\documentclass[reqno,11pt]{amsart}
\pdfoutput=1

\usepackage{eucal}
\usepackage{outlines}
\usepackage{bbm}
\usepackage{amssymb,amsfonts}
\usepackage{stmaryrd}

\usepackage{autobreak}
\usepackage{enumerate}
\usepackage{enumitem}
\usepackage{mathrsfs}
\usepackage{tikz-cd}
\usepackage{mathtools}
\usepackage{yhmath}
\usepackage{quiver}
\usepackage{bbm}
\usepackage{comment}
\usepackage{bm}
\usepackage{quiver} 
\usepackage{scalerel}
\usepackage{stackengine,wasysym}
\usepackage{csquotes}
\usepackage{xcolor}
\usepackage{amsmath,amsthm}
\usepackage{etoolbox}
\usepackage{calligra,mathrsfs}
\usepackage[style=alphabetic,
minalphanames=1,
maxalphanames=20,
maxbibnames=99]{biblatex}
\usepackage[colorlinks,allcolors=blue]{hyperref}
\usepackage{zref-clever}
\zcsetup{cap,nameinlink}

\newcommand{\cref}[1]{\zcref{#1}}
\newcommand{\Cref}[1]{\zcref[S]{#1}}

\newcommand{\labelcref}[1]{\zcref[noname]{#1}}

\ExplSyntaxOn
\newcommand{\zcrefglobalstringname}[2]{
	\__zrefclever_opt_varname_lang_type:nnnn{\languagename}{#1}{#2}{tl}
}
\newcommand{\zcreflocalstringname}[2]{
	\__zrefclever_opt_varname_type:een{#1}{#2}{tl}
}
\newcommand{\zcrefgetstring}[2]{
	\__zrefclever_provide_langfile:e { \languagename }
	\__zrefclever_process_language_settings:
	\ifcsvoid{\zcreflocalstringname{#1}{#2}}
	{\csname \zcrefglobalstringname{#1}{#2} \endcsname}
	{\csname \zcreflocalstringname{#1}{#2} \endcsname}
}
\ExplSyntaxOff
\newcommand{\zcreftypename}[1]{\zcrefgetstring{#1}{Name-sg}}

\newtheorem{sharedtheoremcounter}{}[section]
\undef\sharedtheoremcounter 

\NewDocumentCommand{\newzctheorem}{mO{#1}O{\zcreftypename{#2}}}{%
    \newtheorem{#1}[sharedtheoremcounter]{#3}%

    \zcsetup{countertype={#1=#2}}%

    \AtBeginEnvironment{#1}{%
        \zcsetup{countertype={sharedtheoremcounter=#2}}%
    }%
}
\zcRefTypeSetup{convention}{
	Name-sg = Convention ,
	name-sg = convention ,
	Name-pl = Conventions ,
	name-pl = conventions ,
}
\zcRefTypeSetup{keythm}{
	Name-sg = Key Theorem ,
	name-sg = key theorem ,
	Name-pl = Key Theorems ,
	name-pl = key theorems ,
}
\zcRefTypeSetup{mainthm}{
	Name-sg = Main Theorem ,
	name-sg = main theorem ,
	Name-pl = Main Theorems ,
	name-pl = main theorems ,
}
\zcRefTypeSetup{exmp}{
	Name-sg = Example ,
	name-sg = example ,
	Name-pl = Examples ,
	name-pl = examples ,
}
\zcRefTypeSetup{exmps}{
	Name-sg = Examples ,
	name-sg = examples ,
	Name-pl = Examples ,
	name-pl = examples ,
}
\zcRefTypeSetup{notn}{
	Name-sg = Notation ,
	name-sg = notation ,
	Name-pl = Notations ,
	name-pl = notations ,
}
\zcRefTypeSetup{notns}{
	Name-sg = Notations ,
	name-sg = notations ,
	Name-pl = Notations ,
	name-pl = notations ,
}
\zcRefTypeSetup{disc}{
	Name-sg = Discussion ,
	name-sg = discussion ,
	Name-pl = Discussions ,
	name-pl = discussions ,
}
\zcRefTypeSetup{addm}{
	Name-sg = Addendum ,
	name-sg = addendum ,
	Name-pl = Addenda ,
	name-pl = addenda ,
}
\zcRefTypeSetup{exer}{
	Name-sg = Exercise ,
	name-sg = exercise ,
	Name-pl = Exercises ,
	name-pl = exercises ,
}
\zcRefTypeSetup{property}{
	Name-sg = Property ,
	name-sg = property ,
	Name-pl = Properties ,
	name-pl = properties ,
}
\zcRefTypeSetup{properties}{
	Name-sg = Properties ,
	name-sg = properties ,
	Name-pl = Properties ,
	name-pl = properties ,
}
\zcRefTypeSetup{warn}{
	Name-sg = Warning ,
	name-sg = warning ,
	Name-pl = Warnings ,
	name-pl = warnings ,
}
\zcRefTypeSetup{conv}{
	Name-sg = Convention ,
	name-sg = convention ,
	Name-pl = Conventions ,
	name-pl = conventions ,
}
\zcRefTypeSetup{assumption}{
	Name-sg = Assumption ,
	name-sg = assumption ,
	Name-pl = Assumptions ,
	name-pl = assumptions ,
}
\zcRefTypeSetup{interlude}{
	Name-sg = Interlude ,
	name-sg = interlude ,
	Name-pl = Interludes ,
	name-pl = interludes ,
}
\zcRefTypeSetup{importantrem}{
	Name-sg = Important Remark ,
	name-sg = important remark ,
	Name-pl = Important Remarks ,
	name-pl = important remarks ,
}
\zcRefTypeSetup{rems}{
	Name-sg = Remarks ,
	name-sg = remarks ,
	Name-pl = Remarks ,
	name-pl = remarks ,
}
\zcRefTypeSetup{warning}{
	Name-sg = Warning ,
	name-sg = warning ,
	Name-pl = Warnings ,
	name-pl = warnings ,
}
\zcRefTypeSetup{sch}{
	Name-sg = Scholium ,
	name-sg = scholium ,
	Name-pl = Scholia ,
	name-pl = scholia ,
}
\zcRefTypeSetup{dig}{
	Name-sg = Digression ,
	name-sg = digression ,
	Name-pl = Digressions ,
	name-pl = digressions ,
}
\zcRefTypeSetup{con}{
	Name-sg = Construction ,
	name-sg = construction ,
	Name-pl = Constructions ,
	name-pl = constructions ,
}
\zcRefTypeSetup{irem}{
	Name-sg = Important Remark ,
	name-sg = important remark ,
	Name-pl = Important Remarks ,
	name-pl = important remarks ,
}
\zcRefTypeSetup{tempnotation}{
	Name-sg = Temporary Notation ,
	name-sg = temporary notation ,
	Name-pl = Temporary Notations ,
	name-pl = temporary notations ,
}
\zcRefTypeSetup{conventions}{
	Name-sg = Conventions ,
	name-sg = conventions ,
	Name-pl = Conventions ,
	name-pl = conventions ,
}

\zcRefTypeSetup{defns}{
	Name-sg = Definitions ,
	name-sg = definitions ,
	Name-pl = Definitions ,
	name-pl = definitions ,
}

\zcRefTypeSetup{claim}{
	Name-sg = Claim ,
	name-sg = claim ,
	Name-pl = Claims ,
	name-pl = claims ,
}

\zcRefTypeSetup{notation}{
	Name-sg = Notation ,
	name-sg = notation ,
	Name-pl = Notations ,
	name-pl = notations ,
}

\zcRefTypeSetup{notnn}{
	Name-sg = Notation ,
	name-sg = notation ,
	Name-pl = Notations ,
	name-pl = notations ,
}

\zcRefTypeSetup{warning}{
	Name-sg = Warning ,
	name-sg =  warning ,
	Name-pl = Warnings ,
	name-pl = warnings ,
}

\zcRefTypeSetup{quest}{
	Name-sg = Question ,
	name-sg = question ,
	Name-pl = Questions ,
	name-pl = questions ,
}

\theoremstyle{plain}

\newzctheorem{theorem}
\newzctheorem{keythm}[keythm][Key Theorem]
\newzctheorem{lem}[lemma]
\newzctheorem{thm}[theorem]
\newzctheorem{thmm}[theorem]
\newzctheorem{cor}[corollary]
\newzctheorem{prop}[proposition]
\newzctheorem{conj}[conjecture]
\newzctheorem{quest}[quest]
\newzctheorem{claim}[claim][Claim]
\newzctheorem{mainthm}[mainthm][Main Theorem]

\theoremstyle{definition}

\newzctheorem{defi}[definition]
\newzctheorem{convention}
\newzctheorem{defn}[definition][Definition]	
\newzctheorem{defns}[defns][Definitions]
\newzctheorem{exmp}[exmp][Example]
\newzctheorem{exmps}[exmps][Examples]
\newzctheorem{notn}[notn][Notation]
\newzctheorem{notns}[notns][Notations]
\newzctheorem{disc}[disc][Discussion]
\newzctheorem{addm}[addm][Addendum]
\newzctheorem{exer}[exer][Exercise]
\newzctheorem{property}[property][Property]
\newzctheorem{properties}[properties][Properties]
\newzctheorem{warn}[warn][Warning]
\newzctheorem{remark}[remark][Remark]
\newzctheorem{assumption}[assumption][Assumption]
\newzctheorem{interlude}[interlude][Interlude]
\newzctheorem{importantrem}[importantrem][Important Remark]
\newzctheorem{con}[con][Construction]

\theoremstyle{remark}

\newzctheorem{rems}[rems][Remarks]
\newzctheorem{warning}[warning][Warning]
\newzctheorem{sch}[sch][Scholium]	
\newzctheorem{rem}[remark][Remark]
\newzctheorem{dig}[dig][Digression]
\newzctheorem{notation}[notation][Notation]
\newzctheorem{irem}[irem][Important Remark]
\newzctheorem{tempnotation}[tempnotation][Temporary Notation]
\newzctheorem{conventions}[conventions][Conventions]
\newzctheorem{notnn}[notnn][Notation]
\newzctheorem{conv}[conv][Convention]

\makeatletter
\let\c@equation\c@sharedtheoremcounter

\makeatother

\newcommand{\beqn}{\begin{equation}}
	\newcommand{\eeqn}{\end{equation}}
\newcommand{\bclaim}{\begin{claim}}
	\newcommand{\eclaim}{\end{claim}}
\newcommand{\blem}{\begin{lem}}
	\newcommand{\elem}{\end{lem}}
\newcommand{\bproof}{\begin{proof}}
	\newcommand{\eproof}{\end{proof}}
\newcommand{\bdef}{\begin{defn}}
	\newcommand{\edefn}{\end{defn}}
\newcommand{\bprop}{\begin{prop}}
	\newcommand{\eprop}{\end{prop}}
\newcommand{\bthm}{\begin{thm}}
	\newcommand{\ethm}{\end{thm}}
\newcommand{\brem}{\begin{rem}}
	\newcommand{\erem}{\end{rem}}
\newcommand{\bcor}{\begin{cor}}
	\newcommand{\ecor}{\end{cor}}

\newcommand{\bbC}{\mathbb{C}}

\newcommand{\bbE}{\mathbb{E}}

\newcommand{\bbG}{\mathbb{G}}

\newcommand{\bbN}{\mathbb{N}}
\newcommand{\bbO}{\mathbb{O}}

\newcommand{\bbT}{\mathbb{T}}

\newcommand{\bbZ}{\mathbb{Z}}

\newcommand{\calB}{\mathcal{B}}

\newcommand{\calD}{\mathcal{D}}

\newcommand{\calF}{\mathcal{F}}
\newcommand{\calG}{\mathcal{G}}
\newcommand{\calH}{\mathcal{H}}

\newcommand{\calL}{\mathcal{L}}
\newcommand{\calM}{\mathcal{M}}

\newcommand{\calO}{\mathcal{O}}
\newcommand{\calP}{\mathcal{P}}

\newcommand{\calS}{\mathcal{S}}
\newcommand{\calT}{\mathcal{T}}
\newcommand{\calU}{\mathcal{U}}

\newcommand{\calW}{\mathcal{W}}
\newcommand{\calX}{\mathcal{X}}
\newcommand{\calY}{\mathcal{Y}}
\newcommand{\calZ}{\mathcal{Z}}

\newcommand{\scrA}{\mathscr{A}}
\newcommand{\scrB}{\mathscr{B}}
\newcommand{\scrC}{\mathscr{C}}
\newcommand{\scrD}{\mathscr{D}}
\newcommand{\scrE}{\mathscr{E}}

\newcommand{\scrR}{\mathscr{R}}

\newcommand{\frakm}{\mathfrak{m}}
\newcommand{\fraki}{\mathfrak{i}}
\newcommand{\frakX}{\mathfrak{X}}

\newcommand{\frakY}{\mathfrak{Y}}

\newcommand{\frakp}{\mathfrak{p}}

\renewcommand{\a}{\alpha}
\renewcommand{\b}{\beta}

\renewcommand{\i}{\iota}

\newcommand{\on}{\operatorname}
\newcommand{\wit}{\widetilde}

\newcommand{\wih}{\widehat}

\newcommand{\hook}{\hookrightarrow}
\newcommand{\F}{\mathcal{F}}

\newcommand{\kb}{k[\b^{\pm 1}]}

\newcommand{\mathendash}{\text{\textendash}}
\renewcommand{\mod}{\mathendash\mathrm{mod}}

\mathchardef\mhyphen="2D

\renewcommand{\bmod}{\mathendash\mathbf{mod}}
\newcommand{\comod}{\mathendash\mathrm{comod}}

\newcommand{\heart}{\heartsuit}

\newcommand{\oblv}{\mathrm{oblv}}

\newcommand{\Cat}{\mathcal{C}\mathrm{at}}
\newcommand{\id}{\mathrm{id}}
\newcommand{\Map}{\calM\mathrm{ap}}
\newcommand{\End}{\mathrm{End}}
\newcommand{\cl}{\mathrm{cl}}

\newcommand{\sm}{\mathrm{sm}}

\newcommand{\Alg}{\mathrm{Alg}}
\newcommand{\CAlg}{\mathrm{CAlg}}

\newcommand{\hL}{\widehat{\mathcal{L}}}

\newcommand{\dgcat}{\mathbf{dgcat}}

\newcommand{\QCoh}{\on{\mathsf{QCoh}}}
\newcommand{\IndCoh}{\on{\mathsf{IndCoh}}}
\newcommand{\Vect}{\mathsf{Vect}}

\newcommand{\Coh}{\on{\mathsf{Coh}}}
\newcommand{\Perf}{\on{\mathsf{Perf}}}
\newcommand{\ShvCat}{\on{\mathsf{ShvCat}}}
\newcommand{\act}{\mathsf{act}}
\newcommand{\Ind}{\on{\mathsf{Ind}}}

\newcommand{\Rep}{\on{\mathsf{Rep}}}

\newcommand{\pr}{\on{pr}}

\newcommand{\Fun}{\on{Fun}}

\newcommand{\op}{\on{op}}
\newcommand{\pt}{\on{pt}}

\newcommand{\st}{\on{st}}

\newcommand{\DGCat}{\mathrm{DGCat}}

\newcommand{\res}{\on{res}}

\newcommand{\Tot}{\on{Tot}}

\newcommand{\bCoh}{\mathbf{Coh}}
\newcommand{\bReg}{\mathbf{Reg}}

\newcommand{\Aut}{\on{Aut}}

\DeclareMathOperator{\colim}{colim}

\DeclareMathOperator{\Mod}{Mod}

\DeclareMathOperator{\Sym}{Sym}
\DeclareMathOperator{\fib}{fib}

\DeclareMathOperator{\Spec}{Spec}
\DeclareMathOperator{\Hom}{Hom}

\DeclareMathOperator{\sHom}{\mathscr{H}\text{\kern -3pt {\calligra\large om}}\,}
\DeclareMathOperator{\Spf}{Spf}
\DeclareMathOperator{\Ann}{Ann}

\renewcommand{\Pr}{\calP\mathrm{r}}

\newcommand{\Ch}{\mathsf{Ch}}

\newcommand{\Grp}{\mathrm{Grp}}

\newcommand{\im}{\on{im}}

\newcommand{\ev}{\mathrm{ev}}

\newcommand{\Mor}{\calM\mathrm{or}}

\renewcommand{\sHom}{\calH\mathrm{om}}

\renewcommand{\res}{\mathrm{res}}

\newcommand{\ku}{k[u^{\pm 1}]}

\newcommand{\idm}{\mathrm{idm}}

\newcommand{\enh}{\mathrm{enh}}

\newcommand{\Stk}{\mathsf{Stk}}

\newcommand{\Grpd}{\mathsf{Grpd}}

\newcommand{\laft}{\mathrm{laft}}

\newcommand{\dR}{\mathrm{dR}}

\newcommand{\tT}{t\mathrm{Tate}}
\newcommand{\Pot}{\calD^{\mathrm{pot}}}

\newcommand{\Crys}{\mathsf{Crys}}
\newcommand{\red}{\mathrm{red}}

\newcommand{\bbone}{\bm{1}}

\newcommand{\Ani}{\mathsf{Ani}}
\newcommand{\triv}{\mathrm{triv}}
\newcommand{\Ob}{\calO\mathrm{b}}
\newcommand{\MapStk}{\mathrm{Map}}

\newcommand{\pot}{\mathrm{pot}}
\newcommand{\sfMap}{\mathsf{Map}}
\newcommand{\PreStk}{\mathsf{PreStk}}
\newcommand{\git}{\mathbin{
  \mathchoice{/\mkern-6mu/}
    {/\mkern-6mu/}
    {/\mkern-5mu/}
    {/\mkern-5mu/}}}
\newcommand{\dgCat}{\mathrm{dgCat}}
\newcommand{\bLoc}{\mathbf{Loc}}

\newcommand{\Spc}{\mathsf{Spc}}
\newcommand{\cn}{\mathrm{cn}}
\newcommand{\St}{\mathrm{St}}
\newcommand{\calAut}{\mathcal{A}\mathrm{ut}}
\newcommand{\Act}{\mathrm{Act}}

\DeclareFontFamily{U}{mathx}{\hyphenchar\font45}
\DeclareFontShape{U}{mathx}{m}{n}{
	<5> <6> <7> <8> <9> <10>
	<10.95> <12> <14.4> <17.28> <20.74> <24.88>
	mathx10
}{}
\DeclareSymbolFont{mathx}{U}{mathx}{m}{n}
\DeclareFontSubstitution{U}{mathx}{m}{n}
\DeclareMathAccent{\widecheck}{0}{mathx}{"71}
\DeclareMathAccent{\wideparen}{0}{mathx}{"75}

\renewcommand{\colim}{\operatornamewithlimits{colim}}

\makeatletter
\newcommand*\bigcdot{\mathpalette\bigcdot@{.5}}
\newcommand*\bigcdot@[2]{\mathbin{\vcenter{\hbox{\scalebox{#2}{$\m@th#1\bullet$}}}}}
\makeatother

\usepackage{geometry}
\makeatletter
\renewcommand{\widehat}{%
  \mathaccent"0\hexnumber@\symyhlargesymbols62\relax
}
\makeatother

\title{Kashiwara's Lemma for potent $\calD$-modules}

\author{Jacob Erlikhman and Kendric Schefers}

\begin{document}
\raggedbottom
\begin{abstract}
On stacks there is a richer version of $\calD$-modules---called \textit{potent $\calD$-modules}---which decompletes at equivariant parameters, in the same sense that genuine equivariant $K$-theory decompletes equivariant cohomology. In this paper, we prove a ``genuine'' version of Kashiwara's lemma for $\calD$-modules. Namely, we show that for a broad class of geometric stacks $X$,
if $Z \subset X$ is a closed substack, the category
$\Pot(Z)$ of potent $\calD$-modules on $Z$ is equivalent
under pushforward to the category $\Pot _Z(X)$ of potent $\calD$-modules
on $X$ with support contained in $Z$. 
\end{abstract}

\maketitle

\tableofcontents


\section{Introduction}
\subsection{Statement of main result}
Kashiwara's Lemma for $\mathcal{D}$-modules asserts
that, for a closed embedding of smooth schemes $Z \xhookrightarrow{i} X$ 
over an algebraically closed characteristic 0 field $k$,
the pushforward $i_+: \calD_Z\mod \to \calD_X\mod$\footnotemark
\footnotetext{$\calD_X\mod$ here denotes the (derived)
$\infty$-category of $\calD_X$-modules.}
is fully faithful, and
identifies its essential image as $\calD_X\mod_Z$, the full subcategory of $\calD_X\mod$ spanned
by $\mathcal{D}$-modules set-theoretically supported on $Z$ (cf. \cite[Theorem 1.6.1 and Corollary 1.6.2]{Hotta}). Since the category
$\calD_X\mod_Z$ is insensitive to the scheme structure, Kashiwara's Lemma in particular shows
that the derived category of $\calD$-modules is invariant under nilthickening.
In \cite{BZNLoops}, Ben-Zvi--Nadler demonstrate that the ($2$-periodicized) category
of $\calD$-modules on $X$
can be recovered, using Koszul duality and the Hochschild--Kostant---Rosenberg (HKR) isomorphism, as
the category of cyclic sheaves on the derived loop stack of $X$\footnotemark
	\footnotetext{The derived loop stack of $X$ is the derived stack
	$\calL X := \Map(S^1, X) \simeq X \times_{X \times X} X$,
	where $S^1$ is the algebraic stack $B\bbZ$, or, equivalently,
	the constant derived stack with value $S^1 \in \Spc$.}---more precisely,
they establish an equivalence 
\[\IndCoh(\calL X)^{\tT} \simeq \calD_X\mod_{\bbZ/2},\footnotemark\]
	\footnotetext{The category $\IndCoh(\calL X)^{\tT}$ is the regularized
	Tate invariants, defined in \zcref{sssec:Tate invariants} below, with respect to the $S^1$-action on $\IndCoh(\calL X)$
	given by loop rotation. Concretely, it is the ind-completion
	of the ordinary Tate invariants, $\Coh(\calL X)^{tS^1}$, with respect to the $S^1$-action on $\Coh(\calL X)$
	(appearing as $\simeq \Perf_X(\calL X)$ in \cite{BZNLoops}).}
where $\calD_X\mod_{\bbZ/2}$ denotes the $2$-periodicization of $\calD_X\mod$.
When $X$ is a geometric stack, rather than a scheme, the natural HKR morphism $\bbT_X[-1] \to \calL X$ furnishes
an isomorphism only upon taking the formal completion, along the zero section $X \hook \bbT_X[-1]$
and along the constant loops $X \hook \calL X$, respectively, on each side: $\wih{\bbT}_X[-1] \xrightarrow{\simeq} \wih{\calL}X$. 
In the same paper, Ben-Zvi--Nadler show (\cite[Theorem 6.13]{BZNLoops}) that for a smooth geometric
stack $X$, the ($2$-periodicized) category of $\calD$-modules on $X$ can be recovered,
using this equivalence of completions, as the category of cyclic sheaves on $\wih{\calL} X$.

Thus, the category of cyclic sheaves $\mathsf{IndCoh}(\mathcal{L}X)^{\tT}$ on the \emph{full} 
loop space $\calL X$ is a distinct entity---a kind of enlargement of the ($2$-periodicized) 
category of $\calD$-modules on $X$.\footnotemark
\footnotetext{If $X$ has affine diagonal, the constant loop morphism $X\to \mathcal{L}X$ is a closed immersion. Hence, by \zcref{lem:support-is-formal-completion} below, pushforward
along the inclusion $\wih{\calL}X \hook \calL X$ induces
a fully faithful inclusion of $\IndCoh(\wih{\calL} X)^{\tT}$ 
into $\IndCoh(\calL X)^{\tT}$.}
This category, dubbed the category of
\emph{potent $\calD$-modules} on $X$,
and denoted by $\Pot(X)$, is a central object
in the program of potent
categorical representation theory proposed in 
the recent ICM address of Ben-Zvi--Nadler (\cite{BZPotent}).
In the philosophy expounded therein, and explained
below in \zcref{ssec: potent philosophy}, $\Pot(X)$
is a \emph{decompletion} of the ordinary $2$-periodicized
category of $\calD$-modules.

For a map of stacks $f: X \to Y$, there is an induced $S^1$-equivariant map
of loop stacks, $\calL f: \calL X \to \calL Y$. If $X$ and $Y$ are reasonable, one can show that
the $!$-pullback $\calL f^!: \IndCoh(\calL Y) \to \IndCoh(\calL X)$ induces a map
on regularized Tate invariants, giving a functor
$f^!: \Pot(Y) \to \Pot(X)$. In forthcoming work \cite{ES-6FF}, 
we construct a six functor
formalism extending the assignment
	\[(X \xrightarrow{f} Y) \leadsto (\Pot(Y) \xrightarrow{f^!} \Pot(X)),\] 
and compatible with the six functor formalism on
the full subcategory of ordinary $\calD$-modules.
In particular, given a closed immersion of stacks,
$i: Z \hook X$, there is a well-defined pushforward,
$i_*: \Pot(Z) \to \Pot(X)$, as well
as an appropriate notion of objects in $\Pot(X)$ supported on $Z$.
The aim of this paper is to show that, for a broad
class of stacks, $\Pot$ satisfies the
same invariance under nilthickening as its completed
counterpart.

\begin{thm}[``Kashiwara's Lemma for $\Pot$" (\zcref{thm: Kashiwara's lemma for scalloped stacks})]
	\label{thm:intro kashiwara's lemma for scalloped stacks}
	Let $X$ be a linearly scalloped stack almost of finite type and with affine diagonal and $i:Z \hookrightarrow X$ a closed immersion almost of finite presentation.
	Then $i_*: \Pot(Z) \to \Pot(X)$ is fully faithful 
	and its essential image is $\Pot_Z(X)$, the full subcategory
	of $\Pot(X)$ spanned by objects set-theoretically supported on $Z$.
\end{thm}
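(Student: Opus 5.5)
We reduce the statement to a claim about ind-coherent sheaves on loop stacks together with their $S^1$-actions, and then pass to regularized Tate invariants. Write $\calL i: \calL Z \to \calL X$ for the induced $S^1$-equivariant map of loop stacks. First we check that $\calL i$ is again a closed immersion almost of finite presentation. The key identification is
\[\calL Z \simeq \calL X \times_{X} Z,\]
with $\calL X \to X$ the evaluation at the base point. On classical truncations this holds because a loop in $X$ whose base point lies in $Z$ lies in $Z$ (we use affine diagonal here). Derived, the two sides differ only by a nilpotent thickening, and this is harmless because $\IndCoh$ with set-theoretic supports is insensitive to the derived structure. It follows that $\calL X$ has underlying closed substack $|\calL Z| = |\calL X| \times_{|X|} |Z|$. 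We then check that an object of $\Pot(X)$ supported on $Z$ in the sense of the six-functor formalism is exactly an object of $\IndCoh(\calL X)^{\tT}$ whose underlying ind-coherent sheaf is set-theoretically supported on this closed subset.

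Next we apply \zcref{lem:support-is-formal-completion}, which gives two things:
\begin{itemize}
\item $\IndCoh_{\calL Z}(\calL X) \simeq \IndCoh(\wih{\calL X}_{\calL Z})$, the category on the formal completion;
\item compatibility of this equivalence with the loop-rotation $S^1$-action.
\end{itemize}
The $S^1$-compatibility holds because the completion is $S^1$-stable.

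This reduces the problem to comparing $\IndCoh(\calL Z)^{\tT}$ with $\IndCoh(\wih{\calL X}_{\calL Z})^{\tT}$. The restriction of $\calL i_*$ here is pushforward along the $S^1$-equivariant nil-isomorphism $\calL Z \to \wih{\calL X}_{\calL Z}$. Pushforward along a nil-isomorphism is not an equivalence on $\IndCoh$ itself. The plan is to show that it becomes one after regularized Tate invariants. This is the genuine analogue of Kashiwara, where the "$\calD$-module direction" is supplied by the $S^1$-action.

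This step is the main obstacle. I would attack it by reducing to the linearly scalloped case. Such stacks are covered by quotients $U/G$ with $U$ affine and $G$ linear, so the loop stack localizes over the adjoint quotient $G/G$. Concretely, $\calL(U/G)$ fibers over $G/G$, and for a semisimple parameter $g$ the formal neighborhood of the fiber over $g$ is governed by the fixed loci $U^g$. On each such piece, Ben-Zvi--Nadler's completed equivalence identifies the Tate invariants with $2$-periodic $\calD$-modules on a smooth-enough stack. The required statement there is then the classical Kashiwara lemma, applied to $Z^g \subset X^g$, or more precisely its invariance under nilthickening for $\calD$-modules on stacks, which holds because $\calD$-modules depend only on the reduced, de Rham, prestack.

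To globalize, we use Zariski (or scallop) descent for $\Pot$ in $X$. We also need that regularized Tate invariants commute with the relevant limits. This is where almost finite type and almost finite presentation are needed: they make $\Coh$ well behaved, so that Tate invariants can be computed compactly and commute with the gluing. Once the equivalence holds on each chart, compatibility of $i_*$ with the descent data gives both full faithfulness and the identification of the essential image with $\Pot_Z(X)$.
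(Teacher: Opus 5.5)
There is a genuine gap between your local input and the statement on $\calL(U/G)$. Your local input matches the paper in spirit: shifted localization to $\calL^u(U^{\a}/G(\a))$, only the $\a$-trivial block surviving Tate, then formal loops and the de Rham stack.

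\textbf{The gap.} The $\a$-local argument gives nil-invariance only on $\calL^{\a}(U/G)$, the formal completion along $\mu^{-1}(\a)$ for \emph{closed} points $\a\in G/\!\!/G$. These are the only points with semisimple representatives. Your ``globalize'' step uses descent in $X$, which glues over charts of the stack, not over the base $\Spec A$ with $A=\calO(G/\!\!/G)$. Nothing in the proposal recovers the functor $\Coh(\calL(Z/G))^{tS^1}\to\Coh(\calL(\wih{U_Z}/G))^{tS^1}$ from its completions at closed points.

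This step is not formal. Hom complexes in these Tate categories are $A[u^{\pm 1}]$-modules that are typically not finitely generated over $A$. So if $F$ is the fiber of the comparison map on Homs, vanishing of $F$ after completing, or after tensoring with a Koszul complex, at every maximal ideal does not force $F\simeq 0$; the module $\mathrm{Frac}(A)$ is a counterexample.

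The paper closes this as follows:
\begin{enumerate}
\item It treats $F$ as an $A$-module and uses Neeman's criterion: $F\simeq 0$ iff $F\otimes_A k(\frakp)\simeq 0$ for \emph{all} primes $\frakp$.
\item For a non-closed $\frakp$, it base changes to $\overline{k(\frakp)}$, where $\frakp$ becomes a closed geometric point $\a$. This is why the $\a$-local result is proved over an arbitrary algebraically closed field.
\item It tensors with the Koszul complex $P$ of $\frakm_{\a}$, so that $F\otimes P\otimes P^{\vee}$ becomes a Hom fiber between objects supported over $\a$, hence living on $\calL^{\a}$.
\item After reducing mod $\frakm_{\a}$, it extracts $F\otimes_A\overline{k(\frakp)}$ as a summand, which therefore vanishes.
\end{enumerate}

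Essential surjectivity needs yet another argument, also absent from your proposal. The paper proves thick generation directly: it reduces to hearts and filters by powers of the ideal of $(\calL Z)^{\cl}$. Compatibility of $i_*$ with descent data only transports equivalences you already have chart by chart; it cannot produce either full faithfulness or essential surjectivity.

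\textbf{Two smaller points.} First, objects of $\Pot(X)$ have no underlying ind-coherent sheaf, because the forgetful functor $\Coh(\calL X)^{S^1}\to\Coh(\calL X)$ is killed when $u$ is inverted. So $\Pot_Z(X)$ must be taken as $\fib(j^!)$. It is then identified with $\IndCoh_{\calL Z}(\calL X)^{\tT}$ using that $\tT$ preserves localization sequences whose right adjoints are $t$-bounded.

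Second, the scallop decomposition requires Nisnevich descent for both $\Pot$ and $\Pot_Z$. Since $\Ind$ does not commute with fiber products in $\Pr^L$, this takes a real argument. The paper goes through Tamme's lax pullback, $t$-exactness of \'etale and open pullbacks, and the same localization-sequence theorem. Your remark that almost finite type makes $\Coh$ well behaved names this need but does not supply the argument.
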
  

\begin{rem}
	The class of linearly scalloped stacks, defined in \cite[Definition A.1]{KhanRavi}, encompasses many of the most
	prominently featured finite-dimensional stacks in geometric representation theory.
	For example, any stack of the form $Y/H$, where $H$ is a subgroup of a reductive group $G$ and 
	$Y$ is an $H$-quasi-projective derived scheme, is a linearly scalloped stack. 
	In particular, the Hecke stack $B\backslash G/B$ is linearly scalloped.
	Roughly speaking, they are stacks built successively using Nisnevich squares out of global quotient 
	stacks of the form $Y/G$ for $G$ a reductive group and $Y$ a $G$-quasi-projective scheme.
\end{rem}

\subsection{Motivation}
\label{ssec: potent philosophy}

\subsubsection{Explanation of the name ``potent $\calD$-modules"}
The name ``potent $\calD$-module" is inspired by the following
observation.\footnotemark
\footnotetext{Any unjustified assertion in the following may be found in \cite{BZNLoops}.} 
Given a stack $X$, the unipotent loop space
of $X$, denoted $\calL^u X$, is defined to be the mapping stack $\Map(B\bbG_a, X)$.
Precomposition with the affinization map $S^1 \to B\bbG_a$ gives an inclusion
$\calL^u X \hook \calL X$. The inclusion $\wih{\calL} X \hook \calL X$
factors through this latter map, giving an inclusion $\wih{\calL}X \hook \calL^u X$.
By a theorem of \cite{BZCHN2} (recalled as \zcref{thm: BZCHN Tate sheaves on unipotent equals formal} below), when $X$ is a geometric stack, pushforward along
this inclusion induces an equivalence
\[\mathsf{IndCoh}(\wih{\mathcal{L}}X)^{\tT} \xrightarrow{\simeq} \mathsf{IndCoh}(\mathcal{L}^uX)^{\tT}.\]
For a smooth stack $X$, the left-hand category is identifed with 
$\calD_X\mod_{\bbZ/2}$, as explained in the previous section (and, for a general
stack $X$, it is identified with the $2$-periodicized 
category of (right) crystals, $\Crys(X)_{\bbZ/2}$, by \cite{Preygel}).
The right-hand category, on the other hand, is obtained by
taking the regularized Tate invariants $\tT$ of the category $\IndCoh(\calL^u X)$, whose $S^1$-action factors through
the affinization $S^1 \to B\bbG_a$. In \zcref{prop: BG_a-actions are unipotent} below,
it is shown that the automorphism of a compact object in $\IndCoh(\calL^u X)$ induced by its $S^1$-action
is unipotent. Hence, $\calD_X\mod_{\bbZ/2}$ is obtained as $\tT$
of a \emph{unipotent} categorical $S^1$-action.
In contrast, the automorphism of an object of
$\IndCoh(\calL X)$ induced by its $S^1$-action
is not necessarily \emph{uni}potent---the loop rotation $S^1$-action
on $\IndCoh(\calL X)$ is ``potent." As $\Pot(X)$ is obtained
as $\tT$ of a category with ``potent" categorical $S^1$-action,
its objects are called potent $\calD$-modules.

\subsubsection{Decompletion and genuine equivariance}
\label{sssec: decompletion}
One way to motivate the invention of genuine equivariant spectra
is the following.
Given a compact Lie group $G$ acting on a CW complex $X$, 
one might study $G$-equivariant vector bundles on $X$.
In particular, one might want to consider the Grothendieck group
of such, $K_0(\Vect_G(X))$. In order to do so,
it is natural to consider the complex $K$-theory of the
Borel construction $|X/G|$---the so-called Borel $G$-equivariant
$K$-theory of $X$---but this unfortunately gives
the wrong object. Indeed, while $K_0(\Vect_G(\ast))$ is 
the representation ring of $G$, $R(G)$, $KU^0$ of the classifying space $BG$
is the \emph{completion} of the representation ring $R(G)$
at the augmentation ideal $I_G = \ker(R(G) \to \bbZ)$.

After some reflection, one realizes the issue is that
$KU$, as a spectrum, is determined by descent from its value on CW complexes.
Instead, one should consider an enriched object whose
value is specified for objects like $BG$---orbifold test objects of the form 
$G/H$---by hand.
The appropriate definition for such an object is that
of a genuine equivariant spectrum.
The genuine $G$-equivariant $K$-theory spectrum $KU_G$
is such an object, and its value on the $G$-homogeneous space
$G/H$ is rigged precisely so that $KU^0_G(G/H) \simeq R(H)$.
More generally, the Atiyah--Segal completion theorem
\cite{AtiyahSegal} tells us that the completion of genuine
equivariant $K$-theory at the augmentation ideal $I_G$ is
isomorphic to Borel equivariant $K$-theory.

The category of $\calD$-modules (or crystals) on a stack suffers from the same kind of deficiency.
Given a stack $X$, the category of crystals on $X$ is obtained by descent from
affines:
\[\Crys(X) \xrightarrow{\simeq} \lim_{S \to X} \Crys(S).\]
The result is a category which sees only the formal completion of $\calL X$
at constant loops; indeed (cf. \cite{Preygel, BZNLoops}), for a sufficiently
nice stack, we have
\[\Crys(X)_{\bbZ/2} \simeq \IndCoh(\wih{\calL} X)^{\tT}.\]
Our perspective---the one expounded in \cite{BZPotent}---is that
one should rather consider a category which accesses the full geometry of the
loop space $\calL X$. Namely, one should consider potent $\calD$-modules on $X$.

The analogy between Borel vs. genuine equivariant $K$-theory on the one hand, and ordinary vs.
potent $\calD$-modules on the other, becomes more precise when $X$ is the global quotient of a
derived scheme $U$ by a reductive group $G_{\mathbb{C}}$. Then $\calL(U/G_{\mathbb{C}})$ admits a canonical map $\mu: \calL(U/G_{\mathbb{C}}) \to G_{\mathbb{C}}/\!\!/G_{\mathbb{C}}$
to the GIT quotient $G_{\mathbb{C}}/\!\!/G_{\mathbb{C}}$. In this case, the unipotent loop space $\calL^u(U/G_{\mathbb{C}}) \coloneq \Map(B\bbG_a, U/G_{\mathbb{C}})$
has an alternative expression as the formal completion of $\calL(U/G_{\mathbb{C}})$ along the fiber of $\mu$ over the conjugacy class
of the identity $[1_{G_{\mathbb{C}}}]$. Since $U/G_{\mathbb{C}}$ is a geometric stack, pullback along the canonical map $\wih{\calL}(U/G_{\mathbb{C}}) \hook \calL^u(U/G_{\mathbb{C}})$
induces an equivalence,
\[\IndCoh(\calL^u(U/G_{\mathbb{C}}))^{\tT} \xrightarrow{\simeq} \IndCoh(\wih{\calL}(U/G_{\mathbb{C}}))^{\tT},\]
so we have, in a literal sense (cf. the proof of \zcref{lem:support-is-formal-completion}), 
that objects of $\Crys(U/G_{\mathbb{C}})_{\bbZ/2}$ live on the formal completion at $[1_{G_{\mathbb{C}}}] \in G_{\mathbb{C}}/\!\!/G_{\mathbb{C}}$.
On the other hand, given a compact Lie group $G$,
the complexification of the representation ring $R(G)$---which we idenitified with
$KU_G^0(\ast)$ above---is isomorphic to algebraic global functions 
on the GIT quotient $G_{\bbC}/\!\!/G_{\bbC}$
of the complexification $G_{\bbC}$. Because
$KU^0(BG)$ is a formal completion of $K^0_G(\ast)$,
we say that $K^0_G(\ast)$ is a \emph{decompletion}
of $KU^0(BG)$. In this way, we see that the natural functor 
$\Crys(U/G_{\mathbb{C}})_{\bbZ/2} \to \Pot(U/G_{\mathbb{C}})$
exhibits $\Pot(U/G_{\mathbb{C}})$ as a decompletion of $\Crys(U/G_{\mathbb{C}})_{\bbZ/2}$ along the
map from genuine to Borel equivariant $K$-theory, 
\[KU^0_G(\ast) \simeq \calO(G_{\mathbb{C}}/\!\!/G_{\mathbb{C}}) \to \wih{\calO(G_{\mathbb{C}}/\!\!/G_{\mathbb{C}})_{I_G}} \xrightarrow{\simeq} KU^0(BG).\]

\begin{rem}
    In light of the preceding discussion, one way to understand \zcref{thm:intro kashiwara's lemma for scalloped stacks} 
    is as a genuine equivariant version of Kashiwara's Lemma for $\calD$-modules.
\end{rem}

\begin{rem}
Points of $G/\!\!/G$ are often called ``equivariant parameters."
By working with $\Pot(X)$, we remember information about the
loop space at every equivariant parameter $[\a] \in G/\!\!/G$, rather than just $[1_G]$.
These parameters appear naturally in many different contexts, such as in the study
of 3d mirror symmetry and geometric Langlands, as explained in \cite[\S\S 3, 4]{BZPotent}.
The proof of our main theorem, outlined below in \zcref{ssec: outline of proof},
makes extensive use of the ability to consider the restriction
of potent $\calD$-modules to individual equivariant parameters.
\end{rem}

\begin{rem}
Another sense in which $\Pot(U/G_{\mathbb{C}})$ is a decompletion of $\Crys(U/G_{\mathbb{C}})_{\bbZ/2}$
is furnished by the observation that the functor $\Crys(U/G_{\mathbb{C}})_{\bbZ/2} \to \Pot(U/G_{\mathbb{C}})$
is obtained as the induced map on regularized
Tate invariants given by pushforward along the ``decompletion" $\wih{\calL}(U/G_{\mathbb{C}}) \hook \calL(U/G_{\mathbb{C}})$
of the formal loop space.
Decompletions of the formal loop, however, are not unique.
For example, $\calL X$ and $\bbT_X[-1]$ have 
equivalent formal completions $\wih{\bbT}_X[-1] \xrightarrow{\simeq} \wih{\calL}X$.
In this sense, each of $\calL X$ and $\bbT_X[-1]$ can be seen
as particular decompletions of the formal stack $\wih{\calL}X$.
\end{rem}

\subsubsection{Decompletion and genuine $S^1$-equivariance}
\label{sssec: intro S^1 decompletion}
We are actually performing two decompletions by considering potent $\mathcal{D}$-modules: In the previous section, we explained decompletion with respect to the equivariant parameters of an algebraic group action. In this section, we will explain how the passage from ordinary (homotopy) $S^1$-invariants (resp. Tate invariants) to regularized $S^1$-invariants (resp. $\tT$ invariants) is a decompletion of the action of the topological circle $S^1$.

Let $\mathscr{C}$ be a stable, presentable $\infty$-category with regular $t$-structure\footnote{See \cite[Section 4.3]{Preygel} for what this means.} and equipped with an $S^1$-action. For example, we could take $\mathscr{C}=\mathsf{IndCoh}(X)$, where $X$ is a finite type algebraic stack with affine diagonal and $S^1$-action. Denote the regularization $\mathscr{D}\coloneq \mathscr{R}(\mathscr{C}^{S^1})\coloneq \mathsf{Ind}(\mathsf{Coh}(\mathscr{C}^{S^1}))$. There is a pair of adjoint functions
\begin{align*}
I: \mathscr{C}\rightleftarrows \mathscr{C}^{S^1}:U,	
\end{align*} 
where $U$ is the forgetful functor (forgetting $S^1$-equivariance) and $I$ is the induction functor. If $c\in \mathscr{C}$ is compact, then $I(c)$ is also compact in $\mathscr{C}^{S^1}$, because $U$ preserves colimits. Moreover, the objects $I(c)$ for compact $c\in \mathscr{C}$ generate $\mathscr{C}^{S^1}$, since $U$ is conservative. Thus, this defines a fully faithful and continuous inclusion
\begin{align}\label{eq: inclusion of C^S^1 into R(C^S^1)}
	i: \mathscr{C}^{S^1}\hookrightarrow \mathscr{D}.
\end{align} 
Now, all objects $I(c)$ are $u$-torsion with respect to the $k[u]$-module structure on $\mathscr{C}^{S^1}$, where $u$ is in cohomological degree $2$. Indeed, the $u$-action is induced by tensoring an object $c\in \mathscr{C}^{S^1}$ with the degree 2 map
\begin{align*}
	u:k\to k[2]
\end{align*} 
in $\mathsf{Vect}^{S^1}\simeq k[u]\mod_{u-\text{tors}}$. Since forgetting the equivariance is compatible with the tensor action by $\mathsf{Vect}^{S^1}$ and sends $u$ to a nullhomotopic map in $\mathsf{Vect}$, we find that the $u$-action on $U(\tilde c)$ is nullhomotopic for any $\tilde c\in \mathscr{C}^{S^1}$. Suppose now that $\tilde c = I(c)$ for some $c\in \mathscr{C}$. By adjunction,
\begin{align*}
	\Hom_{\mathscr{C}^{S^1}}(I(c),I(c)[2])\simeq\Hom_{\mathscr{C}}(c,UI(c)[2]),
\end{align*} 
and the map $u_{I(c)}:I(c)\to I(c)[2]$ corresponds to the composition
\begin{align*}
	c\to UI(c)\xrightarrow{U(u_{I(c)})}U(I(c))[2],
\end{align*} 
which we know is nullhomotopic. Hence, $u_{I(c)}$ is nullhomotopic as well. Conversely, if $\tilde c\in \mathscr{C}^{S^1}$ is an arbitrary $S^1$-equivariant object, then
\begin{align*}
	\ker(u:\tilde c\to \tilde c[2])
\end{align*} 
is precisely $IU(\tilde c)$, up to a shift, and these generate the full subcategory of $u$-torsion objects of $\mathscr{D}$:
\begin{align*}
	\mathscr{D}_{u\text{-tors}}\coloneq \ker(\mathscr{D}\to \mathscr{D}[u^{-1}] \simeq \mathscr{C}^{\tT}).
\end{align*} 
Thus, we see that the image of the functor $i$ of \zcref[noname]{eq: inclusion of C^S^1 into R(C^S^1)} is precisely the $u$-torsion subcategory. Moreover, the natural functor of \cite[Proposition 4.3.2]{Preygel} is in fact right adjoint to $i$,\footnote{This can be checked by comparing mapping complexes out of compact objects.} and it exhibits $\mathscr{D}_{u\text{-tors}}\simeq \mathscr{C}^{S^1}$ as a completion of $\mathscr{D}$. In other words, we are decompleting with respect to the equivariant parameter $u$, where $\mathscr{C}^{S^1}$ is supported at $u=0$ and $\mathscr{D}$ is its decompletion away from 0.

\subsubsection{Nil-invariance and Tate fixed points}
    We note that the analogue of \zcref{thm:intro kashiwara's lemma for scalloped stacks}
    is not true for $\IndCoh(\calL(-))$ (in place of $\Pot(-)$), or even for regularized $S^1$-invariants,
    $\Ind(\Coh(\calL(-))^{S^1})$. This furnishes an instance of a recurring pattern 
    in algebraic geometry wherein the Tate fixed points operation 
    produces objects which are ``topological" in nature---that is,
    depending only on underlying reduced structures---from objects which are not.
    We recall the primary example of this pattern below.

    If $X$ is a smooth scheme, $\calO(\calL X)$\footnotemark
    \footnotetext{i.e. derived global functions on the loop space of $X$} 
    may be identified under the Hochschild--Kostant--Rosenberg isomorphism
    with the algebraic de Rham complex $\Omega^{\bullet}_X$. Moreover, this isomorphism
    intertwines the canonical $S^1$-action on
    $\calO(\calL X)$ given by loop rotation with the
    action of the de Rham differential on $\Omega_X^{\bullet}$, letting us further
    identify $\calO(\calL X)^{S^1}$ with $\Omega^{\bullet, \cl}_X$,
    the complex of closed forms on $X$,
    and $\calO(\calL X)^{tS^1}$ with the de Rham cohomology of $X$.
    But while both $\Omega^{\bullet}_X$ and $\Omega^{\bullet, \cl}_X$ are 
    sensitive to nilthickenings of $X$, the de Rham cohomology
    of $X$ is not, with $H^{\bullet}_{\text{dR}}(X)$ depending 
    only on the underlying reduced scheme of $X$.

\subsubsection{Extended example}
In order to anchor the reader's understanding of the text, we
sketch a toy model illustrating the aspects of the theory
found in the preceding discussion.

\begin{exmp}
	\label{ex: extended example}
	Let $G=\mathbb{G} _m$, and consider $X=B\mathbb{G} _m$. 
	Then $\calL X \simeq \mathbb{G} _m \times B\mathbb{G} _m$,
	and $\wih{\mathcal{L}}X \simeq \wih{(\mathbb{G} _m)_{1}}\times B\mathbb{G} _m$.
	Under these identifications, we see that the pushforward 
    \[\QCoh(\wih{\mathcal{L}}X) \hookrightarrow \QCoh(\calL X)\]
    along the inclusion $\wih{\calL}X \hook \calL X$ corresponds to
	the inclusion of graded $k[t,t^{-1}]$-modules supported at $1 \in \bbG_m$ into the category
    of all graded $k[t,t^{-1}]$-modules.

    Applying Tate invariants to the small categories 
	$\mathsf{Coh}(\wih{\mathcal{L}}X)$ and $\mathsf{Coh}(\mathcal{L}X)$ respectively, we obtain (cf. \cite[Example A.13]{BZChenHelmNadler})
	\begin{align*}
		\mathsf{Coh}(\mathcal{L}X)^{tS^1} &\simeq \mathsf{Coh}(k[t,t ^{-1}])\otimes \ku\\
		\mathsf{Coh}(\wih{\mathcal{L}}X)^{tS^1} &\simeq \mathsf{Coh}(\Spf k[\![s]\!])\otimes \ku,
	\end{align*} 
	where $s\coloneq t-1$.\footnote{One reason it is necessary to pass to the ind-completion to obtain $\mathcal{D}_X\mod$ is that $k[\![s]\!]\notin \mathsf{Coh}(\Spf k[\![s]\!])$.} Thus, we see that we have an inclusion
	\begin{align*}
		\mathsf{Coh}(\wih{\mathcal{L}}X)^{tS^1}\hookrightarrow \mathsf{Coh}(\mathcal{L}X)^{tS^1},
	\end{align*} 
	whose image is precisely those modules over $k[t,t ^{-1}]$ which are 
	supported complete with respect to the ideal $(t-1)$, exhibiting a concrete
    instance of ``decompletion," in this instance at the parameter $[1_{\bbG_m}] \in \bbG_m\git\bbG_m= \bbG_m$. 
    
    As explained in \cite{BZChenHelmNadler}, the category $\mathsf{Coh}(\mathcal{L}X)$ admits a 
    direct sum decomposition parametrized by the character lattice $X^*(\mathbb{G}_m)=\mathbb{Z}$:
	\begin{align*}
		\mathsf{Coh}(\mathcal{L}(B\mathbb{G} _m)) \simeq\bigoplus _{\lambda\in \mathbb{Z} }\mathsf{Coh}(\mathbb{G} _m).
	\end{align*} 
    The $S^1$-action on this category is a natural transformation of the identity whose
    component on an object of the $\lambda$-summand $\Coh(B\bbG_m)$ 
    is given by multiplication by $t^\lambda\in k[t,t ^{-1}]$ at the point 
    $t \in \mathbb{G} _m$. Thus, one can think of points in 
    $\mathbb{G} _m$ as equivariant parameters, and taking $\mathsf{Coh}(\hL X)^{tS^1}$ corresponds 
    to completing at the equivariant parameter $1\in\mathbb{G}_m$. If we were instead to consider 
    a torus $T$ of rank $r$ and $X=BT$, then we would be completing at the ideal $(s_1,\ldots,s_r)$, 
    where the $s_i\in \mathcal{O}(T)$ are equivariant parameters.

The inclusion $\wih{\mathcal{L}}X\hookrightarrow \mathcal{L}X$ induces a pullback functor
	\begin{align*}
		\Pot(B\mathbb{G}_m)\coloneq \mathsf{Ind}(\mathsf{Coh}(\mathcal{L}B\mathbb{G}_m)^{tS^1})\to \mathsf{Ind}(\mathsf{Coh}(\wih{\mathcal{L}}B\mathbb{G}_m)^{tS^1})\simeq\mathcal{D}_{B\mathbb{G}_m}\mod_{\bbZ/2}.
	\end{align*}
	Thus, taking endomorphisms of the unit
 in both categories induces a map
	\begin{align*}
		k[t ^{\pm 1}, u^{\pm 1}]\to k[\![s]\!][u^{\pm 1}],
	\end{align*} 
	which exhibits $k[t ^{\pm 1},u^{\pm 1}]$ as a decompletion of $k[\![s]\!][u^{\pm 1}]$. This is analogous to, for example, 
    viewing the genuine $U(1)$-equivariant $K$-theory of a point as a decompletion of the complex $K$-theory of the classifying space $BU(1)\simeq \mathbb{CP}^\infty$.

        Since the functor $\mathsf{Ind}$ preserves colimits, 
    \[
	    \mathsf{Ind}(\mathsf{Coh}(\mathcal{L}X)^{tS^1})\simeq \mathsf{Ind}(\mathsf{Coh}(\mathcal{L}X)^{S^1})\otimes _{k[u]}\ku.
    \]
    In particular, we can consider 
    \begin{align*}
	    \mathsf{Ind}(\mathsf{Coh}(\mathcal{L}X)^{S^1})\simeq k[t,t ^{-1},u]\mod\quad\text{and}\quad
	    \mathsf{IndCoh}(\mathcal{L}X)^{S^1}\simeq k[t,t ^{-1},u]\mod_{u\text{-tors}},
    \end{align*} 
    before inverting $u$. Moreover, by \zcref{sssec: intro S^1 decompletion}, there are adjoint functors
    \begin{align*}
	    k[t,t ^{-1},u]\mod_{u\text{-tors}}\rightleftarrows k[t,t ^{-1},u]\mod,
    \end{align*} 
    which exhibit $k[t,t ^{-1},u]\mod$ as the decompletion of $k[t, t ^{-1},u]\mod_{u\text{-tors}}$ away from $u=0$.
\end{exmp}

\subsection{Outline of proof}
\label{ssec: outline of proof}
Throughout this outline, we assume that all stacks and schemes are of finite type and have affine diagonal. 
\subsubsection{Initial reduction}
Any closed immersion of stacks $Z \hook X$, factors through the formal completion as
\[Z \xrightarrow{i} \wih{X_Z} \xrightarrow{\wih{i}} X.\]
Our proof strategy starts with the
observation that, as for $\IndCoh$ and $\QCoh$,
pushforward along $\wih{i}$ induces an equivalence,
\[\wih{i}_*: \Pot(\wih{X_Z}) \xrightarrow{\simeq} \Pot_Z(X),\]
with adjoint equivalence $\wih{i}^!$ (\zcref{lem:support-is-formal-completion}).
This observation reduces the problem to showing that
the pushforward $i_*: \Pot(Z) \to \Pot(\wih{X_Z})$
is an equivalence---in other words, showing nil-invariance
for $\Pot$---which we do in this paper for $X$ a linearly scalloped stack.

\subsubsection{Reduction to quotient stacks}
In order to show our main result for a linearly scalloped stack $X$,
we reduce to the case when $X$ has the form of a global
quotient $U/G$, where $G$ is a reductive group and $U$ is a derived scheme.

To start this reduction, we show that if $X$ is an algebraic stack, 
then $\Pot(-)$ satisfies Nisnevich descent on the category $\Rep_{/X}$ of stacks $Y$ over $X$
whose structure morphism $Y\to X$ is representable by 
algebraic spaces (\zcref{prop:potent-d-modules-nis-descend}). 
If $X$ is linearly scalloped, this property of $\Pot(-)$ allows us to argue inductively 
using the scallop decomposition of $X$ from the case
where $X$ has the form of a global quotient of a $G$-quasi-projective derived scheme 
(\zcref{thm: Kashiwara's lemma for scalloped stacks}).\footnote{A $G$-(quasi-)projective scheme $U$ is a scheme $U$ with $G$-action such that the map $U /G\to BG$ is (quasi-)projective.}

\subsubsection{$\a$-potent loop spaces}
Now let $X = U/G$, where $G$ is a reductive
group and $U$ is a derived $G$-scheme.
In this case, the loop space $\calL X$ admits a ``Jordan decomposition,"
described in \cite{ChenEquiv,BZChenHelmNadler}, into certain $S^1$-invariant formal substacks,
denoted $\calL^{\alpha} X$, parametrized by closed points $\a \in G/\!\!/G$,\footnotemark
\footnotetext{Here, $G/\!\!/G$ denotes the GIT quotient for the adjoint action of $G$ on itself.}
which correspond to conjugacy classes of semisimple elements in $G$.
For a given closed point $\a \in G/\!\!/G$, the space 
$\calL^{\alpha} X$ is defined as the formal completion of
the fiber of the canonical map $\mu: \calL(U/G) \to G/\!\!/G$ over $\a$\footnotemark
\footnotetext{$\mu$ is defined as the composition of the induced
	map $\calL X = \calL(U/G) \to \calL(BG) \simeq G/G$
	with the affinization $G/G \to G/\!\!/G$.}
and is called the \emph{$\a$-potent loop space} of $X$.\footnote{This
teminology is justified by the fact that taking $\a = [1_G]$ recovers
the 1-potent, i.e. \textit{uni}potent, loop space $\calL^u X$ discussed above. In \cite{ChenEquiv,BZChenHelmNadler}, this space is called the $\alpha$-unipotent loop space.}

The reasons for introducing the spaces $\calL^{\alpha} X$ are:
\begin{enumerate}
	\item \label{first item above}
	$\Coh(\calL^{\alpha}(U/G))^{tS^1}$ is nil-invariant in the sense that
pushforward along the inclusion $i: Z/G \to U/G$ of any closed immersion\footnotemark
\footnotetext{It is shown in \zcref{lem:closed-substack-is-quotient} that
any closed substack of $U/G$ has the form $Z/G$ for a closed subscheme $Z \subset U$.}
induces an equivalence (\zcref{prop: Kashiwara's lemma for unipotent loop spaces}),
\begin{equation}
	\label{eqn: intro eqn}
	(i_*)^{tS^1}: \mathsf{Coh}(\calL^{\alpha}(Z/G))^{tS^1} \xrightarrow{\simeq} \mathsf{Coh}(\calL^{\alpha}(\wih{U_Z}/G))^{tS^1},
\end{equation}
and
	\item \label{second item above}
	the nil-invariance of $\Pot(X) := \IndCoh(\calL(U/G))^{\tT}$ can be deduced from
	knowing the equivalence \zcref[noname]{eqn: intro eqn} for each closed point $\a \in G/\!\!/G$. 
\end{enumerate}

\subsubsection{Essential Surjectivity}
It turns out that in order to show item (\ref{second item above}) above, it is sufficient to show that the pushforward
\begin{align*}
	(i_*)^{tS^1}: \mathsf{Coh}(\mathcal{L}(Z /G))^{tS^1}\to \mathsf{Coh}(\mathcal{L}(\wih{U_Z}/G))^{tS^1}
\end{align*} 
is fully faithful: In \zcref{prop: essential surjectivity in projective case}, we show that the image of $(i_*)^{tS^1}$ generates $\Coh(\wih{U_Z}/G))^{tS^1}$, which, combined with fully faithfulness, proves the theorem after ind-extending (\zcref{thm: Kashiwara's lemma for G-projective quotients}).
The proof of generation reduces to showing the corresponding statement on the hearts
of the $t$-structures on each of the source and target of $(i_*)^{tS^1}$,
which we show by a classical argument involving filtrations defined
by powers of the ideal sheaf of $Z^{\cl}$.

\subsubsection{Leveraging the $\a$-potent loop space}
To show fully faithfulness (\zcref{prop: fully faithfulness in projective case}), we use item (\ref{first item above}). We first reduce, by \zcref{lem: fully faithful iff fully faithful on pure tensors}, to showing that $(i_*)^{tS^1}$ is fully faithful on objects which are in the image of
\begin{align*}
	\mathsf{Coh}(\mathcal{L}(Z /G))^{S^1}\to \mathsf{Coh}(\mathcal{L}(Z /G))^{tS^1}.
\end{align*} 
Denoting two such objects by $\mathcal{F}$ and $\mathcal{G}$, we thus reduce to showing that the following fiber is 0
\begin{align*}
	F\coloneq \ker\left(\Hom_{\mathsf{Coh}(\mathcal{L}(Z /G))^{tS^1}}(\mathcal{F},\mathcal{G})\to \Hom_{\mathsf{Coh}(\mathcal{L}(\wih{U_Z} /G))^{tS^1}}((i_*)^{tS^1}\mathcal{F},(i_*)^{tS^1}\mathcal{G})\right).
\end{align*} 
Our strategy is then to use a theorem of Neeman \cite[Lemma 2.12]{neeman-bockstedt}, which states that a (possibly unbounded) complex of $A$-modules $M$ is 0 if and only if all base changes $M\otimes _Ak(\mathfrak{p})\simeq 0$ for all prime ideals $\mathfrak{p}\in\Spec A$, where $k(\mathfrak{p})$ is the residue field of $\mathfrak{p}$. Since the categories in question are all $\mathcal{O}(G/\!\!/G)$-linear, we aim to show, for $\Spec A \coloneq  G/\!\!/G$, that $F\otimes_A k(\mathfrak{p})\simeq 0$ for all prime ideals $\mathfrak{p}\subset A$.\footnote{In fact, the Tate invariants of an $A$-linear category is $A[u^{\pm 1}]$-linear. We are thus considering $F$ as an $A$-module by restricting the fiber in $A[u^{\pm 1}]\mod$ along $A\to A[u^{\pm 1}]$.} After choosing a prime ideal $\mathfrak{p}\subset A$, we obtain a closed point $\alpha'$ of $\Spec A_{k(\frakp)}\coloneq \Spec (A\otimes _kk(\mathfrak{p}))$ by using the point $A\to k(\mathfrak{p})$. However, the closed point $\alpha'$ is not geometric; hence, it is not necessarily represented by a semisimple element of $G$. In order to apply \zcref{prop: Kashiwara's lemma for unipotent loop spaces}, we require a semisimple representative. Thus, to correct this, we pass to the algebraic closure $\overline{k(\frakp)}$, and consider instead the geometric point $\alpha$ determined by $A\to \overline{k(\frakp)}$. We then base change all our geometric objects to $\overline{k(\frakp)}$. Denoting $(-)_{\overline{k(\mathfrak{p})}}\coloneq -\otimes _k \overline{k(\mathfrak{p})}$, we next show that Hom complexes appearing in the base-changed fiber $F_{\overline{k(\frakp)}}\in A_{\overline{k(\frakp)}}\mod$ can be written as Hom complexes on the base changed stacks in question; in other words, all calculations can now be performed relative to $ \overline{k(\frakp)}$ as the base algebraically closed field.

Now, in order to leverage item (\ref{first item above}), we tensor the base changed fiber with $-\otimes _{A_{\overline{k(\frakp)}}}P\otimes _{A_{\overline{k(\frakp)}}}P^\vee$, where $P$ is the Koszul complex of the maximal ideal of $A_{\overline{k(\frakp)}}$ corresponding to the closed point $\alpha$. The purpose of this maneuver is to use \zcref{lem: tensoring Homs with perfect complex commutes with S^1-invariants}, which allows us to rewrite 
	 $F\otimes _{A_{\overline{k(\frakp)}}}P\otimes _{A_{\overline{k(\frakp)}}}P^\vee$
	 as a kernel of Hom complexes between sheaves on the $\a$-potent loop spaces, since, by an argument similar to that in \zcref{lem:support-is-formal-completion}, an $S^1$-equivariant sheaf supported on the inverse image of $\alpha$ is an $S^1 $-equivariant sheaf on the formal completion at that inverse image, which is precisely the $\a$-potent loop space. Then, we use item (\ref{first item above}) to conclude that 
	 \begin{align*}
	 	F_{\overline{k(\frakp)}}\otimes _{A_{\overline{k(\frakp)}}}P\otimes _{A_{\overline{k(\frakp)}}}P^\vee\simeq 0.
	 \end{align*} 
Finally, after base changing this complex along $A_{\overline{k(\mathfrak{p})}}\to \overline{k(\mathfrak{p})}$, the Koszul differential becomes 0 in the resulting complex; thus, by perfectness of $P$, it is a direct sum of copies of shifts of $F\otimes_A \overline{k(\mathfrak{p})}$. In particular, each of these is 0, so that we have $F\otimes _A\overline{k(\mathfrak{p})}\simeq 0$, hence $F\otimes_A k(\frakp)\simeq 0$, as desired.

\subsection{Structure of paper}

In \zcref{sec:conventions and notation}, we establish the basic conventions
and notation used throughout the paper. In particular, this section contains a
glossary of categories that appear in the work. We encourage the reader to consult
this section first whenever unfamiliar notation appears.

In \zcref{sec:preliminaries}, we collect the foundational definitions and results 
on group actions, loop spaces, and formal completions used throughout the text.
We advise the reader to skip this section at first and return to it as necessary 
while reading the main text. 

In \zcref{sec:potent D-modules}, we develop the theory
of potent $\calD$-modules necessary to formulate and
prove our main result. We also recall various elements
of the six functor formalism on potent $\calD$-modules
constructed in the forthcoming work \cite{ES-6FF}.

In \zcref{sec:nil-invariance of cyclic sheaves on the a-unipotent loop space}, 
we discuss the $\a$-potent loop space $\calL^{\alpha}$, $A$-trivializations,
and the $\alpha$-shifted action on the ordinary unipotent loop space. 
Much of the content in this section are recollections of the material
in \cite[Appendix A]{BZChenHelmNadler}, as well as
compatibility statements between operations like formal completion, fixed points,
and $A$-trivialization. The main result of this section is
the nil-invariance of cyclic sheaves on the $\a$-potent loop space.

In \zcref{sec:the global quotient case} we prove Kashiwara's Lemma 
(\zcref{thm:intro kashiwara's lemma for scalloped stacks}) holds for 
quotient stacks.
We start by proving some algebraic properties of the interaction between $S^1$-invariants and Hom complexes. Using these comparisons,
we prove Kashiwara's Lemma for global quotient stacks.

Finally, in \zcref{sec:the linearly scalloped case}, we use the results of the previous section to
prove Kashiwara's Lemma for all linearly scalloped stacks.

\subsection{Acknowledgements}
JE would first like to thank his advisor, David Nadler, for his continued support and encouragement during this project, 
and Zhiwei Yun for related discussions which lead to the method of proof of \zcref{lem: formal completion is quotient}. 

KS thanks Rok Gregoric for illuminating discussions on the topics of homotopy theory and categorical group actions.

The authors together would like to thank Pavel Safronov, Sanath Devalapurkar, Swapnil Garg, Yuji Okitani, Jeremy Taylor, Jens Eberhardt, Spencer Tamagni, Germ\'an Stefanich, Sam Gunningham, Sam Raskin, and Justin Hilburn for discussions related to the subject of this work.  

The authors thank David Nadler for discussions that led to the creation of this paper. The athors also thank Harrison Chen for
answering numerous questions about his previous work, and for generously sharing work-in-progress. Lastly, we thank David Ben-Zvi for drawing our attention to the Jordan loop space decomposition, as well as for patiently explaining the philosophy of decompletion to us.

\subsubsection{AI use statement}
AI was used for literature search throughout this project, and for locating errors as the project neared completion. It was also used to fix some issues with LaTeX packages, and to help with typesetting.

\section{Conventions and notation}
\label{sec:conventions and notation}

\subsubsection{}
Fix an algebraically closed field $k$ of characteristic zero, once and for all.

\subsubsection{}
Given a functor $F: \scrC \to \scrD$ between $\infty$-categories,
$\im(F)$ denotes the essential image of $F$.

\subsubsection{Hom-objects and mapping spaces}
The reader will encounter $\infty$-categories in
this paper equipped with a variety of different structures,
each of which we use at different times. Here we
establish notation for the various hom-objects and
mapping spaces associated with these structures.

Let $\scrC$ be an $\infty$-category, and let $a,b \in \scrC$
be two objects. We adopt the following notation.
\begin{enumerate}
	\item \label{hom notation 1}
	$\sfMap_{\scrC}(a,b) \in \Grpd_{\infty}$ denotes the mapping
	space from $a$ to $b$. We drop the subscript ``$\scrC$" 
	when the category is clear from context.

	\item \label{hom notation 2}
	If $(\scrC, \otimes)$ is closed symmetric monoidal, then $\sHom_{\scrC}(a,b) \in \scrC$ denotes
	the internal hom-object from $a$ to $b$, i.e. the object determined by,
	\[\sfMap_{\scrC}(c, \sHom_{\scrC}(a,b)) \simeq \Map_{\scrC}(c \otimes a, b),\]
	for $c \in \scrC$. As above, we drop the subscript ``$\scrC$" when the
	category is clear from context.

	\item \label{hom notation 3} 
	If $\scrC$ is a module category over a
	rigid symmetric monoidal category $\scrR$, 
	then $\Hom_{\scrR}(a,b) \in \scrR$
	is the object determined by
	\[\sfMap_{\scrR}(V, \Hom_{\scrR}(a,b)) \simeq \sfMap_{\scrC}(V \otimes a, b),\]
	for $V \in \scrR$, $a, b \in \scrC$.

	When $\scrR$ is $R\mod$ for a ring spectrum $R$, we alternatively write
	$\Hom_R(a,b)$. For example, if $\scrC$ is an object of $\dgcat_k$, 
	then $\Hom_k(a,b)$ is the usual $k$-module spectrum of maps between $a$ and $b$.

	\item
	The space of $n$-morphisms in $\scrC$
	is given by $\sfMap_{\Cat_{\infty}}([n], \scrC)$
	where $[n]$ denotes the $n$-simplex category.
	We denote by $\Ob(\scrC)$, $\Mor(\scrC)$, and
	$\Mor_n(\scrC)$, respectively, the spaces $0$-morphisms (i.e. objects),
	$1$-morphisms, and $n$-morphisms, respectively.

\end{enumerate}

\begin{warning}
Given two objects $X,Y \in \Stk_k$, we will denote the internal mapping object
of $\Stk_k$ between $X$ and $Y$, i.e. the mapping stack, by
$\Map(X, Y)$.
\end{warning}

We will mainly be interested in categories of sheaves in this paper,
for which we have the following alternative notation.

Let $f: X \to S$ be a derived stack over $S$. Pullback along $f$ gives 
a natural symmetric monoidal functor $f^*:\QCoh(S) \to \QCoh(X)$,
making $\QCoh(X)$ into an algebra over $\QCoh(S)$. Composing
$f^*$ with the canonical symmetric monoidal embedding $\Upsilon: \QCoh(X) \to \IndCoh(X)$
obtains a symmetric monoidal functor $\QCoh(S) \to \IndCoh(X)$,
making $\IndCoh(X)$ into an algebra over $\QCoh(S)$, as well.

\begin{enumerate}
	
	\item We use $\sHom_X(-,-)$ to denote the hom-object
	(\labelcref{hom notation 2}) for $(\scrC, \otimes)$ taken to be either
	$(\QCoh(X), \otimes)$ or $(\IndCoh(X), \overset{!}{\otimes})$.
	When there is potential for confusion, we write $\sHom_{\QCoh(X)}(-,-)$
	or $\sHom_{\IndCoh(X)}(-,-)$.
	
	\item We use $\Hom_S(-,-)$ to denote the hom-object of 
	(\labelcref{hom notation 3}) above for $\scrR = \QCoh(S)$
	and $\scrC$ taken to be either $\QCoh(X)$ or $\IndCoh(X)$.
\end{enumerate}

\brem
There is a useful relationship between $\sHom_{\QCoh(X)}(-,-)$
and $\Hom_S(-,-)$:
\[f_*\sHom_{\QCoh(X)}(-,-) \simeq \Hom_S(-,-).\]
When $S$ is smooth, and $f$ is schematic, then
we similarly have
\[f_*\sHom_{\IndCoh(X)}(-,-) \simeq \Hom_S(-,-).\]

Note that if $X$ is taken to be a stack over itself, i.e.
$S = X$ and $f = \id_X$, then $\Hom_S(-,-) = \sHom_X(-,-)$.
\erem

\brem
In the case when the base stack $S$ is an affine scheme $\Spec R$, 
we also write $\Hom_R(-,-)$ in place of $\Hom_S(-,-)$, by slight abuse of
notation. Indeed note that an action of $\QCoh(\Spec R)$ is an action of $R\mod$,
and that $\Gamma(\Hom_{\Spec R}(-,-)) \simeq \Hom_R(-,-)$.
\erem

\subsubsection{Inclusions}
In order to minimize verbiage, we will use
$i_{?}$ as standing, all-purpose notation 
to denote the inclusion of some space (``$?$")
into another, whenever said inclusion is clear
from context.

\subsubsection{Tate invariants}
\label{sssec:Tate invariants}
Let $\scrC$ be a $k$-linear $\infty$-category equipped with an $S^1$-action.
The invariant category $\scrC^{S^1}$ is linear over $k[u]$, where $u$ has cohomological
degree $2$. We set 
\[\scrC^{tS^1} := \scrC^{S^1} \otimes_{k[u]} \ku,\] 
and call the resulting category the \emph{Tate invariants} of $\scrC$.

When $\scrC$ is a large category, the Tate invariants might not be the correct notion
for a given purpose, so we often use the following construction of Preygel instead.
Assume that $\scrC$ is a $k$-linear stable $\infty$-category with a coherent $t$-structure 
(\cite[Definition 4.2.7]{Preygel}). Let $\Coh(\scrC) \subset \scrC$ be the full subcategory
spanned by bounded almost perfect objects of $\scrC$. Then Preygel defines the \emph{$t$Tate invariants}
of $\scrC$ to be the ind-completion of the Tate invariants of the induced $S^1$-action on $\Coh(\scrC)$:
\[\scrC^{\tT} := \Ind(\Coh(\scrC)^{tS^1}).\]

\subsubsection{Stacks}
\label{sssec:stacks conventions}
We use the notions of geometric and ind-geometric stacks of \cite{CautisWilliams}. In particular, if $X$ is a coherent ind-geometric stack with $S^1$-action, it makes sense to write
\begin{align*}
    \mathsf{IndCoh}(X)^{\tT}
\end{align*}
by \cite[Proposition 5.30]{CautisWilliams} and the results of \zcref{ssec: group actions} below. In fact, all our stacks will be coherent ind-geometric stacks locally almost of finite type. By Remark 5.20 of \textit{op. cit.}, for such stacks the categories of ind-coherent sheaves defined in \cite{CautisWilliams} and in \cite{GaitsgoryIndCoh,GR17,GR17II} are the same.

By \textit{closed immersion}, we mean a closed immersion in the sense of \cite[Section 3.7]{CautisWilliams} that is also schematic.

\subsubsection{Symmetric monoidal categories}
If $(\scrC, \otimes)$ is a symmetric monoidal category, we
denote the tensor unit by $\bbone_{\scrC}$.

\subsection{Glossary of categories}
${}$ \\
\begin{outline}
	
	\1 $\Cat_{\infty}$ denotes the $\infty$-category of $\infty$-categories.   
        \2 $\Cat_{\infty}^t$ denotes the category of stable $\infty$-categories equipped with $t$-structures, whose $1$-morphisms are $t$-exact functors,
    and whose higher morphisms are as in $\Cat_{\infty}$.
        \2 $\Cat_{\infty}^{\sm}$ denotes the full subcategory of $\Cat_{\infty}$ spanned by small, idempotent-complete
        $\infty$-categories.
        \2 $\Cat_{\infty, \st}$ denotes the subcategory of $\Cat_{\infty}$ spanned by stable $\infty$-categories,
        whose $1$-morphisms are exact functors, and whose higher morphisms are as in $\Cat_{\infty}$.
            \3 $\Cat_{\infty, \st}^{\sm}$ denotes the subcategory of $\Cat_{\infty}$ given by $\Cat_{\infty, \st} \cap \Cat_{\infty}^{\st}$.
    \1 $\Pr^L_{\st}$ denotes the $\infty$-subcategory of $\Cat_{\infty}$ spanned by
       presentable, stable $\infty$-categories, whose $1$-morphisms are exact colimit-preserving functors, 
       and whose higher morphisms are as in $\Cat_{\infty}$.    
    \2 $(\Pr^L_{\st})^t$ denotes the $\infty$-category of objects in $\Pr^L_{\st}$
            equipped with $t$-structures, whose $1$-morphims are $t$-exact functors,
            and whose higher morphisms are as in $\Pr^L_{\st}$.
	\1 $\DGCat_k$ denotes $\Mod_{\Vect_k}(\Pr^L_{\st})$.
        \2 $\DGCat _k^t$ denotes the $\infty$-category of objects in $\DGCat_k$ equipped with
        $t$-structures, whose $1$-morphisms are $t$-exact functors, and whose higher morphisms are 
        as in $\DGCat_k$. Equivalently, $\DGCat_k^t \simeq \Mod_{(\Vect_k,t)}((\Pr^L_{\st})^t)$,
        where $(\Vect_k, t)$ denotes $\Vect_k$ equipped with its standard $t$-structure.
        There is an obvious forgetful functor $\oblv: \DGCat_k^t \to \DGCat_k$ which
        forgets the $t$-structure.
	\1 $\dgcat_k$ denotes $\Mod_{\Perf k}(\Cat_{\infty, \st}^{\sm})$.
    \2 $S^1\bmod$ denotes $\Fun(BS^1,\dgcat_k)$.
    \2 If $A$ is a commutative dg algebra, then $A\bmod$ denotes $A\mod(\dgcat_k)$.
	\1 $\Ch_k$ denote the dg-category of strict chain complexes over the field $k$. There is
	a localization map $\Ch_k \to \Vect_k$ inverting quasi-isomorphisms.
	\1 $\QCoh(X)$ denotes the $k$-linear (stable cocomplete) 
	$\infty$-category of quasicoherent complexes on a derived prestack $X$. 
	It is equipped with a natural t-structure, whose heart $\QCoh(X)^{\heart}$ 
	is equivalent to the (ordinary) category of quasicoherent sheaves on $\pi_0(X)$.
	\2 $\Perf(X) \subset \QCoh(X)$ is the full subcategory spanned
	by perfect complexes. If $X$ is perfect (e.g. a quasi-compact, quasi-separated
	derived scheme) in the sense of \cite{BZFN}, then $\QCoh(X) = \Ind(\Perf(X))$.
	\2 Let $X$ be a derived stack. Then $\Coh(X) \subset \QCoh(X)$ is the full 
	subcategory spanned by objects with locally bounded, coherent cohomology sheaves. 
	\1 For laft prestacks $X$, $\IndCoh(X)$ is defined in detail in \cite{GaitsgoryIndCoh}, but for $X$ as in \zcref{sssec:stacks conventions}, it is simply the ind-completion of $\Coh(X)$ defined above.
	\1 $\mathsf{Spc}$ denotes the $\infty$-category of $\infty$-groupoids, whose
	objects are alternatively called \emph{spaces} or \emph{animae}.
	\1 Given an $\infty$-category $\scrC$ with a t-structure, $\scrC^+$ will denote $\cup_{n \geq 0} \scrC^{\geq -n}$,
	and $\scrC^-$ will denote $\cup_{n \geq 0} \scrC^{\leq n}$.
	\1 We let $\bCoh_t$ denote the $\infty$-category whose objects are $\infty$-categories
	equipped with coherent $t$-structure, whose $1$-morphisms are continuous and
	left $t$-exact up to a shift functors, and whose higher morphisms are those in $\Cat_{\infty}$.\footnotemark
    \footnotetext{This differs from \cite[Definition 4.3.5(i)]{Preygel}, where the 1-morphisms there are required to be continuous and $t$-exact.}
	\1 We let $\bReg_t\subset \bCoh_t$ denote the full subcategory whose objects are $\infty$-categories with regular $t$-structures.

\end{outline}


\section{Preliminaries}
\label{sec:preliminaries}
\subsection{Group actions}
\label{ssec: group actions}
\begin{defn}
	\label{def: G-category}
Let $G$ be a group object in $\Ani$ (i.e. an $\infty$-group), 
and let $\scrC$ be an $\infty$-category. 
A $G$-action on an object $c \in \scrC$ is defined to be a functor\footnotemark $BG \to \scrC$ 
such that the composition $\ast \to BG \to \scrC$ is the object $c$.
\footnotetext{Here, and throughout the remainder of this paper when appropriate, we
	will consider anima as $\infty$-categories under the canonical inclusion $\Ani \xhookrightarrow{i} \Cat_{\infty}$.}
\end{defn}

In particular, one may take $\scrC$ to be $\Cat_{\infty}$ in the definition above 
to obtain the notion of an $\infty$-category equipped with a $G$-action,
which call a \emph{$G$-category}.
Given an object $c \in \scrC$ equipped with a $G$-action, we often
write $c: BG \to \scrC$ by abuse of notation.

Given a $G$-category $\scrC$, one may consider objects of $\scrC$ equipped with
$G$-actions which are compatible with that on $\scrC$.

\begin{defn}
\label{def: G-equivariant object}
Let $\scrC$ be a $G$-category. The space of \emph{$G$-equivariant} objects
of $\scrC$ is defined to be $\Map_G(\ast, \scrC) := \Map_{\Fun(BG,\Cat_{\infty})}(\ast, \scrC)$, i.e.
the space of natural transformations from the point (equipped with its only possible
$G$-action) to $\scrC: BG \to \Cat_{\infty}$.
\end{defn}

\begin{rem}
An object with $G$-action in a plain $\infty$-category $\scrC$
is equivalently a $G$-equivariant object of the trivial $G$-category $\scrC$.
\end{rem}

\subsubsection{The $G$-action on mapping spaces}
	\label{sssec: G-action on mapping spaces}
	Given a $G$-category $\scrC$, and a pair of $G$-equivariant objects $c, c' \in \scrC$,
	there is a canonical $G$-action on the mapping space 
	$\Map(c,c')$ between the underlying objects of $\scrC$ defined as follows.
	
	Given a $G$-category $\scrC: BG \to \Cat_{\infty}$, there is an induced action
	of $G^{\op}$ on the presheaf category $\calP(\scrC)$ given by the composition
	$BG^{\op} \xrightarrow{\scrC^{\op}} \Cat_{\infty}^{\op} \xrightarrow{\calP} \Cat_{\infty}$,
	where $\calP$ is the functor taking the category $\scrC$ to $\Fun(\scrC^{\op}, \Ani)$.
	This operation is functorial in $\scrC$,
	\[\Fun(BG,\Cat_{\infty})^{\op} \xrightarrow{(-)^{\op}} \Fun(BG^{\op},\Cat_{\infty}^{\op}) \xrightarrow{\calP \circ -} \Fun(BG^{\op}, \Cat_{\infty}),\]
	giving a functor $\wit{\calP}: (\Cat_{\infty}^G)^{\op} \to \Cat_{\infty}^{G^{\op}}$.
	
	Note that, if $\ast$ denotes the trivial $G$-category, $\wit{\calP}(\ast) = \Ani$, equipped with 
	the trivial $G^{\op}$-action. Consider a $G$-equivariant object in $\scrC$,
	$c: \ast \to \scrC$. Then $\wit{\calP}(c^{\op})$ gives a $G^{\op}$-equivariant functor $c^*: \calP(\scrC) \to \Ani$.
	In particular, if $c'$ is $G$-equivariant object of $\scrC$, we see that $c^*$ sends the presheaf represented by $c'$ to a
	$G^{\op}$-equivariant object of $\Ani$ whose underlying space is $\Map(c,c')$. Imitating this
	construction for copresheaves, we find a $G$-equivariant object of $\Ani$ whose 
	underlying space is also $\Map(c,c')$.
	
	Thus, given a pair of $G$-equivariant objects $c,c'$ in $\scrC$, there
	is a canonical $G^{\op} \times G$-action on $\Map(c,c')$. Restricting along
	the diagonal $(-)^{-1} \times \id_G: G \to G^{\op} \times G$, we obtain a canonical $G$-action on
	the mapping space $\Map(c,c')$.

\begin{rem}
\label{rem: mapping space G-action for trivial G-category}
The special case of this construction where $\scrC$ has
the trivial $G$-action is more straightforward.
In this case, a pair of $G$-equivariant objects is
a pair of functors $c: BG \to \scrC$
and $c': BG \to \scrC$. Then the $G^{\op} \times G$-action on the mapping
space $\Map(c(\ast), c'(\ast))$ is given by the composition
\[BG^{\op} \times BG \xrightarrow{c^{\op} \times c'} \scrC^{\op} \times \scrC \xrightarrow{\Map(-,-)} \Ani,\]
and corresponding $G$-action comes from precomposing this functor
with the diagonal map $BG \to BG^{\op} \times BG$.
\end{rem}

\subsubsection{Invariants and coinvariants}

\begin{defn}
	Let $G$ an $\infty$-group and $\scrC$ an $\infty$-category. Suppose that $c: BG \to \scrC$ is
	an object of $\scrC$ equipped with a $G$-action.
	Then the \emph{$G$-invariants} and \emph{$G$-coinvariants}
	of $c$ are defined to be
	\[c^G := \lim_{BG} \scrC \;\;\;\; \text{and} \;\;\;\;  c_G := \colim_{BG} \scrC,\]
	respectively.	
\end{defn}

\begin{rem}
	\label{rem: equivariant objects}
	Note that a $G$-equivariant object of a $G$-category $\scrC$ is
	the same thing as an object of $\scrC^G$. Indeed, for any $\infty$-category $\scrE$, 
	the functor of taking $G$-invariants, $(-)^G: \Fun(BG, \scrE) \to \scrE$
	is right-adjoint to the functor $\triv: \scrE \to \Fun(BG, \scrE)$ that equips
	objects with the trivial $G$-action. Taking $\scrE$ to be $\Cat_{\infty}$,
	we thus obtain,
	\[\scrC^G \simeq \Fun_{\Cat_{\infty}}(\ast, \scrC^G) \simeq \Fun_{\Fun(BG,\Cat_{\infty})}(\triv(\ast), \scrC) = \Fun_G(\ast, \scrC).\]
	In particular, under this equivalence, a morphism in $\scrC^G$ corresponds to
	a map between natural transformations from $\triv(\ast): BG \to \ast \to \Cat_{\infty}$
	to $\scrC: BG \to \Cat_{\infty}$.
\end{rem}

\begin{lem}
\label{lem: morphisms in C^G}
Suppose that $\scrC$ is a $G$-category (i.e. $\scrC: BG \to \Cat_{\infty}$),
and that $c, c' \in \Map_G(\ast,\scrC)$ are $G$-equivariant objects of $\scrC$.
Then there is a natural equivalence
\[\Map_{\scrC^G}(c,c') \simeq \Map_{\scrC}(c,c')^G,\]
where the $G$-action on $\Map_{\scrC}(c,c')$ is the one
defined in \zcref{sssec: G-action on mapping spaces}.
\end{lem}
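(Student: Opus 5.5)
We would prove \zcref{lem: morphisms in C^G} by unwinding the identification $\scrC^G \simeq \Fun_{\Fun(BG,\Cat_{\infty})}(\triv(\ast),\scrC)$ of \zcref{rem: equivariant objects} and computing mapping spaces in a category of natural transformations as an end. For functors $F,F'\colon \scrA \to \scrB$ between $\infty$-categories, mapping spaces in the functor category are given by the end
\[\Map_{\Fun(\scrA,\scrB)}(F,F') \simeq \int_{a\in\scrA}\Map_{\scrB}(F(a),F'(a)),\]
which is the limit of the functor $\mathrm{Tw}(\scrA)\to\Ani$ built from $\Map_{\scrB}(F(-),F'(-))$ on the twisted arrow category. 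The first step is to pass from natural transformations over $BG$ (the lax-limit description) to sections. Straightening identifies $\scrC^G = \lim_{BG}\scrC$ with the $\infty$-category of coCartesian sections of the unstraightening $p\colon \int\scrC \to BG$. Since $BG$ is an $\infty$-groupoid, every section is automatically coCartesian, so $\scrC^G \simeq \Fun_{BG}(BG,\int\scrC)$.

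The second step computes mapping spaces in this category of sections. For sections $s,s'$ of $p$, the mapping space in $\Fun_{BG}(BG,\int\scrC)$ is the limit over $BG$ of the local system of spaces $x\mapsto \Map_{(\int\scrC)_x}(s(x),s'(x))$. Here the relative end over the twisted arrow category $\mathrm{Tw}(BG)$ reduces to a limit over $BG$, because $\mathrm{Tw}(BG)\to BG^{\op}\times BG$ is equivalent to the diagonal-type map, namely the twisted arrow category of a groupoid is equivalent to the groupoid itself. The limit of a local system over $BG$ is by definition its $G$-invariants. So the whole proof reduces to identifying this local system, restricted to the base point, with $\Map_{\scrC}(c,c')$ carrying the $G$-action of \zcref{sssec: G-action on mapping spaces}.

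The third step, which we expect to be the main obstacle, is exactly that identification. We would first treat the case of trivial action, \zcref{rem: mapping space G-action for trivial G-category}. There the local system is visibly $BG\to BG^{\op}\times BG\xrightarrow{c^{\op}\times c'}\scrC^{\op}\times\scrC\xrightarrow{\Map}\Ani$, because $\mathrm{Tw}(BG)\simeq BG$ maps to $BG^{\op}\times BG$ via $(\mathrm{inv},\id)$, matching the diagonal $(-)^{-1}\times\id_G$ used there.

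For a general $G$-category, we would compare with the presheaf construction. The functor $\wit{\calP}$ applied to $c^{\op}$ produces the $G^{\op}$-equivariant evaluation $c^*\colon\calP(\scrC)\to\Ani$. Evaluating on the Yoneda image of $c'$, whose equivariance comes from functoriality of Yoneda in $\Fun(BG,\Cat_\infty)$, yields exactly the fibrewise mapping-space local system $\Map_{(\int\scrC)_x}(s(x),s'(x))$, via the fibrewise Yoneda lemma for the unstraightened fibration. Checking that these two descriptions agree coherently, rather than just on objects, is the delicate point. We would handle it by noting that both are natural in the pair $(c,c')$ and reduce to the trivial-action case after pulling back along $\ast\to BG$ and using uniqueness of the straightening of the mapping-space fibration $\mathrm{Tw}(\int\scrC)\to\mathrm{Tw}(BG)\simeq BG$. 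Naturality of the final equivalence in $c,c'$ follows since every step is functorial.
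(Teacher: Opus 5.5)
Your proof is correct in outline, but it reaches the key formula by a different route than the paper. The paper never unstraightens. It uses that $\Ob(-)=\Map_{\Cat_\infty}(\ast,-)$ and $\Mor(-)=\Map_{\Cat_\infty}([1],-)$ commute with limits. Then $\Map_{\lim_I D}(d,d')$, being the fiber of $\Mor\to\Ob\times\Ob$ over $(d,d')$, is the limit over $I$ of the fibers $\Map_{D(i)}(d(i),d'(i))$. For $I=BG$ this is the limit of a diagram $BG\to\Ani$ with value $\Map_{\scrC}(c,c')$, which the paper identifies with the action of \zcref{sssec: G-action on mapping spaces} only ``by unravelling the construction''. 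That argument is more elementary (no coCartesian sections, no ends) and works for any diagram shape.

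Your route through $\scrC^G\simeq\Fun_{BG}(BG,\int\scrC)$ and the end over $\mathrm{Tw}(BG)\simeq BG$ costs more machinery. In return it gives an explicit model of the local system as the fiberwise mapping spaces of $\int\scrC\to BG$. It also explains the diagonal $(-)^{-1}\times\id_G$ of \zcref{rem: mapping space G-action for trivial G-category} through the map $\mathrm{Tw}(BG)\to BG^{\op}\times BG$. Your auxiliary claims are right: every section over a groupoid is coCartesian, since it sends invertible maps to equivalences, and the twisted arrow category of a groupoid is the groupoid itself.

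The weak point is the general case of your third step, which is exactly the identification the paper only asserts. As written, ``reduce to the trivial-action case after pulling back along $\ast\to BG$ and use uniqueness of straightening'' does not identify two $G$-actions. Restricting to the base point only sees the underlying space $\Map_{\scrC}(c,c')$, and two local systems on $BG$ with equivalent fibers need not be equivalent. Naturality in $(c,c')$ does not by itself supply a comparison map either.

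You first have to produce a map of local systems over $BG$. For instance, take a fiberwise Yoneda embedding of $\int\scrC$ over $BG$ followed by fiberwise evaluation at $s$; this is what the presheaf construction of \zcref{sssec: G-action on mapping spaces} becomes after unstraightening. Only then does it suffice to check at the unique object of $BG$, since equivalences in $\Fun(BG,\Ani)$ are detected there. Alternatively, do as the paper does and take the fiberwise mapping-space local system as the meaning of that construction; this is consistent with the trivial-action remark.
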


\begin{proof}
	We begin by outlining some general facts about
	mapping spaces in limit categories.
    Consider a diagram of $\infty$-categories, $D: I \to \Cat_{\infty}$.
    Since mapping spaces commute with limits in the second argument,
    the objects and morphisms of the limit category $\lim_I D$ are
    given by $\lim_{I} \Map_{\Cat_{\infty}}(\ast, D(i))$,
    and $\lim_I \Map_{\Cat_{\infty}}([1], D(i))$, respectively.
    The universal property of the limit furnishes an equivalence of spaces between
    $\Map_{\Ani}(\ast, \lim_I \Ob(D(i)))$ and compatible collections of objects in $D(i)$ (here $i \in I$ denotes
    objects in the index category $I$).
    For a given $d \in \Map(\ast, \lim_I \Ob(D(i)))$, we write (somewhat abusively) $d = \lim d(i)$
    for $d(i)$ the corresponding object in $D(i)$.
	Putting this all together, we see that $\Map_{\lim_I D}(d,d') \simeq \lim_I \Map_{D(i)}(d(i), d'(i))$.

	Now consider the case of a given $G$-category $\scrC: BG \to \Cat_{\infty}$,
	and let $c, c' \in \scrC^G = \lim_{BG} \scrC$. Applying the above discussion to this case,
	we see that 
	\[\Map_{\scrC^G}(c,c')
	\simeq \Map_{\lim_{BG} \scrC}(c, c') \simeq \lim_{BG} \Map_{\scrC(i)}(c(i), c'(i)).\]
	Since $BG$ has only one object $\ast$ (up to isomorphism), this latter space
	is $\lim_{BG} \Map_{\scrC}(c,c')$, where we write $\scrC$, $c$, and $c'$ for
	$\scrC(\ast)$, $c(\ast)$, and $c'(\ast)$ by abuse of notation. Thus, $\Map_{\scrC^G}(c,c')$
	is obtained by taking the limit over a particular diagram $BG \to \Ani$ sending
	the unique object $\ast \in BG$ to $\Map_{\scrC}(c,c')$.
	Unravelling the construction of this diagram, we see that
	the functor $BG \to \Ani$ is precisely the one constructed in \zcref{sssec: G-action on mapping spaces}
	giving the $G$-action on $\Map_{\scrC}(c,c')$. The equivalence now
	follows from the definition of $\Map_{\scrC}(c,c')^G$, as the limit of this diagram.
\end{proof}

\begin{rem}
	Note that there are variants of the above proposition that replace
	$\Cat_{\infty}$ with some suitable category of categories, such as
	$\DGCat_k$.
\end{rem}

We record the following lemma for use later.

\begin{lem}
	\label{lem:fully-faithful-functors-are-ff-after-tate}
	Let $\scrC$ and $\scrD$ be $\Perf k$-linear $\infty$-categories equipped with $S^1$-actions, 
	and let $F: \mathscr{C}\to \mathscr{D}$ be a fully faithful $S^1$-equivariant functor. 
	Then the induced functor $F^{tS^1}: \scrC^{tS^1} \to \scrD^{tS^1}$ is fully faithful.	
\end{lem}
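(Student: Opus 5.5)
The plan is to reduce the statement to \zcref{lem: morphisms in C^G} applied to $G = S^1$, and then to check that inverting $u$ preserves fully faithfulness. I work in $\dgcat_k$, so mapping spaces are upgraded to $k$-module spectra $\Hom_k(-,-)$. The $k$-linear variant of \zcref{lem: morphisms in C^G} gives, for $c,c' \in \scrC^{S^1}$, an identification $\Hom_{\scrC^{S^1}}(c,c') \simeq \Hom_{\scrC}(c,c')^{S^1}$. Here the $S^1$-action on the right is the one from \zcref{sssec: G-action on mapping spaces}.

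First, $F^{S^1}$ is fully faithful. Since $F$ is $S^1$-equivariant, the map $\Hom_{\scrC}(c,c') \to \Hom_{\scrD}(Fc,Fc')$ is an $S^1$-equivariant map of $k$-modules. By naturality of the construction in \zcref{sssec: G-action on mapping spaces}, it is compatible with the canonical actions. By hypothesis this map is an equivalence, so it stays an equivalence after taking $S^1$-invariants. Thus $F^{S^1}$ is fully faithful, and it is $k[u]$-linear, since the $k[u]$-action on invariants comes from $\Perf k^{S^1}$ and is preserved by any equivariant functor.

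Next, I pass to the Tate construction. By definition, $\scrC^{tS^1} = \scrC^{S^1}\otimes_{k[u]}\ku$ in $\dgcat_k$. For small idempotent-complete categories linear over $k[u]$, this base change is computed by taking the category with the same objects (after idempotent completion) and Hom complexes
\[
\Hom_{\scrC^{tS^1}}(c,c') \simeq \Hom_{\scrC^{S^1}}(c,c')\otimes_{k[u]}\ku \simeq \colim\bigl(\Hom(c,c') \xrightarrow{u} \Hom(c,c'[2]) \xrightarrow{u} \cdots\bigr).
\]
Equivalently, $\scrC^{tS^1}$ is the idempotent completion of the Verdier quotient of $\scrC^{S^1}$ by the thick subcategory of $u$-torsion objects. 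The functor $F^{tS^1}$ is then induced on these localized Hom complexes by the map from the first step. That map is a $k[u]$-linear equivalence, so it remains an equivalence after applying $-\otimes_{k[u]}\ku$. This gives fully faithfulness on the pre-idempotent-completed category. Idempotent completion preserves fully faithfulness, so $F^{tS^1}$ is fully faithful.

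The main obstacle is justifying the description of the Hom complexes in $\scrC^{S^1}\otimes_{k[u]}\ku$ as base changes of those in $\scrC^{S^1}$. To handle it, I would express $\Perf\ku$ as the localization $\Perf k[u]/\langle k[u]/u\rangle$ and use that the tensor product $-\otimes_{\Perf k[u]}\Perf \ku$ in $\dgcat_k$ carries this to the Verdier quotient of $\scrC^{S^1}$ by the thick subcategory generated by the objects $c\otimes k[u]/u$. Hom complexes in a Verdier quotient by objects of this form are computed by the telescope on $u$ displayed above. A more conceptual alternative is to note that $-\otimes_{k[u]}\ku$ is a smashing localization of $k[u]$-linear categories, and smashing localizations preserve fully faithful functors.
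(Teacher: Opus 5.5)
Your proposal is correct and follows essentially the same route as the paper. You first deduce that $F^{S^1}$ is fully faithful from \zcref{lem: morphisms in C^G}: an equivariant map of Hom objects that is an equivalence on underlying objects is an equivalence on invariants. You then observe that Hom complexes in $\scrC^{S^1}\otimes_{k[u]}\ku$ are base changes of those in $\scrC^{S^1}$. Your justification of that base-change formula, via the Verdier quotient by $u$-torsion objects (or smashing localization), and your explicit treatment of idempotent completion fill in points that the paper's proof simply asserts.
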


\begin{proof}
	We first show that the induced functor $F^{S^1}: \scrC^{S^1} \to \scrD^{S^1}$ is fully faithful.
	To see this, let $\wit{a},\wit{b} \in \scrC^{S^1}$, and let $a, c \in \scrC$ denote their images
	under the canonical functor $\scrC^{S^1} \to \scrC$. From the discussion in \zcref{sssec: G-action on mapping spaces},
	we know that the mapping complex $\Hom_{\scrC}(a, b)$ and
	$\Hom_{\scrD}(Fa, Fb)$ have canonical $S^1$-actions. Moreover, since
	$F: \scrC \to \scrD$ is $S^1$-equivariant, the natural map 
	$\Hom_{\scrC}(a, b) \to \Hom_{\scrD}(Fa, Fb)$ is $S^1$-equivariant.
	By \zcref{lem: morphisms in C^G}, in order to show that $F^{S^1}$ is fully faithful, it suffices
	to show that the induced morphism $\Hom_{\scrC}(a, b)^{S^1} \to \Hom_{\scrD}(Fa, Fb)^{S^1}$
	is an isomorphism.
	
	Let $H_{a,b}: BS^1 \to \Ani$ and $H_{Fa, Fb}: BS^1 \to \Ani$ denote the $S^1$-actions on
	$\Hom_{\scrC}(a,b)$ and $\Hom_{\scrD}(Fa, Fb)$ respectively.
	We note that the $S^1$-equivariant map $\Hom_{\scrC}(a,b) \to \Hom_{\scrD}(Fa, Fb)$ is
	a natural transformation $F_{a,b}: H_{a,b} \xLongrightarrow{} H_{Fa, Fb}$.
	Such a natural transformation is an object in the category,
	$\Fun([1], \Fun(BS^1, \Ani))$, which under adjunction
	defines an object of $\Fun(BS^1, \Fun([1], \Ani))$,
	which is precisely a $BS^1$-shaped diagram in $\Fun([1],\Ani)$
	The induced map on $S^1$-invariants is given by
	the limit of this diagram,
	\[\lim_{i \in BS^1} \Hom_{\scrC}(a,b)_i \to \Hom_{\scrD}(Fa,Fb)_i.\]
	Note that $\Hom_{\scrC}(a,b)_i \to \Hom_{\scrD}(Fa,Fb)_i$ is an isomorphism
	for each the unique object $i \in BS^1$.
	
	By \cite[\href{https://kerodon.net/tag/02YF}{Tag 02YF}]{kerodon}, the
	natural transformation is an equivalence if and only if $F_{a,b}$ exhibits
	$H_{a,b}$ as a right Kan extension of $H_{Fa, Fb}$ along the identity
	$BS^1 \to BS^1$.  This condition is satisfied precisely when the induced map
	$F_{a,b}(c): H_{a,b}(c) \to H_{Fa, Fb}(c)$ is an equivalence for all objects $c \in BS^1$.
	Since there is a unique point $\ast \in BS^1$, and since $F_{a,b}(\ast): \Hom_{\scrC}(a,b) \to \Hom_{\scrD}(Fa, Fb)$
	is an equivalence by the assumption that $F$ is fully faithful,
	we see that $F_{a,b}$ is actually a natural equivalence of $S^1$-equivariant animae.
	Thus, when we apply $\lim_{BS^1}: \Fun(BS^1, \Ani) \to \Ani$, we obtain
	an equivalence, as desired. Hence, the functor $F^{S^1}: \scrC^{S^1} \to \scrD^{S^1}$
	is fully faithful.
	
	It remains to show that if $\scrA$ and $\scrB$
	are small $k[u]$-linear categories, and if $F: \scrA \to \scrB$ is a fully faithful
	$k[u]$-linear functor, then $F \otimes_{k[u]} \ku$ is fully faithful.
	Since $\scrA$ is small, we have a natural equivalence,
	\[\Hom_{\scrA}(x,y) \otimes_{k[u]} \ku \xrightarrow{\simeq} \Hom_{\scrA \otimes_{k[u]} \ku}(x \otimes \ku, y \otimes \ku),\]
	for all $x,y \in \scrA$, and the same for $\scrB$.
	Thus, the induced map on $\Hom$s given by $F \otimes_{k[u]} \ku$
	is given by base change of the map $\Hom_{\scrA}(x,y) \to \Hom_{\scrB}(Fx, Fy)$,
	which by assumption is an isomorphism, so we are done.
\end{proof}

\begin{lem}\label{lem: fully faithful iff fully faithful on pure tensors}
	Let $\scrC$ and $\scrD$ be small, stable $\infty$-categories
    equipped with $S^1$-actions, and let $F: \mathscr{C}\to \mathscr{D}$ be an exact, $S^1$-equivariant functor. Then 
	\begin{align*}
		F^{tS^1}: \mathscr{C}^{tS^1}\to \mathscr{D}^{tS^1}
	\end{align*} 
	is fully faithful if and only if it is fully faithful on the image of the natural functor
	\begin{align*}
		\theta:\mathscr{C}^{S^1}\to \mathscr{C}^{tS^1}\coloneq \mathscr{C}^{S^1}\otimes _{k[u]}\ku.
	\end{align*}
\end{lem}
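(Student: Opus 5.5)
One direction is trivial, so the plan concerns the converse: assume $F^{tS^1}$ induces equivalences on mapping complexes between objects of the form $\theta(a)$, $a\in\scrC^{S^1}$, and show it induces equivalences on all mapping complexes. The key structural input is that $\scrC^{tS^1}=\scrC^{S^1}\otimes_{k[u]}\ku$ is, for small categories, the idempotent completion of the category with the same objects as $\scrC^{S^1}$ and mapping complexes $\Hom_{\scrC^{S^1}}(a,b)\otimes_{k[u]}\ku$. This is the same fact invoked at the end of the proof of \zcref{lem:fully-faithful-functors-are-ff-after-tate}. In particular the essential image of $\theta$ generates $\scrC^{tS^1}$ under finite colimits, shifts and retracts. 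The same holds for $\scrD$, and $F^{tS^1}$ is compatible with the two functors $\theta$.

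Fix $x\in\im(\theta)$ and consider the full subcategory of $y\in\scrC^{tS^1}$ for which
\[\Hom(x,y)\to\Hom(F^{tS^1}x,F^{tS^1}y)\]
is an equivalence. Since $F^{tS^1}$ is exact and $\Hom(x,-)$ is exact, this subcategory is closed under shifts, cofibers and retracts. It contains $\im(\theta)$ by hypothesis, so it is all of $\scrC^{tS^1}$. Next, fix an arbitrary $y$ and run the same argument in the first variable, using contravariant exactness of $\Hom(-,y)$. Since retracts of equivalences are equivalences, the class of good $x$ is thick and contains $\im(\theta)$, hence is everything. Therefore $F^{tS^1}$ is fully faithful.

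The only point needing care is the generation claim: that every object of $\scrC^{tS^1}$ lies in the thick closure of $\im(\theta)$. I would justify it from the definition of $\otimes_{k[u]}$ in $\dgcat_k$. The tensor product $\scrA\otimes_{k[u]}\ku$ is the idempotent-complete stable category generated by the images of $\scrA$. Concretely, it is computed as the thick subcategory of $\Ind(\scrA)\otimes_{\Ind(k[u])}\ku\mod$ generated by the objects $a\otimes\ku$, and this base change is $\ku$-modules in $\Ind(\scrA)$, which are generated by the free modules $a\otimes_{k[u]}\ku$. With that granted, the thick-subcategory argument above is routine.
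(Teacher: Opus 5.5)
Your proposal is correct and follows essentially the same route as the paper, which cites \cite[Lemma 2.7]{Perry} for the fact that the image of $\theta$ generates $\scrC^{tS^1}$ and then runs the same thick-subcategory argument, first in the second variable with $\theta c$ fixed and then in the first variable. Your sketch of why the image of $\theta$ generates $\scrC^{S^1}\otimes_{k[u]}\ku$ just replaces that citation.
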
 
\begin{proof}
	By \cite[Lemma 2.7]{Perry}, the image of $\theta$ generates $\mathscr{C}^{tS^1}$. Thus, it suffices to show that
	\begin{align*}
		\Hom_{\mathscr{C}^{tS^1}}(\theta c,\theta c')\to \Hom_{\mathscr{D}^{tS^1}}(F^{tS^1}(\theta c),F^{tS^1}(\theta c'))
	\end{align*} 
	is an isomorphism for all $c,c'\in \mathscr{C}^{S^1}$. For fixed $c\in \mathscr{C}^{S^1}$, the objects $c'_u\in \mathscr{C}^{tS^1}$ such that
	\begin{align*}
		\Hom(\theta c,c'_u)\to \Hom(F^{tS^1}(\theta c),F^{tS^1}(c'_u))
	\end{align*} 
	is an isomorphism form a thick subcategory. Making the same argument for fixed $c'_u$ extends the isomorphism from all $\theta c$ to all objects $c_u\in\mathscr{C}^{tS^1}$.
\end{proof}

\begin{lem}
	\label{lem: invariants of tensor product}
	Let $R$ be a $k$-algebra.
	Let $\scrC, \scrD \in S^1\mod(\DGCat_R)$ such that $\scrD$ is compactly
	generated (hence dualizable) and equipped with the trivial $S^1$-action. 
	Then $(\scrC \otimes_R \scrD)^{tS^1} \simeq
	\scrC^{tS^1} \otimes_{R[u^{\pm 1}]} \scrD[u^{\pm 1}]$.
\end{lem}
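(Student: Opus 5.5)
The plan is to prove the statement in two stages: first for $S^1$-invariants, then pass to Tate invariants by base change along $R[u] \to R[u^{\pm 1}]$.

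\emph{Step 1: invariants.} Since $\scrD$ carries the trivial action, $\scrC \otimes_R \scrD$ is the image of $\scrC$ under the functor $-\otimes_R \scrD$ applied objectwise to the diagram $BS^1 \to \DGCat_R$. The invariants $(-)^{S^1} = \lim_{BS^1}$ are a limit in $\DGCat_R$. Because $\scrD$ is compactly generated, it is dualizable over $R\mod$. Hence $-\otimes_R \scrD \simeq \Fun_R^L(\scrD^\vee, -)$ commutes with all limits, giving
\[(\scrC\otimes_R\scrD)^{S^1} \simeq \scrC^{S^1}\otimes_R \scrD.\]
We check that this equivalence is $R[u]$-linear, where $u$ acts through the first factor. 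The $R[u] = \End(\bbone)^{S^1}$-linear structure on invariants is natural in the $S^1$-category, and $-\otimes_R\scrD$ is a natural functor. We then rewrite the right side as $\scrC^{S^1}\otimes_{R[u]} (R[u]\mod \otimes_R \scrD)$, i.e.\ as $\scrC^{S^1}\otimes_{R[u]}\scrD[u]$ with $\scrD[u] := \scrD\otimes_R R[u]\mod$.

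\emph{Step 2: inverting $u$.} By definition,
\[(\scrC\otimes_R\scrD)^{tS^1} = (\scrC\otimes_R\scrD)^{S^1}\otimes_{R[u]} R[u^{\pm1}].\]
Substituting Step 1 and using associativity and commutativity of relative tensor products in $\DGCat$, this becomes
\[\scrC^{S^1}\otimes_{R[u]} R[u^{\pm1}] \otimes_R \scrD \simeq \scrC^{tS^1}\otimes_R \scrD.\]
Finally, we write $\scrD[u^{\pm1}] := \scrD\otimes_R R[u^{\pm 1}]\mod$. Then $\scrC^{tS^1}\otimes_{R[u^{\pm1}]}\scrD[u^{\pm1}] \simeq \scrC^{tS^1}\otimes_{R[u^{\pm1}]} R[u^{\pm1}]\mod\otimes_R\scrD \simeq \scrC^{tS^1}\otimes_R\scrD$, which gives the claimed formula.

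\emph{Main obstacle.} The key step is the commutation of $-\otimes_R\scrD$ with the limit over $BS^1$, together with keeping track of the $R[u]$-linearity. We will use dualizability of compactly generated categories in $\DGCat_R$ via Lurie's result that $\Fun^L_R(\scrD^\vee,-)\simeq -\otimes_R \scrD$. Limits in $\DGCat_R$ are computed in $\Cat_\infty$, so $\Fun^L_R(\scrD^\vee,-)$ visibly preserves them. For linearity, we note that the $u$-action is induced by the $\Vect^{S^1}$-module structure (equivalently, the $\Rep(S^1)$-action via the unit endomorphisms). This action commutes with the functor $-\otimes_R\scrD$ because $\scrD$ has the trivial action, and we verify this compatibility by naturality.
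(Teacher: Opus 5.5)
Your proposal is correct and follows essentially the same route as the paper: you identify $-\otimes_R\scrD$ with $\Fun_R(\scrD^\vee,-)$ to commute it past $\lim_{BS^1}$, obtaining $(\scrC\otimes_R\scrD)^{S^1}\simeq\scrC^{S^1}\otimes_R\scrD\simeq\scrC^{S^1}\otimes_{R[u]}\scrD[u]$, and then base change along $R[u]\to R[u^{\pm 1}]$. The differences are cosmetic: you reduce both sides to $\scrC^{tS^1}\otimes_R\scrD$ rather than invoking symmetric monoidality of base change $\DGCat_{R[u]}\to\DGCat_{R[u^{\pm 1}]}$, and you explicitly flag the $R[u]$-linearity of the Step 1 equivalence, which the paper uses only implicitly.
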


\begin{proof}
	The symmetric monoidal structure on $S^1\mod(\DGCat_R) := \Fun(BS^1, \DGCat_R)$
	is taken to be the pointwise one. Under this structure, the $S^1$-action on $\scrC \otimes_R \scrD$
	is given by the restriction of the diagram 
	\[\Pi: BS^1 \times BS^1 \xrightarrow{\scrC(-) \times \scrD(-)} \DGCat_R \times \DGCat_R \xrightarrow{\otimes_R} \DGCat_R\]
	along the diagonal functor $\Delta: BS^1 \to BS^1 \times BS^1$.
	Since $\scrD$ is equipped with the trivial $S^1$-action, the diagram $\Pi: BS^1 \times BS^1 \to \DGCat_R$
	factors through the map $BS^1 \times BS^1 \to BS^1 \times \ast \simeq BS^1$. Thus, 
	$(\scrC \otimes_R \scrD)^{S^1} \simeq \lim_{i \in BS^1} (\scrC(i) \otimes_R \scrD)$.
	Since $\scrD$ is dualizable, $\scrC \otimes_R \scrD \simeq \Fun_R(\scrD^{\vee}, \scrC)$,
	where $\scrD^{\vee}$ is the dual to $\scrD$.
	Since $\Fun_R(-,-)$ preserves limits in the second argument, we have
	\begin{align*}
		\lim_{BS^1} \Fun_R(\scrD^{\vee}, \scrC) &\simeq \Fun_R(\scrD^{\vee}, \lim_{BS^1} \scrC) \\
		&\simeq \Fun_R(\scrD^{\vee}, \scrC^{S^1}) \\
		&\simeq \scrC^{S^1} \otimes_R \scrD \\
		&\simeq \scrC^{S^1} \otimes_{R[u]} R[u]\mod \otimes_R \scrD \\
		&\simeq \scrC^{S^1} \otimes_{R[u]} \scrD[u],
	\end{align*}
	where we have used that the $R$-linear structure on $\scrC^{S^1}$
	factors through its $R[u]$-linear structure coming from the action of $(R\mod)^{S^1}$.
	
	By definition, $(\scrC \otimes_R \scrD)^{tS^1} := (\scrC \otimes_R \scrD)^{S^1} \otimes_{R[u]} R[u^{\pm 1}]$,
	so from the identifications above, we have,
	\begin{align*}
		(\scrC \otimes_R \scrD)^{tS^1} &\simeq \left(\scrC^{S^1} \otimes_{R[u]} \scrD[u]\right) \otimes_{R[u]} R[u^{\pm 1}] \\
															 &\simeq \left(\scrC^{S^1} \otimes_{R[u]} R[u^{\pm 1}]\right) \otimes_{R[u^{\pm 1}]} \left(\scrD[u] \otimes_{R[u]} R[u^{\pm 1}]\right) \\
															 &\simeq \scrC^{tS^1} \otimes_{R[u^{\pm 1}]} \scrD[u^{\pm 1}],
	\end{align*}
	where we have used the fact that the base change functor $\DGCat_{R[u]} \to \DGCat_{R[u^{\pm 1}]}$
	is symmetric monoidal (i.e. preserves relative tensor products).
\end{proof}

We record the following result of \cite{BZCHN2}.
\begin{thm}[\cite{BZCHN2}]\label{thm: tTate preserves localization sequences}
	Let
	\[
	\begin{tikzcd}
		A\arrow[r,shift left,"F"]&B\arrow[l,shift left,"F^R"]\arrow[r,shift left,"G"]&C\arrow[l,shift left,"G^R"]
	\end{tikzcd}
	\] 
		be a localization sequence in $\Fun(BS^1,\bCoh_t)$ such that the right adjoints $F^R$ and $G^R$ are $t $-bounded. Then the sequence after applying the $\tT$ construction is a localization sequence in $\ku\bmod$.
\end{thm}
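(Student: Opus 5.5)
Since $\tT$ is defined as $\Ind(\Coh(-)^{tS^1})$, the plan is to verify the localization property one step at a time, following that definition. Each intermediate step should produce a localization sequence at the level of small categories, and we ind-complete at the end.

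\emph{Step 1: pass to coherent objects.} Since $F$ and $G$ are continuous and left $t$-exact up to shift, and their right adjoints $F^R, G^R$ are $t$-bounded, all four functors preserve bounded almost perfect objects. Here we use that the $t$-structures are coherent, so compact objects and the $t$-structure interact well. Hence we obtain a sequence $\Coh(A)\rightleftarrows\Coh(B)\rightleftarrows\Coh(C)$ with adjoints, still $S^1$-equivariant. It is a localization sequence of small stable categories: $F$ is fully faithful, $GF\simeq 0$, and $\Coh(C)$ is the Verdier quotient $\Coh(B)/\Coh(A)$. The quotient statement follows from the unit/counit triangles $FF^R b\to b\to G^RGb$ restricted to $\Coh(B)$, together with essential surjectivity of $G$ on coherent objects. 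Essential surjectivity uses $GG^R\simeq\id$ and the boundedness of $G^R$.

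\emph{Step 2: take $S^1$-invariants.} Limits over $BS^1$ commute with the formation of fiber sequences. Moreover, the adjunctions are $S^1$-equivariant, which follows from the fact that the adjoints of equivariant functors are canonically lax equivariant, and here strictly so. Therefore $\Coh(A)^{S^1}\to\Coh(B)^{S^1}\to\Coh(C)^{S^1}$ is again a split Verdier sequence (fully faithfulness as in \zcref{lem:fully-faithful-functors-are-ff-after-tate}), with adjoints $(F^R)^{S^1},(G^R)^{S^1}$. The unit and counit triangles are computed underlying, hence still exact.

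\emph{Step 3: invert $u$.} Base change $-\otimes_{k[u]}k[u^{\pm1}]$ is a Verdier localization on small $k[u]$-linear categories. It is a left adjoint on $\Ind$-completions and preserves split Verdier sequences. Fully faithfulness is preserved by the last part of \zcref{lem:fully-faithful-functors-are-ff-after-tate}, and the adjunctions and triangles are preserved since base change is a $2$-functor.

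\emph{Step 4: ind-complete.} Finally, $\Ind$ carries split Verdier sequences of small idempotent-complete categories to localization sequences in presentable categories, and it is compatible with the $\ku$-linear structure.

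The main obstacle is Steps 1–2: checking that the Verdier quotient is preserved. In particular, $\Coh(C)$ must be the idempotent-complete quotient and not merely generated by the image. Moreover, $S^1$-invariants of a Verdier quotient need not a priori be the Verdier quotient of invariants. The boundedness of $F^R$ and $G^R$ is exactly what makes the sequence split on $\Coh$, and split sequences are stable under limits. So the first thing to establish carefully is that the right adjoints restrict to $\Coh$.
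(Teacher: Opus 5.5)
The paper does not prove this statement; it quotes it from the forthcoming \cite{BZCHN2}. So your proposal has to stand on its own, and it does not: it has a genuine gap exactly where you locate the crux. The claim in Step~1 that $F^R$ and $G^R$ preserve $\Coh$ is false. $t$-boundedness controls cohomological amplitude, not almost compactness. (For $F$ and $G$ the claim does hold, given that $F^R,G^R$ are continuous, by the argument in the proof of \zcref{lem: regularization of adjunction is adjunction}.)

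Take the simplest instance of the sequence the paper feeds into this theorem in \zcref{lem:support-is-formal-completion}: $B=k[t]\mod$, $C=k[t^{\pm1}]\mod$, $A$ the $t$-power-torsion modules, with any $S^1$-action (e.g.\ trivial).
\begin{itemize}
\item $G^R=j_*$ is $t$-exact, but $j_*k[t^{\pm1}]$ is not finitely generated over $k[t]$.
\item $F^R=\Gamma_{(t)}$ has amplitude $[0,1]$, but $\Gamma_{(t)}k[t]\simeq (k[t^{\pm1}]/k[t])[-1]$ is not coherent.
\item In fact $j^*:\Coh(\mathbb{A}^1)\to\Coh(\mathbb{G}_m)$ has no right adjoint at all: an object representing $\Hom(j^*(-),\calO_{\mathbb{G}_m})$ would have $H^0\simeq k[t^{\pm1}]$ as a $k[t]$-module.
\end{itemize}
So on $\Coh$ the sequence is at best a Karoubi sequence, never split in general. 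This has two consequences.
\begin{itemize}
\item Your essential-surjectivity argument, which uses $G^Rc\in\Coh(B)$, fails. One only gets surjectivity up to retracts: write $G^Rc$ as a uniformly bounded filtered colimit of coherent objects and use that $c$ is almost compact.
\item More seriously, Step~2 has nothing to apply ``split sequences are stable under limits'' to. Limits do not preserve Verdier quotients in general, so you have no argument that $\Ind(\Coh(B)^{S^1})/\Ind(\Coh(A)^{S^1})\to\Ind(\Coh(C)^{S^1})$ is an equivalence. That is the real content of the theorem.
\end{itemize}

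The hypothesis on $F^R,G^R$ should instead be used through Preygel's regularization, after taking invariants of the \emph{large} categories, where the adjoints actually live.
\begin{itemize}
\item Since $S^1$ acts by equivalences, $F^R$ and $G^R$ are strictly equivariant. Hence $A^{S^1}\to B^{S^1}\to C^{S^1}$ is a localization sequence with right adjoints $(F^R)^{S^1},(G^R)^{S^1}$.
\item These remain continuous and $t$-bounded for the induced $t$-structures, since bounds are detected on underlying objects.
\item $\Coh(\scrC^{S^1})\simeq\Coh(\scrC)^{S^1}$, because $(-)^{hS^1}$ commutes with uniformly bounded-below filtered colimits. One also needs the induced $t$-structure on $\scrC^{S^1}$ to be coherent, so that \cite[Proposition 4.3.2]{Preygel} applies to $\scrR(\scrC^{S^1})=\Ind(\Coh(\scrC)^{S^1})$.
\item Now regularize. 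By \cite[Lemma 4.6.1]{Preygel} and \zcref{lem: regularization of adjunction is adjunction}, $t$-boundedness is precisely what makes $\scrR\bigl((F^R)^{S^1}\bigr)$ and $\scrR\bigl((G^R)^{S^1}\bigr)$ exist as right adjoints of $\scrR(F^{S^1})$ and $\scrR(G^{S^1})$.
\item $\scrR(\Phi)$ is the ind-extension of $\Phi|_{\Coh}$ viewed in $\scrD_{<\infty}\simeq\scrR(\scrD)_{<\infty}$, and $\scrR$ respects composition. So $GG^R\simeq\id$, $F^RF\simeq\id$ and the fiber sequence $FF^R\to\id\to G^RG$ pass to the regularizations. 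Hence $\Ind(\Coh(A)^{S^1})\to\Ind(\Coh(B)^{S^1})\to\Ind(\Coh(C)^{S^1})$ is a localization sequence.
\end{itemize}
Your Steps~3--4 then finish the argument, since base change along $k[u]\to\ku$ preserves localization sequences with continuous right adjoints and commutes with $\Ind$. What has to be replaced is the attempt to carry out Steps~1--2 on the small categories $\Coh(-)$.
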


\subsubsection{Group actions and $t$-structures}

\begin{defn}
\label{defn: compatible with t-structure}
Let $G\in\Grp(\mathsf{Spc})$, and suppose given a $G$-action on the underlying
$\infty$-category $\scrC$ of an object in $\Cat_{\infty}^t$. 
We say that the group action is \textit{compatible with the $t$-structure} on $\scrC$ if the functor
$\Act: BG \to \Cat_{\infty}$ encoding the $G$-action factors through the forgetful functor
$\oblv^t: \Cat_{\infty}^t \to \Cat_{\infty}$.
\end{defn}

In other words, the group action is compatible with the $t$-structure on $\scrC$
if $\Act(g)$ is a $t$-exact autoequivalence for each morphism $g \in BG$.

\begin{lem}
\label{lem:truncation functors are equivariant}
	Let $(\scrC, \scrC_{\leq 0}) \in \Cat_{\infty}^t$, i.e. a stable $\infty$-category
    $\scrC$ equipped with a $t$-structure. Let $\Act \in \Fun(BG,\Cat_{\infty})$ be a $G$-action on $\scrC$, i.e.
    $\Act(\ast) \simeq \scrC$. If the $G$-action on $\mathscr{C}$ is compatible with its $t$-structure
    in the sense of \zcref{defn: compatible with t-structure}, 
    then the truncation functors $\tau_{\leq n}$ and $\tau_{\ge n}$ are canonically $G$-equivariant for any $n$.
\end{lem}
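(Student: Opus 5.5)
The idea is to realize each truncation functor as a morphism in $\Fun(BG,\Cat_{\infty})$, i.e.\ as a natural transformation of the diagram $\Act$ to itself. By compatibility, $\Act$ factors as $BG \xrightarrow{\Act^t} \Cat_{\infty}^t \xrightarrow{\oblv^t} \Cat_{\infty}$. So it suffices to produce natural transformations
\[\tau_{\leq n}, \tau_{\geq n} : \oblv^t \Longrightarrow \oblv^t\]
of functors $\Cat_{\infty}^t \to \Cat_{\infty}$ whose components at $(\scrD, \scrD_{\leq 0})$ are the usual truncation functors. Whiskering with $\Act^t$ then gives the $G$-equivariant structure on $\tau_{\leq n}$ and $\tau_{\geq n}$ on $\scrC$, and canonicity is automatic since no choices are made.

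To build $\tau_{\leq n}$ naturally, I would use the full subcategory inclusion $\iota_{\leq n}: \scrD_{\leq n} \hookrightarrow \scrD$. This is functorial in $t$-exact functors: a $t$-exact $F$ preserves $\scrD_{\leq n}$, so it restricts to a functor $\scrD_{\leq n}\to \scrE_{\leq n}$ commuting strictly with the inclusions. Concretely, I would form the functor $\Cat_{\infty}^t \to \Fun([1],\Cat_{\infty})$ sending $\scrD$ to $\iota_{\leq n}$. One way to do this is to observe that $\Cat_{\infty}^t$ sits inside the $\infty$-category of pairs $(\scrD, \scrD')$ with $\scrD'\subset \scrD$ a full subcategory, with the maps being functors that preserve the subcategory.

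The left adjoint $\tau_{\leq n}$ of $\iota_{\leq n}$ is then obtained by passing to adjoints. The naturality squares for $\iota_{\leq n}$ commute strictly, and the mate construction (Lurie, HA~7.3.2 / relative adjunctions, or Kerodon on the adjoint functor calculus) turns them into lax squares for the left adjoints. The only real obstacle is checking that these mates are invertible, i.e.\ that $F\tau_{\leq n} \to \tau_{\leq n}F$ is an equivalence for $t$-exact $F$.

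This holds because $F$ preserves the truncation fiber sequence $\tau_{\geq n+1}x \to x \to \tau_{\leq n}x$, and $t$-exactness puts the outer terms in the right halves. Hence $F\tau_{\leq n}x$ satisfies the universal property of $\tau_{\leq n}Fx$. Because the mates are equivalences, the lax transformation is an honest natural transformation $\oblv^t \Rightarrow \oblv^t$ with components $\iota_{\leq n}\tau_{\leq n}$. The case of $\tau_{\geq n}$ is identical, using the right adjoint of $\scrD_{\geq n}\hookrightarrow\scrD$.

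A more hands-on alternative for the invertibility step is to use that the mate is invertible pointwise and that naturality is witnessed in the $(\infty,2)$-category of categories. I expect that formalizing the mate/strictification step is the only nontrivial point, and the rest is formal.
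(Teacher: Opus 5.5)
Your argument is correct and has the same skeleton as the paper's. Both encode $\scrC_{\le n}\hookrightarrow\scrC$ as a natural transformation $\pr_{\le n}\Rightarrow\oblv$ by viewing $\Cat_\infty^t$ inside pairs in $\Fun([1],\Cat_\infty)$, pass to adjoints, and whisker with the lift $\wit{\Act}$. The difference is the order of the last two steps.

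\textbf{The paper's order.} It whiskers first, obtaining $i_{\le n}\colon\Act_{\le n}\Rightarrow\Act$ in $\Fun(BG,\Cat_\infty)$, and then simply takes adjoints. This needs no computation, and the paper gives none, because every transition functor $\Act(g)$ is an equivalence. The Beck--Chevalley mates are therefore invertible for formal reasons.

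\textbf{Your order.} You pass to adjoints over all of $\Cat_\infty^t$, so you must verify the Beck--Chevalley condition. Your fiber-sequence argument does this correctly. In exchange you get a stronger statement: the truncations become natural endotransformations of $\oblv^t$, compatible with every $t$-exact functor and not only with group actions. The price is invoking the general $\infty$-categorical mate calculus (e.g.\ relative adjunctions as in \cite[\S 7.3.2]{HA}) rather than the trivial groupoid case.

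Your check genuinely uses two-sided $t$-exactness. For exact functors that merely preserve $\scrD_{\le 0}$, such as a suitable shift, the mate can fail to be invertible. Those are the morphisms of the full subcategory of $\Fun_{ff}([1],\Cat_\infty)$ that the paper describes. So restricting to $t$-exact morphisms, as you do, is essential for the universal version.

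One minor correction: for the left adjoint $\tau_{\le n}$, the canonical mate runs $\tau_{\le n}F\to F\tau_{\le n}$, not the other way round.
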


\begin{proof}
Let $\Fun_{ff}([1], \Cat_{\infty})$ denote the full subcategory of $\Fun([1],\Cat_{\infty})$
spanned by the fully faithful functors. An object of this category may be seen as pair
$(\scrC, \scrC')$ consisting of a category $\scrC$ and a full subcategory $\scrC' \subset \scrC$.
Then $\Cat_{\infty}^t$ may be viewed as the full subcategory
of $\Fun_{ff}([1], \Cat_{\infty})$ spanned by 
pairs $(\scrC, \scrC')$ such that $\scrC$ is stable,
and $\scrC'$ together with the left orthogonal
complement of $\scrC'[-1]$ satisfy the properties of a $t$-structure
(\cite[Definition 1.2.1.1]{HA}) with $\scrC_{\leq 0} \coloneq \scrC'$
and $\scrC_{\geq 0} \coloneq {}^{\perp}\scrC'[-1]$.
From this description, it is clear
that the source and target functors $\ev_0: \Fun([1], \Cat_{\infty}) \to \Cat_{\infty}$
and $\ev_1: \Fun([1], \Cat_{\infty}) \to \Cat_{\infty}$ restricts 
to the functors,
\[\pr_{\leq n}: \Cat_{\infty}^t \to \Cat_{\infty} \; \text{and} \; \oblv: \Cat_{\infty}^t \to \Cat_{\infty}\]
which send a category $\scrC$ equipped with $t$-structure
to the subcategory of $n$-connective objects $\scrC_{\leq n}$ and
$\scrC$, respectively.

Denote the lift of $\Act$ to $\Cat_{\infty}^t$ by $\wit{\Act}: BG \to \Cat_{\infty}^t$, and define
$\Act_{\leq n} \coloneq \pr_{\leq n} \circ {\wit{\Act}}$.
Under the adjunction, 
\[\Fun(\Fun([1], \Cat_{\infty}) \times [1], \Cat_{\infty}) \simeq \Fun([1], \Fun(\Fun([1], \Cat_{\infty}), \Cat_{\infty})),\]
the evaluation functor $\ev: \Fun([1], \Cat_{\infty}) \times [1] \to \Cat_{\infty}$ corresponds
to a natural transformation $\ev_0 \Longrightarrow \ev_1$,
which restricts to a natural transformation $\pr_{\leq n} \xLongrightarrow{} \oblv$.
Whiskering this natural transformation with the functor $\wit{\Act}$,
we obtain a natural transformation $i_{\leq n}: \Act_{\leq n} \to \Act$, whose
component at the unique point $\ast \in BG$ is the inclusion $i_{\leq n}: \scrC_{\leq n} \xhookrightarrow{} \scrC$.
Taking right adjoints, we get a natural transformation $\tau_{\leq n}: \Act \xLongrightarrow{} \Act_{\leq n}$,
whose component at $\ast \in BG$ is the ordinary truncation functor $\tau_{\leq n}$. Thus, we have constructed
a canonical $G$-equivariant structure on $\tau_{\leq n}$ for any $n \in \bbZ$.

The proof for $\tau_{\geq n}$ proceeds in exactly the same manner, starting
with the data of the coconnective subcategory $\scrC_{\geq 0} \subset \scrC$,
rather than $\scrC_{\leq 0}$.
\end{proof}

Given $(\scrC, \scrC_{\leq 0}) \in \Cat_{\infty}^t$,
and a $G$-action on $\scrC$ compatible with its $t$-structure,
we may equip the $G$-invariants $\mathscr{C}^{G}\coloneq \lim_{BG} \Act$ with a canonical $t$-structure
(cf. \cite[Lemma 2.1.9(4)]{HLPreygel}). 
Indeed, let $U: \scrC^G \to \scrC$ be the canonical functor. 
We may define $\mathscr{C}^G_{\le 0}$ and $\mathscr{C}^G_{\ge 0}\subset \mathscr{C}^G$,
respectively, as the full subcategory of $\scrC^G$ spanned by objects $c\in \mathscr{C}^G$ such that $Uc \in \mathscr{C}_{\le 0}$ and $Uc\in \mathscr{C}_{\ge 0}$, respectively. 
Because morphisms in $BG$ act by $t$-exact autoequivalences, these subcategories
determine a $t$-structure on $\scrC^G$ such that the
projection functor $\scrC^G = \lim_{BG} \Act \to \scrC$ is $t$-exact.

In particular, we have that $(\mathscr{C}^{G})^\heart$
is the full subcategory of $\mathscr{C}^G$ spanned by objects $c$ such that $Uc\in \mathscr{C}^{\heart}$. 
On the other hand, one can consider the $G$-invariants $(\mathscr{C}^\heart)^G\coloneq \lim_{BG} \Act^\heart$, 
where $\Act^{\heart}$ is obtained from the pair $(\scrC, \scrC^{\heart})$ in the same manner as $\Act_{\leq n}$ in 
the proof of \zcref{lem:truncation functors are equivariant} above. 
We also obtain a natural transformation $\Act^{\heart} \xLongrightarrow{} \Act$ in the same manner
as we obtained the natural transformation $\Act_{\leq n} \xLongrightarrow{} \Act$ above,
whose component at $\ast \in BG$ is the inclusion $\scrC^{\heart} \xhookrightarrow{} \scrC$.
Taking $G$-invariants, we thus obtain from this natural transformation a functor,
\[(\scrC^{\heart})^G \to \scrC^G,\]
which, from our definition of the $t$-structure on $\mathscr{C}^G$, 
factors canonically through the inclusion $(\mathscr{C}^G)^\heart \xhookrightarrow{} \scrC^G$.

\begin{lem}
	\label{lem: S^1 invariants commutes with heart}
	The canonical functor $(\mathscr{C}^\heart)^G\to (\mathscr{C}^G)^\heart$ is an equivalence.
\end{lem}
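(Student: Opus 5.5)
The plan is to show that the canonical functor $\Phi: (\scrC^\heart)^G \to (\scrC^G)^\heart$ is fully faithful and essentially surjective, working throughout in the limit description of both sides over $BG$. Recall that $(\scrC^\heart)^G = \lim_{BG} \Act^\heart$ and $\scrC^G = \lim_{BG}\Act$. The natural transformation $\Act^\heart \Rightarrow \Act$ has component the fully faithful inclusion $\scrC^\heart \hookrightarrow \scrC$ at the unique object of $BG$.

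\textbf{Fully faithful.} This step is formal. By the description of mapping spaces in limit categories used in the proof of \zcref{lem: morphisms in C^G}, for $c,c'\in(\scrC^\heart)^G$ we have
\[\sfMap_{(\scrC^\heart)^G}(c,c') \simeq \lim_{BG}\sfMap_{\scrC^\heart}(c,c').\]
Similarly, $\sfMap_{\scrC^G}(\Phi c,\Phi c')\simeq \lim_{BG}\sfMap_{\scrC}(c,c')$. The map between these is the limit of a natural transformation of $BG$-diagrams in $\Ani$. Its component at $\ast$ is an equivalence because $\scrC^\heart\subset\scrC$ is full. As in the proof of \zcref{lem:fully-faithful-functors-are-ff-after-tate}, a pointwise equivalence of diagrams is an equivalence, so it induces an equivalence on limits. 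Since $(\scrC^G)^\heart\subset\scrC^G$ is full, $\Phi$ is fully faithful.

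\textbf{Essentially surjective.} I would prove a general statement: a limit of full subcategories is the corresponding full subcategory of the limit. Concretely, let $\Act'\Rightarrow\Act$ be a natural transformation of diagrams $BG\to\Cat_\infty$ whose component is the inclusion of a full subcategory $\scrC'\subset\scrC$, stable under the action. Then $\lim\Act'\to\lim\Act$ is fully faithful, with essential image the objects whose underlying object lies in $\scrC'$. To see this, use that a fully faithful functor is a monomorphism in $\Cat_\infty$. Limits preserve monomorphisms, and a monomorphism of $\infty$-categories is precisely the inclusion of a full subcategory on a collection of equivalence classes, stable under equivalences. Objects of $\lim\Act$ are points of $\lim_{BG}\Ob(\Act)$. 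The space $\Ob(\scrC')\subset\Ob(\scrC)$ is a union of connected components, so
\[\lim_{BG}\Ob(\scrC')\to\lim_{BG}\Ob(\scrC)\]
is again an inclusion of components. Its image is the set of points whose image under evaluation at $\ast$ lies in $\Ob(\scrC')$. This holds because a point of a limit of a component-inclusion lies in the sub-limit exactly when its projection does, and $BG$ is connected. Applying this with $\scrC'=\scrC^\heart$ shows that the essential image of $(\scrC^\heart)^G$ is exactly the set of $c$ with $Uc\in\scrC^\heart$. This is $(\scrC^G)^\heart$ by definition of the $t$-structure on $\scrC^G$.

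\textbf{Main obstacle.} The only nonformal point is the component-inclusion claim for limits of spaces over $BG$. I would verify it by computing the limit as a space of sections of the associated fibration over $BG$. A section of the sub-fibration is the same as a section of the full fibration that lands in the chosen components. Since $BG$ is connected, it suffices to check this at one fiber.
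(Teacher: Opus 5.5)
Your argument is correct, but it is organized differently from the paper's. The paper writes $\scrC^\heart\simeq\scrC_{\le 0}\times_{\scrC}\scrC_{\ge 0}$ $G$-equivariantly and commutes $(-)^G$ with this fiber product. It then asserts, from the commutative squares comparing $(\scrC_{\le 0})^G\to\scrC_{\le 0}$ with $U$, that $(\scrC_{\le 0})^G\simeq U^{-1}(\scrC_{\le 0})$, and likewise for $\scrC_{\ge 0}$. Finally it reassembles $(\scrC^G)^\heart$ as the corresponding fiber product.

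That asserted identification is exactly your general principle. The principle says that the invariants of an action-stable full subcategory form the full subcategory of $\lim\Act$ cut out by the condition on the underlying object. You prove it: full faithfulness via limits of mapping spaces, and the essential image via the section-space description of $\lim_{BG}\Ob(-)$ together with connectedness of $BG$. You then apply it directly to $\scrC^\heart$, so the splitting into connective and coconnective parts is unnecessary. The paper's route is shorter and records $(\scrC_{\le 0})^G=(\scrC^G)_{\le 0}$ along the way, but its key step is left unjustified. Yours is self-contained and works for any $G$-stable full subcategory.

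One sentence in your essential-surjectivity paragraph should be deleted. A monomorphism in $\Cat_\infty$ is \emph{not} necessarily the inclusion of a full subcategory; for example, the inclusion $\scrC^{\simeq}\hookrightarrow\scrC$ of the core is a monomorphism. So ``limits preserve monomorphisms'' does not by itself give full faithfulness of $\lim\Act'\to\lim\Act$. This does not damage the proof. Your first step already establishes full faithfulness, and that argument applies verbatim to any full $\scrC'\subset\scrC$. For the essential image, only the object-level statement about $\lim_{BG}\Ob(\scrC')\to\lim_{BG}\Ob(\scrC)$ is needed.
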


\begin{proof}
By definition, we have a $G$-equivariant
equivalence, $\scrC^{\heart} \simeq \scrC_{\leq 0} \times_{\scrC} \scrC_{\geq 0}$.
The operation of taking $G$-invariants, being a limit, commutes with
products, so we have,
\[(\scrC^{\heart})^G \xrightarrow{\simeq} (\scrC_{\leq 0})^G \times_{\scrC^G} (\scrC_{\geq 0})^G.\]
On the other hand, we have commutative diagrams,
\[\begin{tikzcd}
	{\scrC^G} & \scrC && {\scrC^G} & \scrC \\
	{(\scrC_{\leq 0})^G} & {\scrC_{\leq 0}} && {(\scrC_{\geq 0})^G} & {\scrC_{\geq 0}}
	\arrow["U", from=1-1, to=1-2]
	\arrow["U", from=1-4, to=1-5]
	\arrow[hook, from=2-1, to=1-1]
	\arrow["{U_{\leq 0}}", from=2-1, to=2-2]
	\arrow[hook, from=2-2, to=1-2]
	\arrow[hook, from=2-4, to=1-4]
	\arrow["{U_{\geq 0}}", from=2-4, to=2-5]
	\arrow[hook, from=2-5, to=1-5]
\end{tikzcd}\]
implying that $(\scrC_{\leq 0})^G \xrightarrow{\simeq} U^{-1}(\scrC_{\leq 0}) =: (\scrC^G)_{\leq 0}$, and 
$(\scrC_{\geq 0})^G \xrightarrow{\simeq} U^{-1}(\scrC_{\geq 0}) =: (\scrC^G)_{\geq 0}$. Now, by the definition of the heart,
we also have $(\scrC^G)^{\heart} = (\scrC^G)_{\leq 0} \times_{\scrC^G} (\scrC^G)_{\geq 0}$. 
Combining these equivalences,
we thus obtain
\[(\scrC^{\heart})^G \xrightarrow{\simeq} (\scrC^G)^{\heart}.\]
\end{proof}

\subsubsection{Related notions of $G$-action}
Let $G$ be a group object in $\Spc$, and consider
the constant stack $\underline{G} \in \Stk_k$ with value $G$, which by abuse of
notation we sometimes denote by $G$. $\underline{G}$ is a group object in $\Stk_k$,
so the dg-category $\QCoh(G)$ obtains a comonoidal structure under pullback
along the group multiplication maps.
One often sees the following alternative definition of $G$-action on $\scrC$ in the literature:

\begin{defn}
\label{def: actions of group stacks on categories}
	Let $G \in \Grp(\Stk_k)$, and let $\scrC \in \dgCat_k$. Then
	a $G$-action on $\scrC$ is the data of a $\Perf(G)$-comodule structure on $\scrC$.
	Let $\scrD \in \DGCat_k$. Then a $G$-action on $\scrD$ is the data of
	a $\QCoh(G)$-comodule structure on $\scrD$.
\end{defn}

In fact, this alternative definition is equivalent to the one we gave
in \zcref{def: G-category} in the case when $G$ is
the Betti stack associated to group object in $\Spc$.

\begin{prop}
	\label{prop: equivalence of defintions of G-category}
    Let $G \in \Grp(\Spc)$, and let $\scrC \in \QCoh(G)\comod(\DGCat_k)$. Then the underlying dg-category of $\scrC$
	has a canonical $G$-action in the sense of \zcref{def: G-category}. Conversely, if the underlying
	dg-category of $\scrC$ has a $G$-action, then this induces a canonical $\QCoh(G)$-comodule structure
	on $\scrC$.
\end{prop}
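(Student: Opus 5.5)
The plan is to identify $\QCoh(G)$, for $G$ a group in $\Spc$ viewed as a Betti (constant) stack, with the $\infty$-category $\Fun(G, \Vect_k) \simeq \lim_{G} \Vect_k$ of local systems on the underlying space of $G$. Under this identification, pullback along the multiplication $m: G \times G \to G$ becomes the comultiplication. Dually, $\QCoh(G)^{\vee} \simeq \QCoh(G)$ by self-duality of $\Fun(G,\Vect_k)$, and convolution is $m_! = m_*$-type pushforward. Passing to $\Vect_k[G] := C_*(G;k)$, or equivalently to the group algebra, I identify $\QCoh(G)$-comodules in $\DGCat_k$ with modules over the monoidal category $\Vect_k[G]$-modules, i.e.\ over $\Fun(G,\Vect_k)$ with convolution, which is $\Vect_k \otimes \Sigma^\infty_+$-linearization of $G$.

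\textbf{Step 1.} Since $\Fun(G,\Vect_k)$ is compactly generated, hence dualizable in $\DGCat_k$, I use the standard equivalence between comodules over a dualizable coalgebra and modules over its dual algebra. This converts the comodule structure under $m^*$ into a module structure for $(\Fun(G,\Vect_k), \star)$, where $\star$ is convolution (left Kan extension along $m$).

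\textbf{Step 2.} I identify $(\Fun(G,\Vect_k),\star)$ with $\Vect_k \otimes \calP(G)$, the free cocompletion of $G$ with Day convolution. Here $\calP(G) = \Fun(G,\Spc)$ is the free presentably monoidal category on the monoidal space $G$, and tensoring with $\Vect_k$ is a symmetric monoidal left adjoint $\Pr^L \to \DGCat_k$. By the universal properties, a $\Vect_k\otimes\calP(G)$-module structure on $\scrD \in \DGCat_k$ is the same as a monoidal map $G \to \End_{\DGCat_k}(\scrD)^{\simeq}$. This is the same as a map of $\mathbb{E}_1$-groups $G \to \Aut(\scrD)$, i.e.\ a functor $BG \to \DGCat_k$ with value $\scrD$, as in \zcref{def: G-category}. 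Both directions of the proposition follow by reading this chain of equivalences forwards and backwards, and canonicity comes from each step being natural.

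\textbf{Main obstacle.} I expect Step 1 to be the hardest. It requires identifying the coalgebra $(\QCoh(G), m^*)$ with the dual of the convolution algebra. For this I would check that $m^*$ is dual to $m_!$ under the self-duality pairing $\Fun(G,\Vect)\otimes\Fun(G,\Vect)\to\Vect$, $(F,F')\mapsto \colim_G F\otimes F'$, using that for Betti stacks $\QCoh(G\times G)\simeq \QCoh(G)\otimes\QCoh(G)$. That Künneth statement holds because $\Fun(G\times G,\Vect)\simeq \Fun(G,\Vect)\otimes\Fun(G,\Vect)$ by compact generation.

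I would also remark that if $\QCoh$ of the Betti stack is taken to be $\lim_G \Vect_k$ rather than its ind-version, these agree, since $\Fun(G,\Vect_k)$ is already presentable and compactly generated.
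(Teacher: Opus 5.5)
Your argument is correct, but it is organized differently from the paper's. The paper does not dualize anything. It applies Barr--Beck--Lurie to the evaluation functor $L\colon \Fun(BG,\DGCat_k)\to\DGCat_k$, which is conservative with right adjoint $R(\scrC)=\Fun(G,\scrC)$. This makes $G$-categories coalgebras for the comonad $LR$. The paper then identifies $LR$ with $\QCoh(G)\otimes_k-$ via $\Fun(G,\scrC)\simeq\calP(G)\otimes\scrC\simeq\Fun(G,\Vect_k)\otimes_k\scrC$.

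You instead transpose the coaction to an action of the dual convolution algebra $(\Fun(G,\Vect_k),\star)\simeq\Vect_k\otimes\calP(G)$. You then use the universal property of Day convolution, together with the description of module structures as algebra maps into $\End_{\DGCat_k}(\scrD)$.

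What each route buys:
\begin{itemize}
\item The paper's route gives an equivalence of $\infty$-categories in one stroke and needs no duality bookkeeping. However, it only computes the underlying endofunctor of the comonad, leaving implicit that its comultiplication is $m^*$ and its counit is $e^*$.
\item Your route makes the role of the multiplication explicit: Day convolution is $m_!$ of the external product, and the $m^*$--$m_!$ duality is exactly your projection-formula check. The cost is invoking the comodule/dual-module correspondence. You also only get the equivalence on spaces of structures on a fixed $\scrD$, though that is all the statement asks for.
\end{itemize}

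Three small corrections:
\begin{itemize}
\item Classically a left comodule over $C$ is a \emph{right} module over $C^\vee$, so you may land on right modules over the convolution category. This is harmless, since $g\mapsto g^{-1}$ identifies $G$ with its reverse.
\item In your overview, $C_*(G;k)$ sits one categorical level too low: its modules are $\Fun(BG,\Vect_k)$, i.e.\ representations, not $G$-categories. The object you actually use in Steps 1--2, $\Fun(G,\Vect_k)$ with convolution, is the right one.
\item Convolution is $m_!$, not ``$m_!=m_*$''. The fibres of $m$ are copies of $G$, so left and right Kan extension along $m$ differ in general.
\end{itemize}
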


\begin{proof}
	First note that the constant stack $\underline{G}$ is the colimit of the constant
	functor $G \to \ast \to \Stk_k$. On the other hand, since $\QCoh: \Stk_k \to \DGCat_k$ takes colimits to limits,
	$\QCoh(\underline{G}) \simeq \lim_G \QCoh(\ast) \simeq \Fun(G, \Vect_k)$.
	
	Consider the functor $L: \Fun(BG, \DGCat_k) \to \DGCat_k$ given by evaluation at the unique point
	in $BG$. Note that $L$ satisfies the conditions of the Barr--Beck--Lurie
	theorem: it is conservative and has both a right adjoint $R: \DGCat_k \to \Fun(BG, \DGCat_k)$ given by cotensoring
	with $G$ ($\scrC \mapsto \Fun(G, \scrC)$) and a left adjoint given by tensoring with $G$. 
	It remains to identify the comonad $L \circ R$ as the operation 
	of tensoring with $\QCoh(\underline{G}) \simeq \Fun(G, \Vect_k)$,
	which we do by proving that $\Fun(G, \scrC) \simeq \Fun(G, \Vect_k) \otimes_k \scrC$. 
	
	Let $K$ be any (small) anima. Recall that $\calP(K^{\op})^{\op} = \Fun(K^{\op})^{\op}$ is the category
	obtained by freely adjoining limits to $K$, meaning it satisfies the universal property,
	\[\Fun(K,\scrC) \simeq \Fun^{R}(\calP(K^{\op})^{\op}, \scrC),\]
	for any cocomplete category $\scrC$. Suppose that $\scrC$ is also presentable. Then,
	by the definition of the Lurie tensor product on presentable $\infty$-categories, we have
	\[\Fun(K,\scrC) \simeq \Fun^{R}(\calP(K^{\op})^{\op}, \scrC) \simeq \calP(K) \otimes \scrC,\]
	where we use that $K \simeq K^{\op}$ and that $\calP(K)$ is presentable.

	In particular, taking $K = G$ and $\scrC = \Vect_k$, we obtain the equivalence $\Fun(G, \Vect_k) \simeq \calP(G) \otimes \Vect_k$.
	To conclude, we observe that, for any $\scrC \in \DGCat_k$, we have
	\begin{align*}
		\calP(G) \otimes \scrC \simeq (\calP(G) \otimes \Vect_k) \otimes_k \scrC &\simeq \Fun(G, \Vect_k) \otimes_k \scrC \\
																															    &\simeq \QCoh(\underline{G}) \otimes_k \scrC.
	\end{align*}
	Thus, the comonad structure on $\DGCat_k$ is the coalgebra structure on $\QCoh(\underline{G})$.
\end{proof}

For any group prestack $\calG \in \Grp(\PreStk)$,
let $\calB\calG^{\bullet}$ denote the bar construction of $\calG$.
If $\calG \in \Grp(\PreStk)$ has the property
that the functor $\bLoc_{\calG}: \QCoh(\calG)\mod \to \ShvCat(\calG)$\footnotemark 
\footnotetext{Here, $\QCoh(\calG)$ is considered as a symmetric monoidal
category using the tensor product on $\QCoh(-)$, rather than the convolution
product induced by the group operation on $\calG$.}
(i.e.
the left adjoint to enhanced global sections $\Gamma^{\mathrm{enh}}_{\calG}$)
is fully faithful, we have
\begin{align}
    \QCoh(\calG)\comod(\DGCat_k)\footnotemark &\simeq \Tot(\QCoh(\calB\calG^{\bullet})\mod) \label{eqn:ShvCat1}\\
                             &\simeq \Tot(\ShvCat(\calB\calG^{\bullet})) \label{eqn:ShvCat2}\\
                             &\simeq \ShvCat(\calB\calG), \label{eqn:ShvCat3}
\end{align}
\footnotetext{Here, we consider $\QCoh(\calG)$
    equipped with the convolution comonoidal structure induced by the
    pullback aling the group operation maps.}
where equivalence \zcref[noname]{eqn:ShvCat1} is the content
of \cite[Corollary 10.1.5]{Gaitsgory1affineness}; equivalence \zcref[noname]{eqn:ShvCat2} is
the content of \cite[Proposition 10.1.3]{Gaitsgory1affineness}; and equivalence \zcref[noname]{eqn:ShvCat3}
follows from the definition of $\ShvCat(\calB\calG)$. Note
that $\calB\calG$ denotes the delooping of $\calG$ in $\PreStk_k$ (as opposed to in $\Stk_k$ if $\calG$
were in $\Stk_k \subset \PreStk_k$).
By \cite[Proposition 2.5]{PascaleffPaviaSibilla}, $\bLoc_{K_B}$ is in
fact fully faithful for any $K \in \Spc$.
In particular, for the constant prestack $\underline{G}$ associated to any $G \in \Grp(\Spc)$,
we obtain an equivalence,
\[\Fun(BG,\DGCat_k) \xrightarrow{\simeq} \ShvCat(\calB\underline{G}),\]
between $G$-actions on dg-categories and quasi-coherent sheaves of categories
on $\calB\underline{G}$.

\begin{rem}
	For our case of interest, $G=S^1$, the stack $BG$ is not $1$-affine itself, so
	we cannot further identify $\Gamma^{\enh}: \ShvCat(BG) \xrightarrow{\simeq} \QCoh(BG)\mod(\DGCat_k)$. 
\end{rem}

\begin{lem}\label{lem:geometric S^1-actions induce topological S^1-actions}
	Let $X\in \mathsf{PreStk}_{\mathrm{laft}}$ be equipped with an action of the group prestack $S^1_B$.
	Then $\mathsf{IndCoh}(X) \in \DGCat_k$ obtains a canonical $S^1$-action, which
    moreover is compatible with its $t$-structure.
\end{lem}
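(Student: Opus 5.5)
The plan is to pass from the geometric action of the group prestack $S^1_B$ on $X$ to a coaction on $\IndCoh(X)$, and then translate that coaction into a topological action using \zcref{prop: equivalence of defintions of G-category} and the equivalence $\Fun(BS^1,\DGCat_k)\simeq \ShvCat(\calB\underline{S^1})$ recorded above.

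\textbf{Step 1: the action groupoid.} An action of $S^1_B$ on $X$ is the same as a simplicial prestack $X_\bullet = S^1_B{}^{\times \bullet}\times X$, the bar construction, augmented over the quotient $X/S^1_B$. Applying $\IndCoh$ with $!$-pullback gives a cosimplicial diagram $\IndCoh(S^1_B{}^{\times n}\times X)$. This step is legitimate because all face maps are products of projections and multiplication maps with the identity of $X$, and are therefore eventually coconnective and laft.

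\textbf{Step 2: identify the terms.} Since $S^1_B$ is the colimit of the constant diagram on the anima $S^1$, and $\IndCoh$ (under $!$-pullback) sends colimits of prestacks to limits, we get
\[\IndCoh(S^1_B{}^{\times n}\times X)\simeq \Fun((S^1)^{n},\IndCoh(X))\simeq \QCoh((S^1_B)^{\times n})\otimes_k \IndCoh(X).\]
The second equivalence uses the argument in the proof of \zcref{prop: equivalence of defintions of G-category}, namely $\Fun(K,\scrC)\simeq \calP(K)\otimes \scrC$. Through this identification, the cosimplicial object becomes the cobar construction for a $\QCoh(S^1_B)$-comodule structure on $\IndCoh(X)$. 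Feeding this into \zcref{prop: equivalence of defintions of G-category} produces the desired functor $BS^1\to \DGCat_k$ with value $\IndCoh(X)$.

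\textbf{Step 3: $t$-exactness.} The action by a point $g\in S^1$ is the composite
\[\IndCoh(X)\xrightarrow{\act^!}\Fun(S^1,\IndCoh(X))\xrightarrow{\ev_g}\IndCoh(X).\]
This composite equals $!$-pullback along the automorphism $g\colon X\to X$, where $g\colon X \simeq \pt\times X\to S^1_B\times X\to X$. Pullback along an automorphism is an autoequivalence that preserves $\IndCoh(X)^{\leq 0}$ and $\IndCoh(X)^{\geq 0}$, so it is $t$-exact. Consequently the functor $BS^1\to\DGCat_k$ factors through $\DGCat_k^t$, and compatibility in the sense of \zcref{defn: compatible with t-structure} reduces to $t$-exactness on morphisms.

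The main obstacle is the coherence bookkeeping in Step 2. One must show that the identification $\IndCoh(S^1_B\times X)\simeq \Fun(S^1,\IndCoh(X))$ is natural in the simplicial structure, so that the resulting cosimplicial object really is the cobar resolution. To handle this, I would phrase everything via $\ShvCat$: the descent data for $\IndCoh$ along $X\to X/S^1_B$ defines an object of $\ShvCat(\calB\underline{S^1})$, because the pullback of $X/S^1_B\to \calB S^1_B$ is compatible with base change. The equivalence $\Fun(BS^1,\DGCat_k)\simeq\ShvCat(\calB\underline{S^1})$ obtained from \cite[Proposition 2.5]{PascaleffPaviaSibilla} then supplies the action with all higher coherences at once.
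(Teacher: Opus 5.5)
Your Step 3 is fine. It can be shortened: since $S^1$ is connected, every point of $S^1$ acts by an autoequivalence in the path component of $\id_{\IndCoh(X)}$, which is how the paper argues.

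The gap is in Steps 1--2, exactly at the coherence problem you flag, and the $\ShvCat$ patch does not close it. A cosimplicial diagram with terms $\QCoh(S^1_B)^{\otimes n}\otimes_k\IndCoh(X)$ does not by itself determine a $\QCoh(S^1_B)$-comodule. You would need the identifications $\IndCoh(S^1_B\times Y)\simeq\QCoh(S^1_B)\otimes_k\IndCoh(Y)$ to assemble into a coherent monoidal structure on $\IndCoh^!$ over the relevant products, so that it carries the $S^1_B$-module $X$ to a comodule. That is the whole content of the step.

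The $\ShvCat$ route does not supply this. An object of $\ShvCat(\calB(S^1_B))$ is a compatible family of $\QCoh(S)$-module categories over all affines $S\to\calB(S^1_B)$. ``Relative $\IndCoh$'', $S\mapsto\IndCoh(S\times X)$, is not such a family. $\IndCoh$ does not in general satisfy $\QCoh$-linear base change along arbitrary maps of affines, and it is not even defined for non-laft $S$. The only rigorous content of that step is $\ShvCat(\calB(S^1_B))\simeq\lim_{BS^1}\DGCat_k\simeq\Fun(BS^1,\DGCat_k)$. That equivalence translates an action but does not construct one, so you still need a functor $BS^1\to\DGCat_k$.

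The missing idea is straightening over the constant base; this is the paper's proof, and it makes the comodule detour and \zcref{prop: equivalence of defintions of G-category} unnecessary. The action is equivalently the map $X/S^1_B\to\calB(S^1_B)\simeq(BS^1)_B$, with fiber $X$ over the point. Because $(BS^1)_B$ is a constant prestack,
\[\PreStk_{/(BS^1)_B}\simeq\Fun(\CAlg_k^{\cn},\Spc_{/BS^1})\simeq\Fun(\CAlg_k^{\cn},\Fun(BS^1,\Spc))\simeq\Fun(BS^1,\PreStk).\]
Hence the action is a functor $BS^1\to\PreStk_{\laft}$ with value $X$. Composing its opposite with $\IndCoh^!\colon\PreStk_{\laft}^{\op}\to\DGCat_k$, and using $BS^1\simeq(BS^1)^{\op}$, gives the $S^1$-action on $\IndCoh(X)$ with all higher coherences for free.

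Two minor points about Step 1. First, $!$-pullback is defined for every map of laft prestacks, so no eventual coconnectivity is needed. Second, the action map $S^1_B\times X\to X$ is not a product of projections and multiplications with $\id_X$.
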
 
\begin{proof}
    Recall that the data of an $S^1_B$-action on $X$\footnotemark
    \footnotetext{See \cite[Definition 4.3.3 and Remark 4.3.4]{KhanLectures} for the definition of group actions in an $\infty$-category.}
    is a certain simplicial object
    \[X_{\bullet} := \cdots \;\mathrel{\substack{
\longrightarrow\\[-0.6ex]
\longrightarrow\\[-0.6ex]
\longrightarrow\\[-0.6ex]
\longrightarrow}}\;
S_B^1\times S_B^1\times X
\;\mathrel{\substack{
\longrightarrow\\[-0.6ex]
\longrightarrow\\[-0.6ex]
\longrightarrow}}\;
S_B^1\times X
\rightrightarrows X.\]
    There is a natural map $X_{\bullet} \to (S^1_B)_{\bullet}$ given by the projections
    omitting the $X$ factor at each level. By definition, we have $\calB (S^1_B) \simeq |(S^1_B)_{\bullet}| := \colim_{[n] \in \Delta^{\op}} (S^1_B)_{\bullet}$,
    so applying colimits, we obtain a morphism $|X_{\bullet}| \to \calB(S^1_B)$.
    On the other hand, because $(-)_B$ preserves limits and colimits,
    we have $\calB(S^1_B) \simeq (BS^1)_B$, where $BS^1$ is the delooping in $\Spc$,
    so we obtain a morphism $|X_{\bullet}| \to (BS^1)_B$
    Using the straightening equivalence $\Spc_{/BS^1} \simeq \Fun(BS^1, \Spc)$,
    we obtain the following sequence of equivalences,
    \begin{align*}
        \PreStk_{/(BS^1)_B} = \Fun(\CAlg_k^{\cn}, \Spc)_{/(BS^1)_B} &\simeq \Fun(\CAlg_k^{\cn}, \Spc_{/BS^1}) \\
                                                                    &\simeq \Fun(\CAlg_k^{\cn}, \Fun(BS^1, \Spc)) \\
                                                                    &\simeq \Fun(\CAlg_k^{\cn} \times BS^1, \Spc) \\
                                                                    &\simeq \Fun(BS^1, \Fun(\CAlg_k^{\cn}, \Spc)) \\
                                                                    &= \Fun(BS^1, \PreStk).                                                          
    \end{align*}
Thus, a $S^1_B$-action on $X$ is equivalent to a functor $BS^1 \to \PreStk$ sending
the unique point of $BS^1$ to $X$. Moreover, since $X \in \PreStk_{\laft}$,
this determines a functor $\act_X: BS^1 \to \PreStk_{\laft}$.
Composing $\act_X^{\op}: (BS^1)^{\op} \to \PreStk_{\laft}^{\op}$ with the functor
$\mathsf{IndCoh}:\mathsf{PreStk}^{\text{op}}_{\text{laft}}\to \DGCat_k$,
to obtain a functor $A: BS^1 \simeq (BS^1)^{\op} \to \DGCat_k$ under which the unique
point of $BS^1$ is sent to $\IndCoh(X)$.
Thus, the $S^1_B$-action on $X$ induces an $S^1$-action on $\mathsf{IndCoh}(X)$.

It remains to show that this $S^1$-action is compatible with the $t$-structure in the sense that $A: BS^1 \to \DGCat_k$
factors through $\DGCat_k^t$.\footnotemark
\footnotetext{See \zcref{sec:conventions and notation} for the definition of $\DGCat_k^t$.} 
Under the adjunction, 
\[\begin{tikzcd}
	{B: \Alg_{\bbE_1}(\Cat_{\infty})} & {(\Cat_{\infty})_{\ast/}: \Omega_{\ast}},
	\arrow[shift left, from=1-1, to=1-2]
	\arrow[shift left, from=1-2, to=1-1]
    \end{tikzcd}\]
$A$ corresponds to a functor of monoids (i.e. $\bbE_1$-algebras),
$\wit{A}: S^1 \to \Omega_*(\DGCat_k) \simeq \Aut_{\DGCat_k}(\mathsf{IndCoh}(X))$,
where $\DGCat_k$ is regarded as an object of $(\Cat_{\infty})_{\ast/}$
via the choice of category $\IndCoh(X) \in \DGCat_k$.
Since $S^1 \simeq B\bbZ$, it suffices to show that the image under $\wit{A}$ of the unique point of
$S^1$ is a $t$-exact autoequivalence of $\IndCoh(X)$. But since
$\wit{A}$ is a monoidal map, $\ast \in S^1$ must be sent to the unit $\id_{\IndCoh(X)} \in \Aut_{\DGCat_k}(\mathsf{IndCoh}(X))$,
which is obviously $t$-exact.
\end{proof}

\subsubsection{Algebraic group actions}
While the previous discussion centered around actions of
group objects in $\Spc$, actions of algebraic groups---or, more generally,
actions of group stacks---play a significant role in this paper.
For our purposes, it is convenient to reformulate the definition
of $G$-action given in \zcref{def: actions of group stacks on categories} 
by imitating Preygel's definition for formal group actions
found in \cite{PreygelThesis}.

In order to formulate the definition, we recall the following.
By \cite[Theorem 4.5.3.1]{HA}, for any symmetric monoidal
category $\scrC^{\otimes}$, the functor,
\[p: \Mod(\scrC) \to \CAlg(\scrC)\]
is a coCartesian fibration such that the fiber
over $R \in \CAlg(\scrC)$ is the category $\Mod_R(\scrC)^{\otimes}$, and
the coCartesian lifts of $R \to S$
are given by the extension of scalars functor
$- \otimes_R S: \Mod_R(\scrC) \to \Mod_S(\scrC)$.

\begin{con}
\label{con: coCartesian section}
    For each object $C \in \scrC$, we construct a coCartesian
    section of $p$, denoted by $\underline{C}$, as follows.
    Since $\Mod_{\bbone}(\scrC) \simeq \scrC$ by \cite[Proposition 3.4.2.1]{HA}, 
    $C \in \Mod_{\bbone}(\scrC)$, meaning we obtain an object in the fiber of $p$
    over the initial object $\bbone \in \CAlg(\scrC)$. For each $R \in \CAlg(\scrC)$,
    the unique map $u_R: \bbone \to R$ lifts to the $p$-coCartesian edge $\wit{u_R}: C \to C \otimes_{\bbone} R$. 
    By dint of $p$ being a coCartesian fibration and the universal property
    of $p$-coCartesian edges, these lifts assemble coherently\footnotemark 
    \footnotetext{Indeed, if $\wit{f}: C \otimes_{\bbone} R \to (C \otimes_{\bbone} R) \otimes_R S$
    is the $p$-coCartesian lift of $f: R \to S \in \CAlg(\scrC)$, then the
    universal property of $p$-coCartesian maps tells us that $\wit{u_S}$
    factors as $\wit{f} \circ \wit{u_R}$.}
    into a coCartesian section,
    \[\underline{C}: \CAlg(\scrC) \to \Mod(\scrC).\]
\end{con}

We consider two special cases of this construction:
\begin{enumerate}
    \item Let $\scrC^{\otimes} = \Vect^{\otimes}$.
Then a given object $K \in \Vect_k$ determines a functor $\underline{K}: \CAlg_k \to \Mod(\Vect_k)$.
Restricting $\underline{K}$ along the inclusion $\CAlg_k^{\cn} \xhookrightarrow{} \CAlg_k$,
and composing with the forgetful functor $\Mod(\Vect) \to \Spc$, 
we obtain a prestack,
\begin{equation}
\label{eqn: prestack}
    \underline{K}: \CAlg_k^{\cn} \to \Spc,
\end{equation}
which, by abuse of notation, we also denote by $\underline{K}$.

    \item Let $\scrC^{\otimes} = \DGCat_k^{\otimes}$.
	    A given dg-category $\scrE \in \DGCat_k$ determines a functor \[\underline{\scrE}: \CAlg_k(\DGCat_k) \to \Mod(\DGCat_k).\]
There is a fully faithful embedding $\CAlg_k^{\cn} \to \CAlg(\DGCat_k)$ which assigns to the object $R \in\CAlg_k$
the symmetric monoidal category $\Mod_R^{\otimes} \in \CAlg(\DGCat_k)$. Restricting along this embedding
and composing with the functor $\oblv(-)^{\simeq}:\Mod(\DGCat_k) \to \Spc$ obtains a prestack
\begin{equation}
\label{eqn: prestack from category}
    \underline{\scrE}: \CAlg_k^{\cn} \to \Spc,
\end{equation}
which, by abuse of notation, we also denote by $\underline{\scrE}$.
\end{enumerate}

We also have the following related construction:

\begin{con}
\label{con: Aut prestack}
   Let $\scrC^{\otimes}$ be a symmetric monoidal 
   category, and let $C \in \scrC$. Denote by $\St(p): \CAlg(\scrC) \to \Cat_{\infty}$
   the straightening of the coCartesian fibration $p$ recalled above.
   The coCartesian section of $p$, $\underline{C}$, constructed in \zcref{con: coCartesian section}
   corresponds under the straightening operation to a natural transformation $\St(\underline{C}): \underline{\ast} \to \St(p)$,
   where $\underline{\ast}: \CAlg(\scrC) \to \Cat_{\infty}$ is the constant functor with value the point $\ast \in \Cat_{\infty}$.
   We then define 
   \[\underline{\Aut(C)} := \underline{\ast} \times_{\St(p)} \underline{\ast},\]
   where the product on the right-hand side is taken in $\Fun(\CAlg(\scrC), \Cat_{\infty})$.
   The resulting functor sends the object $A \in \CAlg(\scrC)$ to the category $\Aut_{\Mod_A(\scrC)}\left(\underline{C}(A)\right)$.

As a special case, we take $\scrC = \DGCat_k$, and $C = \scrE \in \DGCat_k$.
Composing with the functor $(-)^{\simeq}: \Cat_{\infty} \to \Spc$ and restricting along
the inclusion $\CAlg_k^{\cn} \xhookrightarrow{} \CAlg(\DGCat_k)$, we obtain a
functor $\res(-)^{\simeq}: \Fun(\CAlg(\DGCat_k), \Cat_{\infty}) \to \PreStk_k$,
which preserves limits. Thus, we have
\[\res(\underline{\Aut(\scrE)})^{\simeq} \xrightarrow{\simeq} \ast_B \times_{\res(\St(p))^{\simeq}} \ast_B,\]
where $\ast_B$ is the Betti prestack with value $\ast \in \Spc$. 
To simplify notation, we make the following conventions:
\begin{enumerate}
    \item $\wit{\DGCat_k}$ denotes the prestack $\res(\St(p))^{\simeq}$, and
    \item $\wit{\scrE}$ denote the natural transformation $\res(\St(\underline{\scrE}))^{\simeq}$.
\end{enumerate}
We may consider $\wit{\DGCat_k}$ as a pointed object of $\PreStk_k$ via
the map $\wit{\scrE}$. In which case, $\ast_B \times_{\res(\St(p))^{\simeq}} \ast_B$
may be rewritten as $\Omega_*(\wit{\DGCat_k})$, where $\Omega_*: (\PreStk_k)_{\ast_B/} \to \Grp(\PreStk)$
is the based loops functor on pointed prestacks.
The value on $R \in \CAlg_k^{\cn}$ of the prestack $\Omega_{\wit{\scrE}}(\wit{\DGCat_k})$
is the category $\Aut_{\DGCat_R}(\scrE \otimes_k R)^{\simeq}$.
This motivates the following definition of the \emph{automorphism prestack} of $\scrE$: 
\begin{equation}
\label{eqn: Aut prestack}
\underline{\calAut}(\scrE) \coloneq \Omega_*(\wit{\DGCat_k}) = \ast_B \times_{\res(\St(p))^{\simeq}} \ast_B.
\end{equation}
We emphasize that $\underline{\calAut}(\scrE)$ is an object of $\Grp(\PreStk_k)$.
\end{con}

The above construction allows us to give an alternative definition of
group prestack action on a category.

\begin{defn}
\label{def: alternative definition of action of group stacks}
    Let $\scrC \in \DGCat_k$, and let $\calG \in \Grp(\PreStk_k)$. 
    A $\calG$-action on $\scrC$ is a map of group prestacks $\calG \xlongrightarrow{} \underline{\calAut}(\scrC)$,
    where the latter object is the group prestack \zcref[noname]{eqn: Aut prestack} considered above.
\end{defn}

\zcref{def: actions of group stacks on categories} and \zcref{def: alternative definition of action of group stacks}
do not agree in general, but it turns out that for $\calG \in \Grp(\PreStk_k)$ such that
$\bLoc_{\calG}$ is a fully faithful---which includes the cases $\calG = S^1_B$
and $\calG = B\mathbb{G}_a$---they do.

\begin{lem}
\label{lem: equivalent defns of action}
    Let $\calG \in \Grp(\PreStk_k)$ 
    such that the functor $\bLoc_{\calG}: \QCoh(\calG)\mod \to \ShvCat(\calG)$
    is fully faithful, and let $\scrC \in \DGCat_k$. 
    Then the data 
    of a $\QCoh(\calG)$-comodule structure on $\scrC$ is equivalent to the
    data of a map $\calG \to \underline{\calAut}(\scrC) \in \Grp(\PreStk_k)$. 
    That is, \zcref{def: actions of group stacks on categories}
    and \zcref{def: alternative definition of action of group stacks} are equivalent definitions
    of $\calG$-action on the category $\scrC$.
\end{lem}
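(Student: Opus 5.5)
The plan is to route both descriptions of a $\calG$-action through the equivalence $\QCoh(\calG)\comod(\DGCat_k) \simeq \ShvCat(\calB\calG)$ of \zcref[noname]{eqn:ShvCat1}--\zcref[noname]{eqn:ShvCat3}, which is available precisely because $\bLoc_{\calG}$ is fully faithful. It therefore suffices to identify the space of maps of group prestacks $\calG \to \underline{\calAut}(\scrC)$ with the space of objects of $\ShvCat(\calB\calG)$ whose pullback along $\ast \to \calB\calG$ is $\scrC$. I will compare these over a common description: pointed maps $\calB\calG \to \wit{\DGCat_k}$, with $\wit{\DGCat_k}$ pointed by $\wit{\scrC}$.

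The first step is delooping. Since $\underline{\calAut}(\scrC) = \Omega_*(\wit{\DGCat_k})$ by \zcref[noname]{eqn: Aut prestack}, the adjunction $\calB \dashv \Omega_*$ between $\Grp(\PreStk_k)$ and pointed prestacks identifies maps of group prestacks $\calG \to \underline{\calAut}(\scrC)$ with pointed maps $\calB\calG \to \wit{\DGCat_k}$. This is the prestack-level version of the $B \dashv \Omega_*$ adjunction already used in the proof of \zcref{lem:geometric S^1-actions induce topological S^1-actions}. It holds objectwise in $R \in \CAlg_k^{\cn}$, because $\calB\calG$ is the prestack delooping, that is, computed pointwise, and $\Omega_*$ is computed pointwise as well. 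One only needs to note that pointed maps out of a pointwise delooping factor through the connected component of the basepoint.

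The second step is to identify maps $\calB\calG \to \wit{\DGCat_k}$ with a sheaf of categories. By construction, $\wit{\DGCat_k}(R) = (\DGCat_R)^{\simeq}$. Since $\calB\calG = |\calB\calG^\bullet|$ is a colimit of prestacks, each level is a colimit of affines, so the space of maps is a totalization. Along the Yoneda lemma, this reads $\lim_{S \to \calB\calG} (\DGCat_{\calO(S)})^{\simeq}$, which is the underlying space of $\ShvCat(\calB\calG)$ as defined via right Kan extension from affines. The pointing by $\wit{\scrC}$ corresponds to fixing the fiber at $\ast$ to be $\scrC$.

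The final step is to compose the two identifications with \zcref[noname]{eqn:ShvCat1}--\zcref[noname]{eqn:ShvCat3}, taking the fiber over $\scrC$ of the forgetful functor to $\DGCat_k$. One also checks that the resulting equivalence is compatible with the forgetful maps to $\DGCat_k$ on both sides, which is clear because both are evaluation at the basepoint.

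The main obstacle is the second step. The issue is whether the groupoid-core prestack $\wit{\DGCat_k}$, which was built by straightening the coCartesian fibration $\Mod(\DGCat_k) \to \CAlg(\DGCat_k)$ and then restricting, really represents the core of $\ShvCat(-)$ on prestacks. This requires checking that the transition functors of the straightening are the base change functors $-\otimes_R S$, which is exactly \zcref{con: coCartesian section}. It also requires using that $\ShvCat$ sends colimits of prestacks to limits. The full faithfulness hypothesis on $\bLoc_{\calG}$ enters only through the first equivalence \zcref[noname]{eqn:ShvCat1}, where a comodule over $\QCoh(\calG)$ is identified with descent data over $\calB\calG^\bullet$.
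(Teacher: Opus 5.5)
Your proposal is correct and follows the paper's proof essentially step for step: the paper also uses the $\calB \dashv \Omega_*$ adjunction to turn group maps $\calG \to \underline{\calAut}(\scrC) = \Omega_*(\wit{\DGCat_k})$ into pointed maps $\calB\calG \to \wit{\DGCat_k}$, rewrites these by Yoneda as $\bigl(\lim_{\Spec R \to \calB\calG} \DGCat_R\bigr)^{\simeq} \times_{\DGCat_k^{\simeq}} \{\scrC\} \simeq \ShvCat(\calB\calG)^{\simeq} \times_{\DGCat_k^{\simeq}} \{\scrC\}$, and then applies $\QCoh(\calG)\comod(\DGCat_k) \simeq \ShvCat(\calB\calG)$. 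The additional checks you single out are the steps the paper uses implicitly: that the transition maps of $\wit{\DGCat_k}$ are base change, that $(-)^{\simeq}$ commutes with limits, and that the two forgetful functors to $\DGCat_k$ match.
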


\begin{proof}
    We begin by producing an equivalence,
    \[\Map_{\Grp(\PreStk)}(\calG, \underline{\calAut}(\scrC)) \to \ShvCat(\calB\calG)^{\simeq} \times_{\DGCat_k^{\simeq}} \{\scrC\},\]
    where the product on the right hand side is taken along the functor 
    $\ShvCat(\calB\calG) \to \ShvCat(\Spec k) \simeq \DGCat_k$ given by pullback
    along the unique point $\Spec k\to \calB\calG$.
    
    Since the category $\PreStk$ is an $\infty$-topos,
    we have (\cite[Notation 5.3.6.11]{HA}) an adjunction,
    \[\begin{tikzcd}
	{\calB: \Grp(\PreStk_k)} & {(\PreStk_k)_{\Spec k/}: \Omega_{\ast}},
	\arrow[shift left, from=1-1, to=1-2]
	\arrow[shift left, from=1-2, to=1-1]
    \end{tikzcd}\]
    where we note that $\Spec k$ is the terminal object of $\PreStk_k$.
    As $\underline{\calAut}(\scrC)$ is, by definition, the based loop space $\Omega_*(\wit{\DGCat_k})$,
    we have the following sequence of equivalences.
    \begin{align*}
        \Map_{\Grp(\PreStk_k)}(\calG, \underline{\calAut}(\scrC)) &\simeq \Map_{(\PreStk_k)_{\Spec k/}}\left(\calB\calG, \wit{\DGCat_k}\right) \\
                                                                &\simeq \Map_{\PreStk_k}\left(\calB\calG, \wit{\DGCat_k}\right) \times_{\Map_{\PreStk_k}(\Spec k, \wit{\DGCat_k})} \{\wit{\scrC}\} \\
                                                                &\simeq \left(\lim_{\Spec R \to \calB\calG,\; R \in \CAlg_k^{\cn}} \Map_{(\PreStk_k)}\left(\Spec R, \wit{\DGCat_k}\right)\right) \times_{(\DGCat_k)^{\simeq}} \{\scrC\} \\
                                                                &\simeq \left(\lim_{\Spec R \to \calB\calG,\; R \in \CAlg_k^{\cn}} \wit{\DGCat_k}(R)\right) \times_{(\DGCat_k)^{\simeq}} \scrC \\
                                                                &\simeq \left(\lim_{\Spec R \to \calB\calG,\; R \in \CAlg_k^{\cn}} (\DGCat_R)^{\simeq}\right) \times_{(\DGCat_k)^{\simeq}} \scrC \\
                                                                &\simeq \left(\lim_{\Spec R \to \calB\calG,\; R \in \CAlg_k^{\cn}} (\DGCat_R)\right)^{\simeq} \times_{(\DGCat_k)^{\simeq}} \scrC \\
                                                                &\simeq \ShvCat(\calB\calG)^{\simeq} \times_{(\DGCat_k)^{\simeq}} \scrC,
    \end{align*}
    where the final equivalence follows from the definition of $\ShvCat(-)$ found in \cite{Gaitsgory1affineness}
    and the product in the final line is the limit of the span $\ShvCat(\calB\calG) \xlongrightarrow{\pt^*} \ShvCat(\Spec k) \simeq (\DGCat_k)^{\simeq} \xlongleftarrow{} \{\scrC\}$.
    Now recall from the discussion above that, since the functor $\bLoc_{\calG}$ is
    fully faithful, we have an equivalence $\QCoh(\calG)\comod(\DGCat_k) \simeq \ShvCat(\calB\calG)$.
    Combining this equivalence with the ones in the chain above,
    we obtain an equivalence,
    \[\Map_{\Grp(\PreStk_k)}(\calG, \underline{\calAut}(\scrC)) \simeq \QCoh(\calG)\comod(\DGCat_k)^{\simeq} \times_{(\DGCat_k)^{\simeq}} \{\scrC\},\]
    as desired.
\end{proof}

\begin{rem}
    As noted above, $\bLoc_{\calG}$ is fully faithful when $\calG = G_B$
    for any $G \in \Grp(\Spc)$. In particular, \zcref{lem: equivalent defns of action}
    applies to the group prestack $S^1_B$, which utilize in the proof of the proposition below.
\end{rem}

\begin{prop}
	\label{prop: BG_a-actions are unipotent}
	Let $\scrC \in \DGCat_k$ be a dg-category
	equipped with a $B\bbG_a$-action. Consider $\scrC$ as
	an $S^1$-category via the affinization map $S^1_B \to B\bbG_a$.
	Let $c \in \scrC$, and let $\rho: c \xrightarrow{\simeq} c$
	denote the automorphism induced by this $S^1$-action.
	Then $\rho$ is a unipotent operator\footnotemark if $c$ is compact.
	\footnotetext{i.e. $\rho \simeq 1_c + N_c$ for some
		operator $N_c$ such that $N_c^\ell \simeq 0$ for
		some $\ell \geq 0$.}
\end{prop}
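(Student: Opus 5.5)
We will reduce to the statement that a $B\bbG_a$-action on $\scrC$ is the data of a $\QCoh(B\bbG_a)$-comodule structure, and then analyse the induced monodromy. By \zcref{lem: equivalent defns of action} (which applies since $\bLoc_{B\bbG_a}$ is fully faithful), the $B\bbG_a$-action is a map of group prestacks $B\bbG_a \to \underline{\calAut}(\scrC)$, and the $S^1$-action is its restriction along $S^1_B \to B\bbG_a$. By \zcref{prop: equivalence of defintions of G-category} and the discussion preceding \zcref{lem: equivalent defns of action}, the $S^1$-action is equivalently a $\QCoh(S^1_B) \simeq \Fun(S^1, \Vect_k) \simeq k[S^1]$-local-systems comodule structure, and the $B\bbG_a$-structure refines it to a $\QCoh(B\bbG_a) \simeq \Rep(\bbG_a)$-comodule structure via pullback $\QCoh(B\bbG_a)\to \QCoh(S^1_B)$. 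Dually, passing to the endomorphisms of the identity functor, the $S^1$-action on $\scrC$ is encoded by the $\bbE_2$-map $C_*(\Omega S^1) = k[\bbZ] = k[t^{\pm 1}] \to \End(\id_\scrC)$ (the generator $t$ acting as the monodromy $\rho$), and the $B\bbG_a$-structure makes this factor through the distribution algebra $\calO(\bbG_a)^\vee$, i.e. through the completion $k[t^{\pm1}] \to k[\![t-1]\!]$ (the affinization $S^1\to B\bbG_a$ induces $k[\bbZ]\to$ the algebra of distributions on $\bbG_a$ at the origin, which is $k[\![s]\!]$ with $s=\log t$ up to unit).

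The key step is then: for $c$ compact, the action of $k[\![s]\!]$ on $c$, i.e. on $\End(c)$, is continuous in the sense that it comes from a $\QCoh(B\bbG_a)$-coaction, so the coaction map $c \mapsto \mathrm{coact}(c) \in \scrC \otimes \Rep(\bbG_a)$ sends $c$ to a compact object, which lies in the subcategory generated by finitely many pieces of the filtration of $\calO(\bbG_a) = k[x]$ by degree. Concretely, the coaction of $c$ is an object of $\scrC\otimes\Rep(\bbG_a)$, whose underlying object is $c\otimes \calO(\bbG_a)$-comodule; compactness of $c$ implies that $\mathrm{Hom}(c,c)\to \mathrm{Hom}(c, c\otimes k[x])=\colim_n \mathrm{Hom}(c, c\otimes k[x]_{\le n})$ factors through a finite stage $n=\ell$. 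This finite-stage factorization says that the action of $s$ (dual to $x$) on $c$ satisfies $s^{\ell+1}\simeq 0$, since $s^{m}$ pairs trivially with $k[x]_{\le \ell}$ for $m>\ell$.

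Finally, writing $\rho = \exp(s)$ (the image of $t$ under $k[t^{\pm1}]\to k[\![s]\!]$), we get $\rho = 1_c + N_c$ with $N_c = s + s^2/2+\cdots$, a finite sum since $s$ acts nilpotently, and $N_c^{\ell+1}\simeq 0$. Characteristic zero is used for the logarithm/exponential identification. The main obstacle is the middle step: making precise, at the homotopy-coherent level, that the $S^1$-monodromy of $c$ is computed by the coaction through $\calO(\bbG_a)$ and that compactness of $c$ forces the coaction to factor through a finite filtration stage $k[x]_{\le \ell}$; I would try to do this by evaluating on compact objects $\Hom(c,-)$, reducing to a statement about $\bbG_a$-representations on the (filtered colimit) mapping complex, where local finiteness of $\bbG_a$-comodules gives the nilpotence of the Lie algebra element $s$.
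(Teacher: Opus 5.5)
Your argument is correct and is essentially the paper's proof in dual form. The paper takes the universal point $u_x=\rho_{k[x]}(x)\in\End_{\scrC}(c)\otimes_k k[x]$, which is a finite polynomial $\sum a_i x^i$ because $c$ is compact and $k$ is compact in $\Vect_k$; this is exactly your finite-stage factorization. It then compares coefficients in $u_{s+t}\simeq u_s\circ u_t$ to obtain $\binom{\alpha+\beta}{\beta}a_{\alpha+\beta}\simeq a_\alpha\circ a_\beta$, which is precisely the multiplicativity of your pairing $k[\![s]\!]\to\End(c)$: your $s$ acts as $a_1$ and $s^m$ as $m!\,a_m$. Finally, like your $\rho=\exp(s)$, it identifies the monodromy as $\rho\simeq\ev_1^*u_x$ by restricting along $\underline{\bbZ}\to\bbG_a$.
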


\begin{proof}
	The choice of an object $c \in \scrC$ defines a
	map $c: \Vect_k \to \scrC$. For each $R \in \CAlg_k^{\cn}$,
	there is a natural $k$-linear functor $- \otimes_k R: \scrC \to \scrC \otimes_k R$
	given by extension of scalars. Precomposing this functor with $c$,
	we get a $k$-linear functor which under adjunction corresponds
	to an $R$-linear functor $\Mod_R \simeq \Vect_k \otimes_k R \to \scrC \otimes_k R$, under
	which the generator $R \in \Mod_R$ is sent to the object $c \otimes_k R$.
	Because of the naturality of this construction, these functors
	assemble into a natural transformation $\underline{c}: \underline{\Vect_k} \to \underline{\scrC}$,
	which we view as a morphism in the $\infty$-category $\PreStk_k = \Fun(\CAlg_k^{\cn}, \Spc)$.
	
	On the other hand, a $B\bbG_a$-action on $\scrC$ is the data of a map
	$B\bbG_a \xlongrightarrow{} \underline{\calAut}(\scrC)$ in 
    $\Grp(\PreStk_k)$ (i.e. a natural transformation). Note that, by \cite[Remark 2.1.3.4]{HA},
    we have an equivalence 
    \[\Fun(\CAlg_k^{\cn}, \Grp(\Spc)) \simeq \Grp(\Fun(\CAlg_k^{\cn}, \Spc)).\]
    As noted above, the category $\PreStk$ is an $\infty$-topos, so by \cite[Notation 5.3.6.11]{HA},
    we have an adjunction,
    \[\begin{tikzcd}
	{\calB: \Grp(\PreStk_k)} & {(\PreStk_k)_{\Spec k/}: \Omega_{\ast}}.
	\arrow[shift left, from=1-1, to=1-2]
	\arrow[shift left, from=1-2, to=1-1]
    \end{tikzcd}\]
    Note that, by \cite[Lemme 2.2.5]{ToenChampsAffine},
    we have that $\calB \bbG_a \simeq B\bbG_a$, where the latter
    stack corresponds to the underlying prestack
    of the delooping of $\bbG_a$ as an object
    of $\Grp(\Stk_k)$, rather than of $\Grp(\PreStk_k)$.
    Thus, under the above adjunction, the action $\bbE_1$-algebra map
    $B\bbG_a \to \underline{\calAut}(\scrC)$ corresponds to the
    data of an $\bbE_2$-algebra map
    $\bbG_a \to \Omega_*\underline{\calAut}(\scrC) \simeq \Aut_{\underline{\calAut}(\scrC)}(\underline{\id_{\scrC}})$,
	which, in particular, gives a map $\underline{\rho}: \bbG_a \to \Aut(\underline{c})$, for each $c \in \scrC$.
	This map induces a map of $R$-points,
	\[R \simeq \bbG_a(R) \xrightarrow{\rho_R} \Aut_{\scrC \otimes_k R}(c \otimes_k R).\]
	If $c$ is compact, the canonical map 
	\begin{equation}
		\label{eqn: canonical identification}
		\End_{\scrC}(c) \otimes_k R \xrightarrow{} \End_{\scrC \otimes_k R}(c \otimes_k R)
	\end{equation}
	is an equivalence, so we may regard $\Aut_{\scrC \otimes_k R}(c \otimes_k R)$ as an $\bbE_1$-subspace
	of $\End_{\scrC}(c) \otimes_k R$.
	Now take $R = k[x]$, and consider the element $x \in k[x]$, 
	which we regard as a map $x: \ast \to k[x]$. The composition $\rho_{k[x]}(x)$ gives
	a point of $\End_{\scrC}(c) \otimes_k k[x]$. On the other hand, we have
	\begin{align*}
		\Map_k(k, \End_{\scrC}(c) \otimes_k k[x_1, \ldots x_n]) &\simeq \Map_k\left(k, \End_{\scrC}(c) \otimes_k (\oplus_{i_1, \ldots i_n \geq 0} k \cdot x_1^{i_1}\cdots x_n^{i_n})\right) \\
		&\simeq \Map_k\left(k, \oplus_{i_1, \ldots i_n \geq 0}\left( \End_{\scrC}(c) \cdot x_1^{i_1}\cdots x_n^{i_n}\right)\right) \\
		&\xleftarrow{\simeq} \bigoplus_{i_1, \ldots i_n \geq 0} \Map_k\left(k, \End_{\scrC}(c) \cdot x_1^{i_1}\cdots x_n^{i_n} \right) \\
		&\simeq \bigoplus_{i_1, \ldots i_n \geq 0} \Omega^{\infty}\End_{\scrC}(c) \cdot x_1^{i_1}\cdots x_n^{i_n}
	\end{align*}
	where the notation ``$x_1^{i_1}\cdots x_n^{i_n}$" is a suggestive way of writing
	the index, and the arrow in the third line is an equivalence
	because $k \in \Vect_k$ is compact. Thus, a point of 
	$\End_{\scrC}(c)[x_1, \ldots, x_n] := \End_{\scrC}(c) \otimes_k k[x_1, \ldots, x_n]$ is precisely a sum, or polynomial,
	\[\sum_{(i_1, \ldots, i_n) \in \bbN^n} a_{i_1 \ldots i_n} x_1^{i_1}\cdots x_n^{i_n},\]
	where $a_{i_1 \ldots i_n} \in \Omega^{\infty}(\End_{\scrC}(c))$. In the following, let $u_f$
	denote the polynomial given by $\rho_{k[x_1, \ldots, x_n]}(f)$, for $f \in k[x_1, \ldots, x_n]$.
	
	Because $\varphi: \bbG_a \to \Aut(\underline{c})$
	is a natural transformation, we have a commutative diagram of $\bbE_1$-spaces
	corresponding to any given $k$-algebra map $\varphi: k[x] \to k[t,s]$:
	\[\begin{tikzcd}
		{k[x] \simeq \bbG_a(k[x])} & {\End_{\scrC}(c)[x]} \\
		{k[t,s] \simeq \bbG_a(k[t,s])} & {\End_{\scrC}(c)[t,s]}
		\arrow["{\rho_{k[x]}}", from=1-1, to=1-2]
		\arrow["\varphi"', from=1-1, to=2-1]
		\arrow["{\varphi^*}", from=1-2, to=2-2]
		\arrow["{\rho_{k[t,s]}}", from=2-1, to=2-2]
	\end{tikzcd}\]
	where we have used the fact that $\End(\underline{c})(\varphi)$
	corresponds to the pullback of polynomials $\varphi^*$
	under the identification \zcref[noname]{eqn: canonical identification}.
	Now let $\varphi_s: k[x] \to k[t,s]$, $\varphi_t: k[x] \to k[t,s]$, 
	and $\varphi_{s+t}: k[x] \to k[t,s]$, respectively, denote the maps defined by sending the generator
	$x$ to $s$, $t$, and $s+t$, respectively.
	Then the commutative diagram implies that $\varphi_s^*u_x \simeq \rho_{k[t,s]}(s) = u_s$, 
	$\varphi_t^*u_x \simeq \rho_{k[t,s]}(t) = u_t$,
	and $\varphi_{s+t}^*u_x \simeq \rho_{k[t,s]}(s+t) = u_{s+t}$.
	On the other hand, the fact that $k[t,s] \simeq \bbG_a(k[t,s]) \to \End(c)[t,s]$ is map of $\bbE_1$-spaces
	tells us that $u_{s+t} \simeq u_s \circ u_t$. Thus, we deduce that
	\begin{equation*}
		\varphi_{s+t}^*u_x \simeq \varphi_s^*u_x \circ \varphi_t^*u_x.
	\end{equation*}
	If $u_x = a_0 + a_1x + \cdots + a_n x^n$, where $a_i \in \Omega^{\infty}(\End(c))$,
	then this identity simplifies to the equivalence,
	\begin{equation}
		\label{eqn: key identity}
		\sum_{i=0}^n \sum_{k=0}^{i} \binom{i}{k} a_i s^{i-k}t^k \simeq \sum_{i=0}^n\sum_{j=0}^n a_i a_j s^i t^j,
	\end{equation}
	which imposes a relation on the coefficients $a_i$. To find this relation, we compare the coefficients
	of the components on each side of \zcref[noname]{eqn: key identity} above.
	On the left-hand side, the coefficient of $s^\a t^\b$ is $\binom{\a+\b}{\b} a_{\a+\b}$, while on the right-hand
	side, $s^{\a}t^{\b}$ is $a_{\a} \circ a_{\b}$, so we obtain the relation,
	\[\binom{\a+\b}{\b} a_{\a+\b} \simeq a_{\a}\circ a_{\b},\]
	which can be seen by induction to impose the relation $k^!a_k \simeq a_1^k$
	for any $1 \leq k \leq n$. Thus, $a_1$ is seen to be nilpotent (note that the relation above holds even
	when an endomorphism $a_i$ is zero, as it will be for sufficiently large $i$),
	so $u_x$ has the form $1 + Nx + \cdots N^n x^n$ for some nilpotent operator
	$N \in \Omega^{\infty}(\End(c))$.
	
	The map $\bbG_a \to \underline{\calAut}(c)$
	pulls back along the map $\underline{\bbZ} \to \bbG_a$ to obtain
	the map of prestacks $\underline{\bbZ} \to \underline{\calAut}(c)$.	
	The canonical automorphism $\rho: c \xrightarrow{\simeq} c$
	given by the image of $1 \in \bbZ$ under the 
	induced morphism $\bbZ \to \Aut_{\scrC}(c)$ in the usual
	definition of $S^1$-action on $\scrC$ is obtained
	by looking at the map on $k$-points induced by the map $\underline{\bbZ} \to \underline{\calAut}(c)$.
	By naturality, we obtain a commutative diagram,
	\[\begin{tikzcd}
		{\bbZ \simeq \underline{\bbZ}(k[t])} & {k[x] \simeq \bbG_a(k[t])} & {\End_{\scrC}(c)[x]} \\
		{\bbZ \simeq \underline{\bbZ}(k)} & {k \simeq \bbG_a(k)} & {\End_{\scrC}(c)}
		\arrow["1", from=1-1, to=1-2]
		\arrow["{\id_{\bbZ}}"', from=1-1, to=2-1]
		\arrow["{\rho_{k[t]}}", from=1-2, to=1-3]
		\arrow["{\ev_1}", from=1-2, to=2-2]
		\arrow["{\ev_1^*}", from=1-3, to=2-3]
		\arrow["1", from=2-1, to=2-2]
		\arrow["{\rho_k}", from=2-2, to=2-3],
	\end{tikzcd}\]
	where $1: \bbZ \to k[x]$ and $1: \bbZ \to k$ denote
	the maps given by $1 \mapsto 1$. This diagram tells us
	that the canonical automorphism $\rho: c \to c$ is
	given by $\ev_1^*u_x \simeq 1 + N + \cdots N^n \in \Omega^{\infty}(\End_{\scrC}(c))$.
	The latter is clearly a unipotent endomorphism, as claimed.
\end{proof}

\subsection{Stacks}

\subsubsection{Coherent (ind-)geometric stacks}
We seek to apply \cite[Proposition 5.30]{CautisWilliams} to the stacks which we study in this paper. To do this, we record results below which show that all stacks considered in the paper are coherent ind-geometric in the sense of Definitions 4.1 and 4.3 of \emph{op. cit.}
\begin{lem}\label{lem: formal completions are ind-geometric}
	Let $Z$ be a closed substack of a geometric stack $X$ almost of finite type in the sense of \cite{CautisWilliams}. Then $\wih{X_Z}$ is a laft reasonable ind-geometric stack.
\end{lem}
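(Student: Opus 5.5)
We will exhibit $\wih{X_Z}$ explicitly as a filtered colimit of closed nilthickenings of $Z$ inside $X$, and then check the definitions of \cite{CautisWilliams} term by term.

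\emph{Step 1: the presentation.} Let $\calI$ be the ideal sheaf of $Z^{\cl}$ in $\calO_{X^{\cl}}$. Since $Z\hook X$ is almost of finite presentation and $X$ is almost of finite type (so locally noetherian), $\calI$ is finitely generated on a smooth atlas. Choose, smooth locally on $X$, generators $f_1,\dots,f_r$ of $\calI$. Let $Z_n\subset X$ be the derived zero locus of $f_1^n,\dots,f_r^n$. These loci do not depend on the choice of generators, up to cofinality, because formal completion only sees the reduced locus; this is exactly what permits the local-to-global gluing. Passing to cofinal systems, we expect
\[
\wih{X_Z}\simeq \colim_n Z_n
\]
as prestacks, with all transition maps closed immersions. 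To prove this we check it on $R$-points, where it is the standard description: an $R$-point of $\wih{X_Z}$ is an $R$-point of $X$ whose restriction to $\pi_0(R)^{\red}$ factors through $Z$. This reduces, after smooth base change along an atlas $U\to X$, to the affine statement, which is classical. Formal completion commutes with smooth base change, so the presentation descends from $U$ to $X$.

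\emph{Step 2: each $Z_n$ is a laft geometric stack.} Each $Z_n$ is a closed substack of $X$, hence geometric with affine diagonal. It is almost of finite type because it is cut out by finitely many equations in an almost finite type stack. This yields an ind-geometric presentation by closed immersions, so $\wih{X_Z}$ is ind-geometric in the sense of \cite[Definition 4.1]{CautisWilliams}.

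\emph{Step 3: reasonableness.} By definition, reasonableness asks that the transition maps $Z_n\to Z_m$ be closed immersions of finite presentation, or that their truncated structure sheaves be almost perfect. Since all $Z_n$ are locally noetherian and laft, the pushforward $\calO_{Z_n}$ is coherent on $Z_m$, which gives this condition. Coherence of $\wih{X_Z}$ then follows because each $Z_n$ is coherent: it is locally noetherian, so its structure sheaf is coherent. Laftness of the colimit follows from laftness of the terms, since filtered colimits along closed immersions preserve the laft condition.

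\emph{Main obstacle.} We expect the hardest step to be making Step 1 global on a stack rather than an affine, in the derived setting. The $Z_n$ are only defined smooth locally, and they must be glued compatibly with descent. We would handle this by instead defining $Z_n$ intrinsically. Either take the $n$-th infinitesimal neighborhood via the square-zero/deformation description, or use the truncations $\tau$ together with the classical $n$-th neighborhood and then a derived refinement given by Koszul-type quotients; either way the construction is canonical up to a cofinal pro-system. Cofinality is all that matters for the colimit, so this suffices.
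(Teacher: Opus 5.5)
Your affine-local claim in Step 1 is fine: for $X=\Spec A$ and $I=(f_1,\dots,f_r)\subset\pi_0A$, the colimit of the derived Koszul loci $\Spec\bigl(A/\!/(f_1^n,\dots,f_r^n)\bigr)$ is $\wih{X_Z}$. The gap is the globalization, and that is the entire content of the lemma. Ind-geometricity asks for a \emph{global} presentation of $\wih{X_Z}$ by geometric stacks. Your sentence ``formal completion commutes with smooth base change, so the presentation descends'' conflates two different objects. What descends is the colimit, $\wih{X_Z}\times_XU\simeq\wih{U_{Z\times_XU}}$, not the ind-system. The Koszul loci depend on the chosen generators. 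They do not agree on $U\times_XU$, even up to cofinality, without additional coherent descent data, and there is no general descent for ind-systems. So you never obtain global stacks $Z_n$ to which Steps 2 and 3 could apply. You flag this in your last paragraph, but the remedy is only named, not carried out. Moreover, once you switch to intrinsically defined neighborhoods, Steps 2 and 3 no longer apply as written, since they rest on $Z_n$ being cut out by finitely many equations.

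Your first suggested remedy is exactly the route the paper takes, and the missing work is what the paper supplies.
\begin{enumerate}
\item It replaces $Z$ by $Z_{\red}$.
\item It uses the canonical infinitesimal neighborhoods of \cite[Chapter 9, Corollary 5.2.3]{GR17II}. These are built inductively as square-zero extensions $Z_{n-1}\hookrightarrow Z_n$ and satisfy $\wih{X_Z}\simeq\colim_nZ_n$ globally.
\item It observes that square-zero extensions of stacks affine over $X$ remain affine over $X$, so each $Z_n$ is geometric.
\item It identifies the square-zero ideal as $i_{n-1,*}\Sym^n(\mathbb{L}_{Z/X}[-1])$, via \cite[Chapter 8, Proposition 5.5.5]{GR17II} and Serre duality.
\end{enumerate}
That this ideal is connective and almost perfect is what yields, inductively and flat-locally, that each $Z_n$ is almost of finite type and that the transition maps are almost finitely presented closed immersions. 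Your proposal has no substitute for this computation.

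There is also a smaller omission in Step 3. A reasonable presentation in the sense of \cite{CautisWilliams} requires truncated terms, and the $Z_n$ need not be truncated when $X$ is not. The paper fixes this by passing to $\colim_{n,m}\tau_{\le m}Z_n$, which is legitimate because the colimit is formed in convergent stacks.
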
 
\begin{proof}
	Note that $\wih{X_Z}$ is a laft prestack since $X$ (and hence $Z_{\text{red}}$) is laft. We first check that $Z$ is a geometric stack. Since the map $Z\to X$ is a closed immersion, pulling back the geometric atlas $U\to X$ gives a faithfully flat map $U\times _X Z\to Z$. Moreover, the map $U\times _XZ\to U$ is a closed immersion; hence, $U\times _XZ$ is affine by \cite[Proposition 3.22]{CautisWilliams}. This implies that the map $Z\to X$ is affine by \textit{loc. cit.} Now, since formal completions only depend on the underlying reduced stack, we can assume $Z=Z_{\text{red}}$ is reduced and classical. \cite[Chapter 9 Corollary 5.2.3]{GR17II} identifies 
	\begin{align*}
		\wih{X_Z}\simeq\colim_n Z_n,
	\end{align*} 
	where the $Z_n$ are the infinitesimal neighborhoods of $Z$ constructed in \emph{loc. cit.} The construction therein inductively defines the $Z_n$ by maps
	\begin{align*}
		i_{n-1,*}(\Sym_!^n (\mathbb{T}_{Z /X}[1])[-1])\to \mathbb{T}_{Z_{n-1}/X},
	\end{align*} 
	where $i_{n-1}:Z\to Z_{n-1}$ and $\Sym_!$ denotes the symmetric algebra with respect to the tensor product $\overset{!}{\otimes }$ on $\mathsf{IndCoh}(Z)$.\footnote{Here, $\mathbb{T}_{Z /X}$ denotes the ind-coherent tangent complex of \cite[Chapter 1 Section 4.4]{GR17II}.} By Chapter 8 Proposition 5.5.5 of \emph{op. cit.}, such a map corresponds to a square-zero extension of $Z_{n-1}$ by an ideal $\mathcal{I}_n$. At this point, we can already argue that the maps $Z_n\to X$ are affine and the $Z_n$ are geometric. Arguing inductively, each $U_n=Z_n\times _XU$ is a square-zero extension of an affine scheme. Thus, the maps $U_n\to U$ are affine, and they descend along the faithfully flat atlas to $Z_n\to X$. Similarly, the same argument shows that the maps $i_n$ are affine. Now, we claim that the ideal $\mathcal{I}_n$ is precisely 
	\begin{align*}
		\mathcal{I}_n\simeq i_{n-1,*}(\Sym^n(\mathbb{L}_{Z /X}[-1]))\in \mathsf{APerf}(Z_{n-1})^{\le 0}.\footnotemark
	\end{align*}  
	\footnotetext{Here, the symmetric algebra is with respect to the usual tensor product on $\mathsf{QCoh}(Z)$.}
	We will first show how this proves the lemma and then identify $\mathcal{I}_n$ with this almost perfect complex. Note that $\mathcal{I}_n$ is indeed connective, since the affine pushforwards $i_{n-1,*}$ are $t$-exact on $\mathsf{QCoh}$ and the closed immersion $Z\to X$ implies that $\mathbb{L}_{Z /X}[-1]$ is connective. Furthermore, $Z$ is almost of finite type: It is classical, so it admits a classical closed immersion into $X^{\text{cl}}$ which is finite type. The $Z_n$ are also almost of finite type. Indeed, arguing inductively again, a square-zero extension of an almost finite type stack by a connective almost perfect complex is itself almost of finite type. This can be checked flat locally using the geometric atlas, where it reduces to the statement that if
	\begin{align*}
		I\to B'\to B
	\end{align*} 
	is a square-zero extension with $B$ almost of finite type and $ I\in \mathsf{APerf}(B)^{\le 0}$, then $\mathrm H^0(B')$ is a square-zero extension of $\mathrm H^0(B)$ by a finite module. Thus, it is of finite type and Noetherian. We then use the long exact sequence in cohomology to conclude that each $\mathrm H^i(B')$ is finite over $\mathrm H^0(B')$ as desired. Thus, we have a presentation 	
	\begin{align*}
		\wih{X_Z}\simeq\colim_n Z_n,
	\end{align*} 
	with all structure maps almost finitely presented closed immersions.\footnote{Each $i_{n}$ is a square-zero extension by a connective almost perfect complex. Arguing inductively and working flat locally, the square-zero ideal $ I$ fits into a fiber sequence
		\begin{align*}
			I\to B'\to B.
		\end{align*} 
	Since $I$ is connective, we see that $\mathrm H^0(B')\to \mathrm H^0(B)$ is surjective as required.} We can furthermore take all $Z_n$ truncated. Indeed, write $Z_{n,m}\simeq\tau_{\le m}Z_n$. Then we can write
	\begin{align*}
		\wih{X_Z}\simeq\colim_{n,m}Z_{n,m},
	\end{align*} 
	which is a reasonable ind-geometric presentation.

	We will now show the isomorphism 
	\begin{align*}
		\mathcal{I}_n\simeq i_{n-1,*}(\Sym^n(\mathbb{L}_{Z /X}[-1])).
	\end{align*} 
	By \cite[Chapter 8 Proposition 5.5.5]{GR17II}, it is sufficient to show that 
	\begin{align*}
		\mathbb{D}^{\text{Serre}}_{Z_{n-1}}\left(i_{n-1,*}(\Sym^n(\mathbb{L}_{Z /X}[-1]))\right)\simeq i_{n-1,*}(\Sym^n(\mathbb{T}_{Z /X}[1])).
	\end{align*} 
	We then compute
	\begin{align*}
		\mathbb{D}^{\text{Serre}}_Z(\mathbb{L}_{Z/X}[-1])\simeq \mathbb{T}_{Z /X}[1].
	\end{align*} 
	Hence,
	\begin{align*}
		\mathbb{D}^{\text{Serre}}_{Z_{n-1}}(i_{n-1,*}(\Sym^n(\mathbb{L}_{Z /X}[-1])))\simeq &i_{n-1,*}\left( \mathbb{D}^{\text{Serre}}_Z(\Sym^n(\mathbb{L}_{Z /X}[-1])) \right) \\
		\simeq&i_{n-1,*}(\Sym_!^n (\mathbb{T}_{Z /X}[1])),
	\end{align*} 
	where we note that Serre duality exchanges the usual tensor product on $\mathsf{QCoh}(Z)$ with the tensor product $\overset{!}{\otimes }$ on $\mathsf{IndCoh}(Z)$.
\end{proof}

\begin{lem}\label{lem: formal completions of aft algebraic stacks are coherent ind-geometric}
	Suppose that $Z$ is a closed substack of an algebraic stack $X$ almost of finite type and with affine diagonal. Suppose also that $Z\to X$ is almost of finite presentation. Denote by $U$ the open complement to $Z\to X$. Then $\wih{X_Z}$ is a coherent ind-geometric stack, and $Z$, $U$, and $X$ are coherent geometric stacks.
\end{lem}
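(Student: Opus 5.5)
We split the statement into two parts: first, that $X$, $Z$ and $U$ are coherent geometric stacks; second, that $\wih{X_Z}$ is a coherent ind-geometric stack.

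\textbf{Geometricity and coherence of $X$, $Z$, $U$.} Since $X$ is algebraic with affine diagonal, it is geometric in the sense of \cite{CautisWilliams}. Choose a smooth atlas $V \to X$ by an affine (or a disjoint union of affines). Affine diagonal ensures that all fiber products $V\times_X V$ are affine.

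For coherence we must show that $\Coh$ is well-behaved, i.e.\ that the atlas can be taken by coherent affines. Because $X$ is almost of finite type over the field $k$, each such affine $\Spec A$ has $\pi_0A$ Noetherian and each $\pi_iA$ finitely generated over $\pi_0A$. Hence $A$ is coherent in the sense of \cite[Definition 4.1]{CautisWilliams}. Coherence is flat local, so it descends from $V$ to $X$.

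For $Z$, the proof of \zcref{lem: formal completions are ind-geometric} already shows that $Z \to X$ is affine. Therefore $V\times_X Z$ is affine and gives a smooth atlas of $Z$. Since $Z\to X$ is almost of finite presentation, $V\times_X Z$ is almost of finite type, hence coherent. The same argument applies to $U$: it is an open substack, so $V\times_X U$ is quasi-compact open in $V$. Its diagonal is the restriction of that of $X$, hence affine. Covering $V\times_X U$ by finitely many principal opens gives a coherent affine atlas.

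\textbf{The formal completion.} Here we invoke \zcref{lem: formal completions are ind-geometric}, which presents
\[\wih{X_Z}\simeq\colim_{n,m} Z_{n,m}\]
as a reasonable laft ind-geometric stack. The transition maps are almost finitely presented closed immersions, and each $Z_{n,m}$ is truncated and almost of finite type. By the previous paragraph applied to each $Z_{n,m}$ (each is a closed substack of $X$, or square-zero thickenings of such, with affine maps to $X$), every $Z_{n,m}$ is a coherent geometric stack. It remains to check the remaining condition of \cite[Definition 4.3]{CautisWilliams}: that the transition pushforwards preserve coherence and are compatible, i.e.\ that the presentation is by almost finitely presented closed immersions between coherent geometric stacks. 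This is exactly what the previous lemma provides.

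\textbf{Main obstacle.} The main difficulty is matching the precise hypotheses of coherence in \cite{CautisWilliams}, namely that the affine charts be coherent rings and that coherence descend. I would first try to reduce everything to the Noetherian/almost-finite-type case, citing the statement in \emph{op.\ cit.} that Noetherian laft affines are coherent.
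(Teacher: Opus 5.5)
Your proposal is correct and follows essentially the same route as the paper: pull back a smooth affine atlas of $X$ (Noetherian, hence coherent, because $X$ is almost of finite type) to $Z$, to $U$, and to the infinitesimal neighborhoods $Z_n$ (which are affine over $X$ and almost of finite type by \zcref{lem: formal completions are ind-geometric}), and then use the presentation $\wih{X_Z}\simeq\colim_{n,m}Z_{n,m}$ from that lemma. You argue that $V\times_X U$ is quasi-compact because it is open in a Noetherian affine; this is, if anything, a bit more careful than the paper's bare appeal to the quasi-compactness of $X$.
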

\begin{proof}
	We seek to apply \zcref{lem: formal completions are ind-geometric}. By assumption $X$ admits a smooth affine atlas $\mathcal{U}\to X$, which we can use both as the geometric atlas and the coherent atlas. Indeed, by \cite[Proposition 7.2.4.31]{HA}, $\mathcal{U} =\Spec A$ is Noetherian, since $\mathcal{U}$ is almost of finite type. This implies that $X$ is coherent by the remark after \cite[Definition 3.3]{CautisWilliams}. This allows us to apply \zcref{lem: formal completions are ind-geometric}. Moreover, pulling back the atlas $\mathcal{U}\to X$ to $Z$ gives a geometric atlas for $Z$. Since $Z\to X$ is a closed immersion almost of finite presentation, this implies that $\mathcal{U}\times _XZ$ is also a Noetherian affine scheme, which gives a faithfully flat (in fact, smooth) atlas of $Z$. Thus, $Z$ is coherent.

	We next claim that the $Z_n$ defined in the proof of \zcref{lem: formal completions are ind-geometric} are also coherent geometric stacks. Pulling back the smooth atlas $\mathcal{U}\to X$ to $Z_ n$ to obtain $\mathcal{U}_n\coloneq Z_n\times _X\mathcal{U}$, these are affine schemes $\mathcal{U}_n=\Spec A_n$ as in the proof of \zcref{lem: formal completions are ind-geometric}. Moreover, $\mathcal{U}_n$ is laft because $\mathcal{U},X,$ and $Z_n$ are. Thus, $\mathcal{U}_n$ is almost of finite type, so $A_n$ is Noetherian. The presentation
	\begin{align*}
		\wih{X_Z}\simeq\colim_{n,m}Z_{n,m}
	\end{align*} 
	then supplies the desired coherent ind-geometric presentation of $\wih{X_Z}$.

	Lastly, we show that $U$ is a coherent geometric stack. Pulling back the smooth atlas $\mathcal{U}\to X$ we obtain a smooth atlas of $U$. Since $U$ is quasi-compact,\footnote{Note that $X$ is quasi-compact, as it is almost of finite type.} this induces a smooth affine atlas $\mathcal V\to U$. Hence, $U$ is geometric. Moreover, $\mathcal{V}\to \mathcal{U}$ is locally of finite presentation, since it is in particular an open immersion. This implies that $\mathcal{V}$ is laft and affine, hence almost of finite type. Thus, it is the spectrum of a Noetherian $k$-algebra, as required.
\end{proof}

\begin{cor}\label{cor: loop spaces of formal completions are ind-geometric}
	In the setup of \zcref{lem: formal completions of aft algebraic stacks are coherent ind-geometric}, $\mathcal{L}(\wih{X}_Z)$ is a laft coherent ind-geometric stack, and $\mathcal{L}X$, $\mathcal{L}Z$, and $\mathcal{L}U$ are coherent geometric stacks.
\end{cor}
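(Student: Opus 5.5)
The plan is to exploit that the loop space is a fiber product, $\calL Y \simeq Y \times_{Y \times Y} Y$. We then check the defining conditions of (coherent, ind-)geometric stacks from \cite{CautisWilliams} on this fiber product. The conditions to check are a flat atlas, affine diagonal, the coherence/Noetherian condition on the atlas, and laft-ness.

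For the geometric cases $X$, $Z$, $U$, we use \zcref{lem: formal completions of aft algebraic stacks are coherent ind-geometric}: each of these is a coherent geometric stack almost of finite type with affine diagonal. The affineness of the diagonal passes to $Z$ and $U$ because the immersions are monomorphisms.

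\begin{itemize}
\item Since $Y$ has affine diagonal, the map $\calL Y \to Y$ is the base change of $\Delta: Y \to Y \times Y$ along $\Delta$, so it is affine.
\item Pulling back a smooth affine Noetherian atlas $\Spec A \to Y$ therefore gives a smooth atlas $\calL Y \times_Y \Spec A$, which is affine. It is equal to $\Spec A \times_{Y\times Y} Y$, i.e.\ $\Spec A \times_{Y \times Y} \Spec A$-type data.
\item This scheme is almost of finite type: it is a fiber product of laft stacks, and $\calL Y$ is laft because laft prestacks are closed under finite limits. Being affine and almost of finite type, its ring is Noetherian by \cite[Proposition 7.2.4.31]{HA}.
\item The diagonal of $\calL Y$ is a base change of diagonals built from $Y$, so it is again affine.
\end{itemize}

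Together these show that $\calL X$, $\calL Z$, $\calL U$ are coherent geometric.

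For $\calL(\wih{X}_Z)$, take the presentation $\wih{X_Z} \simeq \colim_{n,m} Z_{n,m}$ from \zcref{lem: formal completions are ind-geometric} and \zcref{lem: formal completions of aft algebraic stacks are coherent ind-geometric}. Its terms are coherent geometric truncated stacks, and the transition maps are almost finitely presented closed immersions.

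The key claim is that $\calL(\colim Z_{n,m}) \simeq \colim \calL Z_{n,m}$. The argument: $\calL$ is a finite limit $Y \times_{Y\times Y} Y$, and filtered colimits of prestacks commute with finite limits. Since the index category is filtered, the diagonal $\mathbb{N}^2 \to \mathbb{N}^2 \times \mathbb{N}^2$ is cofinal, which gives $\colim_{n}(Z_n \times_{Z_n \times Z_n} Z_n)$. One must also check that sheafification does not disturb this, or just work in prestacks, as \cite{CautisWilliams} permits for ind-geometric stacks.

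Then:
\begin{itemize}
\item By the previous paragraph, each $\calL Z_{n,m}$ is coherent geometric.
\item The transition maps $\calL Z_{n,m} \to \calL Z_{n',m'}$ are closed immersions, being base changes and products of closed immersions. They are almost of finite presentation, since this property is stable under fiber products.
\item The terms may fail to be truncated. If needed, we retruncate as in the proof of \zcref{lem: formal completions are ind-geometric}, writing $\colim_{n,m,\ell} \tau_{\le \ell}\calL Z_{n,m}$.
\item Laft-ness again follows from closure under finite limits.
\end{itemize}

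I expect the main obstacle to be the commutation of $\calL$ with the ind-presentation together with verifying the closed-immersion condition. The source of difficulty is that the closed immersion $\calL Z_n \to \calL Z_{n'}$ is a map between fiber products and is not itself literally a base change. I would handle it by factoring it as
\[Z_n \times_{Z_n^2} Z_n \to Z_{n'}\times_{Z_{n'}^2} Z_n \to Z_{n'}\times_{Z_{n'}^2} Z_{n'},\]
where each step is a base change of a closed immersion. The first step is a base change of the relative diagonal of $Z_n \to Z_{n'}$, which is an isomorphism for a monomorphism. The second step is a base change of $Z_n \to Z_{n'}$.
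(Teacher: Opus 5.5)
Your argument is correct in outline, but it takes a different route from the paper.

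\textbf{The paper's route.} The paper never pushes $\calL$ through the ind-presentation. It observes three things:
\begin{enumerate}
\item $\calL X$ is itself a quasi-compact algebraic stack almost of finite type with affine diagonal;
\item $\calL Z\to\calL X$ is an almost finitely presented closed immersion (\zcref{lem: loops of closed immersion is closed immersion});
\item its open complement is $\calL U$ (\zcref{cor: loops of complement}).
\end{enumerate}
It then simply reapplies \zcref{lem: formal completions of aft algebraic stacks are coherent ind-geometric} to the pair $\calL Z\hookrightarrow\calL X$. Combined with $\calL(\wih{X_Z})\simeq\wih{\calL X_{\calL Z}}$ (\zcref{lem:loops-commute-with-formal-completion}), this yields all four claims at once. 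The resulting presentation of $\calL(\wih{X_Z})$ is by the truncated infinitesimal neighborhoods of $\calL Z$ in $\calL X$.

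\textbf{Your route.} You present $\calL(\wih{X_Z})$ as $\colim \calL Z_{n,m}$. This costs you three verifications that the paper's route gets for free:
\begin{enumerate}
\item left exactness of filtered colimits in the category of (convergent) stacks in which the presentation lives;
\item the re-truncation, which is genuinely needed because loop spaces of truncated stacks are usually unbounded, and which recovers $\calL(\wih{X_Z})$ only as a colimit in convergent stacks;
\item the closed-immersion property of the transition maps.
\end{enumerate}
In exchange you avoid \zcref{lem:loops-commute-with-formal-completion}, and your check of the geometric case is more explicit than the paper's one-line assertion.

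\textbf{A justification to repair.} Derived closed immersions are not monomorphisms: for instance, $\Spec k\times_{\mathbb{A}^1}\Spec k=\Spec(k\otimes_{k[x]}k)$ is not $\Spec k$. This affects two steps.
\begin{enumerate}
\item In your factorization, the relative diagonal $Z_n\to Z_n\times_{Z_{n'}}Z_n$ is not an isomorphism. It is, however, a closed immersion, since it is an isomorphism on classical truncations. Under your Noetherian hypotheses it is also almost of finite presentation. So the factorization still shows that $\calL Z_n\to\calL Z_{n'}$ is an almost finitely presented closed immersion. Alternatively, just cite \zcref{lem: loops of closed immersion is closed immersion}, which handles exactly this.
\item Affineness of the diagonal of $Z$ should come from $Z\to X$ being affine, which is established in the proof of \zcref{lem: formal completions are ind-geometric}. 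Use the factorization $Z\to Z\times_X Z\to Z\times Z$: the first map is the diagonal of an affine morphism and the second is a base change of $\Delta_X$. For $U$ your monomorphism argument is fine, since open immersions are monomorphisms.
\end{enumerate}
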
 
\begin{proof}
	If $X$ is a quasi-compact algebraic stack with affine diagonal, then so is $\mathcal{L}X$. Similarly, the map $\mathcal{L}Z\to \mathcal{L}X$ is a closed immersion by \zcref{lem: loops of closed immersion is closed immersion}. Since $Z$ and $X$ are almost of finite type, so are $\mathcal{L}Z$ and $\mathcal{L}X$. Thus, $\mathcal{L}Z\to \mathcal{L}X$ is almost finitely presented, which allows us to apply \zcref{lem: formal completions of aft algebraic stacks are coherent ind-geometric}. The argument for $\mathcal{L}U$ is as in the proof of \zcref{lem: formal completions of aft algebraic stacks are coherent ind-geometric}.
\end{proof}

\begin{lem}\label{lem: ind-noetherian stacks have ind-noetherian base changes}
	Let $X$ be a coherent ind-geometric stack in the sense of \cite[Definitions 4.1 and 4.2]{CautisWilliams}. Let $ A$ be a Noetherian $k$-algebra, 
    and let $X_A \coloneq X\times \Spec A$ denote the base change. If the coherent presentation $X\simeq\colim_n X_n$ has all 
    $X_n$ equipped with a faithfully flat atlas $\Spec A_n\to X_n$ such that $A_n$ are almost of finite type, then $X_A$ is coherent ind-geometric.
\end{lem}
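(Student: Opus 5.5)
The plan is to base change the given coherent ind-geometric presentation term by term and check each defining condition of \cite[Definitions 4.1 and 4.2]{CautisWilliams} for the new pieces. Since $- \times \Spec A$ preserves colimits in prestacks (fiber products in an $\infty$-topos commute with colimits), we get
\[X_A \simeq \colim_n X_n \times \Spec A = \colim_n (X_n)_A.\]
We then need three things: each $(X_n)_A$ is a coherent geometric stack, the transition maps $(X_n)_A \to (X_{n+1})_A$ are closed immersions almost of finite presentation, and the colimit has the required (reasonable/truncatedness) properties.

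\textbf{Step 1: the pieces are geometric.} We pull back the atlas along the projection to get $\Spec(A_n \otimes_k A) \to (X_n)_A$. This map is faithfully flat, since faithful flatness is stable under base change. The diagonal of $(X_n)_A$ is the base change of the diagonal of $X_n$ along $\Spec A \to \Spec A \times \Spec A$, hence affine. So $(X_n)_A$ is geometric. Quasi-compactness and truncatedness of $X_n$ are also preserved, because $\Spec A$ is an affine classical scheme and $- \otimes_k A$ is $t$-exact over a field.

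\textbf{Step 2: coherence.} This is the key point. By \cite[Proposition 7.2.4.31]{HA} it amounts to showing that $A_n \otimes_k A$ is coherent, in fact Noetherian in the appropriate derived sense: $\pi_0(A_n) \otimes_k A$ should be Noetherian and each $\pi_i(A_n) \otimes_k A$ should be finite over it. Since $\pi_0(A_n)$ is of finite type over $k$, the ring $\pi_0(A_n) \otimes_k A$ is a finitely generated $A$-algebra, hence Noetherian by the Hilbert basis theorem. Because $k$ is a field, $\pi_i(A_n \otimes_k A) \simeq \pi_i(A_n) \otimes_k A$. Each $\pi_i(A_n)$ is finite over $\pi_0(A_n)$, so these homotopy groups are finite over $\pi_0(A_n) \otimes_k A$. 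We then invoke the remark after \cite[Definition 3.3]{CautisWilliams}, exactly as in the proof of \zcref{lem: formal completions of aft algebraic stacks are coherent ind-geometric}, to conclude that $(X_n)_A$ is coherent.

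\textbf{Step 3: transition maps.} Closed immersions are stable under base change, as is almost finite presentation. For the latter we can also argue directly on atlases: with $\Spec(A_{n+1} \otimes A) \times_{(X_{n+1})_A} (X_n)_A$ affine, surjectivity on $\pi_0$ is preserved by the right exact functor $- \otimes_k A$. Almost finite presentation is preserved because the source and target are almost of finite type over the Noetherian ring $A$.

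The main obstacle is matching exactly the conditions in \cite{CautisWilliams}, which are phrased relative to coherent (not necessarily finite type over $k$) rings. In particular, we must confirm that finiteness over $A$ rather than over $k$ suffices wherever "almost of finite type" is used. Steps 2 and 3 above are where we would verify this.
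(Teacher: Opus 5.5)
Your proposal is correct and follows the same route as the paper: commute $-\times\Spec A$ with the colimit (the paper notes the presentation is a filtered colimit in convergent stacks rather than prestacks, and uses that finite limits commute with it there), base change the atlases, and deduce that $A_n\otimes_k A$ is Noetherian, so each $(X_n)_A$ is locally Noetherian and hence coherent. Your explicit checks of the diagonal, truncatedness, and the base-changed transition maps fill in points the paper's proof leaves implicit.
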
 
\begin{proof}
	Since the colimit $\colim_n X_n$ is a colimit in the category $\wih{\mathsf{Stk}}_k$ of convergent stacks, finite limits commute with this colimit by \cite[Example 7.3.4.7]{HTT} (cf. also the proof of \cite[Proposition 4.21]{CautisWilliams}). In particular, this implies that
	\begin{align*}
		X_A\simeq\colim_n (X_n\times \Spec A).
	\end{align*} 
	Moreover, if $U_n=\Spec A_n\to X_n$ is a faithfully flat atlas of $X_n$ with $U_n$ almost of finite type, then $ U_n\times \Spec A\to X_n\times \Spec A$ is also an atlas exhibiting $X_n\times \Spec A$ as locally Noetherian in the sense of \cite[Definition 3.3]{CautisWilliams}. Indeed, the product $U_n\times \Spec A$ is Noetherian, because the $U_n$ are almost of finite type and $A$ is Noetherian. Thus, we conclude that $X_A$ has a reasonable presentation by locally Noetherian geometric stacks; in particular, this is a coherent presentation by the remark after \emph{loc. cit.}
\end{proof}

\begin{cor}\label{cor: loop spaces have coherent ind-geometric base changes}
	Let $A$ be a Noetherian $k$-algebra. In the setup of \zcref{cor: loop spaces of formal completions are ind-geometric}, the base change along
    $\Spec A \to \Spec k$ of each of the stacks $\mathcal{L}U$, $\mathcal{L}Z$, $\mathcal{L}X$, and $\mathcal{L}(\wih{X_Z})$ is coherent ind-geometric.
\end{cor}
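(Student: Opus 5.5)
The plan is to reduce directly to \zcref{lem: ind-noetherian stacks have ind-noetherian base changes}. That lemma needs, for each of the four stacks, a coherent ind-geometric presentation $\colim_n X_n$ in which every $X_n$ admits a faithfully flat atlas $\Spec A_n \to X_n$ with $A_n$ almost of finite type. So the work consists of producing such presentations with the required atlases.

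For $\calL U$, $\calL Z$ and $\calL X$, \zcref{cor: loop spaces of formal completions are ind-geometric} says these are coherent geometric stacks, and we use the trivial presentation (a constant colimit). We still need an atlas by an affine scheme almost of finite type.
\begin{enumerate}
\item Since $X$ is almost of finite type with affine diagonal, $\calL X = X\times_{X\times X} X$ is quasi-compact, almost of finite type, and has affine diagonal.
\item Its smooth affine atlas $\Spec B \to \calL X$ then has $\Spec B$ laft and quasi-compact, so $B$ is almost of finite type, and Noetherian by \cite[Proposition 7.2.4.31]{HA}. This is the same reasoning as in the proof of \zcref{lem: formal completions of aft algebraic stacks are coherent ind-geometric}.
\item For $\calL Z$, pull this atlas back along the closed immersion $\calL Z \to \calL X$ of \zcref{lem: loops of closed immersion is closed immersion}; the result is affine and almost of finite presentation over $\Spec B$.
\item For $\calL U$, pull back along the open immersion $\calL U \subset \calL X$ and refine by finitely many affines, using quasi-compactness.
\end{enumerate}
In each case the atlas is almost of finite type, as required.

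For $\calL(\wih{X_Z})$, the proof of \zcref{cor: loop spaces of formal completions are ind-geometric} applies \zcref{lem: formal completions of aft algebraic stacks are coherent ind-geometric} to the closed immersion $\calL Z \hook \calL X$. I would first check that $\calL(\wih{X_Z}) \simeq \wih{(\calL X)}_{\calL Z}$, or at least that the latter is the stack the corollary actually presents; if this identification is not already available, it is the point I would verify first. Granting it, that lemma gives the presentation $\colim_{n,m} Z_{n,m}$ by truncations of the infinitesimal neighborhoods $Z_n$ of $\calL Z$ in $\calL X$. The proof of that lemma shows that $\mathcal U_n = Z_n \times_{\calL X} \mathcal U$ is affine and almost of finite type, where $\mathcal U \to \calL X$ is the smooth atlas above, and that it is a smooth, hence faithfully flat, atlas of $Z_n$. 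The truncations $\tau_{\le m}\mathcal U_n \to Z_{n,m}$ are then again atlases almost of finite type. With these in hand, \zcref{lem: ind-noetherian stacks have ind-noetherian base changes} gives that each base change is coherent ind-geometric.

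The main obstacle is the compatibility of truncation with the atlas. We need $\tau_{\le m}(\mathcal U_n)\simeq \mathcal U_n\times_{Z_n} \tau_{\le m}Z_n$, which holds because truncation commutes with flat base change. We also need that the $Z_{n,m}$ remain almost of finite type with Noetherian atlas. Both facts are already implicit in the proof of \zcref{lem: formal completions of aft algebraic stacks are coherent ind-geometric}, so I expect this step to amount to bookkeeping rather than new input.
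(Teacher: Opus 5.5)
Your proposal is correct and follows the paper's proof. Both arguments reduce to \zcref{lem: ind-noetherian stacks have ind-noetherian base changes}, using the trivial presentation for $\calL U$, $\calL Z$, $\calL X$, and, for $\calL(\wih{X_Z})$, the presentation $\colim_{n,m} Z_{n,m}$ from \zcref{lem: formal completions of aft algebraic stacks are coherent ind-geometric}; the identification $\calL(\wih{X_Z}) \simeq \wih{\calL X_{\calL Z}}$ you wanted to check is already \zcref{lem:loops-commute-with-formal-completion}. The only difference is that you explicitly verify the atlases are almost of finite type, which is the hypothesis that lemma actually uses, whereas the paper only asserts that the stacks involved are locally Noetherian.
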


\begin{proof}
	All of $\mathcal{L}U,\mathcal{L}Z$ and $\mathcal{L}X$ are locally Noetherian (in fact, Noetherian) geometric stacks. Hence, \zcref{lem: ind-noetherian stacks have ind-noetherian base changes} applies to them. For $\mathcal{L}(\wih{X_Z})$, we can again apply \emph{loc. cit.} Indeed, observe that all stacks in the presentation constructed in \zcref{cor: loop spaces of formal completions are ind-geometric} and \zcref{lem: formal completions of aft algebraic stacks are coherent ind-geometric} are locally Noetherian.
\end{proof}

\subsubsection{Properties of $!$-pullback and $*$-pushforward functors}
\begin{lem}
    \label{lem: etale morphisms have t-exact !-pullbacks}
	Let $f:X\to Y$ be an \'etale morphism of laft Artin stacks. Then
    $f^!: \mathsf{IndCoh}(Y)\to \mathsf{IndCoh}(X)$ is $t$-exact.
\end{lem}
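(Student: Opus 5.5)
The plan is to reduce to affine schemes, where the statement is classical, using smooth descent for the $t$-structure on $\IndCoh$. Recall that for a laft Artin stack $Y$, the $t$-structure on $\IndCoh(Y)$ is characterized by smooth pullback. An object $\calF \in \IndCoh(Y)$ lies in $\IndCoh(Y)^{\geq 0}$ (resp. $\IndCoh(Y)^{\leq 0}$) if and only if $g^*\calF$ does so for every smooth map $g: S \to Y$ from an affine scheme. Here $g^* \simeq g^![-d]$ up to the twist by the relative dualizing line bundle when $g$ has relative dimension $d$. Equivalently, one may test on a single smooth atlas. I will use this characterization for both $X$ and $Y$.

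First, choose a smooth affine atlas $V \to Y$ and then a smooth affine atlas $U \to X \times_Y V$. The composite $U \to X$ is smooth and surjective. The map $h: U \to V$ is \'etale composed with smooth, hence smooth; and since $U \to X\times_Y V$ is smooth while $X\times_Y V \to V$ is \'etale, the relative dimensions match. Concretely, if $U \to X\times_Y V$ has relative dimension $d$, then so do $U \to X$ and $h$. For $\calF \in \IndCoh(Y)$ with pullback $\calF_V$ to $V$, base change for $!$-pullback gives $(f^!\calF)|_U \simeq h^!\calF_V$. So it suffices to show that $h^![-d]$ is $t$-exact for a smooth map of relative dimension $d$ between laft affine schemes (or, if one prefers, for the \'etale map $X \times_Y V \to V$ of algebraic spaces, followed by the smooth map from $U$). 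This is where I rely on the normalization $h^! \simeq h^{*,\IndCoh}\otimes \omega_{U/V}[d]$, i.e.\ on $\Upsilon$-compatibility, for smooth $h$.

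Second, I treat the affine case. For $h$ smooth (in particular flat) between laft affine derived schemes, $h^{*,\IndCoh}$ is $t$-exact. Indeed, it is computed on $\Coh$ by the usual flat pullback, which is $t$-exact, and ind-extension preserves this because the $t$-structure on $\IndCoh$ is compactly generated by $\Coh^{\leq 0}$ and $\IndCoh^{\geq 0}$ is closed under filtered colimits. Tensoring with the line bundle $\omega_{U/V}$ is also $t$-exact. In the \'etale case $d = 0$ and $\omega_{U/V} \simeq \calO_U$, so $f^! \simeq f^{*,\IndCoh}$ directly.

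The main obstacle is to justify the comparison $f^! \simeq f^{*,\IndCoh}$ for \'etale $f$ (more generally, the smooth twist) at the level of stacks rather than only affine schemes, and to make sure that the $t$-structure descent statement is available in the form needed. My approach is to do everything on atlases, as above. This avoids defining $f^{*,\IndCoh}$ on stacks at all: the only inputs are the affine statement (Gaitsgory's IndCoh) and the fact that $t$-structures on $\IndCoh$ of Artin stacks are detected by pullback along a smooth atlas \cite{GaitsgoryIndCoh}.
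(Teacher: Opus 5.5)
Your argument is correct, but it takes a different route from the paper. The paper works globally on the stacks. It takes $f^!\simeq f^{*,\IndCoh}$ for \'etale $f$ as given. It then deduces $t$-exactness of $f^{*,\IndCoh}$ from $t$-exactness of flat pullback on $\QCoh$, using three facts: $\Psi_X,\Psi_Y$ are $t$-exact, they intertwine $f^{*,\IndCoh}$ with $f^*$, and they restrict to equivalences $\IndCoh^+\simeq\QCoh^+$. It also uses that $f^{*,\IndCoh}$ preserves $\IndCoh^+$, which is needed because $\Psi$ is not conservative. You instead use the smooth-local definition of the $t$-structure to reduce to two scheme-level facts: $h^!\simeq h^{*,\IndCoh}\otimes\omega_{U/V}$ for smooth $h$, and $t$-exactness of $h^{*,\IndCoh}$ for flat $h$. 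Your route never needs the comparison $f^!\simeq f^{*,\IndCoh}$ on stacks, which the paper simply asserts. It also shows, with no extra work, that $f^![-r]$ is $t$-exact for any smooth $f$ of relative dimension $r$. The paper's route is shorter once the $\Psi$-formalism on stacks is granted, and it involves no shift bookkeeping.

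A few small corrections. First, $U\to X$ does not have relative dimension $d$. It factors through $X\times_Y V\to X$, which is a base change of $V\to Y$, so its relative dimension is $d+e$, where $e$ is the relative dimension of $V\to Y$. Your reduction survives because the extra shift cancels against the one relating $g_V^!$ and $g_V^*$. Writing $g_U:U\to X$, $g_V:V\to Y$, and $K$ for relative canonical bundles,
\[
g_U^{*}f^!\calF \simeq (g_V\circ h)^!\calF\otimes\omega_{U/X}^{-1} \simeq \bigl(h^![-d]\bigr)\bigl(g_V^{*}\calF\otimes K_{V/Y}\bigr)\otimes K_{U/X}^{-1},
\]
so $t$-exactness of $h^![-d]$ is exactly what is needed.

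Second, $X\times_Y V$ need not be an algebraic space, since an \'etale morphism of Artin stacks need not be representable (e.g.\ $B\Gamma\to\Spec k$ for a finite group $\Gamma$). Your main route through the affine $U$ does not use this, so no harm is done.

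Third, in the affine step, closure of $\IndCoh^{\geq 0}$ under filtered colimits is not enough by itself. Add the observation that if $\calF\in\IndCoh(S)^{\geq 0}$ and $\calF=\colim c_\alpha$ with $c_\alpha\in\Coh(S)$, then $\calF\simeq\colim\tau^{\geq 0}c_\alpha$ with $\tau^{\geq 0}c_\alpha\in\Coh(S)^{\geq 0}$. This holds because $\tau^{\geq 0}$ commutes with filtered colimits and preserves $\Coh(S)$ for $S$ Noetherian.
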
 
\begin{proof}
    Since $f$ is \'etale, $f^! \simeq f^{*,\IndCoh}$, so it suffices to show that $f^{*,\IndCoh}$
    is $t$-exact. By \cite[Proposition 11.7.5(b)]{GaitsgoryIndCoh}, the functors
    $\Psi_Y: \IndCoh(Y) \to \QCoh(Y)$ and $\Psi_X: \IndCoh(X) \to \QCoh(X)$
    are $t$-exact. Moreover, they intertwine $f^*$ and $f^{*, \IndCoh}$ in the sense
    that $\Psi_X \circ f^{*, \IndCoh} \simeq f^* \circ \Psi_Y$. We claim that, in order
    to show that $f^{*, \IndCoh}$ is $t$-exact, it suffices to show that
    $f^*: \QCoh(Y) \to \QCoh(X)$ is t-exact. 
    
    Indeed, suppose that $f^*$ is 
    $t$-exact, let $\F \in \IndCoh(Y)^{\leq 0}$. 
    Then $f^* \Psi_Y(\F) \simeq \Psi_X f^{*,\IndCoh}(\F) \in \QCoh(X)^{\leq 0}$.
    Because $\Psi_X$ is $t$-exact, we have 
    \[\Psi_X(\tau^{\geq 1}f^{*,\IndCoh}\calF) \simeq
    \tau^{\geq 1}\Psi_X(f^{*,\IndCoh}(\calF).\] 
    Since $\Psi_X(f^{*,\IndCoh}(\calF)$ is
    coconnective, $\tau^{\geq 1}\Psi_X(f^{*,\IndCoh}(\calF) \simeq 0$, meaning $\Psi_X(\tau^{\geq 1}f^{*,\IndCoh}\F)
    \simeq 0$. On the other hand, by \cite[Proposition 11.7.5(b)]{GaitsgoryIndCoh}
    again, $\Psi_X: \IndCoh(X)^+ \xrightarrow{\simeq} \QCoh(X)^+$
    is an equivalence. Thus, $\tau^{\geq 1}f^{*, \IndCoh}\F \simeq 0$, implying that
    $f^{*,\IndCoh}\F \in \IndCoh(X)^{\leq 0}$.

    On the other hand, because $f$ is eventually coconnective,
    $f^{*,\IndCoh}$ preserves the subcategory of eventually
    coconnective, or bounded below, objects. Thus, if $\F \in \IndCoh(Y)^{\geq 0}
    \subset \IndCoh(Y)^+$, we see $f^{*,\IndCoh}\F \in \IndCoh(X)^+$. 
    Now note that 
    \[\Psi_Xf^{*,\IndCoh}\F \simeq f^*\Psi_Y(\F) \in \QCoh(X)^{\geq 0}\]
    by the $t$-exactness of $\Psi_Y$ and $f^*$. Because $\Psi_X$ restricts 
    to a $t$-exact equivalence $\IndCoh(X)^+ \xrightarrow{\simeq} \QCoh(X)^+$,
    we therefore have that $f^{*,\IndCoh}\F \in \IndCoh(X)^{\geq 0}$.

    We conclude simply by noting that $f^*: \QCoh(Y) \to \QCoh(X)$
    is $t$-exact because $f$ is \'etale (\cite[Chapter 3 Proposition 1.5.4(b)]{GR17}).
\end{proof} 
\begin{lem}\label{lem: laft qc algebraic stacks with affine diagonal are QCA}
	Let $Y$ be a laft, quasi-compact algebraic stack with affine diagonal. Then $Y$ is a QCA stack in the sense of \cite[Definition 1.1.8]{DrinfeldGaitsgory}.
\end{lem}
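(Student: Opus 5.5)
We must verify the conditions in \cite[Definition 1.1.8]{DrinfeldGaitsgory}: $Y$ is quasi-compact, has affine diagonal, and the automorphism group of every geometric point of $Y$ is affine. We also need the implicit requirement that $Y$ be an algebraic stack locally of finite type, at least at the level of the classical truncation. Quasi-compactness and algebraicity are hypotheses. The only thing to arrange is that the Drinfeld--Gaitsgory definition is phrased for (derived) algebraic stacks whose diagonal is quasi-compact, quasi-separated and representable, which is implied here. The plan is therefore to check three points in turn: finite type, diagonal, and stabilizers.

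First, I reduce to the classical truncation ${}^{\cl}Y$. The QCA condition in \cite{DrinfeldGaitsgory} only depends on the underlying classical stack, because affineness of the diagonal and the automorphism groups of geometric points are insensitive to derived structure. Laft means ${}^{\cl}Y$ is locally of finite type over $k$, and together with quasi-compactness this makes ${}^{\cl}Y$ of finite type. Affineness of the diagonal of $Y$ implies affineness of the diagonal of ${}^{\cl}Y$. This holds because ${}^{\cl}(Y\times_{Y\times Y}S)$ agrees with the classical fiber product for classical affine $S$, and a derived stack whose classical truncation is affine is affine.

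Second, I handle the stabilizer condition. For a geometric point $y:\Spec K\to Y$, the automorphism group is the fiber product $\Spec K\times_{Y\times Y}Y$ along $(y,y)$ and $\Delta_Y$. It is the base change of $\Delta_Y$ along the map $\Spec K\to Y\times Y$. Since $\Delta_Y$ is affine, this base change is an affine scheme over $\Spec K$. Its classical truncation is therefore an affine group scheme over $K$, and it is of finite type because $Y$ is locally of finite type. Hence all stabilizers are affine, which is the key condition.

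The main, if mild, obstacle is matching conventions. Drinfeld--Gaitsgory require the diagonal to be affine and the stack to be quasi-compact, or in some formulations only require affine automorphism groups together with a quasi-compact, quasi-separated diagonal. So I must ensure that ``affine diagonal'' in the sense of \cite{CautisWilliams} (the convention of this paper) implies whichever version is used. I will argue that affine morphisms are quasi-compact and separated, so every formulation of their definition is satisfied.
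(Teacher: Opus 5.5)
There is a gap, coming from a misstatement of the definition. In \cite[Definition 1.1.8]{DrinfeldGaitsgory}, an algebraic stack is QCA if three conditions hold. (The diagonal is representable, quasi-compact and quasi-separated by convention, which your last paragraph correctly handles.) The conditions are: (1) it is quasi-compact; (2) the automorphism groups of its geometric points are affine; (3) the classical inertia stack $\mathrm{Iner}(Y^{\cl}) = Y^{\cl}\times_{Y^{\cl}\times Y^{\cl}}Y^{\cl}$ is of finite presentation over $Y^{\cl}$.

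Affine diagonal is not one of the axioms; it is only your hypothesis. Quasi-compactness is given, and your stabilizer argument (base change of the affine diagonal along $(y,y)$) is exactly the first half of the paper's proof. However, condition (3) is never verified, neither in your main argument nor in your discussion of conventions, which only concerns qcqs diagonals. You do show that $Y^{\cl}$ is of finite type, but you never use it. Condition (3) is precisely where the laft hypothesis is needed.

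The repair is the second half of the paper's proof. The projection $\mathrm{Iner}(Y^{\cl})\to Y^{\cl}$, which the paper writes as $(\calL Y)^{\cl}\to Y^{\cl}$, is a base change of $\Delta_{Y^{\cl}}$. This diagonal is affine, hence quasi-compact and quasi-separated. It is also locally of finite type, since $Y^{\cl}$ is locally of finite type over $k$. Because $Y^{\cl}\times Y^{\cl}$ is locally Noetherian, $\Delta_{Y^{\cl}}$ is then locally of finite presentation, hence of finite presentation. Finite presentation is stable under base change, so (3) follows.

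A smaller slip: to see that $Y^{\cl}$ has affine diagonal, you only need two facts. First, classical truncation commutes with fiber products. Second, the truncation of an affine derived scheme is affine. The assertion you cite, that a derived stack with affine classical truncation is affine, is the converse of what you need and plays no role.
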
 
\begin{proof}
	Geometric stabilizers are affine as they are base changes of the affine diagonal. The classical inertia stack is finitely presented because $Y^{\text{cl}}$ is locally of finite type with affine diagonal. Thus, $(\mathcal{L}Y)^{\text{cl}}\to Y^{\text{cl}}$ is a base change of the diagonal of $Y^{\text{cl}}$. This diagonal is qcqs; hence, it is of finite presentation.
\end{proof} 
\begin{lem}\label{lem: stacks in Rep are QCA}
	Let $f:X\to Y$ be a finitely presented morphism in $\mathsf{Stk}_{\mathrm{laft}}$ which is representable by algebraic spaces, and suppose that $Y$ is a laft, quasi-compact algebraic stack with affine diagonal. If $X$ has affine diagonal, then it is QCA in the sense of \cite[Definition 1.1.8]{DrinfeldGaitsgory}. 
\end{lem}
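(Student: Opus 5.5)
The plan is to verify the conditions of \cite[Definition 1.1.8]{DrinfeldGaitsgory} for $X$ directly: $X$ must be quasi-compact, it must have affine automorphism groups of geometric points, and its classical inertia stack $(\calL X)^{\cl} \to X^{\cl}$ must be of finite presentation. We also need $X$ to be an algebraic stack, laft and quasi-compact. This mirrors \zcref{lem: laft qc algebraic stacks with affine diagonal are QCA}. Once these properties are established for $X$, that lemma can be invoked directly.

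First, algebraicity and quasi-compactness. Since $f$ is representable by algebraic spaces and $Y$ is algebraic, $X$ is algebraic. To see this, pull back a smooth affine atlas $\Spec A \to Y$; then $X \times_Y \Spec A$ is an algebraic space. Since $f$ is finitely presented it is in particular quasi-compact, so $X \times_Y \Spec A$ is quasi-compact and admits an étale cover by finitely many affines. Composing gives a smooth atlas of $X$ by a single affine scheme, so $X$ is quasi-compact. Finite presentation of $f$, together with $Y$ being laft, gives that $X$ is laft; we may also take this as part of the hypothesis $X \in \Stk_{\laft}$.

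With $X$ laft, quasi-compact, algebraic and with affine diagonal by hypothesis, \zcref{lem: laft qc algebraic stacks with affine diagonal are QCA} applies verbatim. For completeness, the argument repeats as follows. Geometric stabilizers are affine, being base changes of the affine diagonal $X \to X \times X$ along geometric points. The classical inertia $(\calL X)^{\cl} \to X^{\cl}$ is the base change of the diagonal of $X^{\cl}$. That diagonal is affine, hence quasi-compact and quasi-separated, and it is locally of finite type because $X^{\cl}$ is locally of finite type over $k$. It is therefore of finite presentation, and hence so is the inertia.

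The only step requiring any care is quasi-compactness of $X$. There the point is that a finitely presented morphism is quasi-compact, so the quasi-compactness of $Y$ transfers to $X$ through the atlas. I do not expect genuine obstacles. The representability hypothesis is used only to guarantee that $X$ is algebraic, with its pullback to an affine chart an algebraic space.
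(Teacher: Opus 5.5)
Your proposal is correct and follows the paper's proof essentially step for step. You pull back a smooth affine atlas of $Y$ to get a quasi-compact algebraic space over it, and cover that space by finitely many affines to get a smooth affine atlas of $X$. This shows $X$ is a laft quasi-compact algebraic stack, after which you invoke \zcref{lem: laft qc algebraic stacks with affine diagonal are QCA}.
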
 
\begin{proof}
	We will show that $X$ is a laft quasi-compact algebraic stack and then use \zcref{lem: laft qc algebraic stacks with affine diagonal are QCA} to conclude. Using a smooth affine atlas $U\to Y$, we obtain an algebraic space $V=X\times _YU$ with $V\to U$ finitely presented. In particular, it is quasi-compact; hence, $V$ is quasi-compact. Choosing an \'etale surjection $W\to V$ with $W$ affine, we obtain a smooth affine atlas of $X$.
\end{proof}
\begin{cor}\label{cor: smooth pullbacks preserve Coh}
    In the setup of \zcref{lem: stacks in Rep are QCA}, suppose that $f$ is furthermore smooth. Then 
    \begin{align*}
		f^!: \mathsf{IndCoh}(Y)\to \mathsf{IndCoh}(X)
	\end{align*} 
	restricts to a functor
	\begin{align*}
		f^!|_{\mathsf{Coh}(Y)}: \mathsf{Coh}(Y)\to \mathsf{Coh}(X).
	\end{align*} 
    In particular, $f^!$ preserves compact objects.
\end{cor}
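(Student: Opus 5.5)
We need to show that $f^!$ sends $\Coh(Y)$ to $\Coh(X)$ when $f$ is smooth, finitely presented, and representable by algebraic spaces, and that it preserves compact objects.

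The plan is to identify $f^!$ with a shifted, twisted $*$-pullback and then reduce to a flat-local statement. First, since $f$ is smooth, it is in particular eventually coconnective. By the standard comparison for smooth morphisms of laft stacks (\cite[Chapter 3]{GR17} or \cite{GaitsgoryIndCoh}), there is an identification
\[f^! \simeq f^{*,\IndCoh}(-) \otimes \omega_{X/Y},\]
where $\omega_{X/Y}$ is the relative dualizing line bundle. Locally it is $\det(\mathbb{T}_{X/Y}^\vee)[d]$, with $d$ the locally constant relative dimension. Tensoring with a shifted line bundle preserves $\Coh(X)$, so it suffices to show that $f^{*,\IndCoh}$ sends $\Coh(Y)$ to $\Coh(X)$.

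Next, $\Psi_X \circ f^{*,\IndCoh} \simeq f^* \circ \Psi_Y$, and $\Psi$ restricts to the identity embedding of $\Coh$ into $\QCoh$ on bounded coherent objects. So it is enough to check that $f^*\colon \QCoh(Y) \to \QCoh(X)$ sends objects with bounded coherent cohomology to objects with bounded coherent cohomology. Flatness of $f$ gives $t$-exactness of $f^*$, as in the proof of \zcref{lem: etale morphisms have t-exact !-pullbacks}, where the étale case was used. Hence boundedness is preserved and $H^i(f^*\F) \simeq f^*H^i(\F)$.

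Coherence of $f^*H^i(\F)$ is local in the smooth topology. Following the proof of \zcref{lem: stacks in Rep are QCA}, take a smooth affine atlas $U \to Y$ and an étale affine cover $W \to V = X \times_Y U$. This reduces the claim to pullback along a flat map $W \to U$ of finite presentation between Noetherian affines, where $U$ and $W$ are Noetherian because they are almost of finite type. Pullback of a finitely generated module over $\pi_0$ along such a map is finitely generated, which gives coherence.

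Finally, \zcref{lem: stacks in Rep are QCA} says $X$ and $Y$ are QCA. By \cite{DrinfeldGaitsgory}, $\IndCoh$ of a QCA stack is compactly generated with compact objects exactly $\Coh$, so $f^!$ preserves compact objects. The main obstacle is justifying $f^! \simeq f^{*,\IndCoh} \otimes \omega_{X/Y}$ in this stacky, non-schematic setting. I would deduce it by smooth descent from the schematic case along the atlases above, using that both sides are compatible with smooth base change.
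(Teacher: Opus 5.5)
Your argument is correct, but it takes a different and heavier route than the paper. The paper never compares $f^!$ with a $*$-pullback. Having established via the QCA lemma that $X$ and $Y$ are QCA, it invokes the Drinfeld--Gaitsgory characterization of $\Coh(Y)\subset\IndCoh(Y)$: these are the $\F$ with $y^!\F\in\Coh(S)$ for every smooth $y\colon S\to Y$ from an affine. Then, for a smooth $x\colon T\to X$ with $T$ affine, it simply writes $x^!f^!\F\simeq(f\circ x)^!\F\in\Coh(T)$, since $f\circ x$ is again smooth. That two-line argument needs nothing about dualizing sheaves, $\Psi$, or $t$-structures.

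Your route needs more. It requires the stacky identification $f^!\simeq f^{*,\IndCoh}(-)\otimes\omega_{X/Y}$, which, as you note, must itself come from smooth descent from the schematic case. It also needs the flat-local coherence check. In exchange it gives more information: $f^!$ is $t$-exact up to the shift by the locally constant relative dimension, and QCA enters only in the final compactness clause.

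One step you should make explicit. You want to conclude $f^{*,\IndCoh}\F\in\Coh(X)$ from the fact that $\Psi_X(f^{*,\IndCoh}\F)\simeq f^*\Psi_Y(\F)$ has bounded coherent cohomology. For this you need $f^{*,\IndCoh}\F\in\IndCoh(X)^+$, so that you can use that $\Psi_X$ restricts to an equivalence $\IndCoh(X)^+\simeq\QCoh(X)^+$ identifying the two copies of $\Coh(X)$. This holds because $f$ is eventually coconnective, exactly as in the paper's proof of $t$-exactness of étale $!$-pullbacks.
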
 
\begin{proof}
	By \zcref{lem: stacks in Rep are QCA}, we can use the characterization of coherent sheaves on QCA stacks given in \cite[Section 3.3.1]{DrinfeldGaitsgory}. In particular, \emph{loc. cit.} characterizes $\mathsf{Coh}(Y)\subset \mathsf{IndCoh}(Y)$ as those sheaves $\mathcal{F}$ such that for any smooth $y:S\to Y$ with $S$ affine, $y^!\mathcal{F}\in \mathsf{Coh}(S)$. Thus, suppose given a smooth map $x:T\to X$ with $T$ an affine scheme, and let $\mathcal{F}\in \Coh(Y)$. Since $f\circ x $ is smooth, we find $(f\circ x)^!\mathcal{F}\simeq x^! f^!\mathcal{F}\in \Coh(T)$ as desired.
\end{proof}

\begin{lem}\label{lem: open immersion pushforward is fully faithful}
	Let $X$ be a laft algebraic stack, and let $j:U\hookrightarrow X$ be an almost finitely presented open immersion. Then the pushforward
	\begin{align*}
		j_*: \mathsf{IndCoh}(U)\to \mathsf{IndCoh}(X)
	\end{align*} 
	is fully faithful.
\end{lem}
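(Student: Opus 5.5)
The strategy is to show that the counit $j^! j_* \to \id$ is an equivalence. Here $j^!$ is the left adjoint of $j_*$: since $j$ is an open immersion, $j^! \simeq j^{*,\IndCoh}$, and the adjunction $j^{*,\IndCoh} \dashv j_*$ holds for open embeddings by \cite{GaitsgoryIndCoh}. Once the counit is an equivalence, fully faithfulness of the right adjoint $j_*$ follows formally. I would reduce to a statement about a smooth atlas and then invoke base change for open immersions.

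Concretely, I would choose a smooth atlas $\pi: V \to X$ with $V$ a laft affine (or a disjoint union of affines), and form the fiber product $U_V := V \times_X U$. Then $j_V: U_V \hookrightarrow V$ is an open immersion of laft schemes, and $\pi_U: U_V \to U$ is smooth. Since $\IndCoh$ on laft algebraic stacks is defined by $!$-pullback descent along smooth (or even flat) covers, $\pi_U^!$ belongs to a conservative family as one varies over the Čech nerve. So it suffices to show that the counit becomes an equivalence after applying $\pi_U^!$.

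The main step is base change. For an open immersion $j$ and a smooth map $\pi$ there is an equivalence $\pi^! j_* \simeq j_{V,*}\pi_U^!$. This is the $\IndCoh$ base change for the schematic, quasi-compact morphism $j$; quasi-compactness follows from $j$ being almost finitely presented. Base change for $j^!$ along $\pi$ is just functoriality of $!$-pullback. Combining the two, $\pi_U^! j^! j_* \simeq j_V^! \pi^! j_* \simeq j_V^! j_{V,*}\pi_U^!$. This reduces the claim to the scheme case $j_V^! j_{V,*} \simeq \id$, which is standard for open embeddings of laft schemes: by \cite[Proposition 4.1.2 and Corollary 4.1.5]{GaitsgoryIndCoh}, $\IndCoh(U_V)$ is a localization of $\IndCoh(V)$ and $j_{V,*}$ is fully faithful. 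Alternatively, one could check the scheme case on compact objects via $\Coh$ and the classical fact that $j^* j_* \simeq \id$ on $\QCoh$, together with $\Psi$ being an equivalence on bounded-below objects.

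I expect the main obstacle to be the compatibility bookkeeping. One has to ensure that the base-change isomorphism is compatible with the counit maps, so that the equivalence obtained after pullback is really the image of the counit and not some other map. One also has to check that pushforward $j_*$ on $\IndCoh$ of stacks is actually defined and satisfies base change for this class of morphisms. I would handle this by defining $j_*$ through descent as the limit of the $j_{V_n,*}$ over the Čech nerve $V_\bullet$, which are compatible by base change. Then $j^! j_*$ is computed termwise as $\lim_n j_{V_n}^! j_{V_n,*}$, and each term is naturally equivalent to the identity by the scheme case. The limit of these equivalences is an equivalence, which gives the claim.
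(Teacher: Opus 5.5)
Your proposal is correct and follows essentially the same route as the paper: it pulls the counit $j^!j_*\to\id$ back along the base change $p'$ of a smooth atlas, uses base change $p^!j_*\simeq j'_*p'^!$ to identify the result with the counit for the open immersion of schemes (settled by \cite[Lemma 4.1.1]{GaitsgoryIndCoh}), and concludes by conservativity of $p'^!$. Your extra care about the compatibility of the base-change isomorphism with the counits (via the \v{C}ech-nerve description of $j_*$) just fills in what the paper leaves to ``tracing through the definitions.''
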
 
\begin{proof}
	For derived schemes, this is \cite[Lemma 4.1.1]{GaitsgoryIndCoh}. Let $S\to X$ be a smooth atlas. Consider the Cartesian diagram
	\[
	\begin{tikzcd}
		U\times _XS\arrow[r,"j'"]\arrow[d,"p'"]&S\arrow[d,"p"]\\
		U\arrow[r,"j"]&X
	\end{tikzcd}.
	\] 
Note that $j'$ is also smooth and surjective; hence, $U$ is an algebraic stack as well. We would like to show that the counit
\begin{align*}
	j^*j_*\to \id
\end{align*} 
is an isomorphism. We can pull it back along $p'^!$ to obtain
\begin{align*}
	p'^!j^!j_*\simeq j'^!p^!j_*\xleftarrow{\simeq}j'^!j'_*p'^!
\end{align*} 
Thus, the pulled-back counit becomes
\begin{align*}
	j'^!j'_*p'^!\xrightarrow{\simeq}p'^!.
\end{align*} 
Moreover, tracing through the definitions, one finds that this map is precisely the counit of the $(j'^*,j'_*)$-adjunction precomposed with $p'^!$. Since $p'^!$ is conservative, we conclude. Indeed, conservativity follows from descent along $p'$.
\end{proof}

\begin{lem}\label{lem: open immersion pushfoward is t-bounded}
	Let $X$ be an algebraic stack almost of finite type and with affine diagonal, and let $j:U\hookrightarrow X$ an open substack. Then the pushforward
	\begin{align*}
		j_*: \mathsf{IndCoh}(U)\to \mathsf{IndCoh}(X)
	\end{align*} 
	is $t$-bounded.
\end{lem}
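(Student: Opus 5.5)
The plan is to reduce the statement to quasi-coherent sheaves, and then to a quasi-compact open subscheme of an affine scheme, where a Čech argument bounds the amplitude. Since $j^*=j^!$ is $t$-exact (\zcref{lem: etale morphisms have t-exact !-pullbacks}) and $j_*$ is its right adjoint, $j_*$ is left $t$-exact. It therefore remains to find an integer $N$ with $j_*(\IndCoh(U)^{\leq 0}) \subset \IndCoh(X)^{\leq N}$.

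\emph{Reduction to $\Coh$.} The $t$-structure on $\IndCoh(U)$ is compactly generated by $\Coh(U)$, so $\IndCoh(U)^{\leq 0}$ is generated under colimits by $\Coh(U)^{\leq 0}$. The functor $j_*$ is continuous, and $\IndCoh(X)^{\leq N}$ is closed under colimits. Hence it suffices to show $j_*\F \in \IndCoh(X)^{\leq N}$ for $\F\in\Coh(U)^{\leq 0}$. Such $\F$ lies in $\IndCoh(U)^+$. On $\IndCoh^+$ the functor $\Psi$ is a $t$-exact equivalence onto $\QCoh^+$ (\cite[Proposition 11.7.5]{GaitsgoryIndCoh}), and it intertwines the $\IndCoh$ and $\QCoh$ pushforwards: $\Psi_X j_* \simeq j_*\Psi_U$. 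Since $j_*$ is left $t$-exact, $j_*\F$ is again bounded below. So we reduce to showing that $j_*: \QCoh(U)\to\QCoh(X)$ sends $\QCoh(U)^{\leq 0}$ into $\QCoh(X)^{\leq N}$.

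\emph{Reduction to an affine atlas.} Because $X$ is almost of finite type, it is quasi-compact, so there is a smooth atlas $p: S=\Spec A \to X$ with $S$ affine. Set $U_S := U\times_X S$, with projections $j': U_S \to S$ and $p': U_S \to U$. The morphism $j$ is quasi-compact and quasi-separated, since $U\subset X$ is quasi-compact and $X$ has affine diagonal. Flat base change then gives $p^*j_* \simeq j'_*p'^*$. Both $p^*$ and $p'^*$ are $t$-exact, and $p^*$ detects the $t$-structure because it is faithfully flat. It therefore suffices to bound the amplitude of $j'_*$.

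\emph{Čech argument.} Now $U_S$ is a quasi-compact open subscheme of the affine scheme $S$, so it is separated and covered by finitely many affine opens $V_1,\dots,V_m$. All their finite intersections are affine. Global sections on $S$ detect the $t$-structure, and $\Gamma(S, j'_*\calG)$ is computed by the Čech complex of $\calG$ on this cover. That complex has length $m$, so $j'_*$ has cohomological amplitude at most $m-1$, and $N=m-1$ works.

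The main obstacle I expect is the first step: justifying that $\IndCoh$-pushforward along a non-proper open immersion agrees with $\QCoh$-pushforward on bounded-below objects. The plan is to cite the construction of $j_*$ for eventually coconnective maps in \cite{GaitsgoryIndCoh}, for which $\Psi\circ j_*\simeq j_*\circ\Psi$ holds by definition. This identification is what allows the argument to avoid base change for $\IndCoh$ along non-proper maps.
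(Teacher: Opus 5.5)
Your argument is correct and follows the paper's proof. The ingredients are the same: left $t$-exactness from the adjunction $j^!\dashv j_*$, reduction to a smooth affine atlas by base change, a \v{C}ech-type bound for quasi-coherent pushforward from a quasi-compact open of an affine (the paper does this by Mayer--Vietoris induction on the number of basic opens $D(f_i)$), and comparison via $\Psi$ on bounded-below objects. The only difference is the order: the paper reduces to the atlas first inside $\IndCoh$ (using $p^!j_*\simeq j'_*p'^!$ with $p^!,p'^!$ $t$-exact up to shift), so it needs $\Psi j_*\simeq j_*\Psi$ only for open immersions of schemes, which removes the stack-level obstacle you anticipate.
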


\begin{proof}
	Consider the Cartesian diagram
	\[
	\begin{tikzcd}
		U\times _XS\arrow[r,"j'"]\arrow[d,"p'"]&S\arrow[d,"p"]\\
		U\arrow[r,"j"]&X
	\end{tikzcd},
	\] 
	where $S\to X$ is a smooth affine atlas of $X$. By \cite[Proposition 11.7.5]{GaitsgoryIndCoh} and Corollary 7.5.4 of \emph{op. cit.}, the functors $p^ !$ and $p'^!$ are $t$-exact up to a shift; hence, they are $t$-bounded. Thus, base change along the diagram implies that it is enough to check that $j'_*$ is $t$-bounded. Since open immersions are schematic, $U\times _X S\to S$ is an open immersion of schemes; thus, we have reduced to the case when $j:U\hookrightarrow X$ is an open immersion of schemes with $X=\Spec A$ affine. Note that $j$ is left $t$-exact by \cite[Chapter 4 Lemma 4.1.4]{GR17}, as it is right adjoint to a $t$-exact functor. To obtain the upper bound, we will first show that quasi-coherent pushforward $j_*^{\QCoh}$ is $t$-bounded. Write the scheme
	\begin{align*}
		U=\bigcup_{i=1}^r D(f_i)\subset \Spec A,
	\end{align*} 
	where $f_i\in H^0(A)$.\footnote{Clearly, we can write $U^{\text{cl}}$ in this way, with the $D(f_i)^{\text{cl}}$. Then, pulling back along $D(f_i)\to X$, we obtain the desired presentation of $U$, noting that the underlying topological spaces of $U^{\mathrm{cl}}$ and of $U$ are the same.} We will induct on $r$. In the case $r=1$, $U$ is affine and quasi-coherent pushforward $j_*^{\QCoh}$ is just restriction of modules along a localization, which is $t$-bounded. For $r>1$, write 
	\begin{align*}
		U=T_1\cup T_2,\qquad\qquad
		T_1= \bigcup_{i=1}^{r-1}D(f_i), \qquad\qquad
		T_2= D(f_r) .
	\end{align*} 
	Write the inclusions $j_i:T_i\to X$ and $j_{12}:T_1\cap T_2\coloneq T_1\times_X T_2 \to X$.\footnote{Since open immersions are monomorphisms, $T_1\times_X T_2\simeq T_1\times_U T_2$.} Quasi-coherent pushforward can be evaluated as the fiber in the Mayer-Vietoris triangle
	\begin{align*}
		j^{\QCoh}_*(\mathcal{F})\to j^{\QCoh}_{1*}(\mathcal{F}|_{T_1})\oplus j^{\QCoh}_{2*}(\mathcal{F}|_{T_2})\to j^{\QCoh}_{12*}(\mathcal{F}|_{T_1\cap T_2}).
	\end{align*} 
	Induction gives that the middle term is bounded. For the final term, we have
	\begin{align*}
		T_1\cap T_2\simeq \bigcup_{i=1}^{r-1}D(f_if_r),
	\end{align*} 
	so the final term is bounded as well. We conclude by rotating the Mayer-Vietoris triangle to obtain
	\begin{align*}
		j^{\QCoh}_{12*}(\mathcal{F}|_{T_1\cap T_2})[-1]\to j^{\QCoh}_*(\mathcal{F})\to j^{\QCoh}_{1*}(\mathcal{F}|_{T_1})\oplus j^{\QCoh}_{2*}(\mathcal{F}|_{T_2}).
	\end{align*} 
Thus, we see that the quasi-coherent pushforward $j_*^{\QCoh}$ is $t$-bounded on $\mathsf{QCoh}$. To extend to $\mathsf{IndCoh}$, we use that both $\Psi_X$ and $\Psi_U$ are $t$-exact equivalences on the bounded below categories by \cite[Proposition 11.7.5]{GaitsgoryIndCoh}. Hence,
	\begin{align*}
	\Psi_Xj_*\simeq j_*^{\QCoh}\Psi_U.
\end{align*} 
Since both $j_*$ and $j_*^{\QCoh}$ are left $t$-bounded, we conclude.
\end{proof}
\begin{lem}\label{lem: pullback along formal completion inclusion is t-bounded}
Let $X$ be an algebraic stack almost of finite type and with affine diagonal. Let $Z\hookrightarrow X$ be a closed substack. Then the $!$-pullback
\begin{align*}
	\wih{i}^!: \mathsf{IndCoh}(X)\to \mathsf{IndCoh}(\wih{X_Z})
\end{align*} 
along the inclusion $\wih{i}:\wih{X_Z}\to X$ is $t$-bounded.
\end{lem}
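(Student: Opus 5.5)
Since $t$-boundedness can be tested after smooth pullback, I will work smooth-locally on $X$ and reduce to a statement about modules over a Noetherian ring. Choose a smooth affine atlas $p: S = \Spec A \to X$, with $A$ Noetherian as in the proof of \zcref{lem: formal completions of aft algebraic stacks are coherent ind-geometric}. Base change gives the Cartesian square
\[\wih{S_{Z_S}} \simeq \wih{X_Z}\times_X S \xrightarrow{\wih{i}'} S,\]
where $Z_S \coloneq Z\times_X S$. Formal completion is compatible with base change because $\wih{X_Z}$ is defined as $X\times_{X_{\dR}} Z_{\dR}$. As in the proof of \zcref{lem: open immersion pushfoward is t-bounded}, $p^!$ and its base change $p'^!$ are $t$-exact up to a shift, by \cite[Proposition 11.7.5]{GaitsgoryIndCoh}; for the ind-geometric stack $\wih{X_Z}$ I would check this on each $Z_n$ of the presentation from \zcref{lem: formal completions are ind-geometric}. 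The $t$-structure on $\IndCoh(\wih{X_Z})$ is detected by smooth pullback, since it is defined by descent. Using $p'^!\wih{i}^! \simeq \wih{i}'^! p^!$, it therefore suffices to show that $\wih{i}'^!$ is $t$-bounded in the affine case $X=\Spec A$.

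In the affine case, write $\wih{X_Z}\simeq \colim_n Z_n$ with closed immersions $i_n: Z_n \hook X$. Then $\IndCoh(\wih{X_Z}) \simeq \lim_n \IndCoh(Z_n)$ along $!$-pullbacks, and also $\simeq \colim_n$ along $*$-pushforwards. Under the latter description, an object $\calF$ lies in $\IndCoh(\wih{X_Z})^{\geq 0}$ exactly when each $i_n^!\calF$ does; the compatibility of this description with the convention of \cite{CautisWilliams} is what I would verify. Now $\wih{i}^!$ composed with the restriction to $Z_n$ is $i_n^!$, which is left $t$-exact as the right adjoint of the $t$-exact functor $i_{n,*}$. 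So left $t$-exactness of $\wih{i}^!$ is formal, and the content is the other bound.

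For the upper bound I would use the local cohomology description. The functor $\wih{i}_*\wih{i}^!$ is the colocalization $\Gamma_Z$ onto objects supported on $Z$, and I expect $\wih{i}_*$ to be $t$-exact and conservative, which I would check on the $Z_n$. Choose generators $f_1,\dots,f_r$ of the ideal of $Z^{\cl}$. Then
\[\Gamma_Z(\calF) \simeq \fib\bigl(\calF \to j_*j^*\calF\bigr), \qquad j: U = X\setminus Z \hook X,\]
and $j_*j^*$ is computed by a \v{C}ech complex of length $r$ on the cover $U=\bigcup D(f_i)$. By the argument of \zcref{lem: open immersion pushfoward is t-bounded}, the functor $j_*j^*$ has amplitude $[0,r-1]$ on $\QCoh$, and hence on $\IndCoh^+$ via $\Psi$. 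Consequently $\Gamma_Z$ has cohomological amplitude $[0,r]$, and so $\wih{i}^!$ is right $t$-exact up to shift $r$ on bounded below objects.

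The main obstacle is removing the restriction to bounded below objects, that is, controlling $\IndCoh$ rather than $\QCoh^+$. I would handle this by writing any $\calF\in\IndCoh(X)^{\leq 0}$ as a filtered colimit of objects of $\Coh(X)^{\leq 0}$. Then I would use that $\wih{i}^!$ is continuous, which holds because $\wih{i}$ is ind-proper, and that the $t$-structure on $\IndCoh(\wih{X_Z})$ is compatible with filtered colimits. The bound proved for coherent objects then passes to all of $\IndCoh(X)^{\leq 0}$. Finally, the shift $r$ can be chosen uniformly over a finite atlas because $X$ is quasi-compact.
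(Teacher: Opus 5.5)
Your proposal is correct and follows essentially the same route as the paper: reduce along a smooth affine atlas, identify $\wih{i}_*\wih{i}^!$ with the local cohomology colocalization onto $\IndCoh_Z(X)$ (using that $\wih{i}_*$ is a $t$-exact equivalence onto the supported subcategory), and bound local cohomology. The only difference is that you prove the amplitude bound by hand (the \v{C}ech computation on $\IndCoh^+$ via $\Psi$, extended to all of $\IndCoh(X)^{\leq 0}$ using continuity of $\wih{i}^!$ and filtered colimits of objects of $\Coh(X)^{\leq 0}$), whereas the paper cites Proposition 7.4.5 and Lemma 7.4.8 of \cite{GaitsgoryIndCoh}.
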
 
\begin{proof}
	As in \zcref{lem: open immersion pushfoward is t-bounded}, we reduce to the case when $X$ is a scheme by pulling back along the atlas $S\to X$. Then, we apply Proposition 7.4.5 and Lemma 7.4.8 of \cite{GaitsgoryIndCoh} to conclude, noting that the inclusion
	\begin{align*}
		\mathsf{IndCoh}_Z(X)\subset \mathsf{IndCoh}(X)
	\end{align*} 
	is $t$-exact.
\end{proof}

\subsubsection{Quotient stacks}

Here we collect various results on the structure
of global quotient stacks (i.e. quotients of
derived schemes by actions of group schemes), starting with the
facts that open and closed immersions 
of quotient stacks are themselves quotient stacks.

\begin{lem}\label{lem:closed-substack-is-quotient}
	Let $ \mathcal{Z}$ be a closed substack of a stack $X /G$. Then there is an isomorphism $Z /G\xrightarrow{\cong}\mathcal{Z}$ for a $G$-invariant closed subscheme $Z\hookrightarrow X$.
\end{lem}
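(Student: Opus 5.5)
The plan is to pull $\mathcal{Z}$ back along the quotient map and descend. Let $q: X \to X/G$ be the canonical smooth surjection; it is a $G$-torsor. Set
\[Z \coloneq \mathcal{Z} \times_{X/G} X.\]
Then $Z \to X$ is a base change of the closed immersion $\mathcal{Z} \hookrightarrow X/G$. Closed immersions are schematic and stable under base change, so $Z$ is a closed subscheme of the derived scheme $X$. This uses that closed immersions of derived schemes are affine and determined by a surjection on $\pi_0$ of structure sheaves, as in \cite[Section 3.7]{CautisWilliams}.

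Next I will equip $Z$ with a $G$-action compatible with the inclusion into $X$. The Čech nerve of $q$ is the action groupoid $G^{\times n}\times X$. Base changing this simplicial object along $\mathcal{Z} \to X/G$ gives the Čech nerve of the base-changed map $q_Z: Z \to \mathcal{Z}$, whose levels are
\[G^{\times n} \times_{\ } \bigl(\mathcal{Z}\times_{X/G} X\bigr) \simeq G^{\times n} \times Z.\]
These identifications use the Cartesian squares $(G\times X)\times_{X/G}\mathcal{Z} \simeq G \times Z$, obtained by composing pullbacks. The result is a groupoid object, namely a $G$-action on $Z$, together with an equivariant map of groupoids to the action groupoid of $X$; this is exactly a $G$-invariance of $Z \subset X$. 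The simplicial map lands in closed subobjects levelwise, so $G$-invariance is a property and not extra data, and the action is determined.

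Finally, $q_Z : Z \to \mathcal{Z}$ is a base change of the smooth surjection $q$, hence an effective epimorphism of stacks. In the $\infty$-topos $\Stk_k$, an effective epimorphism is the colimit of its Čech nerve, so
\[\mathcal{Z} \simeq |G^{\times \bullet}\times Z| \simeq Z/G.\]
The map $Z/G \to X/G$ induced by the equivariant inclusion agrees with the original embedding by construction. This gives the isomorphism $Z/G \xrightarrow{\cong} \mathcal{Z}$.

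The main obstacle is the middle step: checking carefully that the levelwise pullback of the bar construction is again the bar construction for a $G$-action on $Z$, rather than just some groupoid. The cleanest route is to note that $\mathcal{Z}\to X/G$ corresponds under straightening to a functor from $\Stk_{/X/G}$. Equivalently, $\Stk_{/X/G}$ is equivalent to $G$-equivariant objects over $X$ via $Y \mapsto Y\times_{X/G}X$, by descent along the torsor $q$. The whole lemma then reduces to this descent equivalence, together with the stability of closed immersions under base change.
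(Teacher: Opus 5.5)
Your proposal follows the paper's proof. You base change $\mathcal{Z}$ along the torsor $X \to X/G$ to get the closed subscheme $Z$, then use that $Z \to \mathcal{Z}$ is a base-changed $G$-torsor to identify $\mathcal{Z} \simeq Z/G$; your \v{C}ech-nerve/descent discussion just spells out the step the paper asserts in one line.

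One aside is inaccurate but harmless. Derived closed immersions need not be monomorphisms (e.g.\ $\Spec k \times_{\mathbb{A}^1} \Spec k \not\simeq \Spec k$), so ``landing in closed subobjects levelwise'' does not show that $G$-invariance is a property rather than data. Your descent construction supplies that data anyway, so nothing in the argument depends on the remark.
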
 
\begin{proof}
Since closed immersions are stable under base change, the map $Z\coloneq \mathcal{Z}\times _{X /G}X\to X$ is a closed immersion. Thus, $Z$ is a scheme. By construction, $Z\to \mathcal{Z}$ is a $G$-torsor. Thus, the natural $G$-equivariant map $Z\to \mathcal{Z}$ factors through the quotient $Z /G$, giving $Z\to Z /G\xrightarrow{\cong }\mathcal{Z}$.
\end{proof} 

\begin{lem}\label{lem:open-substack-is-quotient}
	Let $\mathcal{U}$ be an open substack of a stack $X /G$. Then there is an isomorphism $U /G\xrightarrow{\cong}\mathcal{U}$ for a $G $-invariant open subscheme $U\hookrightarrow X$.
\end{lem}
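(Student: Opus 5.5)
We mirror the proof of \zcref{lem:closed-substack-is-quotient}, replacing closed immersions by open immersions throughout. Let $q: X \to X/G$ be the quotient map, which is a $G$-torsor and in particular smooth and surjective. Set
\[U \coloneq \mathcal{U} \times_{X/G} X.\]
Open immersions are stable under base change, so the projection $U \to X$ is an open immersion. Hence $U$ is an open subscheme of $X$; concretely, it is the open subscheme supported on the open subset $q^{-1}(|\mathcal{U}|) \subset |X|$ with the induced derived structure.

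Next we check that $U$ is $G$-invariant. Consider the action map $a: G \times X \to X$ and the projection $\pr_2: G \times X \to X$. Their composites with $q$ are canonically identified, since they are the two maps in the Čech nerve of $q$. Therefore
\[a^{-1}(U) \simeq (q\circ a)^{-1}(\mathcal{U}) \simeq (q \circ \pr_2)^{-1}(\mathcal{U}) = G \times U\]
as open substacks of $G \times X$. This says that the $G$-action on $X$ restricts to $U$, and the same identification supplies the higher coherences, because everything is pulled back from $\mathcal{U} \subset X/G$. So $U \to \mathcal{U}$ is the base change of the $G$-torsor $X \to X/G$. In particular it is a $G$-torsor whose $G$-action agrees with the restricted action on $U$.

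Since $U \to \mathcal{U}$ is a $G$-equivariant $G$-torsor, it factors through the quotient $U \to U/G$. The induced map $U/G \to \mathcal{U}$ is an equivalence, because a $G$-torsor $P \to \mathcal{U}$ exhibits $\mathcal{U}$ as the geometric realization of its Čech nerve $G^{\bullet} \times P$, and that realization is $P/G$.

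The only point needing care is the identification of the two $G$-actions on $U$: the one restricted from $X$ and the one coming from the torsor structure. Both arise by base change from $q$, so they agree for formal reasons. Indeed, this is exactly the argument left implicit in \zcref{lem:closed-substack-is-quotient}, and nothing specific to open immersions enters beyond their stability under base change.
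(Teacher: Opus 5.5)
Your proposal is correct and follows the paper's argument. The paper proves this lemma by pointing to the closed-substack case: pull $\mathcal{U}$ back along $X \to X/G$, use stability of open immersions under base change, and observe that $U \to \mathcal{U}$ is a $G$-torsor, so that $U/G \simeq \mathcal{U}$. Your additional check of $G$-invariance is a harmless elaboration. It is in fact automatic here: open immersions are monomorphisms, so restricting the action to $U$ involves no extra coherence data.
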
 
\begin{proof}
	The proof is the same as the one above for \zcref{lem:closed-substack-is-quotient}.
\end{proof}

We also have the following lemma describing the
formal completion along a $G$-equivariant map of derived schemes.

\begin{lem}
	\label{lem: formal completion is quotient}
	Let $Y$ and $U$ be prestacks equipped
	with an action of a smooth group prestack $G$,
	and let $f: Y \to U$ be a $G$-equivariant map. Then
	the formal completion $\wih{U_Y}$ has a canonical $G$-action, and
	there is a canonical equivalence $\wih{U_Y}/G \xrightarrow{\simeq} \wih{(U/G)_{Y/G}}$.
\end{lem}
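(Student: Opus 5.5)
We will work directly with the functor-of-points description of formal completion. For a prestack $U$ and a map $f: Y \to U$, $\wih{U_Y}$ is the prestack sending $S \in \CAlg_k^{\cn}$ (really $\Spec S$) to the space of pairs consisting of a point $x \in U(S)$ and a point $y \in Y((\Spec S)_{\red})$, together with an identification of $f(y)$ with $x|_{(\Spec S)_{\red}}$. Equivalently, $\wih{U_Y} \simeq U \times_{U_{\dR}} Y_{\dR}$, where $(-)_{\dR}$ denotes the de Rham prestack $S \mapsto (-)(S_{\red})$. The plan is to use this fiber product description throughout. Since $(-)_{\dR}$ is a functor that preserves finite limits, it carries the $G$-actions on $Y$ and $U$ to $G_{\dR}$-actions on $Y_{\dR}$ and $U_{\dR}$, with $f_{\dR}$ equivariant. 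Restricting along the canonical group map $G \to G_{\dR}$ makes the cospan $U \to U_{\dR} \leftarrow Y_{\dR}$ into a diagram of $G$-prestacks. Taking the fiber product in $\Fun(BG\text{-simplicial}, \PreStk)$, i.e. levelwise in the bar construction, then gives the canonical $G$-action on $\wih{U_Y}$.

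For the comparison, we write quotients as geometric realizations of the action groupoids, followed by sheafification:
\[
U/G = |G^{\times \bullet} \times U|, \qquad Y/G = |G^{\times\bullet}\times Y|.
\]
We then have a canonical map $\wih{U_Y}/G \to (U/G)\times_{(U/G)_{\dR}}(Y/G)_{\dR} = \wih{(U/G)_{Y/G}}$, induced levelwise from $G^n\times \wih{U_Y} \to G^n \times U$ and $G^n\times Y_{\dR}\to (Y/G)_{\dR}$. To show it is an equivalence, it suffices to check after base change along the effective epimorphism $U \to U/G$. Pulling back the target gives
\[
U \times_{(U/G)_{\dR}} (Y/G)_{\dR}.
\]
Pulling back the source gives $\wih{U_Y}$, because $\wih{U_Y} \to \wih{U_Y}/G$ is a $G$-torsor compatible with $U\to U/G$, so that $\wih{U_Y} \simeq U\times_{U/G}\wih{U_Y}/G$.

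The comparison therefore reduces to showing that the natural map
\[
U\times_{U_{\dR}} Y_{\dR} \to U\times_{(U/G)_{\dR}}(Y/G)_{\dR}
\]
is an equivalence. This in turn reduces to showing that the square with vertices $Y_{\dR}, U_{\dR}, (Y/G)_{\dR}, (U/G)_{\dR}$ is Cartesian. The square $Y\to U$ over $Y/G\to U/G$ is Cartesian, being a pullback of $G$-torsors. The key step is that $(-)_{\dR}$ preserves this particular pullback. This is the main obstacle, because $(-)_{\dR}$ does not commute with sheafified quotients in general: $(U/G)_{\dR}$ evaluated on $S$ is $(U/G)(S_{\red})$, and a torsor on $S_{\red}$ need not extend to $S$.

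We handle this by using smoothness of $G$. Given an $S$-point of the fiber product on the right, we must lift the $G$-torsor $P$ over $S_{\red}$ that underlies the point of $(U/G)(S_{\red})$. Here the $S$-point of $U$ supplies such a lift canonically: the map $S \to U \to U/G$ trivializes the pullback torsor on $S$, and its restriction to $S_{\red}$ is identified with $P$. Thus the datum of a point of $(Y/G)(S_{\red})$ over it is the same as a point of $Y(S_{\red})$ mapping to the given point of $U(S_{\red})$, up to the action of $G(S_{\red})$. The residual ambiguity is the fiber of $G(S)\to G(S_{\red})$, which is absorbed because this map is surjective. This surjectivity is where smoothness is used: the lift exists Zariski/étale locally by formal smoothness along the nilpotent thickening $S_{\red}\hookrightarrow S$, and the comparison is checked locally in the topology used to form the quotient.

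Making this precise amounts to writing both sides as spaces of triples and verifying that the fibers are contractible. Once the Cartesian square is established, the base-changed map is an equivalence, and descent along $U \to U/G$ shows that $\wih{U_Y}/G \to \wih{(U/G)_{Y/G}}$ is an equivalence.
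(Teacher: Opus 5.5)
Your argument is correct in outline, but it takes a different route from the paper's.

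The paper compares $S$-points directly. A point of $\wih{U_Y}/G$ is a $G$-torsor $P\to S$ with an equivariant map $P\to U$ and a compatible map $P_{\red}\to Y$. A point of $\wih{(U/G)_{Y/G}}$ is the same datum with $P_{\red}$ replaced by $P\times_S S_{\red}$. Smoothness of $G$ is used once, to see that $P_{\red}\to P\times_S S_{\red}$ is an isomorphism.

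You instead descend along the atlas $U\to U/G$. This makes the argument more formal than you seem to realize, because the step you call the main obstacle is automatic. Since $(-)_{\dR}$ is computed objectwise, $X_{\dR}(S)=X(S_{\red})$, it preserves all limits of prestacks. Applying it to the Cartesian square $Y\simeq U\times_{U/G}(Y/G)$ gives $Y_{\dR}\simeq U_{\dR}\times_{(U/G)_{\dR}}(Y/G)_{\dR}$ at once. Your worry that $(-)_{\dR}$ does not commute with sheafified quotients concerns colimits, which never enter here. So the lifting of torsors from $S_{\red}$ to $S$ and the surjectivity of $G(S)\to G(S_{\red})$ are unnecessary. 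The ``residual ambiguity'' you describe is also not there: the identification of the two images in $(U/G)(S_{\red})$ is part of the data of a point of the fiber product.

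The hypotheses genuinely enter your route at the descent step, which you state without comment. An equivalence may be tested after pullback along $U\to U/G$ only if both sides are sheaves for a topology in which $U\to U/G$ is an effective epimorphism. Smoothness lets you use the \'etale topology, since torsors are then \'etale-locally trivial. There $X_{\dR}$ is a sheaf whenever $X$ is: an \'etale algebra over a reduced ring is reduced, so reduction commutes with \'etale \v{C}ech nerves. Hence $\wih{(U/G)_{Y/G}}=U/G\times_{(U/G)_{\dR}}(Y/G)_{\dR}$ is an \'etale sheaf.

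With that made explicit, your route is cleaner than the paper's. It avoids identifying equivariant maps $P\to Y_{\dR}$ with maps $P_{\red}\to Y$ for non-affine $P$, and it isolates exactly where smoothness is needed.
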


\begin{proof}
	The canonical $G$-action on $\wih{U_Y}$ is given as follows. Since $(-)_{\dR}$
	preserves limits, $G_{\dR}$ has a group structure coming from
	that of $G$, and $U_{\dR}$ and $Y_{\dR}$ have natural $G_{\dR}$-actions
	coming from the $G$-actions on $U$ and $Y$. These $G_{\dR}$-actions
	restrict along the group map $G \to G_{\dR}$ to give $G$-actions on $U_{\dR}$
	and $Y_{\dR}$. Taking the fiber product $U \times_{U_{\dR}} Y_{\dR}$ in the
	category of $G$-prestacks gives the desired $G$-action on $\wih{U_Y}$.

	In order to show the desired equivalence,
	we note that there is a natural morphism 
	$\wih{U_Y}/G \xrightarrow{} \wih{(U/G)_{Y/G}}$
	induced by the canonical maps $U \to U/G$ and $Y \to Y/G$.
	We proceed by showing that this natural map induces an
	isomorphism between the spaces of $S$-points of each stack,
	for an arbitrary affine derived scheme $S$. 
	By definition, $\wih{(U/G)_{Y/G}}(S) = \sfMap(S, \wih{(U/G)_{Y/G}})$ is the
	$\infty$-groupoid of compatible diagrams,
	\[\left\{\begin{tikzcd}
		{S_{\red}} & {Y/G} \\
		S & {U/G}
		\arrow[from=1-1, to=1-2]
		\arrow[from=1-1, to=2-1]
		\arrow["f", from=1-2, to=2-2]
		\arrow[from=2-1, to=2-2]
	\end{tikzcd}\right\},\]
	where by abuse of notation we use $f$ also
	to denote the induced map $Y/G \to U/G$.
	That is, an $S$-point of $\wih{(U/G)_{Y/G}}$ is an $S$-point
	of $U/G$ together with a chosen factorization through $f$ of its restriction to $S_{\red}$.
	At the same time, an $S$-point of $U/G$ is the data of a $G$-torsor $P \to S$ together with 
	a $G$-equivariant map $\theta: P \to U$. Thus, the $S$-points of $\wih{(U/G)_{Y/G}}$ 
	are equivalently compatible diagrams,
	\[\left\{\begin{tikzcd}
		{P \times_S S_{\red}} & Y \\
		P & U
		\arrow[from=1-1, to=1-2]
		\arrow[from=1-1, to=2-1]
		\arrow["f", from=1-2, to=2-2]
		\arrow["\theta", from=2-1, to=2-2]
	\end{tikzcd}\right\},\]
	i.e. the data of a $G$-torsor $P \to S$ together
	with a chosen factorization of $P \times_S S_{\red}$
	through $f$. 
	
	On the other hand, the $S$-points of $(\wih{U_Y}/G)(S)$ is the $\infty$-groupoid
	whose points consist of a chosen $G$-torsor $P \to S$ together
	with a $G$-equivariant map $p: P \to \wih{U_Y}$, and a
	map $p: P \to \wih{U_Y}$ is a point in the $\infty$-groupoid
	of compatible diagrams,
	\[\left\{\begin{tikzcd}
		{P_{\red}} & Y \\
		P & U
		\arrow[from=1-1, to=1-2]
		\arrow[from=1-1, to=2-1]
		\arrow["f", from=1-2, to=2-2]
		\arrow[from=2-1, to=2-2]
	\end{tikzcd}\right\}.\]

	The map of points
	\[\begin{tikzcd}
		& {P_{\red}} && Y \\
		{P \times_S S_{\red}} && Y \\
		& P && U \\
		P && U
		\arrow[from=1-2, to=1-4]
		\arrow[from=1-2, to=2-1]
		\arrow[from=1-2, to=3-2]
		\arrow[from=1-4, to=2-3]
		\arrow[from=1-4, to=3-4]
		\arrow[from=2-1, to=2-3]
		\arrow[from=2-1, to=4-1]
		\arrow[from=2-3, to=4-3]
		\arrow[from=3-2, to=3-4]
		\arrow[from=3-2, to=4-1]
		\arrow[from=3-4, to=4-3]
		\arrow[from=4-1, to=4-3]
	\end{tikzcd}\]
	corresponding to the induced map of $\infty$-groupoids
	$\left(\wih{U_Y}/G\right)(S) \xrightarrow{} \left(\wih{(U/G)_{Y/G}}\right)(S)$
	is the map of diagrams in which $Y \to Y$, $U \to U$, $P \to P$ are the identity morphisms,
	and $P_{\red} \to P \times_S S_{\red}$ is the universal map to the fiber product.
	Since $P_{\red} \to P \times_S S_{\red}$ is natural in $P$, in order to show that
	$(\wih{U_Y}/G)(S) \xrightarrow{} (\wih{(U/G)_{Y/G}})(S)$ is an equivalence,
	it suffices to show that $P_{\red} \to P \times_S S_{\red}$ is an equivalence.
	This is clearly true \'etale locally on $S_{\red}$ since $G$ is smooth (and $P$ is a $G$-torsor), and
	since the property of being reduced is \'etale local, we are done.
\end{proof}


\subsubsection{Miscellanea}

\begin{lem}
	\label{lem: formal completion commutes with fiber products}
	Suppose that $X \to Y$ and $Z \to Y$ are maps
	of prestacks. Let $W := Z \times_Y X$ denote the fiber product.
	Then $\wih{Z_W} \simeq \wih{Y_X} \times_Y Z$.
\end{lem}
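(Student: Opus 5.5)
The plan is to use the de Rham description of formal completions, $\wih{Y_X} \simeq Y \times_{Y_{\dR}} X_{\dR}$, the same one used in the proof of \zcref{lem: formal completion is quotient}. With it, the claim reduces to two facts: fiber products paste, and $(-)_{\dR}$ preserves limits.

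Concretely, I would compute
\[
\wih{Y_X} \times_Y Z \simeq (X_{\dR} \times_{Y_{\dR}} Y) \times_Y Z \simeq X_{\dR} \times_{Y_{\dR}} Z .
\]
On the other side,
\[
\wih{Z_W} \simeq W_{\dR} \times_{Z_{\dR}} Z \simeq (Z_{\dR} \times_{Y_{\dR}} X_{\dR}) \times_{Z_{\dR}} Z \simeq X_{\dR} \times_{Y_{\dR}} Z .
\]
Here the middle step uses $W_{\dR} \simeq Z_{\dR} \times_{Y_{\dR}} X_{\dR}$. This holds because $(-)_{\dR}$ is computed pointwise by $S \mapsto (-)(S_{\red})$, and limits of prestacks are computed pointwise. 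The remaining steps are cancellation of pullbacks.

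I would then check that the two identifications agree with the canonical map $\wih{Z_W} \to \wih{Y_X} \times_Y Z$. That map is induced by $W \to X$ and $Z \to Y$, together with the structure map $\wih{Z_W} \to Z$. Since every equivalence above comes from universal properties over the same cospan $X_{\dR} \to Y_{\dR} \leftarrow Z$, this compatibility is formal.

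As a sanity check, I would redo the argument on $S$-points for an affine $S$, matching the style of \zcref{lem: formal completion is quotient}. An $S$-point of $\wih{Y_X} \times_Y Z$ is a map $S \to Z$ together with a factorization of $S_{\red} \to Z \to Y$ through $X$. By the universal property of $W = Z \times_Y X$, this is the same as a factorization of $S_{\red} \to Z$ through $W$, which is exactly an $S$-point of $\wih{Z_W}$.

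The only real obstacle is being careful that formal completion here means the prestack-level fiber product with de Rham stacks, with no sheafification. If sheafification is involved, I would use that it is left exact, so it commutes with these finite limits.
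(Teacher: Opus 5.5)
Your proposal is correct and is essentially the paper's proof: both identify each side with $Z \times_{Y_{\dR}} X_{\dR}$, using $W_{\dR} \simeq Z_{\dR} \times_{Y_{\dR}} X_{\dR}$ (because $(-)_{\dR}$ preserves limits) together with pasting of pullbacks. Your $S$-point check and your remark on compatibility with the canonical map are sound additions, though the paper does not need them.
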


\begin{proof}
	By definition, $\wih{Z_W} = Z \times_{Z_{\dR}} W_{\dR}$.
	On the other hand, we have a natural equivalence $W_{\dR} = (Z \times_Y X)_{\dR}
	\xrightarrow{\simeq} Z_{\dR} \times_{Y_{\dR}} X_{\dR}$, giving
	an equivalence,
	\begin{align*}
		\wih{Z_W} &\xrightarrow{\simeq} Z \times_{Z_{\dR}} \left(Z_{\dR} \times_{Y_{\dR}} X_{\dR}\right) \\
		&\xrightarrow{\simeq} Z \times_{Y_{\dR}} X_{\dR} \\
		&\xleftarrow{\simeq} Z \times_Y Y \times_{Y_{\dR}} X_{\dR} = Z \times_Y \wih{Y_X}.
	\end{align*}
\end{proof}

\begin{lem}
	\label{lem: formal completion of formal completion is formal completion}
	Let $X \to Y$ and $Z \to W$ be maps of prestacks, and suppose
	that we have a map $\calY := \wih{Y_X} \to \wih{W_Z} =: \calW$. Then there
	is a natural equivalence,
	\[\wih{\calW_{\calY}} \xrightarrow{\simeq} \wih{W_X},\]
	where the formal completion on the right-hand side it taken
	along the composition of the sequence of maps $X \to \wih{Y_X} \to \wih{W_X} \to W$.
\end{lem}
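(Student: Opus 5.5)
The plan is to unwind the definition of formal completion as a fiber product with de Rham prestacks, and then reduce to the fact that $(-)_{\dR}$ is idempotent and converts formal completions into de Rham prestacks of the subspace. Recall that $(-)_{\dR}$ is given on points by $S \mapsto (-)(S_{\red})$. In particular it preserves limits, and it satisfies $(\wih{Y_X})_{\dR} \simeq X_{\dR}$. The same holds for $W$: $\calW_{\dR} \simeq Z_{\dR}$.

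First I will check the identity $(\wih{Y_X})_{\dR} \simeq X_{\dR}$ directly. Since $(-)_{\dR}$ preserves fiber products, we have $(\wih{Y_X})_{\dR} = (Y \times_{Y_{\dR}} X_{\dR})_{\dR} \simeq Y_{\dR} \times_{(Y_{\dR})_{\dR}} (X_{\dR})_{\dR}$. Because $(S_{\red})_{\red} = S_{\red}$, the functor $(-)_{\dR}$ is idempotent. The fiber product therefore becomes $Y_{\dR} \times_{Y_{\dR}} X_{\dR} \simeq X_{\dR}$.

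With this in hand, the map $X \to \calY \to \calW$ is the composite defining the right-hand side, and on de Rham prestacks the map $\calY_{\dR} \to \calW_{\dR}$ identifies with $X_{\dR} \to Z_{\dR}$. Both sides can then be computed:
\[
\wih{\calW_{\calY}} = \calW \times_{\calW_{\dR}} \calY_{\dR} \simeq (W \times_{W_{\dR}} Z_{\dR}) \times_{Z_{\dR}} X_{\dR} \simeq W \times_{W_{\dR}} X_{\dR} = \wih{W_X}.
\]
The middle step is the pasting of pullbacks, applied along $X_{\dR} \to Z_{\dR} \to W_{\dR}$.

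The main obstacle, mild as it is, lies in the compatibility check. The map $X_{\dR} \to Z_{\dR}$ obtained by applying $(-)_{\dR}$ to $\calY \to \calW$ and using the identifications above must agree with the map induced by the given data. Likewise, the composite $X_{\dR} \to Z_{\dR} \to W_{\dR}$ must agree with $(X \to W)_{\dR}$. I will handle this by naturality of the unit $\mathrm{id} \to (-)_{\dR}$. The identification $\calY_{\dR} \simeq X_{\dR}$ is inverse to the map induced by $X \to \calY$, and similarly for $\calW$, so the square commutes by functoriality. The resulting equivalence is natural because every step is built from canonical maps.
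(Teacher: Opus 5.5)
Your proposal is correct and is essentially the paper's argument. You unwind both completions as fiber products, use that $(-)_{\dR}$ preserves limits and is idempotent to identify $\calW_{\dR}\simeq Z_{\dR}$ and $\calY_{\dR}\simeq X_{\dR}$, and then paste pullbacks along $X_{\dR}\to Z_{\dR}\to W_{\dR}$. Your explicit check that the structure maps agree, via naturality of the unit $\mathrm{id}\to(-)_{\dR}$, is a detail the paper leaves implicit, so nothing is missing.
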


\begin{proof}
	Using the universl property of fiber products, we obtain the following sequence of maps:
	\begin{align*}
		\calW_{\calY} &:= \calW \times_{\calW_{\dR}} \calY_{\dR} \\
		&= \wih{W_Z} \times_{\wih{W_Z}_{\dR}} \wih{Y_X}_{\dR} \\
		&= (W \times_{W_{\dR}} Z_{\dR}) \times_{(W \times_{W_{\dR}} Z_{\dR})_{\dR}} (Y \times_{Y_{\dR}} X_{\dR})_{\dR} \\
		&\xrightarrow{\simeq} (W \times_{W_{\dR}} Z_{\dR}) \times_{W_{\dR} \times_{W_{\dR}} Z_{\dR}} (Y_{\dR} \times_{Y_{\dR}} X_{\dR}) \\
		&\xrightarrow{\simeq} (W \times_{W_{\dR}} Z_{\dR}) \times_{Z_{\dR}} X_{\dR} \\
		&\xrightarrow{\simeq} W \times_{W_{\dR}} X_{\dR} \\
		&=: \wih{W_X},
	\end{align*}
	where the only non-trivial equivalence is the one on the fourth line coming from
	the fact that $(-)_{\dR}$ preserves limits as a right adjoint.
\end{proof}


\subsection{Loop spaces}

\begin{lem}
	\label{lem: loops of closed immersion is closed immersion}
	Let $i:Z \hookrightarrow X$ be a closed substack of a stack $X$, and let $K \in \Ani$ be nonempty. 
	Then the induced map $i_{\Map}: \Map(K_B,Z)\to \Map(K_B,X)$ is a closed immersion,
	where $K_B$ denotes the Betti stack of $K$.
\end{lem}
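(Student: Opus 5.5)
The plan is to write the mapping stack as a limit and use that closed immersions are stable under limits of this shape. Since $K$ is an anima, it is the colimit in $\Ani$ of the constant diagram $K \to \ast \to \Ani$. The Betti functor $(-)_B$ preserves colimits, so $K_B \simeq \colim_{K} \ast_B$ with $\ast_B = \Spec k$. Hence $\Map(K_B, Y) \simeq \lim_{K} Y$ for any stack $Y$, the limit of the constant $K$-shaped diagram. The map $i_{\Map}$ is then the map induced on these limits by the natural transformation with all components equal to $i$.

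To exploit this I will use a pullback description rather than a general limit argument. Choose a point $x_0 \in K$; this uses that $K$ is nonempty. Evaluation at $x_0$ gives a commutative square with top arrow $i_{\Map}\colon \Map(K_B,Z)\to \Map(K_B,X)$, bottom arrow $i\colon Z \to X$, and vertical arrows $\ev_{x_0}$. The key claim is that this square is Cartesian, i.e.
\[\Map(K_B,Z) \xrightarrow{\simeq} \Map(K_B,X)\times_X Z.\]
Granting the claim, $i_{\Map}$ is a base change of the closed immersion $i$. Closed immersions are stable under base change, including in the schematic sense fixed in \zcref{sssec:stacks conventions}, so $i_{\Map}$ is a closed immersion.

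To prove the claim I will check it on $S$-points for $S$ affine. An $S$-point of the right-hand side is a map $f\colon S\times K_B \to X$ together with a lift of $f|_{S\times\{x_0\}}$ through $Z$. I need to show this data is the same as a lift of all of $f$ through $Z$, i.e. that the space of such lifts is contractible exactly as for $x_0$. Two inputs do this.

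First, $i$ is a monomorphism of stacks: closed immersions are monomorphisms, since their diagonal is an equivalence. So lifting through $Z$ is a property, not extra structure.

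Second, $S\times K_B$ is, as a stack, $\colim_K S$. It is therefore covered by the copies $S\times\{x\}$, and any two points $x, x'$ in the same path component of $K$ give homotopic maps $S \to S\times K_B$. For a monomorphism whose image is closed (e.g. checked on the reduced/underlying level), factoring $f$ through $Z$ is equivalent to each restriction $f|_{S\times\{x\}}$ factoring through $Z$. These restrictions are homotopic to one another within each component of $K$.

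This is where I expect the real subtlety. If $K$ is disconnected, the choice of a single basepoint does not control the other components. In that case I would instead write $K=\coprod_j K_j$ with each $K_j$ connected, so that $\Map(K_B,-)\simeq \prod_j \Map((K_j)_B,-)$. Apply the connected case to each factor. Finally, use that a product of closed immersions $\prod_j Z_j\to\prod_j X_j$ is a composite of base changes of closed immersions, hence a closed immersion; for infinitely many factors this follows from compatibility of limits with the closedness criterion.

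The main obstacle is verifying the Cartesian square in the connected case, specifically that the factorization property propagates along paths in $K$. I would handle it by noting that $Z\times_X -$ applied to $\colim_K S$ commutes with the colimit (colimits in a topos are universal). This gives $Z\times_X(S\times K_B)\simeq \colim_K (Z\times_X S)$. Since $Z\times_X S \to S$ is a monomorphism with all fibers over path-connected $K$ identified by the transport maps, $f$ factors through $Z$ precisely when $Z\times_X S\to S$ is an equivalence at one, equivalently every, point of $K$.
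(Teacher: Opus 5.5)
\textbf{There is a genuine gap in your key claim.} The basepoint square $\Map(K_B,Z)\simeq\Map(K_B,X)\times_X Z$ for connected $K$ is false in the derived setting of the paper. It fails exactly at the justification you give: that closed immersions are monomorphisms of stacks because their diagonal is an equivalence. For $\Spec A/I\hookrightarrow\Spec A$ the diagonal is $\Spec A/I\to\Spec(A/I\otimes^{\mathbf{L}}_A A/I)$. This is not an equivalence, since $\mathrm{Tor}^A_1(A/I,A/I)\simeq I/I^2$.

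Concretely, take $K=S^1$ and $\{0\}\hookrightarrow\mathbb{A}^1$. Then $\calL\{0\}\simeq\Spec k$, while $\calL\mathbb{A}^1\simeq\Spec k[x,\epsilon]$ with $\epsilon$ in cohomological degree $-1$. Hence $\calL\mathbb{A}^1\times_{\mathbb{A}^1}\{0\}\simeq\Spec k[\epsilon]\not\simeq\Spec k$. So $i_{\Map}$ is not a base change of $i$. Once $S$ is allowed to be derived, lifting $f\colon S\times K_B\to X$ through $Z$ is genuine structure, not a property. The same error recurs in your last paragraph, where $Z\times_X S\to S$ is treated as a monomorphism. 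Your argument would be fine for classical stacks with classical fiber products, but the lemma is applied to derived loop stacks.

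\textbf{How to repair it.} The missing ingredient is the one the paper's proof rests on. Closedness is tested on classical truncations, and $r_{\leq 0}$ preserves limits. For a classical affine $T$, the map $Z(T)\to X(T)$ is a monomorphism of spaces (see the footnote in the proof of \zcref{lem:loops-of-complement}). Since $\Map(K_B,Y)(T)\simeq\mathsf{Map}(K,Y(T))$, your path-transport argument does work on classical points. For connected $K$ it gives $r_{\leq 0}\Map(K_B,Z)\simeq r_{\leq 0}\Map(K_B,X)\times_{r_{\leq 0}X}r_{\leq 0}Z$. Hence $r_{\leq 0}\bigl(S\times_{\Map(K_B,X)}\Map(K_B,Z)\bigr)\simeq r_{\leq 0}(S\times_X Z)$ is a closed subscheme of $r_{\leq 0}S$.

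What you then lose is the schematicness that base change would have supplied for free. You must separately show that $S\times_{\Map(K_B,X)}\Map(K_B,Z)$ is a derived affine scheme. One way, as in the paper, is to identify it with the $K$-indexed limit over $S$ of the affine derived schemes $Z\times_{X,f_k}S$. The paper works with this limit directly on classical points, checking surjectivity on $\pi_0$ of global sections. That route also makes the choice of basepoint and the decomposition of $K$ into components unnecessary.
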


\begin{proof}
	For every $K \in \Ani$, we tautologically have $\colim_K \ast_K \xrightarrow{\simeq} K$, where
	the latter indicates the constant diagram $\ast_K: K \to \Ani$ with value $\ast$. The functor
	$\underline{(-)}: \Ani \to \PreStk$ taking an anima to its constant prestack preserves colimits,
	as does the sheafification functor $L: \PreStk \to \Stk$. Thus, the Betti stack $K_B := L(\underline{K})$ is equivalent
	to $\colim_K \ast_B$. For any stack $Y$,
	$\Map(-, Y)$ takes colimits in the first argument to limits, so we obtain an
	equivalence 
	\[\Map(K_B, Y) \xrightarrow{\simeq} \Map(\colim_K \ast_B, Y) \xrightarrow{\simeq} \lim_K \Map(\ast_B, Y) \simeq \lim_K Y,\]
	where the latter object, explicitly, is the limit of the diagram
	\[D_Y: K \simeq K^{\op} \xrightarrow{\ast_K^{\op}} \Ani^{\op} \xrightarrow{(-)_B^{\op}} \Stk^{\op} \xrightarrow{\Map(-,Y)} \Stk.\]
	Moreover, this equivalence is natural in $Y$, so we obtain a commutative diagram,
	\[\begin{tikzcd}
		{\Map(K_B,Z)} & {\lim_K D_Z} \\
		{\Map(K_B,X)} & {\lim_K D_X}
		\arrow["\simeq", from=1-1, to=1-2]
		\arrow["{i_{\Map}}"', from=1-1, to=2-1]
		\arrow["{\i}", from=1-2, to=2-2]
		\arrow["\simeq", from=2-1, to=2-2],
	\end{tikzcd}\]
	where $\i$ denotes the map of limits induced by the
	natural transformation $D_Z \to D_X$ induced by $i: Z \hook X$.
	It suffices, therefore, to show that $\i$ is a closed immersion.

	Given a diagram in $\Fun(K, \PreStk)_{/D_Y} \simeq \Fun(K, \PreStk_{/Y})$,
	there are two limit operations one may perform. 
	The first is to take the limit within the overcategory $\PreStk_{/Y}$
	to obtain a functor,
	\[{\lim_K}^{/Y}: \Fun(K, \PreStk)_{/D_Y} \to \PreStk_{/Y},\]
	which corresponds to the right adjoint of the constant diagram functor
	$D_{(-)}: \PreStk_{/Y} \to \Fun(K, \PreStk_Y) \simeq \Fun(K, \PreStk)_{/D_Y}$. 
	In particular, if $Y = \ast$, we get the functor $\lim_K: \Fun(K, \PreStk)
	\to \PreStk$. The functor $\lim_K$ in turn induces a functor on overcategories,
	\[\lim_K: \Fun(K, \PreStk)_{/D_Y} \to \PreStk_{/\lim_K D_Y},\]
	which furnishes the second limit operation.
	
	Now let $S$ be an affine derived scheme, and let $f: S \to \lim_K D_X$ be an
	arbitrary map. In order to show that $\i$ is a closed immersion,
	we must, by definition (\cite[\S 3.7]{CautisWilliams}),
	show that the map of $0$-truncated stacks,
	\[\tau_{\leq 0}(\lim_K Z \times_{\lim_K X} S) \to \tau_{\leq 0} S\]
	is a closed immersion.
	For any $\calT \in \PreStk_{/\lim_K D_X}$,
	and any $\calS \in \PreStk_{/S}$, we have the natural
	equivalence,
	\begin{equation}
		\label{eqn: morphisms in overcategory}
	\sfMap_{\PreStk_{/S}}(\calS, \calT \times_{\lim_K D_X} S) \simeq \sfMap_{\PreStk}(\calS, \calT) \times_{\sfMap_{\PreStk}(\calS, \lim_K D_X)} \sfMap_{\PreStk_{/S}}(\calS, S),
	\end{equation}
	coming from the fact that the overcategory
	$\PreStk_{/S}$ is equivalent to the oriented
	product $\PreStk \vec{\times}_{\PreStk} \{S\}$
	(\cite[\href{https://kerodon.net/tag/02VT}{Tag 02VT}]{kerodon}),
	and morphisms in the latter are precisely what appears above.
	Now let $\calT = \lim_K D_Z$, where the structure map
	$\lim_K D_Z \to \lim_K D_X$ comes from the map $D_Z \to D_X$ induced
	by $i: Z \hook X$. Then the right-hand side of \zcref[noname]{eqn: morphisms in overcategory}
	becomes
	\begin{align*}
		\sfMap(\calS, \lim_K D_Z) \times_{\sfMap(\calS, \lim_K D_X)} \sfMap_{/S}(\calS, S) &\simeq \sfMap(\calS, \lim_K D_Z) \times_{\sfMap(\calS, \lim_K D_X)} \ast \\
																																					 &\simeq \lim_K \sfMap(\calS, D_Z) \times_{\lim_K \sfMap(\calS, D_X)} \lim_K \Delta_{\ast} \\
																																					 &\simeq \lim_K \left(\sfMap(\calS, D_Z) \times_{\sfMap(\calS, D_X)} \Delta_{\ast}\right) \\
																																					 &\simeq \lim_K \left(\sfMap_{/D_S}(\calS, D_Z \times_{D_X} D_S)\right) \\
																																					 &\simeq \sfMap\left(\calS, {\lim_K}^{/X} (D_Z \times_{D_X} D_S)\right)																																 
	\end{align*}
	where $\Delta_{\ast}$ denotes the constant $K$-shaped diagram $K \to \Ani$
	with value $\ast$. Here, the first equivalence uses that 
	$\sfMap_{\PreStk_{/S}}(\calS, S) \simeq \{\calS \to S\} \simeq \ast$;
	the second equivalence uses that $\sfMap(\calS, -)$ commutes with limits
	and $\ast \simeq \lim_K \Delta_{\ast}$; the third equivalence use
	that limits commute with one another; the fourth equivalence uses the
	definition of mapping spaces in the overcategory; and the final equivalence uses
	that limits commute with $\sfMap_{/D_X}(\calS, -)$ once again. 
	Since $\calS \in \PreStk_{/S}$ was arbitrary, this shows that,
	\[{\lim_K}^{/S} (D_Z \times_{D_X} D_S) \xrightarrow{\simeq} \lim_K D_Z \times_{\lim_K D_X} S.\]
	
	Thus, we are reduced to showing that the canonical map $\tau_{\leq 0}(\lim_K^{/S}(D_Z \times_{D_X} D_S)) \to \tau_{\leq 0}S$
	is the image of a closed immersion of ordinary affine schemes 
	under the inclusion $i_{\leq 0}: \PreStk_{\leq 0} \to \PreStk$.
	Since $r_{\leq 0} \circ i_{\leq 0} \simeq \id_{\PreStk_{\leq 0}}$,\footnotemark
		\footnotetext{$r_{\leq 0}: \PreStk \to \PreStk_{\leq 0}$ denotes the
			restriction of prestacks along the inclusion of classical rings.}
	it therefore suffices to show that $r_{\leq 0}(\lim_K^{/S}(D_Z \times_{D_X} D_S)) \to r_{\leq 0}S$
	is a closed immersion of ordinary affine schemes.
	For each point $k \in K$, we note that
	$r_{\leq 0}(D_Z(k) \times_{D_X(k)} D_S(k)) = r_{\leq 0}(Z \times_X S) \to r_{\leq 0}S$ is a closed immersion
	because $Z \hook X$ is a closed immersion.
	In particular, since $r_{\leq 0}S \simeq \Spec \pi_0 \calO(S)$ is a classical
	affine scheme, this implies that $r_{\leq 0}(D_Z(k) \times_{D_X(k)} D_S(k))$ is
	an affine classical scheme, and the induced map
	$\pi_0\calO(S) \to \pi_0\calO(D_Z(k) \times_{D_X(k)} D_S(k))$ is a surjective map of commutative rings.
	On the other hand, since $r_{\leq}$ is a right adjoint (to the inclusion $i_{\leq 0}$),
	it preserves limits. In particular, we have $r_{\leq 0}(\lim_K^{/S}(D_Z \times_{D_X} D_S)) \xrightarrow{\simeq}
	\lim_K^{/(r_{\leq 0}S)} r_{\leq 0}(D_Z \times_{D_X} D_S)$. The latter is a limit
	of classical affine schemes.
	Since $r_{\leq 0}S$ is affine, the canonical map $\lim_K^{/S} r_{\leq0}(D_Z \times_{D_X} D_S) \to r_{\leq 0}S$
	is determined by its induced map on global sections, 
	\begin{equation}
		\label{eqn: surjective morphism}
		\pi_0\calO(S) \to \calO\left({\lim_K}^{/S} r_{\leq 0}(D_Z \times_{D_X} D_S)\right) \simeq {\colim_K}^{\pi_0\calO(S)/} \calO\left(r_{\leq 0}\left(D_Z(k)  \times_{D_X(k)} D_S(k)\right)\right)
	\end{equation}
	where the equivalence comes from the fact that
	${\lim_K}^{/S} r_{\leq 0}(D_Z \times_{D_X} D_S)$ is a limit
	of affine schemes.
	If we can show that this induced map of global sections
	is surjective, then we are done. 
	Since every morphism in $K$ is invertible, however, all the structure
	morphisms of the diagram $\calO\left(r_{\leq 0}\left(D_Z(k)  \times_{D_X(k)} D_S(k)\right)\right)$ are isomorphisms.
	Thus, it is obvious that the map \zcref[noname]{eqn: surjective morphism} is surjective, as desired.
\end{proof}

In light of the above, it makes sense to formulate the following lemma.

\begin{lem}
	\label{lem:loops-of-complement}
	Let $i:Z\hookrightarrow X$ be a closed substack of a stack $X$ with open complement $U$,
	and let $K \in \Ani$ be a nonempty connected anima, whose Betti stack is denoted $K_B$.
	Then $\Map(K_B,U)$ is the open complement to $\Map(K_B,Z)$ in $\Map(K_B,X)$.
\end{lem}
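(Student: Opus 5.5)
We would prove this by the same limit description used in the proof of \zcref{lem: loops of closed immersion is closed immersion}. Write $\Map(K_B,Y)\simeq \lim_K D_Y$, naturally in $Y$. First we check that $\Map(K_B,U)\to\Map(K_B,X)$ is an open immersion. For a test affine $S\to \lim_K D_X$, the argument of \zcref{lem: loops of closed immersion is closed immersion} (the identification of $\lim_K D_U\times_{\lim_K D_X}S$ with ${\lim_K}^{/S}(D_U\times_{D_X}D_S)$) carries over verbatim with $U$ in place of $Z$. This reduces us to a $K$-indexed limit over $S$ of open subschemes $U\times_X S\subset S$.

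The key point is that all transition maps in $K$ are invertible. Since $K$ is connected, the diagram $D_U\times_{D_X}D_S$ over $S$ is equivalent to a constant diagram with value a single open $U_S:=U\times_X S\hookrightarrow S$, obtained by evaluating at a chosen point $k_0\in K$. Concretely, the map $S\to\lim_K D_X$ restricts at each $k$ to some $S\to X$. These maps are identified with one another along paths in $K$, so their pullbacks of $U$ coincide as subobjects of $S$. Subobjects form a poset, so the higher coherences of the diagram are trivial. The diagram is therefore constant in the poset of open subfunctors of $S$.

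It remains to compute the limit over $S$ of a constant $K$-diagram with value a monomorphism $U_S\to S$. This limit is the fiber product over $S$ of $\Map(K,\ -)$ applied in $\PreStk_{/S}$. Because $U_S\to S$ is a monomorphism, its diagonal is an equivalence, so $\Map(K,U_S)$ in $\PreStk_{/S}$ is $U_S$ itself for $K$ nonempty and connected. Hence $\Map(K_B,U)\times_{\Map(K_B,X)}S\simeq U\times_X S$, via the point $k_0$, and this is an open immersion. The same computation for $Z$ gives $\Map(K_B,Z)\times_{\Map(K_B,X)}S\simeq Z\times_XS$, again since closed immersions are monomorphisms.

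Complementarity can then be checked after base change to each such $S$, because being the open complement of a closed substack is detected on underlying topological spaces of $\tau_{\le0}$ of the base changes to affines. There it is just the statement that $U\times_XS$ and $Z\times_XS$ are complementary in $S$, which holds by hypothesis.

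The main obstacle is making rigorous the reduction of the $K$-shaped diagram to a constant one; this is the only place connectedness is used. We would handle it by noting that the functor $K\to\mathrm{Sub}(S)$, $k\mapsto$ the pullback of $U$ at $k$, lands in a poset. Any functor from a connected $\infty$-groupoid to a poset is constant. Moreover, the limit of a constant diagram of monomorphisms over a connected nonempty anima is the value itself.
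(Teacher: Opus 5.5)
Your treatment of $U$ is correct, even at the derived level. The step for $Z$, however, rests on a false claim: closed immersions of derived stacks are not monomorphisms. Open immersions are monomorphisms, since they are flat and so $U\times_XU\simeq U$. But for $\{0\}\hookrightarrow\mathbb{A}^1$ the self-intersection is $\Spec(k\otimes_{k[x]}k)$, which has homotopy in cohomological degree $-1$. So the diagonal is not an equivalence, and the fibres of $Z(T)\to X(T)$ need not be empty or contractible when $T$ is not classical.

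Accordingly, the asserted equivalence $\Map(K_B,Z)\times_{\Map(K_B,X)}S\simeq Z\times_XS$ fails. Take $X=\mathbb{A}^1$, $Z=\{0\}$, $K=S^1$, and $S=\calL\mathbb{A}^1\simeq\Spec k[x,\epsilon]$ (with $\epsilon$ in cohomological degree $-1$) mapping by the identity. The left side is $\calL(\Spec k)\simeq\Spec k$. The right side, formed via evaluation at the base point, is $\Spec k[\epsilon]$. The two agree only after classical truncation.

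Agreement after classical truncation is, however, all your last step needs. The repair is the one in the paper's footnote: for a \emph{classical} affine $T$ and $x\in X(T)$, the fibre of $Z(T)\to X(T)$ over $x$ is the space of sections of the closed subscheme $T\times_XZ\subset T$. That space is empty or contractible. Run your transport-along-paths argument only on classical test rings; this identifies the classical truncations of $\Map(K_B,Z)\times_{\Map(K_B,X)}S$ and $Z\times_XS$, and your topological complementarity argument then goes through.

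With this correction your proof is valid, but it is organised differently from the paper's. The paper characterises both subfunctors of $\Map(K_B,X)$ by emptiness conditions and proves one implication formally. For the other, it builds a field-valued point of $S\times_{\Map(K_B,X)}\Map(K_B,Z)$ from one of $(S\times K_B)\times_XZ$, using connectedness exactly as you do (its map $p_K$ being an equivalence). You instead compute the base changes uniformly over all test affines. This gives openness of $\Map(K_B,U)\to\Map(K_B,X)$ directly, rather than as a consequence of its being the complement of a closed substack.
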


\begin{proof}
	Let $S$ be an affine derived scheme,
	and consider a point $f:S\to \Map(K_B,X)$. By definition, 
	$f$ factors through the complement $\Map(K_B,X) \setminus \Map(K_B,Z)$ if and only if
	$S\times_{\Map(K_B,X)}\Map(K_B,Z)\simeq \emptyset$.
	On the other hand, $f$ factors through $\Map(K_B,U)$ if and only if,
	for the map $\tilde{f}: S \times K_B \to X$ corresponding to $f$ under adjunction,
	$(S\times K_B)\times _XZ \simeq \emptyset$.
	It thus suffices to show that
	\[
	S\times _{\Map(K_B,X)}\Map(K_B,Z)\simeq \emptyset  \iff	(S\times K_B)\times _XZ\simeq \emptyset.
	\]
	Suppose $(S\times K_B)\times _XZ\simeq \emptyset$. For any $T$, we have the following
	chain of equivalences: 
	\begin{align*}
		\sfMap(T,S &\times_{\Map(K_B,X)}\Map(K_B,Z)) \\
		&\simeq \sfMap(T,S)\times _{\sfMap(T,\Map(K_B,X))}\sfMap(T,\Map(K_B,Z))\\
		&\simeq \sfMap(T,S)\times_{\sfMap(T,\Map(K_B, S \times K_B))}\sfMap(T,\Map(K_B, S\times K_B))\times_{\sfMap(T,\Map(K_B,X))}\sfMap(T, \Map(K_B,Z))\\
		&\simeq \sfMap(T,S)\times_{\sfMap(T\times K_B,S\times K_B)}\sfMap(T\times K_B,(S\times K_B)\times_X Z)\\
		&\simeq \emptyset,
	\end{align*} 
	where in the second line the map $\sfMap(T,S) \to \sfMap(T, \Map(K_B, S \times K_B))$ is given by
	postcomposition with the map $S \to \Map(K_B, S \times K_B)$ corresponding to the identity $\id_{S \times K_B}$ under adjunction, 
	and the map $\sfMap(T, \Map(K_B, S \times K_B)) \to \sfMap(T, \Map(K_B,X))$ is given
	by postcomposition with the map $\Map(K_B, S \times K_B) \to \Map(K_B, X)$ given itself by postcomposition
	with the map $S \times K_B \to X$ corresponding under adjunction to the map $S\to\Map(K_B,X)$ coming from the first coordinate of
	the chosen $T$-point in $S\times _{\Map(K_B,X)}\Map(K_B,Z)$.
	
	It remains to show the converse. Suppose that $(S\times K_B)\times_X Z\not\simeq \emptyset$. Since emptiness is witnessed by
	field-valued points, there exists $T = \Spec \kappa$ for some field $\kappa$ such that $\sfMap(T, (S \times K_B) \times_X Z))$ is nonempty. Choose a point in
	$\sfMap(T, (S \times K_B) \times_X Z))$. The data of such a point is, by the universal property, 
	the data of maps $\wit{s}: T \to S \times K_B$, and $z: T \to Z$ together with a homotopy of the compositions,
	\[(T \xrightarrow{\wit{s}} S \times K_B \xrightarrow{\wit{f}} X) \xLongrightarrow{\eta} (T \xrightarrow{z} Z \xrightarrow{i} X).\]
	The task at hand is to produce a $T$-point of $S\times_{\Map(K_B,X)}\Map(K_B,Z)$ from this data.
	
	Let $s: T \to S$ denote the map obtained by the composition $T \xrightarrow{\wit{s}} S \times K_B \xrightarrow{\pr_1} S$, and 
	$k: T \to K_B$ the map obtained by $T \xrightarrow{\wit{s}} S \times K_B \xrightarrow{\pr_2} K_B$. 
	We seek a point $\zeta: T \to \Map(K_B,Z)$ together with a homotopy $H: f \circ s \xLongrightarrow{} i_* \circ \zeta$, where
	$i_*: \Map(K_B,Z) \to \Map(K_B,X)$ denotes the map given by composition with $i$. There is a canonical map,
	\[\sfMap(T, \Map(K_B,X)) \xrightarrow{} \sfMap_{\Spc}(K_B(T), X(T)),\] 
	under which the map $f \circ s: T \to \Map(K_B,X)$ is sent to a map $\tilde{f}_s: K_B(T) \to X(T)$. The homotopy $\eta$ tells us precisely
	that the image of $k \in K_B(T)$ lies in the image of the map $Z(T) \to X(T)$.
	Consider the Cartesian diagram 
	\[\begin{tikzcd}
		{K_B(T) \times_{X(T)} Z(T)} & {K_B(T)} \\
		{Z(T)} & {X(T)}
		\arrow["{p_K}", from=1-1, to=1-2]
		\arrow["{p_Z}"', from=1-1, to=2-1]
		\arrow["\sigma"', Rightarrow, nfold, from=1-2, to=2-1]
		\arrow["{\tilde{f}_s}", from=1-2, to=2-2]
		\arrow["{i_T}", from=2-1, to=2-2]
	\end{tikzcd}\]
	We claim that $p_K$ is an equivalence. Indeed, it has one nonempty fiber by assumption, 
	over $k\in K_B(T)$. Given another fiber, over $k'\in K_B(T)$, there is a path $k\to k'$. 
	This path defines a map of fibers by concatenation: A point in the fiber over $k$ 
	is a point $a\in K_B(T)\times_{X(T)}Z(T)$ together with a path $p_K(a)\to k$. 
	Thus, the fiber over $k'$ cannot be empty. Since $T$ is classical, $i_T:Z(T)\hookrightarrow X(T)$ is a monomorphism of 
	spaces,\footnotemark
	\footnotetext{If $i:Z\to X$ be a closed immersion of stacks, and $T = \Spec A$ a classical affine scheme,
		then $i_T:Z(T)\hookrightarrow X(T)$ is a monomorphism of spaces. Indeed, fix 
		$x: T \to X$, and write $T \times_X Z \simeq \Spec B$, with 
		$H^0(B) \simeq A/\mathfrak{a}$ for some ideal $\mathfrak{a} \subset A$. 
		The homotopy fiber of $i_T: Z(T) \hookrightarrow X(T)$ over $x$ is given by
		$\mathsf{Map}_{/T}(T,T \times_X Z) \simeq \sfMap_{\CAlg_A}(B,A) \simeq \Hom_A(A/\mathfrak{a}, A)$, 
		since maps from a connective algebra into a discrete algebra factor 
		through $\pi_0$. Now, an $A$-algebra
		map $A/\mathfrak{a} \to A$ exists precisely when $\mathfrak{a} = 0$, 
		in which case it is unique; hence, this fiber is either empty or contractible.}
	 hence its base change $p_K$ is a monomorphism, 
	so its fibers are all nonempty and contractible. Thus, $p_K$ is an equivalence. 
	Thus, we obtain a map $\tilde\zeta\coloneq p_Z\circ (p_K)^{-1}: K_B(T) \to Z(T)$.
	Since $T = \Spec \kappa$, it is a connected affine scheme, so $K_B(T) \simeq K$.
	Because $\Stk_k$ is an $\infty$-topos, it is cotensored over $\Spc$ (\cite[Remark 5.5.2.6]{HTT}),
	meaning there is a natural equivalence,
	\begin{equation}
		\label{eqn: cotensor}
		\sfMap(T, \Map(K_B, Z)) \simeq \sfMap(K, Z(T)).
	\end{equation}
	Let $\zeta:T\to\Map(K_B,Z)$ denote the point that
	corresponds to $\tilde \zeta$ under this equivalence. 
	
	It remains to specify a homotopy $H: f \circ s \xLongrightarrow{} i_* \circ \zeta$. We have an explicit homotopy
	\begin{align*}
		\wit{H} :\tilde f_s \simeq \tilde f_s\circ p_K \circ (p_K)^{-1} \xLongrightarrow{\sigma \circ (p_K)^{-1}} i_T\circ p_Z\circ (p_K)^{-1} \simeq i_T\circ\tilde\zeta,
	\end{align*}
	of points in $\sfMap(K, X(T))$. Under the natural equivalence \zcref[noname]{eqn: cotensor}, 
	$\wit{H}$ gives a path $H: f\circ s \xLongrightarrow{} i_* \circ \zeta$
	in $\Map(T,\Map(K_B,X))$. Thus, the tuple $(s,\zeta,H)$ specifies $T$-point of 
	$S \times_{\Map(K_B,X)}\Map(K_B,Z)$,
	so we are done.
\end{proof}

In particular, taking $K = S^1$ in
\zcref{lem: loops of closed immersion is closed immersion}
and \zcref{lem:loops-of-complement},
we obtain the following
results about the loop stack of a complementary substack.

\begin{cor}
	\label{cor: loops of complement}
	Let $X$ be a derived stack, and 
	let $i:Z\hookrightarrow X$ be a closed immersion. Let $U \hook X$ denote
	its complement.
	Then $\calL Z \hook \calL X$ is a closed immersion, and
	$\calL U \hook \calL X$ is its complementary open substack.
\end{cor}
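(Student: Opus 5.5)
The corollary follows by specializing \zcref{lem: loops of closed immersion is closed immersion} and \zcref{lem:loops-of-complement} to $K = S^1 \in \Ani$. First I will identify the loop stack with a mapping stack out of a Betti stack. By the convention in the introduction, $\calL X := \Map(S^1, X)$, where $S^1$ denotes the Betti stack $S^1_B$ of the anima $S^1$. The same holds for $Z$ and $U$. These identifications are natural in the target, so the maps $\calL Z \to \calL X$ and $\calL U \to \calL X$ induced by $i$ and by the open inclusion $j: U \hook X$ are exactly the maps $i_{\Map}$ and $j_{\Map}$ of mapping stacks obtained by postcomposition.

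The first assertion comes from \zcref{lem: loops of closed immersion is closed immersion} with $K = S^1$. Its only hypothesis is that $K$ is nonempty, which holds for $S^1$. It gives that $\calL Z \to \calL X$ is a closed immersion.

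The second assertion comes from \zcref{lem:loops-of-complement}, which requires $K$ to be nonempty and connected; $S^1$ is both. Combined with the first step, it gives that $\Map(S^1_B, U) = \calL U$ is the open complement of the closed substack $\calL Z \subset \calL X$.

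There is essentially no obstacle, since both lemmas hold for an arbitrary stack $X$ and closed substack $Z$ with no further hypotheses. The one point to check is the identification $\calL X \simeq X \times_{X\times X} X \simeq \Map(S^1_B, X)$ and its naturality in $X$. This follows from writing $S^1_B$ as the pushout $\ast_B \sqcup_{\ast_B \sqcup \ast_B} \ast_B$ and using that $\Map(-, X)$ takes colimits to limits, compatibly with maps $X \to X'$.
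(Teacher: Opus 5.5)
Your proposal is correct and matches the paper, which obtains the corollary simply by taking $K = S^1$ in the two preceding lemmas; $S^1$ is nonempty, resp.\ nonempty and connected, so both apply. Your extra check that $\calL X \simeq \Map(S^1_B, X)$ naturally in $X$ is taken for granted in the paper through its definition of the loop stack as $\Map(S^1, X)$.
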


It will be important later on to identify the loop stack of a formal completion
with the formal completion along the corresponding induced map
of loop stacks.

\begin{lem}
	\label{lem:loops-commute-with-formal-completion}
	Let $Y \to X$ be any map of derived stacks. Then there is an $S^1$-equivariant
	isomorphism,
	\[
		\calL\left(\wih{X_Y}\right) \simeq \wih{\calL X_{\calL Y}}.
	\] 
\end{lem}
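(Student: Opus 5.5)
We compare functors of points directly, using the description $\wih{X_Y} = X \times_{X_{\dR}} Y_{\dR}$ and the fact that $\calL(-) = \Map(S^1_B, -)$ is a right adjoint on stacks, so it commutes with fiber products. The plan has three steps.

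First, we apply $\calL$ to the defining fiber square of $\wih{X_Y}$. This gives an $S^1$-equivariant equivalence
\[\calL(\wih{X_Y}) \simeq \calL X \times_{\calL(X_{\dR})} \calL(Y_{\dR}).\]
It is $S^1$-equivariant because loop rotation is natural in the target, and the equivalence is induced by the functor $\Map(S^1_B,-)$ applied to a diagram.

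Second, we identify $\calL(Z_{\dR})$ with $(\calL Z)_{\dR}$, naturally in $Z$ and $S^1$-equivariantly. On $S$-points,
\[\calL(Z_{\dR})(S) = \sfMap(S \times S^1_B, Z_{\dR}) = \sfMap((S\times S^1_B)_{\red}, Z),\]
while
\[(\calL Z)_{\dR}(S) = \sfMap(S_{\red}, \calL Z) = \sfMap(S_{\red}\times S^1_B, Z).\]
So it suffices to show $(S\times S^1_B)_{\red} \simeq S_{\red}\times S^1_B$. Since $S^1_B \simeq \colim_{S^1} \ast$ is the sheafification of a constant prestack, $S \times S^1_B \simeq \colim_{S^1} S$, and both sides are colimits of copies of $S_{\red}$. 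More cleanly, one can use the cotensoring of $\Stk$ over $\Spc$ as in \zcref{lem:loops-of-complement}:
\[\sfMap(S, \Map(K_B, Z_{\dR})) \simeq \sfMap_{\Spc}(K, Z_{\dR}(S)) = \sfMap_{\Spc}(K, Z(S_{\red})) \simeq \sfMap(S_{\red}, \Map(K_B,Z)).\]
This holds at least for affine $S$, which is all we need. This chain is natural in $S$, in $Z$, and in automorphisms of $K = S^1$, which gives the $S^1$-equivariance. The same argument with $Z = X$ and $Z = Y$ shows the identifications are compatible with the map $Y\to X$.

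Third, we substitute the second step into the first to obtain
\[\calL(\wih{X_Y}) \simeq \calL X \times_{(\calL X)_{\dR}} (\calL Y)_{\dR} = \wih{\calL X_{\calL Y}}.\]
We must also check that the structure map $\calL X \to \calL(X_{\dR}) \simeq (\calL X)_{\dR}$ is the canonical one. This follows from naturality of the second step applied to the unit $X \to X_{\dR}$.

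The main obstacle is the second step: making the identification $\calL(Z_{\dR}) \simeq (\calL Z)_{\dR}$ honestly natural and $S^1$-equivariant. The cotensor description handles affine test objects. Since both sides are stacks, or at least are determined by their values on affines as right Kan extensions, the equivalence extends. We then need the $S^1$-action, which comes from the $S^1$-action on $K = S^1$ in $\Spc$, to be preserved by each equivalence in the chain; this holds because every step is functorial in $K \in \Spc$.
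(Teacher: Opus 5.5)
Your proposal is correct and follows the same route as the paper. Both proofs commute $\calL$ past the fiber product $X\times_{X_{\dR}}Y_{\dR}$ and then identify $\calL(Z_{\dR})\simeq(\calL Z)_{\dR}$ naturally in $Z$. The one difference is in that last step: the paper writes $\calL Z\simeq Z\times_{Z\times Z}Z$ and uses that $(-)_{\dR}$ preserves limits. Your cotensor computation $\sfMap(S,\Map(K_B,W))\simeq\sfMap_{\Spc}(K,W(S))$, combined with $Z_{\dR}(S)=Z(S_{\red})$, is the same limit-commutation done pointwise, and its naturality in $K$ makes the $S^1$-equivariance explicit where the paper only asserts it.
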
 
\begin{proof}
	By definition, $\wih{X_Y} \coloneq X \times _{X_{\dR}}Y_{\dR}$.
	As a representable functor, $\calL(-)$ commutes with limits, 
	so the natural morphism $\calL(\wih{X_Y}) \xrightarrow{\simeq} \calL X
	\times _{\calL(X_{\dR})}\calL(Y_{\dR})$ is an isomorphism.
	Thus, the lemma follows if we show that the de Rham stack and loop space
	constructions commute, i.e. if  we show the natural map $(\calL Y)_{\dR} \to \calL(Y_{\dR})$
	is an isomorphism for any stack $Y$. This follows immediately from the
	fact that $(-)_{\dR}$ preserves limits (\cite[Lemma 1.1.4]{GRCrystals}), 
	and the loop stack can be wrriten as $\calL Y \simeq Y \times_{Y \times Y} Y$.
	It is clear that all maps admit canonical $S^1$-equivariant structures. 
\end{proof} 
\begin{lem}\label{lem: unipotent loops commute with formal completion}
Let $Y\to X$ be any map of derived stacks. Then the natural $S^1$-equivariant map
\begin{align*}
	\mathcal{L}^u\left( \wih{X_Y} \right) \to \wih{\mathcal{L}^uX_{\mathcal{L}^uY}}
\end{align*} 
is an isomorphism.
\end{lem}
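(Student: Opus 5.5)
We imitate the proof of \zcref{lem:loops-commute-with-formal-completion}, with the Betti circle $S^1_B$ replaced by $B\bbG_a$. By definition $\calL^u(-) = \Map(B\bbG_a, -)$. As a functor $\Stk_k \to \Stk_k$ it is a right adjoint to $- \times B\bbG_a$, so it preserves all limits. Writing $\wih{X_Y} = X \times_{X_{\dR}} Y_{\dR}$, we get a canonical equivalence
\[\calL^u(\wih{X_Y}) \xrightarrow{\simeq} \calL^u X \times_{\calL^u(X_{\dR})} \calL^u(Y_{\dR}).\]
This equivalence is $S^1$-equivariant for the action induced by precomposition along the $S^1$-action on $B\bbG_a$ (or, more precisely, the $B\bbG_a$-action on itself), because the limit is formed functorially in the source. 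The target of the desired map is $\calL^u X \times_{(\calL^u X)_{\dR}} (\calL^u Y)_{\dR}$. The claim therefore reduces to showing that the natural comparison map
\[(\calL^u W)_{\dR} \to \calL^u(W_{\dR})\]
is an $S^1$-equivariant isomorphism for $W = X, Y$, compatibly with the maps induced by $Y \to X$.

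The key step is this commutation of $(-)_{\dR}$ with $\Map(B\bbG_a,-)$. For $\calL$ this followed from writing $\calL W = W \times_{W\times W} W$, a finite limit, but $B\bbG_a$ is not a finite colimit of points in stacks, so this argument is unavailable. The plan is to compute $S$-points directly. A point $S \to (\calL^u W)_{\dR}$ is a point $S_{\red} \to \calL^u W$, that is, a map $S_{\red} \times B\bbG_a \to W$. A point $S \to \calL^u(W_{\dR})$ is a map $S \times B\bbG_a \to W_{\dR}$. I would try to identify the latter with maps $(S \times B\bbG_a)_{\red} \to W$ and then show $(S \times B\bbG_a)_{\red} \simeq S_{\red} \times B\bbG_a$.

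The main obstacle is that the "reduction" of the stack $S \times B\bbG_a$ is not a priori meaningful, since $W_{\dR}$ is defined only by its values on affines. I would handle this by writing $B\bbG_a = |\bbG_a^{\bullet}|$ as the geometric realization of the bar construction; this holds in prestacks by \cite[Lemme 2.2.5]{ToenChampsAffine}, which is already used in the proof of \zcref{prop: BG_a-actions are unipotent}. Since $W_{\dR}$ is a stack, maps out of the realization become totalizations, so both sides can be written as
\[\Tot\,\sfMap(S \times \bbG_a^{n}, W_{\dR}) \;\;\text{and}\;\; \Tot\,\sfMap(S_{\red}\times\bbG_a^n, W).\]
For each $n$ we have $\sfMap(S \times \bbG_a^n, W_{\dR}) = W\bigl((S\times \bbG_a^n)_{\red}\bigr)$, and since $\bbG_a^n$ is smooth over $k$ (characteristic zero), $(S \times \bbG_a^n)_{\red} = S_{\red} \times \bbG_a^n$. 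This is the same smoothness argument used at the end of \zcref{lem: formal completion is quotient}. The identifications are natural in the simplicial degree, so the totalizations agree.

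Naturality in $W$ gives compatibility with $Y \to X$. The $S^1$-equivariance follows because the loop-rotation action is induced by automorphisms of $B\bbG_a$, and every identification above is natural in the source stack.
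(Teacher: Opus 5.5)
Your proof is correct and is essentially the paper's argument. Both reduce, via limit-preservation of $\calL^u$, to the commutation $(\calL^u W)_{\dR}\simeq\calL^u(W_{\dR})$, and both prove it using the bar presentation of $B\bbG_a$ together with the affine identity $(S\times\bbG_a^n)_{\red}\simeq S_{\red}\times\bbG_a^n$; the difference is only presentational, since the paper proves $(B\bbG_a\times -)_{\red}\simeq B\bbG_a\times(-)_{\red}$ for the left adjoints and passes to right adjoints, whereas you evaluate the right adjoints on $S$-points directly.
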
 
\begin{proof}
	Let $L=(-)_{\text{red}}$ be the left adjoint to $R=(-)_{\text{dR}}$, and let $F=B\mathbb{G} _a \times -$ be the left adjoint to $\mathcal{L}^u=\Map(B\mathbb{G} _a,-)$. We claim that there is a canonical equivalence
	\begin{align*}
		L\circ F\simeq F\circ L.
	\end{align*} 
	This suffices to prove the claim. Indeed, passing to right adjoints gives an equivalence
	\begin{align*}
		\mathcal{L}^u\circ R\simeq R\circ \mathcal{L}^u.
	\end{align*} 
	Since $\mathcal{L}^u$ preserves limits, we have
	\begin{align*}
		\mathcal{L}^u(\wih{X_Y})\simeq& \mathcal{L}^uX\times _{\mathcal{L}^u(X_{\text{dR}})}\mathcal{L}^u(Y_{\text{dR}})\\
		\simeq&\mathcal{L}^uX\times _{(\mathcal{L}^uX)_{\text{dR}}}(\mathcal{L}^uY)_{\text{dR}}\\
		\simeq&\wih{(\mathcal{L}^uX)_{\mathcal{L}^uY}}.
	\end{align*} 
	Moreover, this composition of equivalences is precisely the natural map 
	\begin{align*}
	\mathcal{L}^u\left( \wih{X_Y} \right) \to \wih{\mathcal{L}^uX_{\mathcal{L}^uY}}.
	\end{align*} 
	We will now prove the equivalence $L\circ F\simeq F\circ L$. To see this, observe first that reduction commutes with products with $\mathbb{A}^n$. Indeed, for all $n$, the functors
	\begin{align*}
		Z\mapsto L(Z\times \mathbb{A}^n)\qquad \text{and}\qquad Z\mapsto L(Z)\times \mathbb{A}^n
	\end{align*} 
	preserve colimits. Thus, the canonical comparison map between them is an isomorphism if and only if it is an isomorphism on affine schemes. There, it is simply the identity
	\begin{align*}
		(H^0(A)[x_1,\ldots,x_n])_{\text{red}}=H^0(A)_{\text{red}}[x_1,\ldots,x_n].
	\end{align*} 
	Now, apply this in each degree to the bar presentation of $B\mathbb{G} _a$:
	\begin{align*}
		L(Z\times B\mathbb{G} _a)\simeq&\colim_{[n]\in\Delta^{\text{op}}}L(Z\times \mathbb{G} _a^n)\\
		\simeq&\colim L(Z)\times \mathbb{G} _a^n\\
		\simeq&L(Z)\times B\mathbb{G} _a.
	\end{align*} 
\end{proof}
\begin{lem}\label{lem: formal loops of open complement}
	Let $i:Z\to X$ be a closed substack of a stack $X$. Then $\hL i:\hL Z\to \hL X$ is a closed immersion. Moreover, if $U$ is the open complement to $Z$ in $X$, then $\hL U$ is the open complement to $\hL Z$ in $\hL X$, and the natural $S^1$-equivariant map
	\begin{align*}
		\hL(\wih{X_Z})\to \wih{(\hL X)_{\hL Z}}
	\end{align*} 
	is an isomorphism.
\end{lem}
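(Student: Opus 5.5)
The strategy is to work entirely with de Rham stacks, using that $\hL X = \wih{(\calL X)_X}$ is the formal completion of $\calL X$ along the constant loops. Write $c_X : X \to \calL X$ for the constant loop map. By definition $\hL X = \calL X \times_{(\calL X)_{\dR}} X_{\dR}$. We will use two facts repeatedly: $(-)_{\dR}$ preserves limits, and $(\calL Y)_{\dR}\simeq \calL(Y_{\dR})$ (both from the proof of \cref{lem:loops-commute-with-formal-completion}). Every identification below is natural in the input data, so $S^1$-equivariance comes for free, exactly as in that lemma.

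\textbf{Step 1: a base-change formula.} We first show
\[\hL Z \simeq \hL X \times_{\calL X} \calL Z\]
and similarly with $U$ in place of $Z$. By \cref{lem: formal completion commutes with fiber products}, the right-hand side is the formal completion of $\calL Z$ along $W := X \times_{\calL X} \calL Z$. A formal completion only depends on the de Rham stack of the locus, so it suffices to show $W_{\dR} \simeq Z_{\dR}$ compatibly with the map to $(\calL Z)_{\dR}$. Applying $(-)_{\dR}$ gives
\[W_{\dR} \simeq X_{\dR} \times_{\calL(X_{\dR})} \calL(Z_{\dR}).\]
Now $Z_{\dR} \to X_{\dR}$ is a monomorphism. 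For $Z$ this holds because a closed immersion is injective on reduced points; for $U$ it holds because an open immersion is a monomorphism. Loops of a monomorphism are then computed by base change, $\calL(Z_{\dR}) \simeq \calL(X_{\dR}) \times_{X_{\dR}} Z_{\dR}$, where the map $\calL(X_{\dR}) \to X_{\dR}$ is evaluation at the basepoint. Since evaluation composed with $c$ is the identity, the fiber product above collapses to $Z_{\dR}$, as needed.

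\textbf{Step 2: closed immersion and open complement.} Given Step 1, $\hL i$ is the base change of $\calL Z \hook \calL X$ along $\hL X \to \calL X$. The latter map is a closed immersion with open complement $\calL U$ by \cref{cor: loops of complement}. Closed immersions are stable under base change, and so is the property of being complementary open/closed substacks. Hence $\hL Z \hook \hL X$ is a closed immersion whose open complement is $\hL X \times_{\calL X} \calL U \simeq \hL U$.

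\textbf{Step 3: the final isomorphism.} For the source, \cref{lem:loops-commute-with-formal-completion} gives $\calL(\wih{X_Z}) \simeq \wih{(\calL X)_{\calL Z}}$. Therefore $\hL(\wih{X_Z})$ is the completion of $\wih{(\calL X)_{\calL Z}}$ along $\wih{X_Z}$. By \cref{lem: formal completion of formal completion is formal completion}, this is the completion of $\calL X$ along $\wih{X_Z}$. Since $(\wih{X_Z})_{\dR} \simeq Z_{\dR}$, both descriptions equal
\[\calL X \times_{\calL(X_{\dR})} Z_{\dR}.\]
For the target, $\wih{(\hL X)_{\hL Z}}$ is the completion of $\wih{(\calL X)_X}$ along $\hL Z$. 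By the same lemma it is the completion of $\calL X$ along $\hL Z$. Moreover $(\hL Z)_{\dR} \simeq Z_{\dR}$, because $(\hL Z)_{\dR} = (\calL Z)_{\dR} \times_{(\calL Z)_{\dR}} Z_{\dR}$. So the target is the same stack $\calL X \times_{\calL(X_{\dR})} Z_{\dR}$. One then checks that the natural map is the composite of these identifications.

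The main obstacle is Step 1, specifically the monomorphism claim for $Z_{\dR} \to X_{\dR}$ and the resulting formula for $\calL(Z_{\dR})$. I would verify this on $S$-points. An $S$-point of $Z_{\dR}$ is an $S_{\red}$-point of $Z$, and $Z(S_{\red}) \to X(S_{\red})$ is a monomorphism because $S_{\red}$ is classical (the footnote argument in \cref{lem:loops-of-complement}). The mapping-stack statement then follows, since $\Map(S^1_B, -)$ preserves monomorphisms and commutes with the relevant fiber products.
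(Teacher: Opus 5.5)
Your proposal is correct and is essentially the paper's proof. Step 1 is the same argument: $Z_{\dR}\to X_{\dR}$ is a monomorphism of de Rham stacks, which yields $\hL Z\simeq \calL Z\times_{\calL X}\hL X$. Step 3 is the same reduction, via nil-invariance of $(-)_{\dR}$ and \cref{lem: formal completion of formal completion is formal completion}, of both sides to $\calL X\times_{(\calL X)_{\dR}} Z_{\dR}$.

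The only difference is in the open-complement part. You treat $U$ by the identical de Rham argument and get complementarity by base change from \cref{cor: loops of complement}. The paper instead proves the sharper identity $U\simeq \calL U\times_{\calL X}X$ using the evaluation retraction, and then checks disjointness of $\hL Z$ and $\hL U$ by hand; your version is cleaner.

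One small caveat on Step 1: the formula $\calL(Z_{\dR})\simeq \calL(X_{\dR})\times_{X_{\dR}}Z_{\dR}$ really uses connectedness of $S^1$, not just that $\Map(S^1_B,-)$ preserves monomorphisms. However, the weaker statement you actually need, $X_{\dR}\times_{\calL(X_{\dR})}\calL(Z_{\dR})\simeq Z_{\dR}$, follows from the monomorphism property and $\ev\circ c=\id$ alone.
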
 
\begin{proof}
	Note that $Z_{\text{dR}}\to X_{\text{dR}}$ is a monomorphism, since the de Rham prestack only evaluates on underlying reduced, classical stacks, where this holds. Hence, by \zcref{cor: loops of complement}, $(\calL Z)_{\text{dR}}\to(\calL X)_{\text{dR}}$ is a monomorphism as well; thus,
    \begin{align*}
        Z_{\text{dR}}\simeq (\calL Z)_{\text{dR}}\times_{(\calL X)_{\text{dR}}}X_{\text{dR}}.
    \end{align*}
    In turn, this implies that 
    \begin{align*}
        \hL Z\simeq \calL Z\times_{\calL X} \hL X,
    \end{align*}
    so the map $\hL Z \to \hL X$ is the base change of a closed immersion hence also a closed immersion. We first show that the natural map
	\begin{align}\label{eq: natural map of formal loop spaces}
		\hL U\to \hL X\times _{\mathcal{L}X}\mathcal{L}U
	\end{align} 
	is an isomorphism. Indeed, we compute
	\begin{align*}
		\hL X\times _{\mathcal{L}X}\mathcal{L}U\coloneq& X_{\text{dR}}\times _{(\mathcal{L}X)_{\text{dR}}}\mathcal{L}X\times _{\mathcal{L}X}\mathcal{L}U\\
		\xrightarrow{\simeq}&\mathcal{L}U\times _{(\mathcal{L}X)_{\text{dR}}}X_{\text{dR}}\\
		\xleftarrow{\simeq}&\mathcal{L}U\times _{(\mathcal{L}U)_{\text{dR}}}(\mathcal{L}U)_{\text{dR}}\times _{(\mathcal{L}X)_{\text{dR}}}X_{\text{dR}}\\
		\simeq&\mathcal{L}U\times _{(\mathcal{L}U)_{\text{dR}}}\left( \mathcal{L}U\times _{\mathcal{L}X}X \right) _{\text{dR}},
	\end{align*} 
	where in the last line we use the fact that $(-)_{\text{dR}}$ preserves limits, since it is a right adjoint. Thus, we see that it suffices to show that the canonical map
	\begin{align*}
		u:U\to \mathcal{L}U\times _{\mathcal{L}X}X
	\end{align*} 
	is an isomorphism. To see this, note that there is also a natural map
	\begin{align*}
		e:\mathcal{L}U\times _{\mathcal{L}X}X\to \mathcal{L}U\xrightarrow{\ev_U} U,
	\end{align*} 
	where the first map is the projection to $\mathcal{L}U$ and the second is evaluation at the base point of $S^1$. We claim that the composition $e\circ u\simeq\id_U$. Indeed, the composition $e\circ u$ is naturally isomorphic to the composition $\ev_U\circ c_U$, where $c_U: U\to \mathcal{L}U$ is the constant loop map. By definition, $\ev_U\circ c_U\simeq\id_U$. We next claim that $u\circ e\simeq \id_P$, where $P\coloneq \mathcal{L}U\times _{\mathcal{L}X}X$. To see this, note that the maps $u,e$ are relative to $X$. Moreover, the projection $P\to X$ is an open immersion by \zcref{cor: loops of complement}, so it is a monomorphism. Thus, the composition $u\circ e$ is an endomorphism of $P$ relative to $X$; i.e., it is an object of
	\begin{align*}
		\mathsf{Map}_{/X}(P,P).
	\end{align*} 
	Now, since $P\to X$ is a monomorphism, for every $S$, the map of spaces
	\begin{align*}
		\mathsf{Map}(S,P)\to \mathsf{Map}(S,X)
	\end{align*} 
	has fibers which are either empty or contractible. For $S=P$, we see that the fiber over $P\to X$ is precisely
	\begin{align*}
		\mathsf{Map}_{/X}(P,P).
	\end{align*} 
	Since this fiber contains in particular the identity endomorphism of $P$, we conclude that $\id_P\simeq u\circ e$. Thus, we conclude that the natural map \zcref[noname]{eq: natural map of formal loop spaces} is an isomorphism as desired.

	Let now $S\to \hL Z$ be an $S$-point. We will show that
	\begin{align*}
		S\times _{\hL X}\hL U\simeq \emptyset,
	\end{align*} 
	which, together with the isomorphism  \zcref[noname]{eq: natural map of formal loop spaces} will prove that $\hL U$ is the open complement to $\hL Z $ in $\hL X$. For any stack $Y$, $Y\simeq \emptyset$ if and only if $Y_{\text{dR}}\simeq \emptyset$.
	Applying this to the stack $S\times _{\hL X}\hL U$, we find
	\begin{align*}
		\left(	S\times _{\hL X}\hL U\right)_{\text{dR}}\simeq &S_{\text{dR}}\times _{(\hL X)_{\text{dR}}}(\hL U)_{\text{dR}}\\
	\simeq&S_{\text{dR}}\times _{X_{\text{dR}}}U_{\text{dR}}\\
	\xleftarrow{\simeq}&S_{\text{dR}}\times _{Z_{\text{dR}}}Z_{\text{dR}}\times _{X_{\text{dR}}}U_{\text{dR}}\\
	\simeq&S_{\text{dR}}\times _{Z_{\text{dR}}}(Z\times _XU)_{\text{dR}}\\
	\simeq&S_{\text{dR}}\times _{Z_{\text{dR}}}\emptyset_{\text{dR}}\\
	\simeq&S_{\text{dR}}\times _{Z_{\text{dR}}}\emptyset\\
	\simeq&\emptyset,
	\end{align*} 
where in the first and fourth lines we use that $(-)_{\text{dR}}$ preserves limits, as it is a right adjoint, in the second line, we use that the de Rham prestack is nil-invariant, and in the fifth line we use that $U $ is the open complement to $Z$ in $X$.
	We now show that the natural map
	\begin{align*}
		\hL(\wih{X_Z})\to \wih{(\hL X)_{\hL Z}}
	\end{align*} 
	is an isomorphism. Indeed, we have the canonical isomorphisms
	\begin{align*}
		\hL(\wih{X_Z})\coloneq& \wih{(\mathcal{L}(\wih{X_Z}))_{\wih{X_Z}}}\\
			\simeq&\mathcal{L}(\wih{X_Z})\times _{(\mathcal{L}Z)_{\text{dR}}}Z_{\text{dR}}\\
			\simeq&\mathcal{L}X\times _{(\mathcal{L}X)_{\text{dR}}}(\mathcal{L}Z)_{\text{dR}}\times _{(\mathcal{L}Z)_{\text{dR}}}Z_{\text{dR}}\\
			\xrightarrow{\simeq}&\mathcal{L}X\times _{(\mathcal{L}X)_{\text{dR}}}Z_{\text{dR}}\\
			\simeq&\mathcal{L}X\times _{(\mathcal{L}X)_{\text{dR}}}(\hL Z)_{\text{dR}}\\
			\eqcolon &\wih{(\mathcal{L}X)_{(\hL Z)}},
	\end{align*} 
	where in the second and fifth lines we use that the functor $(-)_{\text{dR}}$ is nil-invariant. On the other hand, again by nil-invariance of $(-)_{\text{dR}}$, we have canonical isomorphisms
	\begin{align*}
		\wih{(\hL X)_{\hL Z}}\simeq &\hL X\times _{X_{\text{dR}}}Z_{\text{dR}}\\
		\simeq& \wih{(\hL X)_{Z}}.
	\end{align*} 
	Thus, it suffices to show that the natural map
	\begin{align*}
		\wih{(\hL X)_{Z}}\to \wih{(\mathcal{L}X)_{\hL Z}}
	\end{align*}
	is an isomorphism. But this follows from \zcref{lem: formal completion of formal completion is formal completion}.
\end{proof}

\section{Potent $\calD$-modules}
\label{sec:potent D-modules}

Following \cite{BZPotent}, we make the following defintion.

\begin{defn}
	Let $X$ satisfy the conditions of \zcref{cor: loop spaces of formal completions are ind-geometric}. The category of \emph{potent $\calD$-modules}
	on $X$ is defined to be
	\[\Pot(X) := \IndCoh(\calL X)^{\tT} \, (= \Ind(\Coh(\calL X)^{tS^1})).\]
\end{defn}

\begin{rem}
	Note that, for $X$ a coherent stack, the subcategory of
	coherent objects of both $\QCoh(X)$ and $\IndCoh(X)$
	is $\Coh(X)$. Thus, in defining the category of potent
	$\calD$-modules, it makes no difference whether
	we take $\QCoh(\calL X)^{\tT}$ or $\IndCoh(\calL X)^{\tT}$.
\end{rem}

\begin{rem}
	The name in the above definition takes inspiration from the equivalence
	under Koszul duality between cyclic sheaves on the unipotent loop
	space of $X$ and ordinary $\calD$-modules on $X$. Potent $\calD$-modules
	correspond to cyclic sheaves on the full---or \emph{potent}---loop space rather
	than just the \emph{uni}potent loop space of $X$.
\end{rem}

\begin{lem}
	\label{lem: regularization of adjunction is adjunction}
	Suppose that $\scrC, \scrD \in \bCoh_t$, and that $L: \scrC \leftrightarrows \scrD: R$ 
	are adjoint functors which each are left t-exact up to a shift, exact, and which preserve uniformly
	left t-bounded filtered colimits. Then the induced functors 
	$\scrR(L): \scrR(\scrC) \leftrightarrows \scrR(\scrD): \scrR(R)$
	are adjoint to one another.
\end{lem}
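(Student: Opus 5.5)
We first make the regularization explicit. Following Preygel, for $\scrC \in \bCoh_t$ set $\scrR(\scrC) := \Ind(\Coh(\scrC))$, where $\Coh(\scrC) \subset \scrC$ is the subcategory of bounded almost perfect (coherent) objects. For a functor $F$ as in the statement, $\scrR(F)$ is built as follows. Since $F$ is left $t$-exact up to a shift and preserves uniformly left $t$-bounded filtered colimits, it carries $\Coh(\scrC)$ into $\Ind(\Coh(\scrD))^{+}$. Here we use that $\scrD^+ \simeq \scrR(\scrD)^+$ for a coherent $t$-structure. We then ind-extend to obtain a continuous functor $\scrR(F): \scrR(\scrC) \to \scrR(\scrD)$. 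The same construction applies to $R$. By construction, on bounded below objects $\scrR(F)$ agrees with $F$ under the identifications $\scrR(\scrC)^+ \simeq \scrC^+$ and $\scrR(\scrD)^+ \simeq \scrD^+$.

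The plan is to produce the unit and counit of the adjunction for $\scrR(L) \dashv \scrR(R)$ directly, and to check the triangle identities on compact generators. Both $\scrR(L)$ and $\scrR(R)$ are continuous. Hence for $c \in \Coh(\scrC)$ and arbitrary $d = \colim_i d_i$ with $d_i \in \Coh(\scrD)$ we can compute
\[
\sfMap_{\scrR(\scrD)}(\scrR(L)c, d) \simeq \colim_i \sfMap(\scrR(L) c, d_i)
\]
\emph{provided} $\scrR(L)c$ is compact. That proviso need not hold, so we will not argue objectwise. Instead we argue on the small-category level.

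The key steps are these.
\begin{enumerate}
\item Restrict to the bounded below subcategories. There $L^+ : \scrC^+ \rightleftarrows \scrD^+ : R^+$ is still an adjunction, since both functors preserve $(-)^+$ by left $t$-exactness up to a shift, and $\scrC^+, \scrD^+$ are full subcategories.
\item Identify $\scrC^+ \simeq \scrR(\scrC)^+$ and $\scrD^+ \simeq \scrR(\scrD)^+$ compatibly with $L, R$ and $\scrR(L), \scrR(R)$. This gives unit and counit transformations $\id \to \scrR(R)\scrR(L)$ and $\scrR(L)\scrR(R) \to \id$ restricted to $(-)^+$ that satisfy the triangle identities there.
\item Extend these transformations to all of $\scrR(\scrC)$ and $\scrR(\scrD)$. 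Both composites $\scrR(R)\scrR(L)$ and $\scrR(L)\scrR(R)$ are continuous. Natural transformations between continuous functors out of $\Ind(\Coh(-))$ are determined by their restriction to $\Coh(-) \subset (-)^+$, since $\Fun^{L}(\Ind \scrA, \scrE) \simeq \Fun(\scrA, \scrE)$.
\item The triangle identities are equalities of natural transformations between continuous functors. We can therefore check them on $\Coh$, where they hold by step 2.
\end{enumerate}

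The main obstacle is step 2. We must verify that the ind-extended functor $\scrR(L)$ restricted to $\scrR(\scrC)^+$ really agrees with $L^+$. Concretely, a bounded below object of $\scrR(\scrC)$ is a filtered colimit of coherent objects, and a priori this colimit is not uniformly bounded below. To handle this, we would use the coherent $t$-structure to write any $x \in \scrR(\scrC)^{\geq -n}$ as $\tau^{\geq -n}$ of a colimit, which is a uniformly left $t$-bounded filtered colimit of objects $\tau^{\geq -n}c_i$. We then invoke the hypothesis that $L$ preserves such colimits, together with left $t$-exactness up to a shift, to commute $L$ past the colimit and truncation, exactly as in \cite[Section 4.3]{Preygel}.
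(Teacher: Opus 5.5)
Your route differs from the paper's and can be made to work, but two things need fixing.

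First, your claim that compactness of $\scrR(L)c$ ``need not hold'' is wrong, and proving it is exactly the paper's first step. Take $c\in\Coh(\scrC)$ and a uniformly left $t$-bounded filtered diagram $\{d_i\}$ in $\scrD$. Since $R$ preserves such colimits, $\{Rd_i\}$ stays uniformly left $t$-bounded, and $c$ is almost compact, we get
\[
\sfMap(Lc,\colim_i d_i)\simeq\sfMap(c,\colim_i Rd_i)\simeq\colim_i\sfMap(c,Rd_i)\simeq\colim_i\sfMap(Lc,d_i).
\]
As $Lc$ is also left $t$-bounded, it is coherent, so $\scrR(L)$ preserves compact objects. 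This is in any case forced by the conclusion, since a left adjoint of the continuous functor $\scrR(R)$ must preserve compacts. With this in hand, the paper compares $\sfMap(\scrR(L)c,d)$ and $\sfMap(c,\scrR(R)d)$ for $c,d$ coherent, using Preygel's identifications $\scrR(-)_{<\infty}\simeq(-)_{<\infty}$. It then extends to all objects by continuity in each variable, and never constructs a unit or counit.

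Second, your step 4 is not justified as written. Write $\eta,\epsilon$ for your Ind-extended unit and counit. Evaluating the triangle identities at coherent objects involves the components $\epsilon_{\scrR(L)c}$ and $\eta_{\scrR(R)d}$. Within your argument, $\scrR(L)c$ and $\scrR(R)d$ are only known to be bounded below, and $\scrR(R)d$ genuinely need not be coherent (e.g.\ $R=j_*$ along an open immersion). By construction, $\eta,\epsilon$ agree with the transported unit and counit of $L^+\dashv R^+$ only on $\Coh$. So ``they hold by step 2'' needs agreement on all of $\scrR(-)^+$.

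This follows from the same truncation argument you use in step 2. We have $\scrR(\scrD)^{\geq -n}\simeq\Ind(\Coh(\scrD)^{\geq -n})$, because truncations of coherent objects are coherent and $\tau^{\geq -n}$ commutes with filtered colimits. Both candidate counits go between functors that preserve filtered colimits on this subcategory, so they are determined by their restriction to $\Coh(\scrD)^{\geq -n}$; the same holds for the units on $\scrR(\scrC)$.

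With this added, your argument is complete. It has the merit of producing an explicit unit and counit compatible with those of $L^+\dashv R^+$. The paper's argument is shorter and yields, as a byproduct, that $L$ preserves coherent objects.
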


\begin{proof}
	The existence of the induced functors $\scrR(L)$ and $\scrR(R)$ is established
	by \cite[Lemma 4.6.1]{Preygel}, wherein it is also stated that they preserve colimits.
	
	We first note that $L$ preserves coherent objects. Indeed, let $c \in \Coh(\scrC)$,
	and let $\{d_i\}$ be a uniformly left t-bounded filtered diagram in $\scrD$. Then
	\begin{align}
		\Map_{\scrD}(Lc, \colim_i d_i) &\simeq \Map_{\scrC}(c, R(\colim_i d_i)) \\
															&\simeq \Map_{\scrC}(c, \colim_i R(d_i)) \label{colimitem1}\\
															&\simeq \colim_i \Map_{\scrC}(c, R(d_i)) \label{colimitem2}\\
															&\simeq \colim_i \Map_{\scrD}(Lc, d_i),
	\end{align}
	where \zcref[noname]{colimitem1} follows from the hypothesis that
	$R$ preserves uniformly left t-bounded filtered colimits, and \zcref[noname]{colimitem2}
	follows from $c$ being almost compact and the diagram $R(d_i)$ being uniformly left $t$-bounded. Thus $Lc$ is almost compact. 
	At the same time, $c$, being coherent,
	is left t-bounded (i.e. in $\scrC_{<\infty}$). The functor
	$L$ being left t-exact up to a shift means precisely that $L(\scrC_{<\infty}) \subset \scrD_{<\infty}$.
	Thus $Lc \in \scrD_{<\infty}$, hence coherent in the sense of \cite[Definition 4.2.2]{Preygel}.

	Since $L$ preserves coherent objects, the regularized
	functor $\scrR(L)$ preserves compact objects. 
	By \cite[Proposition 4.3.2]{Preygel},
	the canonical functors $\scrR(\scrC) \to \scrC$ and $\scrR(\scrD) \to \scrD$ restrict
	to equivalences $\scrR(\scrC)_{<\infty} \xrightarrow{\simeq} \scrC_{\infty}$ 
	and $\scrR(\scrD)_{<\infty} \xrightarrow{\simeq} \scrD_{<\infty}$. Thus, for any $c \in \Coh(\scrC)$
	and $d \in \Coh(\scrD)$, we have
	\begin{align*}
		\Map_{\scrR(\scrD)}(\scrR(L)c, d) &\simeq \Map_{\scrR(\scrD)_{<\infty}}(\scrR(L)c, d) \\
		&\simeq \Map_{\scrD_{<\infty}}(\scrR(L)c, d) \\
		&\simeq \Map_{\scrD_{<\infty}}(Lc, d) \\
		&\simeq \Map_{\scrC_{<\infty}}(c, Rd) \\
		&\simeq \Map_{\scrC_{<\infty}}(c, \scrR(R)d) \\
		&\simeq \Map_{\scrR(\scrC)_{<\infty}}(c, \scrR(R)d) \\
		&\simeq \Map_{\scrR(\scrC)}(c, \scrR(R)d)
	\end{align*}
	Since $\scrR(L)(c)$ is compact, and since $\scrR(R)$ preserves
	colimits by construction, both sides of the equivalence $\Map_{\scrR(\scrD)}(\scrR(L)c,d) \simeq
		\Map_{\scrR(\scrC)}(c, \scrR(R)d)$ commute with filtered colimits in $d$.
		On the other hand, since $\Map(-,-)$ commutes with
	colimits in the first argument, and since $\scrR(L)$ commutes with colimits
	in $c$, we deduce that both sides of the same equivalence 
	also commute with filtered colimits in $c$. 
	Thus, we get that $\Map_{\scrR(\scrC)}(\scrR(L)x, y) \simeq \Map_{\scrR(\scrD)}(x,\scrR(R)y)$,
	for any $x \in \scrR(\scrC)$ and $y \in \scrR(\scrD)$, as desired.
	It remains to observe that this equivalence is natural in both $x$ and $y$,
	since all the equivalences in the chain above are natural in $c$ and $d$.
\end{proof}

\begin{lem}
	\label{lem: naive Tate invariants preserves adjunction}
	Suppose that $L: \scrC \leftrightarrows \scrD: R$ is an adjunction in $S^1\bmod$. Then
	the induced functors $L^{tS^1}: \scrC^{tS^1} \leftrightarrows \scrD^{tS^1}: R^{tS^1}$
	form an adjunction in $\kb\bmod$.
\end{lem}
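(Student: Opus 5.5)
We will prove the statement by passing the adjunction through the two operations that define the Tate invariants: first $S^1$-invariants, then base change along $k[u]\to\ku$. Both are $2$-functors on suitable $(\infty,2)$-categories, and $2$-functors preserve adjunctions. Concretely, an adjunction $L \dashv R$ in $S^1\bmod = \Fun(BS^1,\dgcat_k)$ consists of a unit $\eta: \id_{\scrC} \Rightarrow RL$ and a counit $\epsilon: LR \Rightarrow \id_{\scrD}$, which are $S^1$-equivariant natural transformations, together with homotopies witnessing the triangle identities. We will produce the corresponding data for $L^{tS^1} \dashv R^{tS^1}$.

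\textbf{Step 1: $S^1$-invariants.} The limit functor $\lim_{BS^1}: \Fun(BS^1,\dgcat_k) \to \dgcat_k$ is the right adjoint of $\triv$. It is enriched over $\Cat_\infty$: equivariant natural transformations between equivariant functors induce natural transformations on limits, because $\Fun_{S^1}(\scrC,\scrD) \simeq \Fun(\scrC,\scrD)^{S^1}$. Hence we may apply it to $\eta$ and $\epsilon$ to get $\eta^{S^1}: \id \Rightarrow R^{S^1}L^{S^1}$ and $\epsilon^{S^1}: L^{S^1}R^{S^1} \Rightarrow \id$. The triangle identities hold because they are identities between $2$-morphisms, which are preserved by this functor. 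We can also check the result concretely. Using \zcref{lem: morphisms in C^G}, for $\wit a \in \scrC^{S^1}$ and $\wit b \in \scrD^{S^1}$ we have
\[\Hom_{\scrD^{S^1}}(L\wit a,\wit b) \simeq \Hom_{\scrD}(La,b)^{S^1} \simeq \Hom_{\scrC}(a,Rb)^{S^1} \simeq \Hom_{\scrC^{S^1}}(\wit a, R\wit b).\]
The middle equivalence is $S^1$-equivariant, because the adjunction equivalence is induced by the equivariant unit and counit. Moreover, the functors $L^{S^1}$ and $R^{S^1}$ are $k[u]$-linear, since $u$ arises from the $\Vect^{S^1}$-action, which is natural in the category. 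So this is an adjunction in $k[u]\bmod$.

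\textbf{Step 2: base change.} The functor $-\otimes_{k[u]}\ku: k[u]\bmod \to \kb\bmod$ is a symmetric-monoidal $2$-functor between module $(\infty,2)$-categories, given by extension of scalars. It therefore preserves adjunctions as well. Concretely, $\eta^{S^1}\otimes \ku$ and $\epsilon^{S^1}\otimes\ku$ are the unit and counit, and the triangle identities are tensored along. As a sanity check, on the generating objects $\theta(\wit a)$ we have
\[\Hom_{\scrC^{tS^1}}(\theta \wit a,\theta \wit b) \simeq \Hom_{\scrC^{S^1}}(\wit a,\wit b)\otimes_{k[u]}\ku,\]
as in the proof of \zcref{lem:fully-faithful-functors-are-ff-after-tate}. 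So the Hom equivalence from Step 1 base-changes to an equivalence on these objects. By \cite[Lemma 2.7]{Perry} and a thick-subcategory argument as in \zcref{lem: fully faithful iff fully faithful on pure tensors}, it extends to all objects, after idempotent completion if that is part of the definition.

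\textbf{Main obstacle.} The real content is coherence. We must make precise that $(-)^{S^1}$ and $-\otimes_{k[u]}\ku$ are functors of $(\infty,2)$-categories, so that they preserve $2$-morphisms and hence adjunctions, rather than merely functors of $(\infty,1)$-categories. For $(-)^{S^1}$, we would realize it as $\Fun_{S^1}(\Perf k,-)$, where $\Perf k$ carries the trivial action. This is a corepresentable functor, hence $\Cat_\infty$-enriched. For base change, we would cite the standard fact that $\Mod_A(\dgcat_k)\to\Mod_B(\dgcat_k)$ is a $\dgcat_k$-enriched left adjoint. The Hom-level verification above serves as a backup argument that avoids these enrichments.
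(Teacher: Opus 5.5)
Your proposal is correct and follows the same route as the paper. The paper's proof simply observes that $(-)^{S^1}\colon S^1\bmod \to k[\b]\bmod$ and $-\otimes_{k[\b]}\kb\colon k[\b]\bmod \to \kb\bmod$ are functors, and therefore carry adjunctions to adjunctions. Your additions make explicit what the paper leaves implicit: that this needs $(\infty,2)$-functoriality (for instance by writing $(-)^{S^1}$ as the corepresentable $\Fun_{S^1}(\Perf k,-)$), and that one can check it at the level of Homs via \zcref{lem: morphisms in C^G} and \cite[Lemma 2.7]{Perry}.
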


\begin{proof}
	It is known that $S^1$-invariants are a functor $(-)^{S^1}: S^1\bmod \to k[\b]\bmod$,
	and basechange is a functor $- \otimes_{k[\b]} \kb: k[\b]\bmod \to \kb\bmod$.
	This implies that an adjunction in each of the source categories
	is taken to an adjunction in each of the target categories,
	which proves the claim.
\end{proof}

\begin{cor}
	Suppose that $\scrC, \scrD \in \Fun(BS^1,\bCoh_t)$, and that $L: \scrC \leftrightarrows \scrD: R$ 
	are $S^1$-equivariant adjoint functors which each are left t-exact up to a shift, exact, and which preserve uniformly
	left t-bounded filtered colimits. Then the induced functors 
	$L^{\tT}: \scrC^{\tT} \leftrightarrows \scrD^{\tT}: R^{\tT}$
	are adjoint to one another.
\end{cor}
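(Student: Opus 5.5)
The proof will combine the two preceding lemmas, using the factorization of $\tT$ into three steps. Recall $\scrC^{\tT} := \Ind(\Coh(\scrC)^{tS^1})$, which we can write as
\[\scrC^{\tT} \simeq \scrR(\scrC^{S^1}) \otimes_{k[\b]} \kb.\]
Here $\scrR$ is Preygel's regularization $\Ind(\Coh(-))$. So $\tT$ is obtained by taking invariants, then regularizing, then base changing to $\kb$ (equivalently, taking Tate invariants of the small categories $\Coh(-)$ and then ind-completing). The plan is to show that the adjunction $L \dashv R$ survives each of these steps.

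\textbf{First step.} Since $L$ and $R$ are $S^1$-equivariant and left $t$-exact up to a shift, they restrict to $S^1$-equivariant functors on coherent objects. For $L$ this is exactly the argument in the first half of the proof of \zcref{lem: regularization of adjunction is adjunction}. For $R$ we would like the same: it is left $t$-bounded by hypothesis, and we need it to preserve almost compactness. This is the delicate point. Rather than prove it, I would avoid it by working with $\scrR(R)$, which exists by \cite[Lemma 4.6.1]{Preygel} and is continuous.

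\textbf{Second step.} Apply \zcref{lem: naive Tate invariants preserves adjunction} (or just functoriality of $(-)^{S^1}$ as a $2$-functor) to get an adjunction $L^{S^1} \dashv R^{S^1}$ between $\scrC^{S^1}$ and $\scrD^{S^1}$. These are again objects of $\bCoh_t$ with the induced $t$-structures described after \zcref{lem:truncation functors are equivariant}. The forgetful functor $U$ is $t$-exact and conservative, and commutes with limits and with the relevant filtered colimits. Hence $L^{S^1}$ and $R^{S^1}$ inherit the hypotheses: left $t$-exact up to the same shift, exact, and preserving uniformly left $t$-bounded filtered colimits. Checking the last property amounts to checking it after applying $U$, using that colimits in $\scrC^{S^1}$ are computed underlying.

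\textbf{Third step.} Apply \zcref{lem: regularization of adjunction is adjunction} to obtain an adjunction $\scrR(L^{S^1}) \dashv \scrR(R^{S^1})$. Everything here is $k[\b]$-linear, since the functors are induced from $S^1$-equivariant ones and regularization is compatible with the $k[\b]$-action. Base changing along $k[\b] \to \kb$ is a $2$-functor, so it preserves adjunctions, giving $L^{\tT} \dashv R^{\tT}$.

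The main obstacle is the identification of $\scrR(\scrC^{S^1}) \otimes_{k[\b]} \kb$ with $\Ind(\Coh(\scrC)^{tS^1})$, compatibly with the functors. This requires $\Coh(\scrC^{S^1})$ to agree with $\Coh(\scrC)^{S^1}$, which should follow from \zcref{lem: S^1 invariants commutes with heart}-type arguments and the $t$-exactness of $U$. If that identification is awkward, the fallback is to redo the mapping-space argument of \zcref{lem: regularization of adjunction is adjunction} directly. One checks the adjunction on generators $\theta(c)$ and $d$ with $c, d$ coherent $S^1$-equivariant objects. Mapping spaces after inverting $u$ are colimits along multiplication by $u$, and this is compatible with the adjunction isomorphism because $L$ and $R$ are $k[\b]$-linear. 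One then extends to all objects by thick-subcategory and filtered-colimit arguments, as in \zcref{lem: fully faithful iff fully faithful on pure tensors}.
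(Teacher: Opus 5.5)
Your proposal is correct and matches the paper, whose proof just cites the regularization lemma together with the lemma that $(-)^{S^1}$ followed by base change along $k[\b]\to\kb$ preserves adjunctions. Your explicit ordering is the right way to read that one-liner: invariants, then regularize $\scrC^{S^1}$, then invert $\b$, using the identification $\scrR(\scrC^{S^1})\otimes_{k[\b]}\kb\simeq\scrC^{\tT}$, which the paper likewise takes for granted. This order is needed because $R$ need not preserve $\Coh$, so the naive Tate lemma cannot be applied to $\Coh(\scrC)\leftrightarrows\Coh(\scrD)$ directly.
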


\begin{proof}
	This follows immediately from \zcref{lem: regularization of adjunction is adjunction}
	and \zcref{lem: naive Tate invariants preserves adjunction}.
\end{proof}

\subsection{Potent $\calD$-modules with support}

\begin{defn}
	Let $X$ be a derived stack. Given a closed embedding $Z \xhookrightarrow{i} X$,
	let $U \xhookrightarrow{j} X$ be the complementary open embedding. We define
	\[\Pot_Z(X) := \fib\left(\Pot(X) \xrightarrow{j^!} \Pot(U)\right)\]
	to be the subcategory of potent $\calD$-modules on $X$ supported on $Z$.
\end{defn}

\begin{lem}
	\label{lem:support-is-formal-completion}
	Let $X$ be an algebraic stack almost of finite type and with affine diagonal, and let $Z \hook X$ be a closed substack such that $Z\hook X$ is almost of finite presentation.
	Let $\wih{i}: \wih{X_Z} \to X$ denote induced map from the formal completion to $X$.
	Then $!$-pullback along $\wih{i}$ induces an equivalence,
	 \[i^!: \Pot_Z(X) \to \Pot(\wih{X_Z}),\]
	 with adjoint inverse $\wih{i}_*$.
\end{lem}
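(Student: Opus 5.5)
We prove this at the level of $S^1$-equivariant categories of ind-coherent sheaves on loop spaces, and then pass to $\tT$ using \zcref{thm: tTate preserves localization sequences}. By \zcref{cor: loops of complement}, $\calL Z \hook \calL X$ is a closed immersion with open complement $\calL U$. By \zcref{lem:loops-commute-with-formal-completion}, $\calL(\wih{X_Z}) \simeq \wih{(\calL X)_{\calL Z}}$ $S^1$-equivariantly. By \zcref{cor: loop spaces of formal completions are ind-geometric}, all of these are coherent (ind-)geometric, so the $\tT$ constructions make sense.

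For $\IndCoh$ there is the standard localization sequence
\[\IndCoh(\wih{(\calL X)_{\calL Z}}) \overset{\wih{i}_*}{\underset{\wih{i}^!}{\rightleftarrows}} \IndCoh(\calL X) \overset{j^!}{\underset{j_*}{\rightleftarrows}} \IndCoh(\calL U).\]
Here $\wih{i}_*$ is fully faithful with essential image $\IndCoh_{\calL Z}(\calL X) = \ker j^!$, and $\wih{i}^!$ is its right adjoint. This is \cite[Proposition 7.4.5]{GaitsgoryIndCoh} for schemes; for stacks, we descend along a smooth affine atlas of $\calL X$, as in the proofs of \zcref{lem: open immersion pushforward is fully faithful} and \zcref{lem: pullback along formal completion inclusion is t-bounded}.

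Next we check that this is a localization sequence in $\Fun(BS^1, \bCoh_t)$:
\begin{itemize}
\item All functors are $S^1$-equivariant, since they come from $S^1$-equivariant maps of stacks. The $S^1$-actions are compatible with $t$-structures by \zcref{lem:geometric S^1-actions induce topological S^1-actions}.
\item $j^!$ is $t$-exact by \zcref{lem: etale morphisms have t-exact !-pullbacks}.
\item $\wih{i}_*$ is $t$-exact.
\item The right adjoints $j_*$ and $\wih{i}^!$ are $t$-bounded by \zcref{lem: open immersion pushfoward is t-bounded} and \zcref{lem: pullback along formal completion inclusion is t-bounded}.
\item Each functor is continuous and left $t$-exact up to a shift.
\end{itemize}

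Applying \zcref{thm: tTate preserves localization sequences}, we get a localization sequence
\[\Pot(\wih{X_Z}) \xrightarrow{\wih{i}_*} \Pot(X) \xrightarrow{j^!} \Pot(U)\]
in $\ku\bmod$. In particular $\wih{i}_*$ is fully faithful, and its essential image is exactly $\fib(j^!) = \Pot_Z(X)$. The adjunction $(\wih{i}_*, \wih{i}^!)$ is preserved by the corollary following \zcref{lem: naive Tate invariants preserves adjunction}. Hence $\wih{i}^!$ restricted to $\Pot_Z(X)$ is an inverse equivalence: the unit $\id \to \wih{i}^!\wih{i}_*$ is an isomorphism by full faithfulness, and the counit is an isomorphism on the image.

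\textbf{Main obstacle.} The delicate step is verifying the hypotheses of \zcref{thm: tTate preserves localization sequences}. This requires working with the coherent ind-geometric stack $\wih{(\calL X)_{\calL Z}}$ rather than a scheme. Concretely, we must:
\begin{itemize}
\item identify the kernel of $j^!$ on $\IndCoh(\calL X)$ with $\IndCoh(\wih{(\calL X)_{\calL Z}})$ via $\wih{i}_*$, checking this after smooth base change to an atlas, where Gaitsgory's result applies and both sides satisfy descent;
\item ensure that the $t$-structure and coherent objects on the ind-geometric side agree with those in \cite{CautisWilliams}, so that the regularization is the intended one.
\end{itemize}
I would first reduce the kernel identification to the affine atlas, using that the formal completion commutes with base change (\zcref{lem: formal completion commutes with fiber products}).
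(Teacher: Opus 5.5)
Your proposal is correct and follows essentially the same route as the paper: form the $S^1$-equivariant localization sequence $\IndCoh(\wih{\calL X_{\calL Z}})\to\IndCoh(\calL X)\to\IndCoh(\calL U)$ in $\Fun(BS^1,\bCoh_t)$, check $t$-boundedness of the right adjoints via \zcref{lem: open immersion pushfoward is t-bounded} and \zcref{lem: pullback along formal completion inclusion is t-bounded}, and apply \zcref{thm: tTate preserves localization sequences}. The only difference is in how the first term is handled. The paper gets $\IndCoh_{\calL Z}(\calL X)\simeq\IndCoh(\wih{\calL X_{\calL Z}})$ directly for stacks from \cite[Theorem 5.2.8]{PreygelT}, and uses \cite[Propositions 2.3.3, 2.5.2]{BeraldoDiffOp} to show the supported category is $\Ind(\Coh_{\calL Z}(\calL X))$; you instead reduce to the scheme case by smooth descent and get compact generation of the formal-completion side from Cautis--Williams.
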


\begin{proof}
	We will deduce the result from the corresponding statement
	for ind-coherent sheaves.
	By \cite[Theorem 5.2.8]{PreygelT}, it is know that $!$-pullback along
	the induced map $\calL\wih{i}: \wih{\calL X_{\calL Z}} \hookrightarrow \calL X$
	induces an equivalence,
	\[
		(\calL\wih{i})^!: \mathsf{IndCoh}(\calL X)_{\calL Z}\xrightarrow{\simeq} \mathsf{IndCoh}\left(\wih{\calL X_{\calL Z}}\right).
	\]
	By \zcref{lem:loops-commute-with-formal-completion}, there is an $S^1$-equivariant isomorphism,
	$\calL(\wih{X_Z}) \simeq \wih{\calL X_{\calL Z}}$,
	implying that 
	\[\mathsf{IndCoh}\left(\wih{\calL X_{\calL Z}}\right)^{\tT} \simeq \Pot(\wih{X_Z}).\]
	It therefore suffices to show that
	$\left(\mathsf{IndCoh}(\calL X)_{\calL Z}\right)^{\tT} \simeq \fib(j^!_{\pot})$,
	where we use $j^!_{\pot}$ to distinguish the potent $\calD$-module
	pullback from the $\IndCoh$ pullback.
	
	Let $j: U \hook X$ denote the complement to $i: Z \hook X$.
	By \zcref{cor: loops of complement}, the inclusions
	$\calL Z \xhookrightarrow{Li} \calL X \xhookleftarrow{\calL j} \calL U$
	give a decomposition of $\calL X$ into a complementary pair of open and closed substacks.
	The inclusion $\calL j$ is canonically $S^1$-equivariant, so $\calL j^!$ is an $S^1$-equivariant functor,
	meaning we have an $S^1$-equivariant fiber sequence
	\[
		\fib\left(\calL j^!\right) \to \mathsf{IndCoh}(\calL X) \xrightarrow{\calL j^!} \mathsf{IndCoh}(\calL U).
	\]
	By \cite[Proposition 2.3.3]{BeraldoDiffOp}, the compact objects of $\mathsf{IndCoh}(\calL X)$ 
	(resp. $\mathsf{IndCoh}(\calL U)$) are given by $\mathsf{Coh}(\calL X)$ 
	(resp. $\mathsf{Coh}(\calL U)$). Since $\calL j$ is an open immersion,
	$j^! \simeq j^*$. Thus, $\calL j^!$ has right adjoint $\calL j_*$, which is automatically
	continuous, implying that $\calL j^!$ preserves compact objects. 
	We therefore obtain a fiber sequence of small categories:
	\[
		\fib\left(\calL j^!|_{\Coh(\calL X)}\right)\to \mathsf{Coh}(\calL X) \xrightarrow{\calL j^!|_{\Coh(\calL X)}} \mathsf{Coh}(\calL U).
	\]
	By \cite[Proposition 2.5.2]{BeraldoDiffOp}, for any closed immersion of laft quasi-compact stacks $\frakY \hook \frakX$,
	the image of $\Coh(\frakY)$ under pushforward along the induced map $\fraki: \frakY \to \wih{\frakX_{\frakY}}$
	generates $\IndCoh(\wih{\frakX_{\frakY}})$. At the same time, we know from \cite[Theorem 5.2.8]{PreygelT}
	that $\IndCoh(\wih{\frakX_{\frakY}}) \simeq \IndCoh_{\frakY}(\frakX)$.
	Hence, the proof of \cite[Chapter II.1 Proposition 6.1.3(d)]{GR17}
	goes through in our setting, showing that
	\[\IndCoh_{\calL Z}(\calL X) := \fib\left(\calL j^!\right) \simeq \Ind\left(\fib\left(\calL j^!_{\Coh(\calL X)}\right)\right) =: \Ind(\Coh_{\calL Z}(\calL X)).\]
Thus, we have a localization sequence of objects of $\Fun(BS^1,\bCoh_t)$,
\begin{align*}
	\mathsf{IndCoh}(\mathcal{L}(\wih{X_Z}))\xleftarrow{\simeq}\mathsf{IndCoh}_{\mathcal{L}Z}(\mathcal{L}X)\to \mathsf{IndCoh}(\mathcal{L}X)\to \mathsf{IndCoh}(\mathcal{L}U).
\end{align*} 
Moreover, the right adjoints in this sequence are $t$-bounded by \zcref{lem: loops of closed immersion is closed immersion}, \zcref{lem: open immersion pushfoward is t-bounded} and \zcref{lem: pullback along formal completion inclusion is t-bounded}. Thus, by \zcref{thm: tTate preserves localization sequences}, we obtain a localization sequence after applying $(-)^{\tT}$; hence,
\begin{align*}
	\Pot_Z(X)\xrightarrow{\simeq}\Pot(\wih{X_Z})
\end{align*} 
as desired.
\end{proof}

\begin{cor}
\label{cor: support is formal completion for L^}
	In the setting of \zcref{lem:support-is-formal-completion}, $!$-pullback along $\wih{i}$ induces an equivalence
\begin{align*}
	\mathsf{IndCoh}_{\hL Z}(\hL X)^{\tT}\xrightarrow{\simeq} \mathsf{IndCoh}(\hL (\wih{X_Z}))^{\tT}
\end{align*} 
\end{cor}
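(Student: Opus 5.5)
The plan is to repeat the proof of \zcref{lem:support-is-formal-completion}, with the formal loop spaces $\hL(-)$ in place of the full loop spaces $\calL(-)$. \zcref{lem: formal loops of open complement} supplies the inputs used there for $\calL$. It shows that $\hL i: \hL Z \hook \hL X$ is a closed immersion, being the base change of $\calL Z \hook \calL X$, and that $\hL U$ is its complementary open substack. It also gives an $S^1$-equivariant isomorphism $\hL(\wih{X_Z}) \simeq \wih{(\hL X)_{\hL Z}}$. It therefore suffices to prove that
\[
\IndCoh_{\hL Z}(\hL X)^{\tT} \xrightarrow{\simeq} \IndCoh\bigl(\wih{(\hL X)_{\hL Z}}\bigr)^{\tT},
\]
and to transport this along the isomorphism above.

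First, the equivalence before taking Tate invariants. The stack $\hL X$ is the formal completion of the coherent geometric stack $\calL X$ along the closed substack $X$; this uses affine diagonal, so that the constant loop map is a closed immersion. By \zcref{lem: formal completions of aft algebraic stacks are coherent ind-geometric} and \zcref{cor: loop spaces of formal completions are ind-geometric}, $\hL X$ is therefore coherent ind-geometric. The same holds for $\hL Z$ and $\hL U$, since they are base changes along $\calL Z\hook\calL X$ and $\calL U\hook\calL X$. Alternatively, by \zcref{lem: formal completion of formal completion is formal completion}, $\wih{(\hL X)_{\hL Z}}$ is the formal completion of $\calL X$ along $Z$. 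Either way, \cite[Theorem 5.2.8]{PreygelT} applies (with $\calL X$ as the ambient stack and the supports intersected appropriately) and yields an $S^1$-equivariant equivalence $(\hL\wih{i})^!: \IndCoh_{\hL Z}(\hL X) \xrightarrow{\simeq} \IndCoh(\wih{(\hL X)_{\hL Z}})$. The $S^1$-equivariance holds because every map involved is loop-rotation equivariant.

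Second, the localization sequence. By \zcref{lem:geometric S^1-actions induce topological S^1-actions}, the sequence
\[
\IndCoh_{\hL Z}(\hL X)\to \IndCoh(\hL X)\xrightarrow{(\hL j)^!}\IndCoh(\hL U)
\]
lies in $\Fun(BS^1,\bCoh_t)$. One then identifies the fiber with $\Ind$ of the fiber on coherent objects, using \cite[Proposition 2.5.2]{BeraldoDiffOp} exactly as in the proof of \zcref{lem:support-is-formal-completion}. Finally, the right adjoints must be $t$-bounded. These are $(\hL j)_*$ and the inclusion of the support subcategory, the latter composed with the $t$-bounded $\wih{i}^!$ from \zcref{lem: pullback along formal completion inclusion is t-bounded}. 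Once this is checked, \zcref{thm: tTate preserves localization sequences} gives a localization sequence after applying $(-)^{\tT}$. That identifies $\IndCoh_{\hL Z}(\hL X)^{\tT}$ with the fiber in the $\tT$-sequence, and composing with the first step gives the claim.

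The main obstacle is the $t$-boundedness of $(\hL j)_*$. The stack $\hL X$ is ind-geometric rather than geometric, so \zcref{lem: open immersion pushfoward is t-bounded} does not apply verbatim. I would reduce to it by writing $\hL X=\colim_n X_n$ as in \zcref{lem: formal completions are ind-geometric}, with each $X_n$ a nilthickening of $X$ inside $\calL X$. Then $\hL U=\colim_n (X_n\times_{\calL X}\calL U)$, and $(\hL j)_*$ is computed termwise by base change from $(\calL j)_*$. Since the pushforwards along the closed immersions $X_n\to\calL X$ are $t$-exact, the bound from \zcref{lem: open immersion pushfoward is t-bounded} on $\calL X$ holds uniformly in $n$. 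This uniformity is what makes the bound pass to the colimit.
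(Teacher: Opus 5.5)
Your proposal is correct and follows the paper's skeleton: reduce via \zcref{lem: formal loops of open complement}, get coherent ind-geometricity and hence compact generation, form the $S^1$-equivariant localization sequence, check that the right adjoints are $t$-bounded, and apply \zcref{thm: tTate preserves localization sequences}. The real difference is in the step you single out, the $t$-boundedness of $(\hL j)_*$.

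\emph{The paper's route.} The paper never uses a presentation. Write $\iota_X:\hL X\to\calL X$ and $\iota_U:\hL U\to\calL U$. Base change along the Cartesian square $\hL U\simeq\hL X\times_{\calL X}\calL U$ gives $\iota_X^!(\calL j)_*\simeq(\hL j)_*\iota_U^!$. Full faithfulness of $\iota_{U*}$ from \cite[Theorem 5.2.8]{PreygelT} then yields $(\hL j)_*\simeq\iota_X^!(\calL j)_*\iota_{U*}$. This is a composite of the $t$-exact $\iota_{U*}$, the $t$-bounded $(\calL j)_*$ (\zcref{lem: open immersion pushfoward is t-bounded} on the geometric stack $\calL X$), and the $t$-bounded $\iota_X^!$ (\zcref{lem: pullback along formal completion inclusion is t-bounded}).

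\emph{Your route.} Set $U_n:=X_n\times_{\calL X}\calL U$, with $j_n:U_n\to X_n$ the open immersion and $i_{U,n}:U_n\to\hL U$ the inclusion. You work on the generators $i_{U,n*}G$, $G\in\IndCoh(U_n)^{\le 0}$, of $\IndCoh(\hL U)^{\le 0}$. Applying $(\hL j)_*$ to such a generator gives the pushforward of $j_{n*}G$ to $\hL X$. Pushing $j_{n*}G$ further to $\calL X$ gives $(\calL j)_*$ of the pushforward of $G$ to $\calL U$. Since closed pushforwards are $t$-exact and conservative, $j_{n*}$ has an amplitude independent of $n$. Continuity of $(\hL j)_*$ and closure of $\IndCoh(\hL X)^{\le d}$ under colimits and extensions then finish the argument.

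This is valid, with two caveats. It relies on describing the $t$-structure on $\IndCoh$ of a reasonable ind-geometric stack through its presentation. Also, the upper bound must be checked on these generators, not via a termwise formula $F\simeq\colim_n i_{U,n*}i_{U,n}^!F$, because $i_{U,n}^!$ is only left $t$-exact. The paper's route is shorter and reduces everything to the two geometric-stack lemmas. Yours is more hands-on but needs only the open-immersion lemma on $\calL X$.

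\emph{One inconsistency.} For the right adjoint on the support side, you invoke \zcref{lem: pullback along formal completion inclusion is t-bounded} for the completion $\wih{(\hL X)_{\hL Z}}\to\hL X$. That lemma is stated only for geometric targets, which is exactly your own objection to using \zcref{lem: open immersion pushfoward is t-bounded} on $\hL X$. There are two fixes:
\begin{itemize}
\item The paper factors $(\hL\wih{i})^!\simeq(\iota_X\circ\hL\wih{i})^!\iota_{X*}$, where $\iota_X\circ\hL\wih{i}$ is the completion $\wih{(\calL X)_Z}\to\calL X$, to which the lemma does apply.
\item Alternatively, the bound for the right adjoint $\Gamma_{\hL Z}$ of the inclusion follows from your bound on $(\hL j)_*$ via the triangle $\Gamma_{\hL Z}\to\id\to(\hL j)_*(\hL j)^!$.
\end{itemize}
Separately, your justification of the pre-Tate equivalence by intersecting supports inside $\calL X$, namely $\IndCoh_{\hL Z}(\hL X)\simeq\IndCoh_Z(\calL X)\simeq\IndCoh(\wih{(\calL X)_Z})$, spells out a point the paper leaves to ``the same method of proof''.
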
 
\begin{proof}
	We will show that the proof of \zcref{lem:support-is-formal-completion} applies to the map $\hL \wih{i}$. Indeed, to obtain the desired localization sequence, it is sufficient, by \zcref{lem: formal loops of open complement}, to show that the natural map
	\begin{align}\label{eq: formal completion commutes}
		\hL(\wih{X_Z})\to \wih{(\hL X)_{\hL Z}}
	\end{align} 
	is an isomorphism and that the ind-coherent sheaf categories in question are compactly generated. We have that \zcref[noname]{eq: formal completion commutes} is an isomorphism by \zcref{lem: formal loops of open complement}. We now seek to apply \cite[Proposition 5.30]{CautisWilliams} to show that all categories in question are compactly generated by $\Coh$. For this, note that by nil-invariance of $(-)_{\text{dR}}$, we have canonical isomorphisms
    \begin{align*}
	    \wih{\hL X_{\hL Z}}\simeq&\calL X\times_{(\calL X)_{\text{dR}}}X_{\text{dR}}\times _{X_{\text{dR}}}Z_{\text{dR}}\\
	    \simeq&\mathcal{L}X\times _{(\mathcal{L}X)_{\text{dR}}}Z_{\text{dR}}\\
	    \eqcolon &\wih{(\mathcal{L}X)_Z}.
    \end{align*}
    Since $\mathcal{L}X$ is an algebraic stack almost of finite type and with affine diagonal by assumption, we see that by \zcref{lem: formal completions of aft algebraic stacks are coherent ind-geometric} $\wih{(\mathcal{L}X)_Z}$ is coherent ind-geometric, so \emph{loc. cit.} applies to it. For the stacks $\hL X$ and $\hL U$, we again use \zcref{lem: formal completions of aft algebraic stacks are coherent ind-geometric}. Thus, we obtain a localization sequence of objects of $\Fun(BS^1,\bCoh_t)$
    \begin{align*}
	    \mathsf{IndCoh}(\hL(\wih{X_Z})\xleftarrow{\simeq}\mathsf{IndCoh}_{\hL Z}(\hL X)\to \mathsf{IndCoh}(\hL X)\to \mathsf{IndCoh}(\hL U).	
    \end{align*} 
    It remains to check that the right adjoints in this localization sequence are $t$-bounded. For the map $\hL\wih{i}:\hL(\wih{X_Z})\to \hL X$, we use the natural equivalence
    \begin{align*}
	    \hL(\wih{X_Z})\simeq\wih{(\mathcal{L}X)_Z},
    \end{align*} 
    so that by \zcref{lem: pullback along formal completion inclusion is t-bounded}, the $!$-pullback along $\hL(\wih{X_Z})\to \mathcal{L}X$ is $t$-bounded. Now, the map $\hL(\wih{X_Z})\to \mathcal{L}X$ factors as
    \begin{align*}
	    \hL(\wih{X_Z})\xrightarrow{\hL\wih{i}} \hL X\xrightarrow{\iota_X}\mathcal{L}X,
    \end{align*} 
    and the $!$-pullback along $\hL X\to \mathcal{L}X$ is $t$-bounded by \emph{loc. cit.}. Moreover, by \cite[Theorem 5.2.8]{PreygelT}, the $*$-pushforward along $\iota_X:\hL X \to \mathcal{L}X$ is fully faithful and $t$-exact. Thus, the unit $\id\xrightarrow{\simeq}\wih{i}^!\wih{i}_*$ is an isomorphism. We can thus write 
    \begin{align*}
	    (\hL\wih{i})^!\simeq(\hL\wih{i})^!\iota_X^!\iota_{X*}\simeq(\iota_X\circ\hL\wih{i})^!\iota_{X*}.
    \end{align*} 
    The right-hand side is $t$-bounded; hence, we conclude that $(\hL\wih{i})^!$ is $t$-bounded as well. 
    Now, we will show that $*$-pushforward along $\hL j:\hL U\to \hL X$ is also $t$-bounded. Consider the Cartesian square
    \[
    \begin{tikzcd}
	    \hL U\arrow[r,"\hL j"]\arrow[d,"\iota_U"]&\hL X\arrow[d,"\iota_X"]\\
	    \mathcal{L}U\arrow[r,"\mathcal{L}j"]&\mathcal{L}X
    \end{tikzcd}.
    \] 
    Base change along this square shows that
    \begin{align*}
    	\iota_X^!(\mathcal{L}j)_*\simeq(\hL j)_*\iota_U^!.
    \end{align*} 
    Again, by \cite[Theorem 5.2.8]{PreygelT}, $\iota_{U*}$ is fully faithful and $t$-exact, so $\iota_U^!\iota_{U*}\simeq\id$. Thus,
    \begin{align*}
	    (\hL j)_*\simeq \iota_X^!(\mathcal{L}j)_*\iota_{U*}.
    \end{align*} 
    Now, all of the functors on the right-hand side are $t$-bounded. Indeed, $(\mathcal{L}j)_*$ is $t$-bounded by \zcref{lem: open immersion pushfoward is t-bounded}, and $i_X^!$ is $t$-bounded by \zcref{lem: pullback along formal completion inclusion is t-bounded}. This allows the same method of proof as in \zcref{lem:support-is-formal-completion}.
\end{proof}

\section{Nil-invariance of cyclic sheaves on the $\a$-potent loop space}
\label{sec:nil-invariance of cyclic sheaves on the a-unipotent loop space}

\subsection{$\a$-potent loop space}
The free loop space of a quotient by a reductive group admits a 
kind of ``Jordan decomposition,"
described in \cite[\S 3]{ChenEquiv},
that allows one to work over equivalence classes 
of semisimple elements in $G/\!\!/G$ one at a time.
Namely, given a global quotient $X/G$, with $G$ a
reductive group, there is a canonical map,
\[\mu: \calL(X/G) \to \calL(BG) \simeq G/G \to G/\!\!/G,\]
where $G/\!\!/G$ is the affinization of $G/G$---a variety
parametrizing semisimple conjugacy classes of $G$.
For each semisimple conjugacy class $\a \in G/\!\!/G$,\footnotemark
\footnotetext{In \cite{BZChenHelmNadler}, $\a$ denotes
a semisimple element in $G$ whose conjugacy class is denoted
$[\a]$. Since the definitions therein indexed by $\a$
only depend on the equivalence class $[\a]$, 
we choose in this paper to use $\a$ to denote the
conjugacy class rather than a representative thereof.}
one defines the $\a$-potent loop space
to be the formal completion of $L(X/G)$ along the fiber
of $\mu: L(X/G) \to G/\!\!/G$ over $\a$:
\[\calL^{\alpha}(X/G) := \wih{\calL(X/G)_{\mu^{-1}(\a)}}.\]
From definition of the formal completion, there is a canonical map
$i_{\a}: \calL^{\a}(X/G) \to \calL(X/G)$.

We have the following analog of \zcref{lem:loops-commute-with-formal-completion}. 
\begin{lem}
	\label{lem:unipotent-loops-commute-with-formal-completion}
	Let $\calX \simeq X/G$ be a global quotient stack with $G$ a linear group with 
	reductive neutral component acting on a prestack $X$.
	Let $\calZ \hook \calX$ be a closed immersion with complement
    $\calU \hook \calX$. Then the induced map
    $\calL^{\a}(\calZ) \to \calL^{\a}(\calX)$ is a closed immersion
    whose open complement is $\calL^{\a}(\calU) \to \calL^{\a}(\calX)$.
    Moreover, we have an equivalence,
	\begin{align*}
		\calL^{\alpha}\left(\wih{\calX_{\calZ}}\right) \simeq \wih{(\calL^{\alpha} \calX)_{\calL^{\alpha} \calZ}},
	\end{align*} 
	for each $\alpha \in G/\!\!/G$.
\end{lem}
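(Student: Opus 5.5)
The plan is to reduce everything to formal-completion identities already proved, using that $\calL^{\a}$ is the formal completion of $\calL$ along a fiber of $\mu$. By \zcref{lem:closed-substack-is-quotient} and \zcref{lem:open-substack-is-quotient} we may write $\calZ \simeq Z/G$ and $\calU \simeq U/G$ for complementary $G$-invariant closed and open subprestacks of $X$. The maps $\mu_{\calZ}, \mu_{\calU}$ factor through $\mu_{\calX}$ via $\calL\calZ \to \calL\calX$ and $\calL\calU \to \calL\calX$, since they all factor through $\calL(BG)$. Write $F_{\calX} := \mu_{\calX}^{-1}(\a)$, and similarly $F_{\calZ}$, $F_{\calU}$. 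Then
\[F_{\calZ} \simeq \calL\calZ \times_{\calL\calX} F_{\calX}, \qquad F_{\calU} \simeq \calL\calU \times_{\calL\calX} F_{\calX}.\]

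\textbf{Closed immersion and complement.} Apply \zcref{lem: formal completion commutes with fiber products} to $\calL\calZ \to \calL\calX \leftarrow F_{\calX}$. This gives
\[\calL^{\a}\calZ = \wih{(\calL\calZ)_{F_{\calZ}}} \simeq \calL\calZ \times_{\calL\calX} \calL^{\a}\calX.\]
So $\calL^{\a}\calZ \to \calL^{\a}\calX$ is a base change of $\calL\calZ \to \calL\calX$, which is a closed immersion by \zcref{cor: loops of complement}; hence it is a closed immersion. The same argument shows $\calL^{\a}\calU \simeq \calL\calU \times_{\calL\calX} \calL^{\a}\calX$, which is open. Complementarity then follows by base-changing the complementary pair $\calL\calZ$, $\calL\calU$ of \zcref{cor: loops of complement} along $\calL^{\a}\calX \to \calL\calX$. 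Emptiness of $S \times_{\calL^{\a}\calX} \calL^{\a}\calU$ for $S \to \calL^{\a}\calZ$ reduces to $\calL\calZ \times_{\calL\calX}\calL\calU \simeq \emptyset$. Covering by the open substack is checked on field-valued points, which factor through $F_{\calX}$ set-theoretically, so this reduces to the corresponding statement for $\calL\calX$.

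\textbf{The equivalence.} By \zcref{lem: formal completion is quotient}, $\wih{\calX_{\calZ}} \simeq \wih{X_Z}/G$, so $\calL^{\a}$ of it is defined via $\mu$. Using \zcref{lem:loops-commute-with-formal-completion}, $\calL(\wih{\calX_{\calZ}}) \simeq \wih{\calL\calX_{\calL\calZ}}$, and the map $\mu$ on it is the restriction of $\mu_{\calX}$. Its fiber over $\a$ is therefore $\wih{\calL\calX_{\calL\calZ}} \times_{\calL\calX} F_{\calX}$. Its reduced structure agrees with that of $F_{\calZ}$, because a formal completion has the same reduced points as its center. Since formal completions depend only on the de Rham stack of the center, we get
\[\calL^{\a}(\wih{\calX_{\calZ}}) \simeq \calL\calX \times_{(\calL\calX)_{\dR}} (\calL\calZ)_{\dR} \times_{(\calL\calX)_{\dR}} (F_{\calX})_{\dR} \simeq \calL\calX \times_{(\calL\calX)_{\dR}} (F_{\calZ})_{\dR}.\]
This uses that $(-)_{\dR}$ preserves limits and that $(\calL\calZ)_{\dR} \to (\calL\calX)_{\dR}$ is a monomorphism, as in the proof of \zcref{lem: formal loops of open complement}. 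On the other side, $\wih{(\calL^{\a}\calX)_{\calL^{\a}\calZ}}$ is identified with $\wih{(\calL\calX)_{F_{\calZ}}}$ by \zcref{lem: formal completion of formal completion is formal completion}, since $(\calL^{\a}\calZ)_{\dR} \simeq (F_{\calZ})_{\dR}$. The two sides therefore agree.

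\textbf{Main obstacle.} The hardest step is keeping track of $\mu$ on the formal completion: one must check that the $G$-quotient presentation and the map to $G/\!\!/G$ are compatible with \zcref{lem:loops-commute-with-formal-completion}. One must also justify that $(\calL\calZ)_{\dR}\times_{(\calL\calX)_{\dR}}(F_{\calX})_{\dR}\simeq(F_{\calZ})_{\dR}$, which uses that $(-)_{\dR}$ commutes with fiber products and that the relevant maps are monomorphisms on de Rham stacks. The $S^1$-equivariance is automatic because every map used is induced from $S^1$-equivariant maps of loop spaces.
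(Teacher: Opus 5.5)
Your proposal is correct and follows essentially the same route as the paper. You identify $\calL^{\a}\calZ$ and $\calL^{\a}\calU$ with the base changes of $\calL\calZ$ and $\calL\calU$ along $\calL^{\a}\calX \to \calL\calX$, using the Cartesian square of fibers of $\mu$ and packaging it through \zcref{lem: formal completion commutes with fiber products} instead of the paper's explicit de Rham manipulations; you then reduce the equivalence to matching de Rham stacks of the fibers, which lands both sides directly on $\wih{(\calL\calX)_{F_{\calZ}}}$ (the monomorphism input and the field-valued-points detour are unnecessary, since $(-)_{\dR}$ preserving limits and base change of the complementary pair $\calL\calZ$, $\calL\calU$ already suffice).
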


\begin{proof}
	By \zcref{lem: formal completion is quotient}, $\wih{X_Z}/G \xrightarrow{\simeq} \wih{\calX_{\calZ}}$.
	Denote by $\bbO_\alpha(\wih{\calX_{\calZ}})$ (resp. $\bbO_\alpha(\calZ)$, $\bbO_\alpha(\calX)$, $\bbO_\alpha(\mathcal{U})$) 
	the derived fiber of $\mu_{\wih{\calX_{\calZ}}}: \calL(\wih{\calX_{\calZ}}) \simeq \calL(\wih{X_Z}/G) \to G/\!\!/G$ over $\a \in G/\!\!/G$ (resp. the derived fibers of $\mu_{\calX}$, $\mu_{\calZ}$, and $\mu_{\mathcal{U}}$ over
	$\a \in G/\!\!/G$). 
    
    Let $S\to \calL^{\alpha} \mathcal{X}$ be an $S$-point. We will first show that 
    \begin{align*}
    	\calL^{\alpha} \mathcal{U}\xrightarrow{\simeq} \calL^{\alpha} \mathcal{X}\times _{\mathcal{LX}}\mathcal{LU}.
    \end{align*} 
    Indeed, 
    \begin{align}
	    \calL^{\alpha} \mathcal{U}\xrightarrow{\simeq}&\mathcal{LU}\times _{(\mathcal{LU})_{\text{dR}}}\left( \mathcal{LU}\times _{\mathcal{LX}}\bbO_\alpha(\mathcal{X}) \right) _{\text{dR}}\label{eq: orbit of open complement}\\
	    \xrightarrow{\simeq}&\mathcal{LU}\times _{(\mathcal{LX})_{\text{dR}}}\bbO_\alpha(\mathcal{X})\label{eq: de rham of orbit of open complement}\\
	    \xrightarrow{\simeq}&\mathcal{LU}\times _{\mathcal{LX}}\mathcal{LX}\times _{(\mathcal{LX})_{\text{dR}}}\bbO_\alpha(\mathcal{X})\nonumber\\
	    \eqcolon &\mathcal{LU}\times _{\mathcal{LX}}\calL^{\alpha} \mathcal{X},\nonumber
    \end{align} 
    where in \zcref[noname]{eq: orbit of open complement}, we use the Cartesian diagram \[
    \begin{tikzcd}
		\bbO_\alpha(\mathcal{U})\arrow[r]\arrow[d]&\mathcal{LU}\arrow[d]\\
		\bbO_\alpha(\mathcal{X})\arrow[r]&\mathcal{LX}
	\end{tikzcd}
    \]
    and in \zcref[noname]{eq: de rham of orbit of open complement} we use that $(-)_{\text{dR}}$ preserves limits by \cite[Lemma 1.1.4]{GRCrystals}
    Thus, to see that $\calL^{\alpha}\mathcal{U}$ 
    is the open complement to $\calL^{\alpha}\mathcal{Z}$, it is sufficient to show that 
	\begin{align*}
		S\times _{\calL^{\alpha} \mathcal{X}}\calL^{\alpha} \mathcal{Z}\simeq \emptyset\iff S\times _{\mathcal{LX}}\mathcal{LZ}\simeq \emptyset.
	\end{align*} 
	In fact, we claim that
	\begin{align*}
		S\times _{\calL^{\alpha} \mathcal{X}}\calL^{\alpha} \mathcal{Z}\simeq S\times _{\mathcal{LX}}\mathcal{LZ}.
	\end{align*} 
	Indeed, by using the Cartesian square
	\begin{equation}\label{cd: cartesian square for a-orbits}
	\begin{tikzcd}
		\bbO_\alpha(\mathcal{Z})\arrow[r]\arrow[d]&\mathcal{LZ}\arrow[d]\\
		\bbO_\alpha(\mathcal{X})\arrow[r]&\mathcal{LX}
	\end{tikzcd},
\end{equation}
	we see that
	\begin{align*}
		S\times _{\calL^{\alpha} \mathcal{X}}\calL^{\alpha} \mathcal{Z}\coloneq& S\times _{\calL^{\alpha} \mathcal{X}}\mathcal{LZ}\times _{(\mathcal{LZ})_{\text{dR}}}\bbO_\alpha(\mathcal{Z})_{\text{dR}}\\
		\xrightarrow{\simeq}&S\times _{\calL^{\alpha} \mathcal{X}}\mathcal{LZ}\times _{(\mathcal{LZ})_{\text{dR}}}(\mathcal{LZ})_{\text{dR}}\times _{(\mathcal{LX})_{\text{dR}}}\bbO_\alpha(\mathcal{X})_{\text{dR}}\\
		\xrightarrow{\simeq}&S\times _{\calL^{\alpha} \mathcal{X}}\bbO_\alpha(\mathcal{X})_{\text{dR}}\times _{(\mathcal{LX})_{\text{dR}}}\mathcal{LZ}\\
		\xleftarrow{\simeq}&S\times _{\calL^{\alpha} \mathcal{X}}\calL^{\alpha} \mathcal{X}\times _{\mathcal{LX}}\mathcal{LZ}\\
		\xrightarrow{\simeq}&S\times _{\mathcal{LX}}\mathcal{LZ}.
	\end{align*}

	Unraveling the definitions, we have,
	\begin{align*} 
		\calL^u_\a \calX \times _{(\calL^u_\a \calX)_{\dR}}(\calL^u_\a \calZ)_{\dR} &\simeq \left(\calL\calX \times_{(\calL\calX)_{\dR}} \bbO_{\a}(\calX)_{\dR}\right) \times_{\left(\calL\calX \times_{(\calL\calX)_{\dR}} \bbO_{\a}(\calX)_{\dR}\right)_{\dR}} \left(\calL\calX \times_{(\calL\calZ)_{\dR}} \bbO_{\a}(\calZ)_{\dR}\right)_{\dR} \\
		&\xrightarrow{\simeq} \left(\calL\calX \times_{(\calL\calX)_{\dR}} \bbO_{\a}(\calX)_{\dR}\right) \times_{(\calL\calX)_{\dR} \times_{(\calL\calX)_{\dR}} \bbO_{\a}(\calX)_{\dR}} \left((\calL\calZ)_{\dR} \times_{(\calL\calZ)_{\dR}} \bbO_{\a}(\calZ)_{\dR}\right) \\
		&\xrightarrow{\simeq} \left(\calL\calX \times_{(\calL\calX)_{\dR}} (\calL\calZ)_{\dR}\right) \times_{(\calL\calX)_{\dR}  \times_{(\calL\calX)_{\dR}} (\calL\calZ)_{\dR}} \left(\bbO_{\a}(\calX)_{\dR} \times_{\bbO_{\a}(\calX)_{\dR}} \bbO_{\a}(\calZ)_{\dR}\right) \\
		&\xleftarrow{\simeq} \left(\calL\calX \times_{(\calL\calX)_{\dR}} (\calL\calZ)_{\dR}\right) \times_{\left(\calL\calX  \times_{(\calL\calX)_{\dR}} (\calL\calZ)_{\dR}\right)_{\dR}} \left(\bbO_{\a}(\calX) \times_{\bbO_{\a}(\calX)_{\dR}} \bbO_{\a}(\calZ)_{\dR}\right)_{\dR}\\
		&=: \wih{\calL\calX_{\calL\calZ}} \times_{\left(\wih{\calL\calX_{\calL\calZ}}\right)_{\dR}} \wih{\bbO_{\a}(\calX)_{\bbO_{\a}(\calZ)}}_{\dR}.
	\end{align*} 
	On the other hand, by \zcref{lem:loops-commute-with-formal-completion}, 
	$\calL(\wih{\calX_{\calZ}}) \simeq \wih{\calL\calX_{\calL\calZ}}$, 
	so we have
	\[
	\wih{(\calL^{\alpha} \calX)_{\calL^{\alpha} \calZ}} \simeq 
	\wih{\calL\calX_{\calL\calZ}} \times_{\left(\wih{\calL\calX_{\calL\calZ}}\right)_{\dR}} \wih{\bbO_{\a}(\calX)_{\bbO_{\a}(\calZ)}}_{\dR}
	\simeq \calL(\wih{\calX_{\calZ}}) \times_{\calL(\wih{\calX_{\calZ}})_{\dR}} \wih{\bbO_{\a}(\calX)_{\bbO_{\a}(\calZ)}}_{\dR}.
	\]
	It therefore suffices to show that $\wih{\bbO_{\a}(\calX)_{\bbO_{\a}(\calZ)}} \simeq \bbO_{\a}(\wih{\calX_{\calZ}})$. This follows from
	applying \zcref{lem: formal completion commutes with fiber products} to the Cartesian square \zcref[noname]{cd: cartesian square for a-orbits}
	to obtain $\wih{\bbO_{\a}(\calX)_{\bbO_{\a}(\calZ)}} \simeq \wih{\calX_{\calZ}} \times_{\calX} \bbO_{\a}(\calX)$,
	and then observing that the latter objects is exactly $\bbO_{\a}(\wih{\calX_{\calZ}})$ since
	$\mu_{\wih{\calX_{\calZ}}}: \wih{\calX_{\calZ}} \to G/\!\!/G$ factors as $\wih{\calX_{\calZ}} \to \calX \xrightarrow{\mu_{\calX}} G/\!\!/G$.
\end{proof}

\begin{cor}\label{cor: support is formal completion for L^u}
	Suppose in the setting of \zcref{lem:support-is-formal-completion} that $X =U /G$ is a global quotient stack where $U$ is a derived scheme, $G$ is an algebraic group with reductive neutral component, and we denote $\wih{i}: \wih{X_Z}\to X$. Then $!$-pullback along $\calL^u\wih{i}$ induces an equivalence
	\begin{align*}
	\mathsf{IndCoh}_{\mathcal{L}^uZ}(\mathcal{L}^uX)^{\tT}\xrightarrow{\simeq}\mathsf{IndCoh}(\mathcal{L}^u(\wih{X_Z}))^{\tT}.
\end{align*}
\end{cor}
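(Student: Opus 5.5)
We follow the template of \zcref{lem:support-is-formal-completion} and \zcref{cor: support is formal completion for L^}: we produce an $S^1$-equivariant localization sequence in $\Fun(BS^1,\bCoh_t)$,
\begin{align*}
\mathsf{IndCoh}(\calL^u(\wih{X_Z}))\xleftarrow{\simeq}\mathsf{IndCoh}_{\calL^u Z}(\calL^u X)\to \mathsf{IndCoh}(\calL^u X)\to \mathsf{IndCoh}(\calL^u V),
\end{align*}
where $V$ is the open complement of $Z$. We then check that the right adjoints are $t$-bounded and apply \zcref{thm: tTate preserves localization sequences}. Since $X=U/G$, the unipotent loop space is the $\a$-potent loop space at $\a=[1_G]$, i.e.\ $\calL^u X\simeq \calL^{[1]}X=\wih{\calL X_{\mu^{-1}([1])}}$. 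The same holds for $Z=Z'/G$ (\zcref{lem:closed-substack-is-quotient}), for $V=V'/G$ (\zcref{lem:open-substack-is-quotient}), and for $\wih{X_Z}\simeq\wih{U_{Z'}}/G$ (\zcref{lem: formal completion is quotient}). Thus all geometric inputs can be read off from \zcref{lem:unipotent-loops-commute-with-formal-completion} with $\a=[1]$: $\calL^u Z\hook\calL^u X$ is a closed immersion with open complement $\calL^u V$, and $\calL^u(\wih{X_Z})\simeq\wih{(\calL^u X)_{\calL^u Z}}$. Alternatively, the last identification is \zcref{lem: unipotent loops commute with formal completion}.

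Next we need compact generation by $\Coh$, to apply \cite[Proposition 5.30]{CautisWilliams} and the argument of \cite[Chapter II.1 Proposition 6.1.3(d)]{GR17}. We write $\calL^u X\simeq\wih{(\calL X)_{\mathbb O}}$ with $\mathbb O=\mu^{-1}([1])$ a closed substack of $\calL X$, almost of finite presentation since $[1]$ is a closed point of the finite type affine $G/\!\!/G$. Then \zcref{lem: formal completions of aft algebraic stacks are coherent ind-geometric}, applied to $\mathbb O\hook\calL X$, makes $\calL^uX$ coherent ind-geometric. Similarly, $\calL^u V$ is the completion of $\calL V$ along $\mathbb O\times_{\calL X}\calL V$. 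By nil-invariance of $(-)_{\dR}$, $\calL^u(\wih{X_Z})\simeq \calL X\times_{(\calL X)_{\dR}}(\mathbb O\times_{\calL X}\calL Z)_{\dR}$ is again a formal completion of $\calL X$ along a closed substack. This also gives the equivalence $\IndCoh_{\calL^u Z}(\calL^u X)\simeq \IndCoh(\wih{(\calL^uX)_{\calL^uZ}})$ via \cite[Theorem 5.2.8]{PreygelT} and \cite[Proposition 2.5.2]{BeraldoDiffOp}, exactly as in \zcref{lem:support-is-formal-completion}.

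For $t$-boundedness we copy the trick of \zcref{cor: support is formal completion for L^}, replacing $\hL$ by $\calL^u$ and $\iota$ by $i_{[1]}:\calL^u X\to\calL X$. Write $(\calL^u\wih i)^!\simeq (i_{[1]}\circ\calL^u\wih i)^!\, i_{[1],*}$, using that $i_{[1],*}$ is fully faithful and $t$-exact by \cite[Theorem 5.2.8]{PreygelT}. The composite is $!$-pullback to a formal completion of $\calL X$, hence $t$-bounded by \zcref{lem: pullback along formal completion inclusion is t-bounded}. For the open part, base change along the Cartesian square with $\calL V\to\calL X$ (Cartesian by the first step) gives $(\calL^u j)_*\simeq i_{[1]}^!(\calL j)_*i_{[1],*}$. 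Its factors are $t$-bounded by \zcref{lem: open immersion pushfoward is t-bounded} and \zcref{lem: pullback along formal completion inclusion is t-bounded}.

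The main obstacle is the bookkeeping: every stack in sight must be exhibited as a formal completion of an aft algebraic stack with affine diagonal along an afp closed substack, so that the coherence lemmas apply. We must also verify that the squares used for base change are genuinely Cartesian and $S^1$-equivariant. We would handle both by reducing each case to the explicit fiber-product formulas from the proof of \zcref{lem:unipotent-loops-commute-with-formal-completion}.
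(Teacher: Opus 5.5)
Your proposal is correct and follows essentially the same route as the paper. The paper's proof identifies $\calL^u(\wih{X_Z})\simeq\wih{(\calL^u X)_{\calL^u Z}}\simeq\wih{(\calL X)_{\bbO_{[1]}(Z)}}$ using nil-invariance of $(-)_{\dR}$ and $\calL^u\simeq\calL^u_{[1]}$; your $\mathbb O\times_{\calL X}\calL Z$ is exactly the paper's $\bbO_{[1]}(Z)$. It then says the proof of \zcref{cor: support is formal completion for L^} applies, and that proof is the localization-sequence, compact-generation and $t$-boundedness argument you spell out.
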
 
\begin{proof}
	We have natural isomorphisms
	\begin{align*}
		\mathcal{L}^u(\wih{X_Z})\simeq&\wih{(\mathcal{L}^uX)_{\mathcal{L}^uZ}}\\
		\simeq&\mathcal{L}X\times _{(\mathcal{L}X)_{\text{dR}}}\bbO_{[1]}(Z)_{\text{dR}}\\
		\coloneq &\wih{(\mathcal{L}X)_{\bbO_{[1]}(Z)}},
	\end{align*} 
	where we use nil-invariance of $(-)_{\text{dR}}$ and the equivalence $\mathcal{L}^u\simeq\mathcal{L}^u_{[1]}$ in the second line. Thus, the proof of \zcref{cor: support is formal completion for L^} applies.
\end{proof}

\subsection{$A$-trivializations}
We recall the notions of $A$-trivialization for prestacks and
categories equipped with an action of a group prestack.

\begin{defn}(\cite[Definition A.16]{BZChenHelmNadler})
	Let $G$ be a linear algebraic group with reductive 
	neutral component acting on a prestack $X$, and let $A \subset G$ be a normal subgroup 
	with $G'=G/A$. An \emph{$A$-trivialization} (of the $G$-action on $X$) 
	is a $G'$-action on $X$ and an equivalence
	\begin{align*}
		X /G \simeq X/G' \times_{BG'} BG.
	\end{align*} 
	When $A =\langle \alpha \rangle$, for $\alpha \in Z(G)$ a central element, we call an $A$-trivialization 
	an \emph{$\alpha$-trivialization}.
\end{defn}

\begin{lem}
	\label{lem: loop space has A-trivialization}
	Let $G$ be an affine algebraic group, and let $A \subset G$ be 
	a closed central subgroup.\footnote{This is to ensure that $G/A$ is an affine algebraic group as well.} Denote the quotient $G' = G/A$. Suppose that $X$ is a derived stack 
	equipped with a $G$-action and an $A$-trivialization $X/G \simeq X/G' \times_{BG'} BG$. Then $\mathcal{L}^u(X/G)$
	has a canonical $A$-trivialization coming from that on $X/G$.
\end{lem}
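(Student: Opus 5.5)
The plan is to obtain the $A$-trivialization of $\calL^u(X/G)$ by applying the limit-preserving functor $\calL^u = \Map(B\bbG_a, -)$ to the given one. Recall what an $A$-trivialization is: a $G'$-action on some object $Y$ together with an equivalence $Y/G \simeq Y/G' \times_{BG'} BG$ over $BG$. So for $\calL^u(X/G)$ we must produce three things: a presentation $\calL^u(X/G) \simeq Y/G$, a $G'$-action on $Y$, and the Cartesian equivalence.

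First, apply $\calL^u$ to the given equivalence $X/G \simeq X/G' \times_{BG'} BG$. Since $\calL^u$ is a right adjoint (to $B\bbG_a \times -$), it preserves fiber products. This gives
\[\calL^u(X/G) \simeq \calL^u(X/G') \times_{\calL^u(BG')} \calL^u(BG).\]

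Second, identify the factors as quotients. The standard description is $\calL^u(BG) \simeq \wih{G}_{\mathcal{U}}/G$, the formal completion of $G$ along its unipotent cone, modulo conjugation. Similarly $\calL^u(X/G)$ is a $G$-quotient of a unipotent-loop prestack over $X$. The cleanest way to organize this is to pull back along $X \to X/G$ via evaluation at the base point $\calL^u(X/G) \to X/G$:
\begin{itemize}
\item set $Y := \calL^u(X/G) \times_{X/G} X$, which carries a $G$-action with $Y/G \simeq \calL^u(X/G)$;
\item set $Y' := \calL^u(X/G') \times_{X/G'} X$, which carries a $G'$-action with $Y'/G' \simeq \calL^u(X/G')$.
\end{itemize}
Because $A$ is central, the map $G \to G'$ induces on unipotent loop spaces $\wih{G}_{\mathcal{U}} \to \wih{G'}_{\mathcal{U}}$. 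This map is an isomorphism: $A$ is a closed central subgroup, and the map $G \to G'$ is étale along unipotents when $A$ is finite, or more generally the fiber $A$ meets unipotent classes only through its unipotent part. This is where the hypotheses enter, and it is the main thing to check carefully.

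Third, combine the fiber product with the base-point evaluation squares to obtain $Y \simeq Y'$ compatibly with the map $G \to G'$. This equips $Y$ with a $G'$-action for which
\[Y/G \simeq Y/G' \times_{BG'} BG.\]
This equivalence is the desired $A$-trivialization, and it is canonical because every step is functorial.

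The main obstacle is the second step. Concretely, one must show that
\[\calL^u(BG) \to \calL^u(BG') \times_{BG'} BG\]
is an equivalence, i.e. that unipotent loops in $BG$ are the same as unipotent loops in $BG'$ together with a lift of the base point. I would attempt this using the presentation $\calL^u(BG) \simeq \calL(BG) \times_{G/\!\!/G} \wih{\{1\}}$ together with the fact that $\calL(BG) = G/G$. Writing $\mathcal{N}$ for the unipotent cone, this reduces the claim to the statement that the formal neighborhood $\wih{G}_{\mathcal{N}}$ maps isomorphically onto $\wih{G'}_{\mathcal{N}'} \times_{G'} G$ near the identity component of $A$. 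If $A$ has a nontrivial unipotent part this may fail, in which case I would restrict to the conjugation-compatible statement used later in the paper.
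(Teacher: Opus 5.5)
\textbf{The second and third steps rest on a false claim.} Your first step, applying the limit-preserving functor $\calL^u$ to the given trivialization, is right. The problem is the map $\wih{G}_{\mathcal U}\to\wih{G'}_{\mathcal U'}$, equivalently $\calL^u(BG)\to\calL^u(BG')\times_{BG'}BG$. It is not an isomorphism once $A$ is positive-dimensional, whether or not $A$ has a unipotent part. For $G=A=\bbG_m$ one has $\calL^u(B\bbG_m)\simeq\Spf k[\![s]\!]\times B\bbG_m$, while $\calL^u(\pt)\times B\bbG_m\simeq B\bbG_m$. In general the fibres of $\wih{G}_{\mathcal U}\to\wih{G'}_{\mathcal U'}$ are formal completions of $A$ along its unipotent part, so your claim holds only for finite $A$.

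This is not a harmless special case. The lemma is used with $A=\langle\alpha\rangle$ for a central semisimple $\alpha$, which is typically a diagonalizable group of positive dimension. Consequently $Y\not\simeq Y'$ in general, and there is no $G'$-action on $Y$ to transport from $Y'$. Your fallback of restricting to a ``conjugation-compatible statement'' does not say what would replace this.

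\textbf{The missing observation.} You do not need $\calL^u(BG)$ to be pulled back from $\calL^u(BG')$, only from some stack over $BG'$, and that is automatic. Since $A$ is central, the conjugation action of $G$ on $\wih{G_{\mathcal U}}$ factors through $G'$. Hence $\calL^u(BG)\simeq\wih{G_{\mathcal U}}/G\simeq\wih{G_{\mathcal U}}/G'\times_{BG'}BG$, and the map $\calL^u(BG)\to\calL^u(BG')$ factors through the projection to $\wih{G_{\mathcal U}}/G'$. Substituting this into your first step gives
\[\calL^u(X/G)\simeq W\times_{BG'}BG,\qquad W:=\calL^u(X/G')\times_{\calL^u(BG')}\wih{G_{\mathcal U}}/G'.\]
Then $Y:=W\times_{BG'}\pt$ satisfies $W\simeq Y/G'$ and $\calL^u(X/G)\simeq Y/G$, which is the paper's proof.

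This $Y$ coincides with your $\calL^u(X/G)\times_{X/G}X$, so your choice of object was fine. What has to change is where the $G'$-action comes from: it comes from $W$, which keeps the formal $A$-directions, not from $\calL^u(X/G')$.
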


\begin{proof}
	Since $G' = G/A$ is the quotient of $G$ by a central subgroup, there is
	an equivalence $G/G \simeq G/G' \times_{BG'} BG$, which gives an $A$-trivialization of 
	$\calL (BG) \simeq G/G$. This $A$-trivialization induces an $A$-trivialization
	of $\calL (X/G)$ as follows: 
	\begin{align*}
		\left(X \times_{X \times X} (X \times G)\right)/G &\xrightarrow{\simeq} (X/G) \times_{X/G \times_{BG} X/G} (X/G \times_{BG} G/G) \\																		
		&\simeq \left(X/G' \underset{BG'}{\times} BG\right) \underset{\left(X/G' \underset{BG'}{\times} BG\right) \underset{BG}{\times} \left(X/G' \underset{BG'}{\times} BG\right)}{\times} \left(X/G' \underset{BG'}{\times} BG\right) \underset{BG}{\times} \left(G/G' \underset{BG'}{\times} BG\right) \\
		&\xrightarrow{\simeq} (X/G' \times_{BG'} BG) \times_{(X/G' \times_{BG'} X/G') \times_{BG'} BG} \left((X/G' \times_{BG'} \times G/G') \times_{BG'} BG\right) \\
		&\xrightarrow{\simeq} \left(X/G' \times_{XG' \times_{BG'} X/G'} X/G' \times_{BG'} G/G'\right) \times_{BG'} BG \\
		&\xleftarrow{\simeq} \left(X \times_{X \times X} X \times G\right)/G' \times_{BG'} BG.
	\end{align*}
	
	Let $U \subset G$ denote the unipotent cone of $G$.
	There is a similar $A$-trivialization on $U/G$ from
	the equivalence $U/G \simeq U/G' \times_{BG'} BG$.
	Combining this with the $A$-trivialization of $\calL (BG)$ identified above also
	induces an $A$-trivialization of $\calL^u(BG)$:
	\begin{align*}
		\calL (BG) \times_{\calL (BG)_{\dR}} (U/G)_{\dR} &\simeq (G/G' \times_{BG'} BG) \times_{(G/G' \times_{BG'} BG)_{\dR}} (U/G' \times_{BG'} BG)_{\dR} \\
		&\xrightarrow{\simeq} (G/G' \times_{BG'} BG) \times_{(G/G')_{\dR} \times_{(BG')_{\dR}} BG_{\dR}} ((U/G')_{\dR} \times_{BG'_{\dR}} BG_{\dR}) \\
		&\simeq (G/G' \times_{(G/G')_{\dR}} (U/G')_{\dR}) \times_{(BG' \times_{BG'_{\dR}} BG'_{\dR})} (BG \times_{BG_{\dR}} BG_{\dR}) \\ 
		&\simeq (G/G' \times_{(G/G')_{\dR}} (U/G')_{\dR}) \times_{BG'} BG \\
		&\simeq (G/G' \times_{G_{\dR}/G'_{\dR}} U_{\dR}/G'_{\dR}) \times_{BG'} BG	\\
		&\simeq (G \times_{G_{\dR}} U_{\dR})/(G' \times_{G'_{\dR}} G'_{\dR}) \times_{BG'} BG \\
		&\simeq \wih{G_U}/G' \times_{BG'} BG.								
	\end{align*}
	Finally, using the $A$-trivializations on $\calL^u(BG)$, $\calL (BG)$, and $\calL (X/G)$ above, 
	we obtain a canonical $A$-trivialization of $\calL^u(X/G)$. Indeed, applying $\mathcal{L}^u\coloneq \Map(B\mathbb{G} _a,-)$, which preserves limits, to the $A$-trivialization of $X/G$, we obtain
	\begin{align*}
		\mathcal{L}^u(X /G)\simeq& \mathcal{L}^u(X /G')\times _{\mathcal{L}^u(BG')}\mathcal{L}^u(BG)\\
		\simeq&\left( \mathcal{L}^u(X /G')\times _{\mathcal{L}^u(BG')}\wih{G_U}/G' \right) \times _{BG'}BG.
	\end{align*} 
	We claim that this is an $A $-trivialization. Indeed, denoting
	\begin{align*}
		W\coloneq \mathcal{L}^u(X /G)\times _{\mathcal{L}^u(BG')}\wih{G_U}/G',
	\end{align*} 
	we need to show that $W \simeq Y /G'$ and $\mathcal{L}^u(X /G)\simeq Y /G$ for some stack $Y$. We claim this holds for $Y\coloneq W\times _{BG'}\pt$. To see this, observe first that the base change of $W\times _{BG'}BG$ along $\pt \to BG$ gives a $G$-torsor
	\begin{align*}
		W\times _{BG'}\pt \xlongrightarrow{} W\times _{BG'}BG\simeq \mathcal{L}^u(X /G).
	\end{align*} 
	Thus, we see that
	\begin{align*}
		\mathcal{L}^u(X /G)\simeq (W\times _{BG'}\pt) /G.
	\end{align*} 
	On the other hand, $W\times _{BG'}\pt$ is the base change of $W$ along the universal $G'$-torsor $\pt \to BG'$; hence,
	\begin{align*}
		W\simeq (W\times _{BG'}\pt) /G'
	\end{align*} 
	as desired.
\end{proof}

\subsubsection{Canonical $A$-trivialization on the fixed point locus}
	If $A \subset G$ is a central subgroup, the centralizer $G(A)$ clearly
	acts on the fixed point locus $X^A$ of the induced $A$-action on $X$, and
	\cite[Proposition A.17]{BZChenHelmNadler}\footnotemark 
	\footnotetext{Although stated in \cite{BZChenHelmNadler} only for derived schemes, the 
	proof therein is equally valid for $X$ a general prestack.}
	states that $X^A$ possesses a canonical $A$-trivialization.
	In the sequel, it will be important to know that, for a $G$-equivariant
	map of prestacks $f: X \to Y$, the
	formal completion along the induced map on fixed points,
	$f^A: X^A \to Y^A$ also possesses a canonical $A$-trivialization.
	This follows from \cite[Proposition A.17]{BZChenHelmNadler} and
	the following lemma.

\begin{lem}
	\label{lem:fixed-points-commute-with-formal-completion}
	Let $G$ be any group prestack, and let $f: X \to Y$ be any $G$-equivariant map of prestacks. 
	Let $A \subset G$ be a reduced central subgroup.
	Then there is a natural equivalence $(\wih{Y_X})^A \simeq \wih{(Y^A)_{X^A}}$.\footnotemark
	\footnotetext{It is worth nothing that there is no natural morphism either direction
		between $(\wih{Y_X})^A$ and $\wih{(Y^A)_{(X^A)}}$. The equivalence stated is
		obtained by composing a universal map with the inverse of another.}
\end{lem}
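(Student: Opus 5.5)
We compare both sides with a common third object built from fixed points and de Rham stacks. The footnote warns that there is no direct map between the two sides, so we construct two universal maps out of, or into, a common stack and show each is an equivalence.

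First we make sense of the objects. The fixed point locus $Y^A$ is the mapping prestack $\Map(BA, Y)_{/BA}$, that is, the stack of $A$-equivariant maps from the point. Equivalently, it is the limit $Y^A=\lim_{BA}Y$ of the diagram $BA\to\PreStk$ given by the restricted action. Taking $A$-fixed points is therefore a limit, and so it commutes with fiber products. Applying this to $\wih{Y_X}=Y\times_{Y_{\dR}}X_{\dR}$, which is a fiber product of $G$-prestacks and hence of $A$-prestacks (as in the proof of \zcref{lem: formal completion is quotient}, $Y_{\dR}$ and $X_{\dR}$ carry $A$-actions restricted along $A\to A_{\dR}$), we get
\[(\wih{Y_X})^A\simeq Y^A\times_{(Y_{\dR})^A}(X_{\dR})^A.\]
On the other side, $\wih{(Y^A)_{X^A}}=Y^A\times_{(Y^A)_{\dR}}(X^A)_{\dR}$. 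It therefore suffices to compare the de Rham pieces: we want the map $(Y^A)_{\dR}\to (Y_{\dR})^A$, and likewise for $X$, to induce an equivalence of the two fiber products.

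The key step, and the main obstacle, is this comparison. There is a natural map $(Y^A)_{\dR}\to (Y_{\dR})^{A}$, but it need not be an equivalence. Its target consists of maps into $Y_{\dR}$ that are equivariant for $A$ acting through $A\to A_{\dR}$. Since $(-)_{\dR}$ is a right adjoint and preserves limits, $(Y_{\dR})^{A_{\dR}}\simeq(Y^A)_{\dR}$ when we use the $A_{\dR}$-action. We can also write $(Y_{\dR})^A=\lim_{BA}Y_{\dR}$. Because $A$ is reduced, $A\to A_{\dR}$ is an isomorphism on reduced test schemes. Every $S$-point of $Y_{\dR}$ depends only on $S_{\red}$, and $(S\times A)_{\red}=S_{\red}\times A$ for reduced $A$ (as in the proof of \zcref{lem: unipotent loops commute with formal completion}, reduction commutes with products with smooth, or at least reduced, group factors). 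We therefore expect to prove that $(Y_{\dR})^A\simeq(Y_{\dR})^{A_{\dR}}\simeq (Y^A)_{\dR}$. This would be checked on $S$-points through the bar construction of $BA$, levelwise. If this fails in general, the fallback is to avoid it: prove only that the two squares
\[Y^A\times_{(Y^A)_{\dR}}(X^A)_{\dR}\quad\text{and}\quad Y^A\times_{(Y_{\dR})^A}(X_{\dR})^A\]
both map to $Y^A\times_{(Y_{\dR})^A}(X^A)_{\dR}$ by universal maps. This is the ``compose a universal map with the inverse of another'' of the footnote. Then we show each map is an equivalence by checking on $S$-points, using that an $S$-point of either side is an $S$-point of $Y^A$ together with a factorization of its restriction to $S_{\red}$ through $X^A$ (respectively through $X$, compatibly with the $A$-structure).

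Finally, we check on $S$-points that the data agree. A factorization of $S_{\red}\to Y^A$ through $X^A$ is the same as an $A$-equivariant factorization of $S_{\red}\times BA\to Y/A$ through $X/A$. Because $A$ is reduced, $(S\times A^n)_{\red}=S_{\red}\times A^n$ at every level of the bar construction, so this equivariant factorization data is exactly what the middle fiber product records. Assembling these identifications gives the natural equivalence $(\wih{Y_X})^A\simeq\wih{(Y^A)_{X^A}}$.
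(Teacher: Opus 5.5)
Your proposal is correct and is essentially the paper's argument. You commute $A$-fixed points past the fiber product $Y\times_{Y_{\dR}}X_{\dR}$ to get $(\wih{Y_X})^A\simeq Y^A\times_{(Y_{\dR})^A}(X_{\dR})^A$, then show $(S^A)_{\dR}\to(S_{\dR})^A$ is an equivalence by evaluating levelwise on the bar construction and using $(T\times A^n)_{\red}\simeq T_{\red}\times A^n$ for reduced $A$. That levelwise computation settles the comparison outright, so your hedge that the map ``need not be an equivalence'' and the fallback are unnecessary; the fallback is also slightly off, since the natural map goes $(X^A)_{\dR}\to(X_{\dR})^A$, not the other way.
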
	

\begin{proof}
	First note that the $G$-equivariant structure on $f: X \to Y$
	restricts to a canonical $A$-equivariant, so we obtain map of fixed points,
	$X^A \to U^A$.

	Recall the Cartesian diagram defined the fixed point locus of $\wih{Y_X}$,
	\begin{equation}
		\label{eqn: fixed points}
		\begin{tikzcd}
			{(\wih{Y_X})^A} & {\MapStk(BA, \wih{Y_X}/A)} \\
			{\{\id_{BA}\}} & {\MapStk(BA,BA)}
			\arrow[from=1-1, to=1-2]
			\arrow[from=1-1, to=2-1]
			\arrow["\lrcorner"{anchor=center, pos=0.125}, draw=none, from=1-1, to=2-2]
			\arrow[from=1-2, to=2-2]
			\arrow[from=2-1, to=2-2].
		\end{tikzcd}
	\end{equation}
	By definition, $\wih{Y_X}/A = (Y \times_{Y_{\dR}} X_{\dR})/A$. Since the
	quotient by the group action of $A$ is a sifted colimit, it commutes
	with finite products, so we obtain $(Y \times_{Y_{\dR}} X_{\dR})/A 
	\xrightarrow{\simeq} Y/A \times_{Y_{\dR}/A} X_{\dR}/A$.
	Mapping stacks preserve limits in the second argument, so we
	therefore have the Cartesian diagram
	\[\begin{tikzcd}
		{\MapStk(BA, \wih{Y_X}/A)} & {\MapStk(BA, Y/A)} \\
		{\MapStk(BA, (X/A)_{\dR})} & {\MapStk(BA, (Y/A)_{\dR})}
		\arrow[from=1-1, to=1-2]
		\arrow[from=1-1, to=2-1]
		\arrow[from=1-2, to=2-2]
		\arrow[from=2-1, to=2-2].
	\end{tikzcd}\]
	Limits commute with one another, so we may evaluate the top-left term of
	the fixed point diagram \labelcref{eqn: fixed points} as the limit
	of the cospan whose terms are the top-left terms of the
	pullback diagrams
	\[
	\begin{tikzcd}
		{S^A} & {\MapStk(BA, S/A)} \\
		{\{\id_{BA}\}} & {\MapStk(BA,BA)}
		\arrow[from=1-1, to=1-2]
		\arrow[from=1-1, to=2-1]
		\arrow["\lrcorner"{anchor=center, pos=0.125}, draw=none, from=1-1, to=2-2]
		\arrow[from=1-2, to=2-2]
		\arrow[from=2-1, to=2-2].
	\end{tikzcd}
	\]
	for $S = Y$, $Y_{\dR}$, and $X_{\dR}$.
	This gives precisely $\wih{Y_X}^A \xrightarrow{\simeq} Y^A \times_{(Y_{\dR})^A} (X_{\dR})^A$.
	
	On the other hand, there is a natural map $\wih{(Y^A)_{(X^A)}} = Y^A \times_{(Y^A)_{\dR}} (X^A)_{\dR}
	\to Y^A \times_{(Y_{\dR})^A} (X_{\dR})^A$ induced by the universal morphism $(S^A)_{\dR} \to (S_{\dR})^A$ that
	exists for any stacks $S$ with $A$-action, so
	it remains to show that $(S^A)_{\dR} \xrightarrow{\simeq} (S_{\dR})^A$. By definition, for a test affine $T$,
	\begin{align*}
		S^A(T)\simeq\lim_{[n]\in\Delta}S(T\times A^n).
	\end{align*} 
	Now, $A$ is reduced; hence, $(T\times A^n)_{\text{red}}\simeq T_{\text{red}}\times A^n$. Thus, we compute
	\begin{align*}
		(S_{\text{dR}})^A(T)\simeq&\lim_{[n]\in\Delta}S_{\text{dR}}(T\times A^n)\\
		\simeq&\lim_{[n]\in\Delta}S(T_{\text{red}}\times A^n)\\
		\simeq&S^A(T_{\text{red}})\\
		\simeq&(S^A)_{\text{dR}}(T).
	\end{align*} 
	These isomorphisms are clearly natural in $T$ and compatible with the group operations and induced action. Hence, we find
	\begin{align*}
		(S^A)_{\text{dR}}\to (S_{\text{dR}})^A
	\end{align*} 
	is an isomorphism as desired.
\end{proof}

In particular, \zcref{lem:fixed-points-commute-with-formal-completion} and 
\cite[Proposition A.17]{BZChenHelmNadler} together imply that the formal 
completion along a map of fixed points has a canonical $A$-trivialization.

\begin{cor}
	\label{lem: canonical a-trivialization of formal completion}
	Let $G$ be a group prestack, and let $X \to Y$ be a $G$-equivariant map of 
	prestacks equipped with $G$-actions. Let $A \subset G$ be a reduced central subgroup.
	Then $\wih{{Y^A}_{X^A}}$ has a canonical $A$-trivialization.
\end{cor}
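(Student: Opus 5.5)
The plan is to combine \zcref{lem:fixed-points-commute-with-formal-completion} with \cite[Proposition A.17]{BZChenHelmNadler}, applied to the fixed-point loci themselves.

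First, identify the $A$-action on the formal completion. The map $f^A\colon X^A \to Y^A$ is equivariant for the centralizer $G(A)$, which is all of $G$ since $A$ is central. Following the construction in the proof of \zcref{lem: formal completion is quotient}, the completion $\wih{(Y^A)_{X^A}} = Y^A \times_{(Y^A)_{\dR}} (X^A)_{\dR}$ then carries a canonical $G$-action. This is the one obtained by restricting the $G_{\dR}$-actions on the de Rham prestacks along $G \to G_{\dR}$.

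Next, use \zcref{lem:fixed-points-commute-with-formal-completion}, which requires $A$ to be reduced, to get a natural equivalence
\[(\wih{Y_X})^A \simeq \wih{(Y^A)_{X^A}}.\]
Here $\wih{Y_X}$ is a prestack with $G$-action, again by the construction in \zcref{lem: formal completion is quotient}. By \cite[Proposition A.17]{BZChenHelmNadler}, valid for general prestacks as noted in the footnote, the fixed-point locus $(\wih{Y_X})^A$ has a canonical $A$-trivialization. Concretely, this consists of a $G/A$-action on $(\wih{Y_X})^A$ together with an equivalence
\[(\wih{Y_X})^A/G \simeq (\wih{Y_X})^A/(G/A) \times_{B(G/A)} BG.\]
Transporting this datum along the equivalence above gives the desired $A$-trivialization of $\wih{(Y^A)_{X^A}}$.

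The main obstacle is a compatibility check: the equivalence of \zcref{lem:fixed-points-commute-with-formal-completion} must be $G$-equivariant for the $G$-action on $\wih{(Y^A)_{X^A}}$ described above, so that the transported structure really trivializes that action rather than some other one. I would verify this by noting that every step of that proof is functorial in $G$-prestacks:
\begin{enumerate}
\item the fiber products $Y^A \times_{(Y_{\dR})^A} (X_{\dR})^A$ are taken in $G$-prestacks;
\item the comparison map $(S^A)_{\dR} \to (S_{\dR})^A$ is natural in the $G$-prestack $S$, and the lemma shows it is an isomorphism.
\end{enumerate}
Hence the composite equivalence lifts to $\Fun(BG, \PreStk)$.

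Alternatively, one could apply \cite[Proposition A.17]{BZChenHelmNadler} separately to $Y^A$, $(Y^A)_{\dR}$ and $(X^A)_{\dR}$, using $(S^A)_{\dR} \simeq (S_{\dR})^A$ for the de Rham terms. Since $A$-trivializations are compatible with fiber products, by the same manipulation used in \zcref{lem: loop space has A-trivialization}, these would glue to a trivialization of the completion. Either route yields canonicity, because all the inputs are canonical.
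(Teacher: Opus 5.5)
Your main route is the paper's own argument: the paper obtains the corollary in one line by applying \cite[Proposition A.17]{BZChenHelmNadler} to $(\wih{Y_X})^A$ and transporting the resulting $A$-trivialization along the equivalence of \zcref{lem:fixed-points-commute-with-formal-completion}. Your check that this zigzag equivalence is $G$-equivariant is a reasonable addition, since the paper only asserts that the isomorphisms are ``clearly natural in $T$ and compatible with the group operations and induced action''; the alternative gluing route is not needed.
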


\subsection{$\a$-shifted action on the unipotent loop space}
In addition to the standard $S^1$-action on $\calL^{\alpha}(-)$
given by loop rotation, there is the so-called $\a$-shifted $S^1$-action,
which we describe below. In order to do so, we first recall the following
definition.

If $X$ is a prestack equipped with an action of a group prestack $G$,
and $\a \in G$ is a central element, then we obtain a canonical $\a$-trivialization,
\begin{equation}
	\label{eqn: canonical a-trivialization}
	X^{\alpha}/G(\alpha) \simeq (X^{\alpha}/G'(\alpha)) \times_{BG'(\alpha)} BG(\alpha),
\end{equation}
as a special case of \cite[Proposition A.17]{BZChenHelmNadler},
where $G(\alpha)$ here denotes the centralizer of $\a$, 
and $G'(\alpha) := G(\alpha)/\langle \alpha \rangle$ denotes the quotient.
The $\a$-shifted loop rotation action on $\calL(X^{\a}/G(\a))$ 
(resp. $\calL^u(X^{\a}/G(\a))$ and $\calL^{\alpha}(X^{\a}/G(\a))$) is defined in the
following construction:

\begin{con}
	\label{con: a-shifted loop rotation}
There is an $S^1$-action on $BG(\alpha)$, denoted by $B(sh_{\a})$, 
given by the delooping of the $\bbZ$-action $sh_{\a}$ on $G(\a)$ given by
multiplication by $\alpha$. This induces an $S^1$-action on $X^{\a}/G(\a)$,
which we denote by $\sigma_{\a}$ via its canonical $\a$-trivialization: $\sigma_{\a}$ is the diagonal
to the $S^1 \times S^1$-action obtained from the trivial action on the first factor
and the $B(sh_{\a})$-action on the the second factor in the 
product on the right-hand side of \zcref[noname]{eqn: canonical a-trivialization}. 
Composing this action $\sigma_{\a}:BS^1 \to \PreStk$ with the loop space functor $\calL(-): \PreStk \to \PreStk$,
we obtain an $S^1$-action on $\calL(X^{\a}/G(\a))$ called
the \emph{$\a$-twisted action} on $\calL(X^{\a}/G(\a))$.
Since the ordinary $S^1$-action $\rho$ on $\calL(X^{\a}/G(\a))$ given by loop rotation
commutes with $\calL(\sigma_{\a})$, the \emph{$\alpha$-shifted loop rotation} action is defined
to be the diagonal to the $S^1\times S^1$-action $\rho \times \calL(\sigma_\alpha)$. We denote
this final $S^1$-action by $\rho(\a)$.

One may also take the functor $\calL^{\alpha}(-): \PreStk_{/BG} \to \PreStk_{/BG}$ in place of the usual loop space functor
in the above construction to obtain the $\a$-shifted loop rotation action on $\calL^{\alpha}(X^{\a}/G(\a))$.
This action is also denoted by $\rho({\a})$.
\end{con}

\subsection{Fixed point and shifted localization}
Let us now specialize to the case when $X$ is a \emph{derived scheme} equipped
with an action of a reductive group $G$.
For a chosen semisimple element $\a \in G$, 
let $X^{\a}$ denote the homotopy fixed points
of the action of $\langle \a \rangle$, and
let $G(\a) \subset G$ denote the centralizer of
$\a$. Clearly $G(\a)$ acts on $X^{\a}$,
and there is a canonical map,
$X^{\a}/G(\a) \to X/G$.
By \cite[Theorem 2.9]{BZChenHelmNadler}, if $X$ is locally Noetherian,
the above map induces an $S^1$-equivariant equivalence on $\a$-potent loop spaces,
\[\ell_{\a}^u: \calL^{\alpha}(X^{\a}/G(\a)) \xrightarrow{\simeq} \calL^{\alpha}(X/G).\]
There is an automorphism of $sh_{\a}: \calL(X^{\a}/G(\a)) \to \calL(X^{\a}/G(\a))$
defined as follows. From the canonical $\a$-trivialization on
$X^{\a}/G(\a)$ we obtain an induced product decomposition
$\calL(X^{\a}/G(\a)) \simeq \calL(X^{\a}/G'(\a)) \times_{G'(\a)/G'(\a)} G(\a)/G(\a)$,
where $G'(\a)$ is the quotient $G(\a)/\langle \a \rangle$.
Let $sh_{\a}$ be the automorphism defined on this product by multiplication
by $\a$ on the $G(\a)/G(\a)$ factor, and the identity on the other factors.
This automorphism restricts to a map,
\[sh_{\a}: \calL^u(X^{\a}/G(\a)) \to \calL^{\alpha}(X^{\a}/G(\a)),\]
which is $S^1$-equivariants for the $\a$-shifted loop rotation action (defined
in \zcref{con: a-shifted loop rotation})
on the source and the ordinary loop rotation action on the right.
The \emph{shifted localization map},
\[s\ell^u_{\a}: \calL^u(X^{\a}/G(\a)) \to \calL^{\alpha}(X/G),\]
defined to be the composition $\ell_{\a}^u\circ sh_{\a} $,
is an $S^1$-equivariant equivalence for the $\rho(\a)$
action on the source and the $\rho$ action on the target 
by \cite[Corollary A.18]{BZChenHelmNadler}.

In what follows, we will require an
extension of the shifted localization results
of \cite{BZChenHelmNadler} to the case
when $X$ is the formal completion of
a map of derived schemes,
which is not a derived scheme itself in general.

\begin{lem}
	\label{lem: shifted localization for formal completions}
	Let $G$ be a linear algebraic group with reductive neutral component
	acting on locally Noetherian derived schemes $X$ and $Y$, and let $\a \in G$
	be a semisimple element. If $f: X \to Y$ is a $G$-equivariant map,
	then the shifted localization map 
	\[s\ell^u_{\a}: \calL^u\left(\left(\wih{Y_X}\right)^{\a}/G(\a)\right) \to \calL^{\alpha}\left(\wih{Y_X}/G\right)\]
	is an equivalence.
\end{lem}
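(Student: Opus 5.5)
We reduce to the scheme case \cite[Corollary A.18]{BZChenHelmNadler}. The strategy is to show that both sides of the shifted localization map are formal completions, along compatible maps, of the corresponding objects for $Y$, and that $s\ell^u_{\a}$ is the map induced on formal completions by the known equivalence for $Y$ and $X$.

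\textbf{Step 1: the source.} Rewrite it using \zcref{lem:fixed-points-commute-with-formal-completion}, which gives a natural equivalence $(\wih{Y_X})^{\a}\simeq \wih{(Y^{\a})_{X^{\a}}}$; this uses that $\langle \a\rangle$ is reduced, as it is a diagonalizable group for $\a$ semisimple. This equivalence is $G(\a)$-equivariant and compatible with the canonical $\a$-trivializations of \cite[Proposition A.17]{BZChenHelmNadler}, by \zcref{lem: canonical a-trivialization of formal completion}. Then \zcref{lem: formal completion is quotient} identifies $(\wih{Y_X})^{\a}/G(\a)$ with $\wih{(Y^{\a}/G(\a))_{X^{\a}/G(\a)}}$, and \zcref{lem: unipotent loops commute with formal completion} gives
\[\calL^u\bigl((\wih{Y_X})^{\a}/G(\a)\bigr)\simeq \wih{\calL^u(Y^{\a}/G(\a))_{\calL^u(X^{\a}/G(\a))}}.\]

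\textbf{Step 2: the target.} By \zcref{lem: formal completion is quotient} and \zcref{lem:unipotent-loops-commute-with-formal-completion}, applied to $X/G\to Y/G$ (used there only through the formal-completion identification, which also works for non-closed maps), we have $\calL^{\alpha}(\wih{Y_X}/G)\simeq \wih{(\calL^{\alpha}(Y/G))_{\calL^{\alpha}(X/G)}}$.

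\textbf{Step 3: compatibility.} Check that under these identifications $s\ell^u_{\a}$ is the map on formal completions induced by the commutative square formed by $s\ell^u_{\a}$ for $X$ and for $Y$. These two maps are $S^1$-equivariant equivalences by \cite[Corollary A.18]{BZChenHelmNadler}; here one needs $X$ and $Y$ locally Noetherian. Both components of $s\ell^u_{\a}=\ell^u_{\a}\circ sh_{\a}$ are natural in the $G$-prestack: $\ell^u_{\a}$ is induced by $X^{\a}/G(\a)\to X/G$, and $sh_{\a}$ is multiplication by $\a$ on the $G(\a)/G(\a)$ factor of the $\a$-trivialized product. Both commute with $(-)_{\dR}$ and fiber products, so they are compatible with $Z\times_{Z_{\dR}}W_{\dR}$. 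The formal completion of an equivalence along an equivalence is an equivalence, since $(-)_{\dR}$ is functorial, and this gives the result. The $\rho(\a)$ versus $\rho$ equivariance follows by the same naturality.

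\textbf{Expected obstacle.} The main difficulty is Step 3: verifying that the composite of the abstract equivalences in Steps 1 and 2, in particular the non-natural zigzag in \zcref{lem:fixed-points-commute-with-formal-completion}, really intertwines $sh_{\a}$. Concretely, one must check that the $\a$-trivialization of $\wih{(Y^{\a})_{X^{\a}}}$ obtained via that zigzag agrees with the one obtained by completing the $\a$-trivializations of $Y^{\a}$ and $X^{\a}$. I would handle this by constructing both $\a$-trivializations on the common model $Y^{\a}\times_{(Y_{\dR})^{\a}}(X_{\dR})^{\a}$, using that \cite[Proposition A.17]{BZChenHelmNadler} is functorial in the prestack and commutes with limits.
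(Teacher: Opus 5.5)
Your proposal is correct and follows essentially the same route as the paper. Naturality of $s\ell^u_{\a}$ along $f$, together with \cite[Corollary A.18]{BZChenHelmNadler}, gives a commutative square of equivalences and hence an equivalence on formal completions; the lemmas on fixed points, quotients, and loop spaces commuting with formal completion then identify its source and target with the two sides of the statement. The paper asserts without comment the two points you flag: that these identifications intertwine $sh_{\a}$ and the $\a$-trivializations, and that the $\calL^{\a}$ completion lemma does not need $X/G\to Y/G$ to be a closed immersion. Your plan is therefore, if anything, more careful than the paper.
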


\begin{proof}
	By construction, the shifted localization map $s\ell^u_{\a}: \calL^u(U^{\a}/G(\a)) \to \calL^{\alpha}(U/G)$ is natural along
	maps of derived $G$-schemes $U$. Since $X$ and $Y$ themselves satisfy the hypotheses of \cite[Corollary A.18]{BZChenHelmNadler},
	this means we obtain a commutative diagram of $S^1$-equivariant maps,
	\[\begin{tikzcd}
		{\calL^u(X^{\a}/G(\a))} & {\calL^{\alpha}(X/G)} \\
		{\calL^u(Y^{\a}/G(\a))} & {\calL^{\alpha}(Y/G)}
		\arrow["\simeq", from=1-1, to=1-2]
		\arrow["{\calL^{\alpha}(f^{\a}/G(\a))}"', from=1-1, to=2-1]
		\arrow["{\calL^{\alpha}(f/G)}", from=1-2, to=2-2]
		\arrow["{\simeq }", from=2-1, to=2-2],
	\end{tikzcd}\]
	where $f^{\a}: X^{\a} \to Y^{\a}$ is the induced map on fixed points.
	From this, we deduce that the induced map
	\begin{equation}
		\label{eqn: localization for formal completions}
	\wih{\calL^u(Y^{\a}/G(\a))_{\calL^u(X^{\a}/G(\a))}} \xrightarrow{} \wih{\calL^{\alpha}(Y/G)_{\calL^{\alpha}(X/G)}}
	\end{equation}
	is an equivalence. On the other hand,
	\begin{itemize}
		\item by \zcref{lem:fixed-points-commute-with-formal-completion}, \zcref{lem:unipotent-loops-commute-with-formal-completion}, \zcref{lem: formal completion is quotient}, and the fact that $\calL^u=\calL^u_{[1]}$ is a limit, hence commutes with limits, we see---by a direct fiber product calculation---that
		the left- and right-hand side of \zcref[noname]{eqn: localization for formal completions}
		are equivalent to 
		\[\calL^u\left(\wih{Y^{\a}/G(\a)_{X^{\a}/G(\a)}}\right) \;\;\; \text{and} \;\;\; \calL^{\alpha}\left(\wih{Y/G_{X/G}}\right),\]
		respectively;
		\item by \zcref{lem: formal completion is quotient}, $\wih{Y^{\a}/G(\a)_{X^{\a}/G(\a)}} \simeq \wih{Y^{\a}_{X^{\a}}}/G(\a)$
		and $\wih{Y/G_{X/G}} \simeq \wih{Y_X}/G$;
		\item and by \zcref{lem:fixed-points-commute-with-formal-completion}, $\wih{Y^{\a}_{X^{\a}}}
		\simeq \left(\wih{Y_X}\right)^{\a}$.
	\end{itemize} 
	All together, the equivalence \zcref[noname]{eqn: localization for formal completions} becomes
	the equivalence
	\[s\ell^u_{\a}: \calL^u\left(\wih{Y_X}^{\a}/G(\a)\right) \xrightarrow{\simeq} \calL^{\alpha}\left(\wih{Y_X}/G\right),\]
	as desired
\end{proof}

There is a corresponding notion of $A$-trivialization for $G$-categories, which we recall below.

\begin{defn}
If $G$ is an affine algebraic group and $A\subset Z(G)$ is a central subgroup, 
then an \emph{$A$-trivialization} of a $\mathsf{QCoh}(BG)$-module category 
$\mathscr{C}$ is a category $\mathscr{C}'$ and an equivalence
\begin{align*}
	\mathscr{C}\simeq \mathscr{C}'\otimes_{\mathsf{QCoh}(B(G /A))}\mathsf{QCoh}(BG).
\end{align*} 
Then the category of \emph{$A$-trivial objects} of $\mathscr{C}$ is denoted $\mathscr{C}_A$, 
which is defined to be the essential image in $\mathscr{C}$ of the natural functor 
$\mathscr{C}'\to \mathscr{C}$. When $A=\langle\alpha\rangle$ for $\alpha\in Z(G)$,
we write $\mathscr{C}_\alpha \coloneq \mathscr{C}_A $.
\end{defn}

\begin{rem}
	If $X$ is a prestack with $G$-action, an $A$-trivialization of $X$ induces
	a $A$-trivialization of the $G$-category $\QCoh(X/G)$.
\end{rem}

We now show that Tate invariants with respect to the $\alpha$-twisted loop rotation 
action on $\calL^u(X^\alpha /G(\alpha))$ kills all objects which are not $\alpha$-trivial. 
In fact, this already happens at the level of $S^1$-invariants, as is shown in the proof of the following result.

\begin{prop}
	\label{prop: Tate invariants kill everything except a-trivial block}
	Let $G$ be a reductive affine algebraic group and $X$ a laft prestack with $G $-action, 
	and let $\alpha\in Z(G)$ be a central semisimple element. 
	Suppose further that $X /G$ has an $\alpha$-trivialization and $\mathcal{L}^u(X /G)$ is coherent ind-geometric in the sense of \cite[Definitions 4.1 and 4.3]{CautisWilliams}. Then
	the inclusion of the $\a$-trivial block of $\IndCoh(L^u(X/G))$ induces
	an equivalence,
	\begin{align}
		\tag{1} \label{eq:tate-alpha} \IndCoh(\calL^u(X/G))^{\tT(\alpha)}_\alpha &\xrightarrow{\simeq} \IndCoh(\calL^u(X/G))^{\tT(\alpha)}
	\end{align} 
	where $\tT(\alpha)$ denotes the renormalized 
	Tate invariants with respect to the 
	$\alpha$-twisted loop rotation action.
\end{prop}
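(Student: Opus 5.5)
The plan is to use the $\alpha$-trivialization to split $\IndCoh(\calL^u(X/G))$ into blocks indexed by characters of $\langle\alpha\rangle$, and then to show that the $\alpha$-twisted loop rotation kills every block except the trivial one, already at the level of $S^1$-invariants.

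\emph{Step 1: the block decomposition.} By \zcref{lem: loop space has A-trivialization}, $\calL^u(X/G) \simeq Y/G' \times_{BG'} BG$ with $G' = G/\langle\alpha\rangle$. Hence $\IndCoh(\calL^u(X/G))$ is a module over $\QCoh(BG)$ via $\IndCoh(Y/G') \otimes_{\QCoh(BG')} \QCoh(BG)$. Since $\alpha$ is central and semisimple, $\langle\alpha\rangle$ is diagonalizable. So $\QCoh(BG)$ decomposes over $\QCoh(BG')$ into isotypic summands indexed by characters $\chi$ of $\langle\alpha\rangle$:
\[\IndCoh(\calL^u(X/G)) \simeq \bigoplus_{\chi} \IndCoh(\calL^u(X/G))_{\chi}.\]
The $\chi = 1$ summand is the $\alpha$-trivial block.

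\emph{Step 2: compatibility with the twisted action.} We check that this decomposition is compatible with the $\alpha$-twisted $S^1$-action of \zcref{con: a-shifted loop rotation}, and with the $t$-structure, so that it passes to $\Coh$. The twisted action is the diagonal of loop rotation and the action induced by $B(sh_\alpha)$. The generator of $\pi_1(S^1)$ acts on a $\chi$-summand by the natural automorphism of the identity given by loop rotation composed with $\chi(\alpha)$.

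\emph{Step 3: nontrivial blocks die under $S^1$-invariants.} For a compact object $c$ in the $\chi$-summand, the loop-rotation monodromy is unipotent by \zcref{prop: BG_a-actions are unipotent}. Here we use that the $S^1$-action on $\IndCoh(\calL^u(-))$ factors through $B\bbG_a$, so we may write the monodromy as $1 + N$ with $N$ nilpotent. The total monodromy is then $\chi(\alpha)(1+N)$. When $\chi(\alpha) \neq 1$, the map $\chi(\alpha)(1+N) - 1$ is invertible. Computing $\Hom(c,c')^{S^1}$ through the $S^1$-action on mapping complexes (\zcref{lem: morphisms in C^G}) gives a fiber of $\mathrm{mon} - 1$ on Hom complexes. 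Since that map is an isomorphism, these Hom complexes vanish, so the $\chi$-block of $\Coh^{S^1}$ is zero.

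\emph{Step 4: conclusion and the main obstacle.} Tate invariants, ind-completion and the direct-sum decomposition commute, so the inclusion of the trivial block induces the equivalence \zcref[noname]{eq:tate-alpha}. I expect the main obstacle to be Step 3. The difficulty is making rigorous that $S^1$-invariants of an $S^1$-action on a $k$-linear category, with monodromy acting invertibly minus the identity, are trivial. This should use that $S^1 = B\bbZ$, so that invariants are fixed points of a single automorphism. It also requires tracking coherently that the $B\bbG_a$ unipotence survives the twist by the $B(sh_\alpha)$ factor.
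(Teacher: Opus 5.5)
Your architecture matches the paper's proof: the block decomposition from \zcref{lem: loop space has A-trivialization}, the monodromy $\chi(\alpha)(1+N_c)$ on the $\chi$-block, invertibility of $1-\chi(\alpha)(1+N_c)$ when $\chi(\alpha)\neq 1$, and then passage to Tate invariants and ind-completion. But the deduction closing Step 3 does not work as written. The formula ``$\Hom(c,c')^{S^1}$ is a fiber of $\mathrm{mon}-1$'' computes homotopy fixed points of a $\bbZ$-action, i.e.\ a limit over $B\bbZ\simeq S^1$. However, $S^1$-invariants are a limit over $BS^1\simeq K(\bbZ,2)$. A complex $M$ with $S^1$-action has no monodromy automorphism at all: a connected group acts through automorphisms homotopic to the identity. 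The action is instead a degree $-1$ operator $\varepsilon$, and $M^{S^1}\simeq\Hom_{k[\varepsilon]}(k,M)\simeq(M[\![u]\!],d+u\varepsilon)$, as in the proof of \zcref{lem: base change of S^1-invariants}. Already for the trivial action your recipe would give $M\oplus M[-1]$ rather than $M[\![u]\!]$. Your remark in Step 4, that ``$S^1=B\bbZ$, so invariants are fixed points of a single automorphism,'' is off by one categorical level. As you say yourself in Step 2, $S^1=B\bbZ$ means an $S^1$-action on $\scrC$ amounts to an automorphism of the identity functor $\id_{\scrC}$, with higher coherences. It is not an automorphism of $\scrC$, or of its Hom complexes, whose fixed points one takes.

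The missing idea, which is what the paper uses, is that an $S^1$-equivariant structure on $\tilde c$ supplies a homotopy $\rho(\alpha)_c\simeq\id_c$, i.e.\ a nullhomotopy of $1-\rho(\alpha)_c$. Combined with your invertibility statement, this gives $\id_c\simeq 0$, hence $c\simeq 0$. Conservativity of $\scrC_\chi^{S^1}\to\scrC_\chi$ then gives $\tilde c\simeq 0$. The logic is therefore not ``a fiber of an invertible map vanishes.'' It is ``the equivariant structure forces an invertible endomorphism to be null.'' If you prefer a Hom-level version: postcomposition with $1-\rho(\alpha)_{c'}$ is an invertible endomorphism of the underlying complex $\Hom(c,c')$. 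The equivariant structure on $\tilde c'$ makes it nullhomotopic, so $\Hom(c,c')\simeq 0$ before any invariants are taken. With Step 3 repaired this way, your proof coincides with the paper's.
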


\begin{proof}
	We will investigate the $S^1 $-invariants of the category $\Coh(\calL^u (X /G))$.
	Let $X^*(\langle \alpha \rangle)$ denote the character lattice of $\langle \alpha \rangle$. 
	The category $\mathscr{C}=\mathsf{Coh}(\calL^u(X /G))$ has a direct sum decomposition
	\[\mathscr{C} \simeq \bigoplus _{\chi\in X^*(\langle \alpha \rangle)}\mathscr{C}_\chi,\]
	arising from the $\alpha$-trivialization of the stack $\calL^u(X/G)$ coming from
	\zcref{lem: loop space has A-trivialization}.
	The $\alpha$-shifted loop rotation action on $L^u(X/G)$ induces a corresponding
	$S^1$-action on $\mathscr{C}$. Moreover, since any $S^1$-action on $\scrC$
	preserves the decomposition $\mathscr{C} \simeq \oplus _\chi \mathscr{C}_\chi$,\footnotemark
	\footnotetext{Indeed, the $S^1$-action on $\scrC$ restricts to an $S^1$-action
	on \emph{any} full subcategory of $\scrC$ (cf. proof of \zcref{lem: S^1 invariants commutes with heart}).}
	we have
	\begin{align*}
		\mathscr{C}^{S^1(\alpha)} \simeq \bigoplus _{\chi\in X^*(\langle \alpha \rangle)}\mathscr{C}_\chi^{S^1(\alpha)}.
	\end{align*}

	We claim that $\mathscr{C}_\chi^{S^1(\alpha)} \simeq 0$ whenever $\chi(\alpha)\neq 1$. 
	Let $\chi \in X^*(\langle \alpha \rangle)$ be any character such that $\chi(\alpha)\neq 1$. 
	Let $\tilde c\in \mathscr{C}_\chi^{S^1(\alpha)}$ be an $S^1(\alpha)$-equivariant object, and denote by $c\in \mathscr{C}_\chi$ 
	its underlying object. Let $\rho: \bbZ \to \Aut_{\Aut(\scrC)}(\id_{\scrC})$ denote ordinary loop rotation action, and $\rho_c: c \to c$ denote
	the component at $c$ of the natural automorphism $\rho(1)$.
	By \zcref{prop: BG_a-actions are unipotent}, $\rho_c$ is a unipotent operator,
	hence has the form $\rho_c \simeq 1 + N_c$ for some nilpotent operator $N_c \in \Omega^{\infty}(\End_{\scrC}(c))$.
	Furthermore, the restriction to $\scrC_{\chi}$ of the $\alpha$-shifted rotation 
	action on $\scrC$ is given by multiplication by the scalar $\chi(\alpha)$. 
	Thus, the component at $c$ of the natural automorphism of the identity endofunctor given by $\rho(\a)$ is
	\begin{align*}
		\rho(\alpha)_c \simeq \chi(\alpha)\rho_c \simeq \chi(\alpha)(1+N_c).
	\end{align*} 
	The $S^ 1(\alpha)$-equivariant structure on $\tilde c$ supplies a nullhomotopy $1-\rho(\alpha)_c \simeq 0$. 
	We claim, however, that $1-\rho(\alpha)_c$ is invertible. 
	Indeed, let $n \in \mathbb{N} $ be such that $N_c^n \simeq 0$. Then
	\begin{align*}
		(1-\rho(\alpha)_c)^{-1}&\simeq \left( (1-\chi(\alpha)(1+N_c) \right)^{-1}\\
											   &\simeq \frac{1}{1-\chi(\alpha)}\sum_{i=0}^{n-1}\left( \frac{\chi(\alpha)}{1-\chi(\alpha)}N_c \right) ^i.
	\end{align*} 
	Thus, the identity automorphism must be nullhomotopic itself, $\id_c \simeq 0$, which implies
	that $c \simeq 0$. Since the forgetful functor $\mathscr{C}_\chi^{S^1(\alpha)}\to \mathscr{C}_\chi$ is conservative,\footnotemark
	\footnotetext{cf. the proof of \zcref{lem:fully-faithful-functors-are-ff-after-tate}. 
	It is shown there that an $S^1$-equivariant map of
	anima with $S^1$-actions is an natural equivalence if and only if
	the underlying map on objects is an isomorphism.} we conclude that $\tilde c \simeq 0$. Thus
	$\mathscr{C}_\chi^{S^1(\alpha)}\cong 0$
	for $\chi(\alpha)\neq 1$. To complete the proof of
	equivalence \zcref[noname]{eq:tate-alpha}, it suffices to observe 
	that the $\alpha$-trivial block of $\mathscr{C}$ is 
	precisely the summand $\mathscr{C}_{\chi_1}$ for the
	character $\chi_1$ such that $\chi_1(\alpha)=1$.
\end{proof}

\subsection{Nil-invariance}
We now prove the main result of this section: that the category of cyclic sheaves on the $\a$-potent
loop space of a global quotient stack is invariant under nil-isomorphism. Such a result
follows from a form of Koszul duality for cyclic sheaves appearing in the future work \cite{BZCHN2},
although our proof does not rely on this, instead utilizing the fixed point localization results of \cite{Preygel}. 
We do however utilize the following theorem of \cite{BZCHN2}.
\begin{thm}\label{thm: BZCHN Tate sheaves on unipotent equals formal}
	Let $X /G$ be a quotient stack of a derived scheme of finite type by an affine algebraic group. Then the canonical inclusion $i_X:\wih{\mathcal{L}}(X /G)\to \mathcal{L}^u(X /G)$ induces a pair of adjoint equivalences
	\[
		\begin{tikzcd}[column sep= large]
		\mathsf{IndCoh}(\wih{\mathcal{L}}(X /G))^{\tT}\arrow[r,"i_{X*}^{\tT}",shift left]&\mathsf{IndCoh}(\mathcal{L}^u(X /G))^{\tT}\arrow[l,shift left, "(i_X^!)^{\tT}"].
	\end{tikzcd}
	\] 
\end{thm}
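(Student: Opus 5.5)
The plan is to realize $\wih{\calL}(X/G)$ as a formal completion of $\calL^u(X/G)$ along a closed substack. Then full faithfulness and adjointness come from the support formalism of \zcref{lem:support-is-formal-completion}, and essential surjectivity reduces to showing that the regularized Tate invariants vanish on the open complement.

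\emph{Step 1 (geometry).} Write $\calX = X/G$. By definition $\calL^u\calX \simeq \wih{(\calL\calX)_{\bbO_{[1]}(\calX)}}$, and $\wih{\calL}\calX = \wih{(\calL\calX)_{\calX}}$, where the constant loops $\calX \hook \bbO_{[1]}(\calX)$ form a closed substack because the diagonal is affine. By \zcref{lem: formal completion of formal completion is formal completion}, $i_X$ identifies $\wih{\calL}\calX$ with the formal completion of $\calL^u\calX$ along $\calX$. Let $\calV$ be the complementary open of $(\calL^u\calX)^{\red}$; it is the locus of non-constant unipotent loops, and it is stable under loop rotation. The argument of \zcref{cor: support is formal completion for L^u} (i.e. \cite[Theorem 5.2.8]{PreygelT}, \zcref{lem: pullback along formal completion inclusion is t-bounded}, \zcref{lem: open immersion pushfoward is t-bounded} and \zcref{thm: tTate preserves localization sequences}) then gives an $S^1$-equivariant localization sequence
\[\IndCoh(\wih{\calL}\calX)^{\tT} \xrightarrow{i_{X*}^{\tT}} \IndCoh(\calL^u\calX)^{\tT} \to \IndCoh(\calV)^{\tT}\]
with $t$-bounded right adjoints. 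In particular $i_{X*}^{\tT}$ is fully faithful with right adjoint $(i_X^!)^{\tT}$; the adjunction itself follows from the corollary after \zcref{lem: naive Tate invariants preserves adjunction}.

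\emph{Step 2 (reduction).} It remains to show $\IndCoh(\calV)^{\tT}\simeq 0$, i.e. that $\Coh(\calV)^{tS^1}=0$. By \zcref{lem: fully faithful iff fully faithful on pure tensors} and the generation statement of \cite[Lemma 2.7]{Perry}, it suffices to show that for every $\wit{\calF}\in\Coh(\calV)^{S^1}$, the element $u$ acts nilpotently on $\End(\wit{\calF})$, so that it dies after inverting $u$. Dévissage along the $t$-structure, compatible with the action by \zcref{lem:truncation functors are equivariant} and \zcref{lem: S^1 invariants commutes with heart}, reduces this to objects of the heart, since nilpotence of $u$ is preserved under finite extensions.

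\emph{Step 3 (main obstacle: localization away from fixed points).} The $S^1$-action on $\calL^u\calX$ factors through $B\bbG_a$. Infinitesimally, this action is the canonical vector field, which is the loop generator in $\bbT[-1]$, and on $\calV$ this field is nowhere vanishing because the $B\bbG_a$-fixed locus is exactly the constant loops. I would first try to apply Preygel's fixed point localization theorem, which says that Tate invariants of coherent sheaves depend only on a neighbourhood of the fixed locus; with an empty fixed locus this gives zero directly. If that theorem does not literally apply in this ind-geometric setting, the fallback is to work smooth-locally on an atlas of $\calV$ and use the $B\bbG_a$-structure of \zcref{prop: BG_a-actions are unipotent}. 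There, $u$ acting on $\End(\wit{\calF})$ is cup product with a class that is divisible by the vector field. Since the vector field is nonvanishing on $\calV$, one should be able to write $u$ as a boundary up to a nilpotent error, forcing $u^N=0$.

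I expect this last step, making the "no fixed points implies $u$ nilpotent" argument rigorous on the non-quasicompact formal thickening $\calV$, to be the main difficulty.
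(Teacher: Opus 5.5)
The paper does not prove this statement. It imports it from the forthcoming \cite{BZCHN2}, so there is no internal argument to compare with, and your proposal has to stand on its own.

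Steps~1--2 are a sensible reduction. They use the same localization-sequence mechanism the paper runs in its corollaries on supports for $\hL$ and $\calL^u$. Two small corrections:
\begin{itemize}
\item For non-reductive $G$, $\calL^u\calX$ is not the completion along $\bbO_{[1]}(\calX)$. For example, $\calL^u B\bbG_a=\calL B\bbG_a=\bbG_a\times B\bbG_a$, whereas the fibre over the identity of $\bbG_a\to\bbG_a/\!\!/\bbG_a=\bbG_a$ is a point. You should complete along the unipotent locus instead.
\item Closedness of the constant loops needs $X$ to have affine diagonal, which is not among the hypotheses.
\end{itemize}
The real issue is that, once the localization sequence is in place, the theorem is \emph{equivalent} to $\Coh(\mathcal{V})^{tS^1}=0$. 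So Steps~1--2 only reduce the statement to its actual content. All of that content sits in Step~3, which you leave as a heuristic. This is a genuine gap.

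Neither route you suggest for Step~3 closes it.

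\emph{Preygel.} The localization result of \cite{Preygel} that the paper invokes (Theorem 5.1.3) identifies $\IndCoh(\wih{\calL}\frakX)^{\tT}$ with $\IndCoh(\frakX_{\dR})^{\tT}$. It concerns formal loop spaces and says nothing about the open piece $\mathcal{V}$.

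\emph{Empty fixed locus.} ``Empty fixed locus implies $\tT=0$'' is not a valid principle for $S^1$-actions as such. The homotopy fixed locus of loop rotation on all of $\calL\calX$ is just the constant loops, yet $\Pot$ sees non-constant loops. For instance, the component of $\calL B\mu_2=\mu_2\times B\mu_2$ over $-1$ has no fixed points, but its trivial-isotypic block carries the trivial $S^1$-action and contributes $\Perf(\ku)$. So unipotence must enter essentially. A localization theorem for $B\bbG_a$-actions on ind-geometric stacks is exactly what \cite{BZCHN2} supplies.

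\emph{Smooth-local fallback.} This fails because neither action lifts to an atlas, and the homotopy making $u$ a boundary is genuinely stacky. For $\calX=B\bbG_a$ one has:
\begin{itemize}
\item $\mathcal{V}=\bbG_m\times B\bbG_a$ and $\End(\calO_{\mathcal{V}})=k[t^{\pm1}]\otimes k[\epsilon]$, with loop-rotation operator $B\epsilon=t$;
\item $u=(d+uB)(t^{-1}\epsilon)$, where $\epsilon\in H^1(B\bbG_a,\calO)$ restricts to zero on the atlas $\bbG_m$.
\end{itemize}
Your heuristic is correct in that example. The ``boundary up to a nilpotent error'' version is genuinely needed in general. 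On the orbit stack $BZ(g)$ of $g=\exp(E_{12})\in GL_3$ one has $\log g\in[\mathfrak{z}(g),\mathfrak{z}(g)]$, so no class in $H^1(BZ(g),\calO)$ pairs to $1$ with the vector field.

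What is missing is an argument producing such homotopies globally on $\mathcal{V}$. It has to work across its stratification by orbit type, over its formal and derived thickening, and for every object of $\Coh(\mathcal{V})^{S^1}$, not just structure sheaves.
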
 

For use in the below proposition, we record the following corollary.
\begin{cor}\label{cor: Tate sheaves on formal unipotent equals formal}
	In the setup of \zcref{thm: BZCHN Tate sheaves on unipotent equals formal}, suppose that $Z\to X$ is a $G$-invariant closed subscheme and $G$ has a reductive neutral component. If $Z\to X$ is almost finitely presented and $X$ has affine diagonal, then the canonical inclusion $\wih{i}:\wih{\mathcal{L}	}(\wih{X_Z}/G)\to \mathcal{L}^u(\wih{X_Z}/G)$ induces a pair of adjoint equivalences
\[
\begin{tikzcd}[column sep= large]
	\mathsf{IndCoh}(\wih{\mathcal{L}}(\wih{X_Z}/G))^{\tT}\arrow[r,"\wih{i}_*^{\tT}",shift left]&\mathsf{IndCoh}(\mathcal{L}^u(\wih{X_Z}/G))^{\tT}\arrow[l,shift left, "(\wih{i}^!)^{\tT}"].
\end{tikzcd}
\] 
\end{cor}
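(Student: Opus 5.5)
The plan is to reduce the statement to \zcref{thm: BZCHN Tate sheaves on unipotent equals formal} by expressing both sides as ind-completions of coherent-sheaf categories that are supported on $\mathcal{L}Z$ (resp.\ $\mathcal{L}^uZ$), and then use the support results already established. Concretely, put $\calX = X/G$ and $\calZ = Z/G$. By \zcref{lem: formal completion is quotient}, $\wih{X_Z}/G \simeq \wih{\calX_{\calZ}}$. By \zcref{lem: formal loops of open complement} and \zcref{lem: unipotent loops commute with formal completion}, $\hL(\wih{\calX_{\calZ}}) \simeq \wih{(\hL\calX)_{\hL\calZ}}$ and $\calL^u(\wih{\calX_{\calZ}}) \simeq \wih{(\calL^u\calX)_{\calL^u\calZ}}$. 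Moreover, \zcref{cor: support is formal completion for L^} and \zcref{cor: support is formal completion for L^u} identify the two $\tT$ categories in question with $\IndCoh_{\hL\calZ}(\hL\calX)^{\tT}$ and $\IndCoh_{\calL^u\calZ}(\calL^u\calX)^{\tT}$, via $!$-pullback along the formal completion maps.

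Next I apply \zcref{thm: BZCHN Tate sheaves on unipotent equals formal} to $X/G$ itself. This gives adjoint equivalences $i_{X*}^{\tT}$ and $(i_X^!)^{\tT}$ between $\IndCoh(\hL\calX)^{\tT}$ and $\IndCoh(\calL^u\calX)^{\tT}$. I then need to show these equivalences preserve the full subcategories of objects supported on $\hL\calZ$ and $\calL^u\calZ$. Each supported subcategory is the fiber of $!$-restriction to the open complement, $\hL\calU$ resp.\ $\calL^u\calU$, where $\calU = (X\setminus Z)/G$. The complement description for $\hL$ is \zcref{lem: formal loops of open complement}; for $\calL^u = \calL^{[1]}$ it is \zcref{lem:unipotent-loops-commute-with-formal-completion}.

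To get compatibility with restriction, I use the Cartesian square with $\hL\calU \to \calL^u\calU$ on top and $\hL\calX \to \calL^u\calX$ on the bottom. It is Cartesian because $\hL\calU \simeq \hL\calX \times_{\calL\calX}\calL\calU$, as shown in the proof of \zcref{lem: formal loops of open complement}, and similarly for $\calL^u$. Base change for open immersions then gives $j^!\, i_{X*} \simeq i_{U*}\, j^!$ at the $S^1$-equivariant level, and hence after applying $\tT$. Applying \zcref{thm: BZCHN Tate sheaves on unipotent equals formal} to $U/G$ as well, with $U$ again of finite type, $i_{U*}^{\tT}$ is also an equivalence. A map of fiber sequences whose middle and right terms are equivalences induces an equivalence on fibers. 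By \zcref{thm: tTate preserves localization sequences}, whose $t$-boundedness hypotheses are checked exactly as in \zcref{cor: support is formal completion for L^}, these fibers are the supported $\tT$ categories. The right adjoint $(i^!)^{\tT}$ restricts to the inverse because an adjoint equivalence restricts.

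Finally I need compatibility of the resulting equivalence with $\wih{i}_*^{\tT}$ under the identifications of the first paragraph. This follows from the commutative square relating $\hL(\wih{\calX_{\calZ}}) \to \calL^u(\wih{\calX_{\calZ}})$ to $\hL\calX \to \calL^u\calX$, together with functoriality of $*$-pushforward. I expect the main obstacle to be the base-change and $t$-boundedness bookkeeping needed to pass the square through $\tT$, specifically that $i_{X*}$ and $j^!$ are morphisms in $\Fun(BS^1,\bCoh_t)$ that commute. If the open-complement approach is awkward, the fallback is to argue directly that $i_{X*}^{\tT}$ preserves objects set-theoretically supported on $\calL Z$, and that $(i_X^!)^{\tT}$ does as well, since both are linear over $\QCoh(\calL^u\calX)$-support.
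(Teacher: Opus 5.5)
Your proposal is correct and follows essentially the same route as the paper: you apply \zcref{thm: BZCHN Tate sheaves on unipotent equals formal} to both $X/G$ and $U/G$, use the Cartesian square of open complements and base change to show that the equivalence matches the supported subcategories, and then transport through \zcref{cor: support is formal completion for L^} and \zcref{cor: support is formal completion for L^u} via the commutative square of formal-completion inclusions. The only cosmetic difference is how the supported subcategories are matched: you use a map of fiber sequences built from $i_{X*}^{\tT}$ and $i_{U*}^{\tT}$, whereas the paper restricts $(i_X^!)^{\tT}$, gets essential surjectivity from the fact that $(i_U^!)^{\tT}$ is an equivalence, and separately checks by base change that $i_{X*}$ preserves supports.
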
 
\begin{proof}
	For ease of notation, we drop the $(-)^{\tT}$ when denoting the relevant pullbacks or pushforwards; for example, we write $\wih{i}^!$ for $(\wih{i}^!)^{\tT}$. Denote the open complement to $Z$ in $X$ by $U$, and let $j:U /G\hookrightarrow X /G$ and $\iota: Z /G\hookrightarrow X /G$ denote the inclusions. We claim that we have a commutative diagram
	\begin{equation}\label{cd: open complement in formal and unipotent loops}
	\begin{tikzcd}
	\wih{\mathcal{L}}(U /G)\arrow[r,hook,"\wih{j}"]\arrow[d,"i_U"]&\wih{\mathcal{L}}(X /G)\arrow[d,"i_X"]\\
	\mathcal{L}^u(U /G)\arrow[r,"j^u",hook]&\mathcal{L}^u(X /G)
	\end{tikzcd},
\end{equation}
	where $ \wih{j}$ (resp. $j^u$) denotes $\wih{\mathcal{L}}j$ (resp. $\mathcal{L}^uj$). Indeed, by \zcref{lem: formal loops of open complement} (resp. \zcref{lem:unipotent-loops-commute-with-formal-completion} with $\alpha=1$), the open complement to $\hL (Z /G)$ in $\hL (X /G)$ (resp. to $\mathcal{L}^u (Z /G)$ in $\mathcal{L}^u(X /G)$) is $\hL (U /G)$ (resp. $\mathcal{L}^u(U /G)$). Applying $\mathsf{IndCoh}(-)^{\tT}$ to \zcref[noname]{cd: open complement in formal and unipotent loops}, we obtain a commutative diagram
	\[
	\begin{tikzcd}
		\mathsf{IndCoh}(\mathcal{L}^u(X /G))^{\tT}\arrow[r,"i_X^!"]\arrow[d,"(j^u)^!"]&\mathsf{IndCoh}(\hL (X /G))^{\tT}\arrow[d,"\wih{j}^!"]\\
		\mathsf{IndCoh}(\mathcal{L}^u(U /G))^{\tT}\arrow[r,"i_U^!"]&\mathsf{IndCoh}(\hL (U /G))^{\tT}
	\end{tikzcd}.
	\] 
	Thus, we see that if $\mathcal{F}\in \mathsf{IndCoh}_{\mathcal{L}^u(Z /G)}(\mathcal{L}^u(X /G))^{\tT}$, then $i_X^!\mathcal{F}\in \mathsf{IndCoh}_{\hL (Z /G)}(\hL (X /G))^{\tT}$. This implies that the equivalence $i_X^!$ of \zcref{thm: BZCHN Tate sheaves on unipotent equals formal} restricts to a fully faithful functor
	\begin{align*}
 \mathsf{IndCoh}_{\mathcal{L}^u(Z /G)}(\mathcal{L}^u(X /G))^{\tT}\hook\mathsf{IndCoh}_{\hL (Z /G)}(\hL (X /G))^{\tT}.
	\end{align*} 
	We claim this functor is an equivalence. To see this, let $\mathcal{G}\in \mathsf{IndCoh}_{\hL (Z /G)}(\hL (X /G))^{\tT}$. By \zcref{thm: BZCHN Tate sheaves on unipotent equals formal}, there is an $\mathcal{F}\in \mathsf{IndCoh}(\mathcal{L}^u(X /G))^{\tT}$ such that $i_X^! \mathcal{F}=\mathcal{G}.$
	By definition of the category with support, $\wih{j}^! \mathcal{G}\simeq 0.$
	Thus, 
	\begin{align*}
		i_U^!(j^u)^!\mathcal{F}\simeq&\wih{j}^!i_X^!\mathcal{F}\\
		\simeq& 0.
	\end{align*} 
	Since $i_U^!$ is also an equivalence by \zcref{thm: BZCHN Tate sheaves on unipotent equals formal} applied to $U /G$, we find that
		$(j^u)^!\mathcal{F}\simeq 0,$
	so that $\mathcal{F}\in \mathsf{IndCoh}_{\mathcal{L}^u(Z /G)}(\mathcal{L}^u(X /G))^{\tT}$. Thus, we have shown that $i_X^!$ restricts to an equivalence
	\begin{align}\label{eq: support equivalence for unipotent and formal}
 \mathsf{IndCoh}_{\mathcal{L}^u(Z /G)}(\mathcal{L}^u(X /G))^{\tT}\xrightarrow{\simeq}\mathsf{IndCoh}_{\hL (Z /G)}(\hL (X /G))^{\tT}.
\end{align} 
We next claim that its left adjoint $i_{X*}$ restricts to the inverse of this equivalence. For this, it is sufficient to show that $i_{X*}$ restricts to a functor between the categories with support. Thus, let $\mathcal{G}\in \mathsf{IndCoh}_{\hL (Z /G)}(\hL (X /G))^{\tT}$ as before. A similar argument to that in the proof of \zcref{lem: formal loops of open complement} shows that the diagram \zcref[noname]{cd: open complement in formal and unipotent loops} is Cartesian. Thus, base change along this diagram gives that
\begin{align*}
	(j^u)^!i_{X*}\mathcal{G}\simeq i_{U*}\wih{j}^!\mathcal{G}\simeq 0,
\end{align*} 
so that $i_{X*}$ restricts as required.

We will now show that the functors $\wih{i}_*$ and $\wih{i}^!$ are adjoint equivalences. To see this, we apply \zcref{cor: support is formal completion for L^} and \zcref{cor: support is formal completion for L^u} to each of the respective inclusions $\iota^u: \wih{(\mathcal{L}^u(X /G))_{\mathcal{L}^u(Z /G)}}\to \mathcal{L}^u(X /G)$ and $\iota^\wedge:\wih{\mathord{(\hL (X /G))_{\hL (Z /G)}}}\to \hL (X /G)$. We thus obtain adjoint equivalences
	\[
	\begin{tikzcd}
		\mathsf{IndCoh}_{\mathcal{L}^u(Z /G)}(\mathcal{L}^u(X /G))^{\tT}\arrow[r,"(\iota^u)^!",shift left]&\mathsf{IndCoh}(\mathcal{L}^u(\wih{X_Z}/G))^{\tT}\arrow[l,"\iota^u_*",shift left]
	\end{tikzcd}
	\] 
	and
	\[
	\begin{tikzcd}
		\mathsf{IndCoh}_{\hL (Z /G)}(\hL (X /G))^{\tT}\arrow[r,"(\iota^\wedge)^!",shift left]&\mathsf{IndCoh}(\hL (\wih{X_Z}/G))^{\tT}\arrow[l,"\iota^\wedge_*",shift left].
	\end{tikzcd}
	\] 
	Combining these with the natural adjoint equivalences of \zcref[noname]{eq: support equivalence for unipotent and formal}, we obtain a commutative diagram
	\[
	\begin{tikzcd}
		\mathsf{IndCoh}_{\mathcal{L}^u(Z /G)}(\mathcal{L}^u(X /G))^{\tT}\arrow[r,shift left,"i_X^!"]\arrow[d,shift left, "(\iota^u)^!"]&\mathsf{IndCoh}_{\hL (Z /G)}(\hL (X /G))^{\tT}\arrow[l,shift left,"i_{X*}"]\arrow[d,shift left,"(\iota^\wedge)^!"]\\
		\mathsf{IndCoh}(\mathcal{L}^u(\wih{X_Z}/G))^{\tT}\arrow[u,"\iota^u_*",shift left]\arrow[r,shift left,"\wih{i}^!"]&\mathsf{IndCoh}(\hL (\wih{X_Z}/G))^{\tT}\arrow[u,"\iota^\wedge_*",shift left]\arrow[l,shift left,"\wih{i}_*"]
	\end{tikzcd}
	\] 
	Since all functors in the diagram except $\wih{i}^!$ and $\wih{i}_*$ are equivalences, we deduce that those functors are equivalences, too. 
\end{proof}

We now give the proof of nil-invariance of $\Coh(-)^{tS^1}$ on the $\a$-potent loop spaces.
\begin{prop}
	\label{prop: Kashiwara's lemma for unipotent loop spaces}
	Let $U$ be a finite type derived scheme with affine diagonal and equipped with an action of a reductive group $G$, and let $Z$ be a $G$-invariant closed subscheme of $U$ such that the inclusion $Z\to U$ is almost finitely presented. Let $\calL^{\alpha}(i): \calL^{\alpha}(Z/G) \to \calL^{\alpha}(\wih{U_Z}/G)$ denote the standard
	inclusion. Then $(\calL^{\alpha}(i)_*)^{tS^1}: \Coh(\calL^{\alpha}(Z/G))^{tS^1} \to \Coh(\calL^{\alpha}(\wih{U_Z}/G))^{tS^1}$
	is an equivalence.
\end{prop}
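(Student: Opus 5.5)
The plan is to move the problem, in three steps, from the $\a$-potent loop space to the formal loop space, where Tate invariants only see the underlying reduced geometry. First, use shifted localization to replace the $\a$-potent loop spaces by unipotent loop spaces of fixed point loci. Second, pass from the $\a$-shifted loop rotation to the ordinary one. Third, compare unipotent with formal loops, where nil-invariance is essentially tautological.

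Concretely, fix a semisimple representative of $\a$, with centralizer $G(\a)$. By \cite[Corollary A.18]{BZChenHelmNadler} for $Z$, and by \zcref{lem: shifted localization for formal completions} for $\wih{U_Z}$ (with $X=Z\to Y=U$), we have compatible $S^1$-equivariant equivalences
\[
s\ell^u_\a:\calL^u(Z^\a/G(\a))\xrightarrow{\simeq}\calL^\a(Z/G),\qquad
s\ell^u_\a:\calL^u\bigl((\wih{U_Z})^\a/G(\a)\bigr)\xrightarrow{\simeq}\calL^\a(\wih{U_Z}/G).
\]
Here the sources carry the $\rho(\a)$-action. By \zcref{lem:fixed-points-commute-with-formal-completion}, $(\wih{U_Z})^\a\simeq\wih{(U^\a)_{Z^\a}}$, and $Z^\a\to U^\a$ is again an almost finitely presented closed immersion of finite type derived schemes. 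So it suffices to prove the statement for $\calL^u(Z^\a/G(\a))\to\calL^u(\wih{U^\a_{Z^\a}}/G(\a))$ with the $\rho(\a)$-action.

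Next I would apply \zcref{prop: Tate invariants kill everything except a-trivial block}, using the canonical $\a$-trivialization from \cite[Proposition A.17]{BZChenHelmNadler} and \zcref{lem: canonical a-trivialization of formal completion}. It reduces both sides to the $\a$-trivial block. On that block the $\a$-twist acts trivially, so $\rho(\a)$ agrees with ordinary loop rotation there. Moreover, the block is identified with $\IndCoh$ of $\calL^u(Z^\a/G'(\a))\times_{BG'(\a)}BG(\a)$-type data pulled back from $G'(\a)$. Hence the problem becomes nil-invariance of $\Coh(\calL^u(-))^{tS^1}$ for the ordinary rotation.

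Then I would use \zcref{thm: BZCHN Tate sheaves on unipotent equals formal} for $Z^\a/G(\a)$ and \zcref{cor: Tate sheaves on formal unipotent equals formal} for the formal completion. These identify both sides with $\IndCoh(\wih{\calL}(-))^{\tT}$. By \zcref{lem: formal loops of open complement} we have $\wih{\calL}(\wih{U^\a_{Z^\a}}/G(\a))\simeq\wih{(\calL(U^\a/G(\a)))_{Z^\a/G(\a)}}$, and $\wih{\calL}(Z^\a/G(\a))$ is the formal completion of $\calL(Z^\a/G(\a))$ along the same reduced locus. So both formal loop spaces have the same de Rham stack and differ only by nilpotent structure. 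Here I would invoke the Preygel/Ben-Zvi--Nadler identification $\IndCoh(\wih{\calL}Y)^{\tT}\simeq\Crys(Y)_{\bbZ/2}$, which is nil-invariant, to conclude that pushforward is an equivalence after $\tT$.

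Finally, I would descend from $\Ind(\Coh(-)^{tS^1})$ to $\Coh(-)^{tS^1}$ by taking compact objects. Pushforward along the closed immersion preserves coherence, so it induces an equivalence on compacts.

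The main obstacle is this last identification on formal loops, namely checking that the pushforward really corresponds to the crystal pushforward $\Crys(Z^\a/G(\a))\to\Crys(\wih{U^\a_{Z^\a}}/G(\a))$, which is an equivalence by nil-invariance of crystals. My first approach would be to verify that the HKR/Koszul comparison is natural in closed immersions; failing that, I would argue directly via generation by $i_*\Coh$ (\cite[Proposition 2.5.2]{BeraldoDiffOp}) together with full faithfulness from \zcref{lem:fully-faithful-functors-are-ff-after-tate}.
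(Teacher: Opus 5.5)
Your proposal follows essentially the same route as the paper. The steps match in order: shifted localization, reduction to the $\a$-trivial block and then to ordinary loop rotation, the unipotent-to-formal comparison via \zcref{thm: BZCHN Tate sheaves on unipotent equals formal} and \zcref{cor: Tate sheaves on formal unipotent equals formal}, Preygel's identification of $\IndCoh(\wih{\calL}(-))^{\tT}$ with periodic crystals, and finally restriction to compact objects.

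The naturality you flag as the main obstacle is handled in the paper by a single commutative square $\wih{p'}\circ\wih{\calL}(i^{\a})\simeq i^{\a}_{\dR}\circ\wih{p}$ of $S^1$-equivariant maps to the de Rham stacks (trivial action on the targets), together with functoriality of $\tT$ and \cite[Theorem 5.1.3]{Preygel} for the vertical arrows, so no HKR compatibility check is needed. Your fallback would not work as stated: $\wih{\calL}(i^{\a})_*$ is not fully faithful before taking Tate invariants, so \zcref{lem:fully-faithful-functors-are-ff-after-tate} does not apply.
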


\begin{proof}
	We begin by showing that the related functor,
	\[(\calL^{\alpha}(i)_*)^{\tT}: \IndCoh(\calL^{\alpha}(Z/G))^{\tT} \to \IndCoh(\calL^{\alpha}(\wih{U_Z}/G))^{\tT},\]
	is an equivalence. Afterwards, we will show that $\calL^{\alpha}(i)_*$ preserves coherent objects. Since $\calL^{\alpha}(i)$ is $S^1$-equivariant, we can restrict this equivalence to compact objects to conclude, by using \cite[Proposition 5.30]{CautisWilliams} and \zcref{cor: loop spaces of formal completions are ind-geometric}. 
	
	As noted above, by \cite[Corollary A.18]{BZChenHelmNadler} and \zcref{lem: shifted localization for formal completions}, respectively,
	the shifted localization maps $s\ell_Z: \calL^u(Z^{\a}/G(\a)) \to \calL^{\alpha}(Z/G)$
	and $s\ell_{\wih{U_Z}}: \calL^u((\wih{U_Z})^{\a}/G(\a)) \simeq \calL^u(\wih{U^{\a}_{Z^{\a}}}/G(\a)) \to \calL^{\alpha}(\wih{U_Z}/G)$\footnotemark
	\footnotetext{cf. \zcref{lem: formal completion is quotient} and \zcref{lem:fixed-points-commute-with-formal-completion}.}
	are $S^1$-equivariant equivalences for the $\a$-twisted loop rotation $S^1$-action $\rho(\a)$ on the source
	and the ordinary loop rotation $S^1$-action $\rho$ on target.
	It is straightforward to see that these equivalences fit into a commutative diagram
	of $S^1$-equivariant maps,
	\[\begin{tikzcd}
		{\calL^u(Z^{\a}/G(\a)) } & {\calL^u(\wih{U^{\a}_{Z^{\a}}}/G(\a))} \\
		{\calL^{\alpha}(Z/G)} & {\calL^{\alpha}(\wih{U_Z}/G)}
		\arrow["\calL^u(i^{\a})", from=1-1, to=1-2]
		\arrow["{s\ell_Z}"', from=1-1, to=2-1]
		\arrow["{s\ell_U}", from=1-2, to=2-2]
		\arrow["{\calL^{\alpha}(i)}", from=2-1, to=2-2],
	\end{tikzcd}\]
	where $i^{\a}: Z^{\a}/G(\a) \to \wih{U^{\a}_{Z^{\a}}}$ is map on
	fixed point loci induced by the $S^1$-equivariant map $i$.
	
	After applying $\IndCoh(-)$ and the $(-)^{\tT}$ construction
	(which respects composition of functors by \cite[Lemma 4.6.1]{Preygel}), this commutative diagram yields
	the following commutative diagram of functors,
	\[\begin{tikzcd}
		{\IndCoh(\calL^u(Z^{\a}/G(\a)))^{\tT(\a)}} &&& {\IndCoh(\calL^u(\wih{U^{\a}_{Z^{\a}}}/G(\a)))^{\tT(\a)}} \\
		{\IndCoh(\calL^{\alpha}(Z/G))^{\tT}} &&& {\IndCoh(\calL^{\alpha}(\wih{U_Z}/G))^{\tT}}
		\arrow["{(\calL^u(i^{\a})_*)^{\tT(\a)}}", from=1-1, to=1-4]
		\arrow["{(\calL^{\alpha}(i)_*)^{\tT}}", from=2-1, to=2-4]
		\arrow["\simeq", from=1-1, to=2-1]
		\arrow["\simeq", from=1-4, to=2-4],
	\end{tikzcd}\]
	where $\tT(\a)$ and $\tT$ denote the renormalized Tate invariants taken
	with respect to the $\rho(\a)$ and $\rho$ $S^1$-actions, respectively. Indeed, \cite[Proposition 5.30]{CautisWilliams} applies, as all stacks in questions are formal completions of coherent geometric stacks by \zcref{cor: loop spaces of formal completions are ind-geometric}, hence are coherent ind-geometric stacks by \zcref{lem: formal completions of aft algebraic stacks are coherent ind-geometric}.
	Thus, it suffices to show that $(\calL^u(i^{\a})_*)^{\tT(\a)}$ is an equivalence.

	By \zcref{prop: Tate invariants kill everything except a-trivial block},\footnotemark
    \footnotetext{The stack $\mathcal{L}^u(\wih{U^\a_{Z^\a}}/G(\alpha))$ is coherent ind-geometric by \zcref{lem:unipotent-loops-commute-with-formal-completion}, \zcref{lem: formal completion is quotient}, and \zcref{cor: loop spaces of formal completions are ind-geometric}, where we note that $\mathcal{L}^1(Y /G)=\mathcal{L}^u(Y /G)$ for any $G$-stack $Y$. Indeed, we have the natural isomorphisms
		\begin{align*}
			\mathcal{L}^u(\wih{U^\a_{Z^\a}}/G(\alpha))\simeq&\wih{(\mathcal{L}^u(U^\a /G(\a)))_{\mathcal{L}^u(Z^\a /G(\alpha))}}\\
			\simeq&\mathcal{L}(U^\a /G(\a))\times _{(\mathcal{L}(U^\a /G(\a)))_{\text{dR}}}\bbO_1(U^\a /G(\a))_{\text{dR}}\times _{\bbO_1(U^\a /G(\a))_{\text{dR}}}\bbO_1(Z^\a /G(\a))_{\text{dR}}\\
			\xrightarrow{\simeq}&\mathcal{L}(U^\a /G(\a))\times _{(\mathcal{L}(U^\a /G(\a)))_{\text{dR}}}\bbO_1(Z^\a /G(\a))_{\text{dR}}\\
			\coloneq &\wih{(\mathcal{L}(U^\a /G(\a)))_{\bbO_1(Z^\a /G(\a))}},
		\end{align*} 
		where in the second line we use nil-invariance of $(-)_{\text{dR}}$. We also note that the inclusion $\bbO_1(Z^\a /G(\alpha))\hookrightarrow \mathcal{L}(U^\a /G(\a))$ is an almost finitely presented closed immersion, because it is the composition of a base change of the inclusion of the point $\alpha$ into $G/\!\!/G$ with the almost finitely presented closed immersion $\calL Z\to \calL X$.
	}
    it
	in fact suffices to show that the induced functor on $\a$-trivial blocks,
	\[(\calL^u(i^{\a})_*)^{\tT(\a)}_{\a}: \IndCoh(\calL^u(Z^{\a}/G(\a)))^{\tT(\a)}_{\a}
		\to \IndCoh(\calL^u(\wih{U^{\a}_{Z^{\a}}}/G(\a)))^{\tT(\a)}_{\a}\]
	is an equivalence.
	In order to do so, we note that on the $\a$-trivial blocks, $\IndCoh(\calL^u(Z^{\a}/G(\a)))_{\a}$
	and $\IndCoh(\calL^u(\wih{U^{\a}_{Z^{\a}}}/G(\a)))_{\a}$,
	the $\a$-twisted and ordinary $S^1$-actions coincide. Thus, there are equivalences 
	that fit into a commutative diagram as follows:
		\[\begin{tikzcd}
		{\IndCoh(\calL^u(Z^{\a}/G(\a)))^{\tT}_{\a}} &&& {\IndCoh(\calL^u(\wih{U^{\a}_{Z^{\a}}}/G(\a)))^{\tT}_{\a}} \\
		{\IndCoh(\calL^u(Z^{\a}/G(\a)))^{\tT(\a)}_{\a}} &&& {\IndCoh(\calL^u(\wih{U^{\a}_{Z^{\a}}}/G(\a)))^{\tT(\a)}_{\a}}
		\arrow["{(\calL^u(i^{\a})_*)^{\tT}}", from=1-1, to=1-4]
		\arrow["{(\calL^u(i^{\a})_*)^{\tT(\a)}}", from=2-1, to=2-4]
		\arrow["\simeq", from=1-1, to=2-1]
		\arrow["\simeq", from=1-4, to=2-4].
	\end{tikzcd}\]
	Because $\calL^u(i^{\a})_*$ respects the decomposition of its source and
	target given by the $\a$-trivialization,\footnotemark
	\footnotetext{This follows from the fact that $\calL^u(i^{\a})$ preserves the
	canonical $\a$-trivializations of the unipotent loop spaces (cf. \zcref{lem: loop space has A-trivialization}).} 
	and the $S^1$-action 
	preserves full subcategories, it therefore
	suffices to show the stronger statement that
	\[(\calL^u(i^{\a})_*)^{\tT}: \IndCoh(\calL^u(Z^{\a}/G(\a)))^{\tT} \to \IndCoh(\calL^u(\wih{U^{\a}_{Z^{\a}}}/G(\a)))^{\tT}\]
	is an equivalence. By \zcref{thm: BZCHN Tate sheaves on unipotent equals formal} and \zcref{cor: Tate sheaves on formal unipotent equals formal}, we see that it suffices to show that
	\[(\hL (i^{\a})_*)^{\tT}: \IndCoh(\hL (Z^{\a}/G(\a)))^{\tT} \to \IndCoh(\hL (\wih{U^{\a}_{Z^{\a}}}/G(\a)))^{\tT}\]
	is an equivalence. To this end, let $\wih{p}: \hL (Z^{\a}/G(\a)) \to (Z^{\a}/G(\a))_{\dR}$
	and $\wih{p'}: \hL (\wih{U^{\a}_{Z^{\a}}}/G(\a)) \to (\wih{U^{\a}_{Z^{\a}}}/G(\a))_{\dR}$
	denote the canonical maps from the formal loop space to the de Rham stack,
	where our notation $\wih{p}$ and $\wih{p'}$ is chosen to agree with that in \cite{Preygel}.
	We note that $\wih{p}$ and $\wih{p'}$ are canonically $S^1$-equivariant for the
	loop rotation action on the source and the trivial $S^1$-action on the target.
	Hence, we have a commutative diagram,
	\begin{equation}
		\label{eqn: formal loop de Rham cyclic sheaves diagram}
	\begin{tikzcd}
		{\IndCoh\left(\hL (Z^{\a}/G(\a))\right)^{\tT}} &&& {\IndCoh\left(\hL \left(\wih{U^{\a}_{Z^{\a}}}/G(\a)\right)\right)^{\tT}} \\
		{\IndCoh\left((Z^{\a}/G(\a))_{\dR}\right)^{\tT}} &&& {\IndCoh\left(\left(\wih{U^{\a}_{Z^{\a}}}/G(\a)\right)_{\dR}\right)^{\tT}}
		\arrow["{(\hL (i^{\a})_*)^{\tT}}", from=1-1, to=1-4]
		\arrow["{(\wih{p}_*)^{\tT}}"', from=1-1, to=2-1]
		\arrow["{(\wih{p'}_*)^{\tT}}", from=1-4, to=2-4]
		\arrow["{(i^{\a}_{\dR *})^{\tT}}", from=2-1, to=2-4],
	\end{tikzcd}
	\end{equation}
	coming from the identification $\wih{p'} \circ \hL (i^{\a}) \simeq i^{\a}_{\dR} \circ \wih{p}$
	and the fact that $\tT$ is a functor on $\Fun(BS^1,\bCoh_t)$.
	The vertical arrows in \zcref[noname]{eqn: formal loop de Rham cyclic sheaves diagram} 
	are equivalences by \cite[Theorem 5.1.3]{Preygel}.\footnotemark
	\footnotetext{Although the results of \cite{Preygel} are stated only for
	derived algebraic spaces, \cite[Theorem 5.1.3]{Preygel} holds for stacks as
indicated by the remark at the top of page 21 of \textit{op. cit.} Indeed, the required statement is that for the stack $\mathfrak{X}=\wih{U^\alpha_{Z_\alpha}}/G(\alpha)$, the two maps $c_S: \mathfrak{X}\to \wih{\mathfrak{X}^S}$, where $S\in \{\emptyset,S^1\} $, are required to have $!$-pullbacks and $*$-pushforwards which are compatible with base change. Further, the $!$-pullbacks $c_S^!$ are required to be monadic. Now, the $c_S$ are inf-schematic nil-isomorphisms. Thus, by \cite[Chapter 3 Proposition 3.1.2]{GR17II}, the $*$-pushforwards $c_{S*}$ exist and the functors $c_S^!$ are conservative, hence monadic by Barr-Beck.}
	Meanwhile, $i^{\a}_{\dR}$ is an isomorphism because the map
	$i^{\a}: Z^{\a}/G(\a) \hook \wih{U^{\a}_{Z^{\a}}}/G(\a)$
	is a formal completion by \zcref{lem: formal completion is quotient}, 
	hence induces an isomorphism on underlying reduced stacks.
	Thus, $(i^{\a}_{\dR *})^{\tT}$ is an equivalence, so
	$(\hL (i^{\a})_*)^{\tT}$, hence $(\calL^{\alpha}(i)_*)^{\tT}$,
	is an equivalence as desired.

    It remains to show then
	that $(\calL^{\alpha}i)_*$ preserves $\Coh(-)$. 
	We begin by observing that $\calL^{\alpha}i$ is ind-inf-schematic and ind-proper. Indeed,
	the map $i: Z/G \to \wih{U_Z}/G$ is
	clearly an ind-schematic and ind-proper map of stacks.\footnotemark
	\footnotetext{$i$ is induced by a $G$-equivariant closed immersion of schemes $Z \to U$.}
	By the stability of ind-schematic and ind-proper morphisms under
	base change, we deduce that $\calL(Z/G) \to \calL(\wih{U_Z}/G)$ is an ind-schematic map of stacks, which
	implies, again by stability under base change,
	that the induced map of formal completions $\calL^{\alpha}i: \calL^{\alpha}(Z/G) \to \calL^{\alpha}(\wih{U_Z}/G)$
	is ind-schematic and ind-proper. In particular, it is ind-inf-schematic and ind-proper.
	Thus, by \cite[Chapter 3 Proposition 3.2.4]{GR17II}, $(\calL^{\alpha}i)_*$ admits a continuous
	right adjoint, which implies that $(\calL^{\alpha}i)_*$ preserves compact objects, which
	for the stacks involved means $\Coh(-)$.
\end{proof}

\section{The global quotient case}
\label{sec:the global quotient case}

\begin{lem}\label{lem: base change for homs}
	Let $X\to S$ be a reasonable ind-geometric stack over an affine scheme $S= \Spec A$. Let $S'=\Spec B\to S$ be a flat map. Then for $\calF,\calG\in\mathsf{Coh}(X)$, the canonical map
	\begin{align*}
		\Hom_{\mathsf{Coh}(X)}(\mathcal{F},\mathcal{G})\otimes _AB\to \Hom_{\mathsf{Coh}(X\times _SS')}(p^*\mathcal{F},p^*\mathcal{G})
	\end{align*} 
	is an isomorphism.
\end{lem}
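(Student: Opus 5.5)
We first reduce to the case of a geometric stack with a smooth (or flat) affine atlas, and then use flat base change for $\QCoh$ together with the fact that $\Hom$ out of a coherent object into a coherent object is computed by a finite-type limit.

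\textbf{Step 1: reduction to a geometric stack.} Write $X\simeq\colim_n X_n$ for a reasonable ind-geometric presentation, so that each $X_n\to X_{n+1}$ is an almost finitely presented closed immersion. Any $\calF\in\Coh(X)$ is pushed forward from some $X_n$, say $\calF\simeq i_{n*}\calF_n$. Then
\[
\Hom_X(\calF,\calG)\simeq \Hom_{X_m}(\calF_m, i_m^!\calG)
\]
for $m$ large. The base change $X\times_S S'\simeq\colim_n(X_n\times_SS')$ holds, as in \zcref{lem: ind-noetherian stacks have ind-noetherian base changes}. Since $S'\to S$ is flat, $!$-pullback along the closed immersions commutes with $p^*$ by base change. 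We are therefore reduced to the case where $X$ is a geometric stack. (Alternatively, write $\Hom_X(\calF,\calG)\simeq\lim_m\Hom_{X_m}(\calF_m,i_m^!\calG)$; but since $\calF$ is supported on a single $X_n$, no limit is actually needed.)

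\textbf{Step 2: the geometric case.} For $X$ geometric with atlas $U_\bullet\to X$, the Čech nerve consists of affines $U_k=\Spec C_k$. We have
\[
\Hom_X(\calF,\calG)\simeq\Tot\,\Hom_{C_k}(\calF|_{U_k},\calG|_{U_k}),
\]
where $\Hom_{C_k}$ denotes the $A$-linear mapping complex. This uses the $\QCoh$ description of $\Coh$ on geometric stacks, together with $\Psi$ being an equivalence on bounded-below objects.

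On each affine $U_k$, the flat base change
\[
\Hom_{C_k}(M,N)\otimes_AB\xrightarrow{\ \simeq\ }\Hom_{C_k\otimes_AB}(M\otimes_AB,N\otimes_AB)
\]
holds for $M$ almost perfect and $N$ bounded below. The reason is that $M$ is a colimit of perfect complexes compatible with truncations, and for bounded-above $N$ only finitely many stages matter in each degree. Here flatness of $B$ is used so that $-\otimes_AB$ is $t$-exact.

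\textbf{Step 3: commuting $\otimes_AB$ with the totalization (main obstacle).} In general $-\otimes_AB$ does not commute with infinite limits. We handle this by boundedness. The terms $\Hom_{C_k}(\calF,\calG)$ are uniformly bounded below, since $\calF$ is bounded above and $\calG$ is bounded below, with uniform bounds because the atlas maps are flat. For a cosimplicial diagram of uniformly bounded-below complexes, each cohomology group of $\Tot$ depends only on a finite partial totalization $\Tot^{\le N}$, which is a finite limit. A finite limit commutes with the exact functor $-\otimes_AB$. Since $B$ is flat, $-\otimes_AB$ is also $t$-exact, so this degreewise description is preserved.

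Putting these together gives the isomorphism after base change. The main thing to double-check is the uniform lower bound on the Čech terms, and the compatibility of $p^*$ with $\Psi$ on bounded-below ind-coherent sheaves, which follows as in \zcref{lem: etale morphisms have t-exact !-pullbacks}.
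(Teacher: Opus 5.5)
Your argument is correct, but it is organized differently from the paper's, and one step you call ``base change'' needs a real argument.

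The paper never passes through $i_m^!$. It uses $\Coh(X)\simeq\colim_n\Coh(X_n)$ \cite[Proposition 5.5]{CautisWilliams}, so that
$\Hom_{\Coh(X)}(\calF,\calG)\simeq\colim_{m\ge n}\Hom_{X_m}(\calF_m,\calG_m)$ with both arguments still coherent. This filtered colimit commutes with $-\otimes_AB$ for free. The geometric case is then quoted from Cautis--Williams: flat base change for $\sHom$ with almost perfect source and bounded-below target (Propositions 7.2--7.3), and flat base change for $f_{n*}$ (Proposition 3.8).

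You instead push the ind-structure into the second argument via the adjunction $(i_{m*},i_m^!)$. This has two costs.

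\begin{itemize}
\item \emph{Non-coherent target.} $i_m^!\calG$ is only bounded below, so you need the geometric statement for bounded-below targets. Your Steps 2--3 do prove this: \v{C}ech descent, the affine almost-perfect approximation, and the finite partial totalization using the uniform lower bound. In effect you reprove the cited Cautis--Williams results by hand, in slightly greater generality.
\item \emph{Base change for $i_m^!$.} The identification $p_m^*i_m^!\calG\simeq(i'_m)^!p^*\calG$ for the ind-closed immersion $X_m\to X$ is not a formal instance of base change. This step, not the points you list at the end, is the one needing proof.
\end{itemize}

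The identification does hold. First write $i_m^!\calG\simeq\colim_{k\ge m}i_{m,k}^!\calG_k$. This is valid because $\IndCoh(X)$ is the colimit of the $\IndCoh(X_k)$ along pushforwards with continuous right adjoints. Next, commute the continuous functor $p_m^*$ past this colimit. Finally, for a closed immersion $i_{m,k}$ of geometric stacks, $i_{m,k}^!$ corresponds under $\Psi$, on bounded-below objects, to the right adjoint of $i_{m,k*}$ on $\QCoh$. Its pushforward is $\sHom(i_{m,k*}\calO,-)$, which commutes with flat pullback by the same $\sHom$ base change the paper cites.

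After this unwinding, though, $\Hom_{X_m}(\calF_m,i_m^!\calG)\simeq\colim_k\Hom_{X_k}(\calF_k,\calG_k)$, and you are back at the paper's filtered colimit. So the paper's decomposition is the more economical one. What your route buys is a self-contained geometric step and the stronger statement with $\calG$ merely bounded below.

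Two minor points. The affine approximation argument needs $N$ bounded below cohomologically, not bounded above. The adjunction isomorphism holds for every $m\ge n$, not only for $m$ large.
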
 
\begin{proof}
	Consider the Cartesian square
	\[
	\begin{tikzcd}
		X_n \times _SS'\arrow[r,"p_n"]\arrow[d,"\hat p_n"]&X_n \arrow[d,"f_n"]\\
		S'\arrow[r,"g"]&S
	\end{tikzcd}.
	\] 
	All objects in the square are geometric stacks, and both horizontal maps are flat. First, suppose that $\mathcal{F},\mathcal{G}\in \mathsf{Coh}(X_n)$. By \cite[Propositions 7.2 and 7.3]{CautisWilliams}, we have
	\begin{align*}
		p_n^*\sHom_{\mathsf{QCoh}(X_n)}(\mathcal{F},\mathcal{G})\simeq\sHom_{\mathsf{QCoh}(X_n \times _SS')}(p_n^*\mathcal{F},p_n^*\mathcal{G}),
	\end{align*} 
	since $\mathcal{F}$ is almost perfect, $\mathcal{G}$ is bounded below, and $p_n$ is flat. Now, Proposition 3.8 of \emph{op. cit.} gives
	\begin{align*}
		g^*f_{n*}\sHom_{\mathsf{QCoh}(X_n)}(\mathcal{F},\mathcal{G})\simeq \hat p_{n*}p_n^*\sHom_{\mathsf{QCoh}(X_n)}(\mathcal{F},\mathcal{G}).
	\end{align*} 
	Thus, we obtain
	\begin{align*}
		\Hom_{X_n}(\mathcal{F},\mathcal{G})\otimes _AB\simeq\Hom_{X_n \times _SS'}(p_n^*\mathcal{F},p_n^*\mathcal{G}).
	\end{align*} 
	By Proposition 5.5 of \emph{op. cit.},
	\begin{align*}
		\mathsf{Coh}(X)\simeq&\colim_n \mathsf{Coh}(X_n)\\
		\mathsf{Coh}(X\times _SS')\simeq&\colim_n \mathsf{Coh}(X_n \times _S S').
	\end{align*} 
	Now, let $\mathcal{F},\mathcal{G}\in \mathsf{Coh}(X)$, and suppose they are represented by $\mathcal{F}_n,\mathcal{G}_n\in \mathsf{Coh}(X_n)$. For $m\ge n$, write $\mathcal{F}_m,\mathcal{G}_m$ for their pushforwards to $X_m$. We compute
	\begin{align*}
		\Hom_{\mathsf{Coh}(X)}(\mathcal{F},\mathcal{G})\otimes _A B\simeq&\colim_{m\ge n}\Hom_{\mathsf{Coh}(X_m)}(\mathcal{F}_m,\mathcal{G}_m)\otimes _A B\\
		\simeq&\colim_{m\ge n}\Hom_{\mathsf{Coh}(X_m \times _S S')}(p_m^*\mathcal{F}_m,p_m^*\mathcal{G}_m)\\
		\simeq&\Hom_{\mathsf{Coh}(X\times _S S')}(p^*\mathcal{F},p^*\mathcal{G}).
	\end{align*} 
\end{proof}

\begin{lem}\label{lem: base change of S^1-invariants}
	Let $M\in A\mod$ be equipped with an $S^1$-action, and suppose $M$ is cohomologically bounded below. If $A\to B$ is flat, then the natural map
	\begin{align*}
		M^{tS^1}\otimes_{A[u^{\pm 1}]} B[u^{\pm 1}] \to (M\otimes_A B)^{tS^1}
	\end{align*} 
	is an isomorphism. Here the $S^1$-action on $M \otimes_A B$ is the one induced by the trivial $S^1$-actions
    on $A$ and $B$. 
\end{lem}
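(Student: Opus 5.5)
We will compute both sides explicitly and reduce to the statement that flat base change commutes with the relevant limit. Since $B$ is flat over $A$, the functor $-\otimes_A B$ is $t$-exact on $A\mod$. In particular it preserves uniformly bounded-below limits of the kind that compute homotopy fixed points. Concretely, $M^{hS^1} = \lim_{BS^1} M$. Because $M$ is bounded below, this can be computed by the standard filtration $\lim_n M^{hS^1}_{\le n}$, using the cellular (skeletal) filtration of $BS^1 = \mathbb{CP}^\infty$. Equivalently, it is computed by the product totalization $\prod_{n\ge 0} M[-2n]$ with the differential twisted by the circle operator $B$ of degree $-1$.

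The first step is to write $M^{hS^1}$ as the complex $M[[u]]$. Its degree-$m$ part involves the product $\prod_{n} M^{m-2n}$. Because $M$ is bounded below, this product is finite in each degree: only finitely many $n$ have $M^{m-2n}\neq 0$, since $m-2n$ decreases. So the product agrees with the direct sum degreewise. Tensoring with $B$ therefore commutes with it, giving
\[
M^{hS^1}\otimes_A B \simeq (M\otimes_A B)^{hS^1}.
\]
Here we use that $M\otimes_A B$ is again bounded below by flatness, and that its induced circle operator is $B_M\otimes \id$, since the $S^1$-action on $B$ is trivial.

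The second step is to invert $u$. The Tate construction here is $M^{tS^1} = M^{hS^1}\otimes_{A[u]}A[u^{\pm1}] = \colim(M^{hS^1}\xrightarrow{u} M^{hS^1}[2]\xrightarrow{u}\cdots)$, following the convention of \zcref{sssec:Tate invariants}. Tensor products commute with this colimit, so
\[
M^{tS^1}\otimes_{A[u^{\pm1}]}B[u^{\pm1}] \simeq M^{hS^1}\otimes_{A[u]} B[u^{\pm1}] \simeq (M^{hS^1}\otimes_A B)\otimes_{B[u]}B[u^{\pm1}].
\]
By the first step, the last expression is $(M\otimes_A B)^{tS^1}$. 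We then check that the composite agrees with the natural map, by naturality of the comparison $M^{hS^1}\otimes_A B\to (M\otimes_A B)^{hS^1}$.

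The main obstacle is the first step. We must justify rigorously, in the $\infty$-categorical setting, that $\lim_{BS^1}$ of a bounded-below $A$-module commutes with flat base change. The natural tool is the identification $\Fun(BS^1, A\mod)\simeq A[\epsilon]\mod$ with $|\epsilon| = -1$, together with $M^{hS^1}\simeq \Hom_{A[\epsilon]}(A, M)$. We would then resolve $A$ by the standard free resolution, which has one generator in each nonpositive even degree. This gives the complex $\prod_n M[-2n]$, and the degreewise-finiteness argument above applies to it directly. As an alternative, if the limit is presented as a sequential limit of finite limits, the required Mittag-Leffler-type vanishing comes from boundedness below.
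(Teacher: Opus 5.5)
Your proposal is correct and follows essentially the same route as the paper. Both arguments use $M^{S^1}\simeq\Hom_{A[\varepsilon]}(A,M)\simeq M[\![u]\!]$ via the standard semi-free resolution, use bounded-belowness to make the power series effectively finite in each degree, apply flat base change, and then invert $u$. The only difference is in the finiteness step: the paper compares cohomology degree by degree using the fiber sequence $M[\![u]\!][-2n-2]\to M[\![u]\!]\to M[u]/u^{n+1}$, whereas you argue at chain level that $\prod_n M^{m-2n}$ is a finite product. Your version requires a strictly bounded-below mixed-complex model of $M$, which you can obtain by smart truncation $\tau^{\ge b}$; this truncation is compatible with $\varepsilon$ because $\varepsilon$ has degree $-1$.
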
 
\begin{proof}
    We begin by proving that $M^{S^1} \otimes_{A[u]} B[u] \xrightarrow{\simeq} (M \otimes_A B)^{S^1}$.
    
	By definition, $M^{S^1}\simeq\Hom_{A[\varepsilon]}(A,M)$, where $\deg \varepsilon=-1$. We now use the standard resolution of $A$ as a $A[\varepsilon]$-module,
	\begin{align*}
	P\coloneq \Tot(	\cdots \to A[\varepsilon][2]\to A[\varepsilon][1]\to A[\varepsilon]),
	\end{align*} 
	where the map $A[\varepsilon][j]\to A[\varepsilon][j-1]$ is multiplication by $\varepsilon$. Applying $\Hom_{A[\varepsilon]}(-,M)$ to this resolution we find
	\begin{align*}
		K(M)\coloneq \Hom_{A[\varepsilon]}(A,M)\simeq M[\![u]\!],
	\end{align*} 
	where the differential is $d=d_M+u\varepsilon$, $\deg u = 2$, and $d_M$ is the differential of $M$. This is the usual Koszul duality for mixed complexes. Note that $P\simeq A[\varepsilon,v]$ with $\deg v =-2$ and differential given by $d (a v^j)=\varepsilon a v^{j-1}$. Consider the fiber sequence
	\begin{align*}
		M[\![u]\!][-2n-2]\to M[\![u]\!]\to M[u] /u^{n+1}M[u].
	\end{align*} 
	Suppose the lower bound on $M$ is $b$. Then the fiber
	\begin{align*}
		M[\![u]\!][-2n-2]
	\end{align*} 
	is bounded below by $b+2n+2$. There is a commutative square
	\[
	\begin{tikzcd}
		K(M)\otimes _AB\arrow[r]\arrow[d]&K(M\otimes _AB)\arrow[d]\\
		K_n(M)\otimes _AB\arrow[r,"\simeq"]&K_n(M\otimes _AB)
	\end{tikzcd},
	\] 
	where $K_n(M)=(M[u]/u^{n-1},d)$. The map
	\begin{align*}
		K_n(M)\otimes _AB\to K_n(M\otimes _AB)
	\end{align*} 
	is an isomorphism because it involves only finitely many summands, and tensoring preserves direct sums. Since $ B$ is flat over $A$, the bound $b+2n+2$ is preserved. Thus, both vertical maps in the square have fibers bounded below by $b+2n+2$. Now, fixing $j$, choose $n$ such that $b+2n+2>j+1$, in which case both vertical maps give isomorphisms on $H^j$. Since we can do this for arbitrary $j$, we find
	\begin{align*}
		K(M)\otimes _AB\xrightarrow{\simeq}K(M\otimes _AB).
	\end{align*}
    Now observe that $K(M) \otimes_A B \simeq K(M) \otimes_{A[u]} B[u]$, and
    we conclude that $M^{S^1} \otimes_{A[u]} B[u] \simeq (M \otimes_A B)^{S^1}$, as desired.
    To conclude, we apply base change $- \otimes_{k[u]} \ku$ to each side of the equivalence
    to obtain,
    \[M^{tS^1} \otimes_{A[u^{\pm 1}]} B[u^{\pm 1}] \simeq (M^{S^1} \otimes_{A[u]} B[u]) \otimes_{k[u]} \ku \simeq (M \otimes_A B)^{S^1} \otimes_{k[u]} \ku,\]
    as desired.
\end{proof}

\begin{prop}\label{prop: Tate base change of Coh is fully faithful}
	Let $X \to S$ be a stack over $S=\Spec A$ such that $X$ is reasonable ind-geometric, and let $S'=\Spec B\to S$ be a flat map. Suppose that $X$ has an $S^1$-action; $S$ and $S'$ are equipped with the trivial $S^1$-action; and the maps $X\to S$ and $S\to S'$ are $S^1$-equivariant. The projection $p: X \times_S S' \to X$ is then
    canonically $S^1$-equivariant, so $p^*$ is $S^1$-equivariant and its target is $B$-linear. Let $\theta$ denote the natural $\Perf B[u^{\pm 1}]$-linear functor
	\begin{align*}
		\theta:\mathsf{Coh}(X)^{tS^1}\otimes _{A} B\to& \mathsf{Coh}(X\times _S S')^{tS^1}
	\end{align*} 
	obtained by extension of scalars. Then $\theta$ is fully faithful.
\end{prop}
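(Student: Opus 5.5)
The plan is to reduce full faithfulness of $\theta$ to a comparison of mapping complexes between generating objects, and then to identify both sides by combining \zcref{lem: base change for homs} with \zcref{lem: base change of S^1-invariants}.

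\textbf{Reduction to generators.} The source $\Coh(X)^{tS^1}\otimes_A B$ is generated under finite colimits, shifts and retracts by objects $\theta_0(c)\otimes B$ with $c\in\Coh(X)^{S^1}$. This follows from \cite[Lemma 2.7]{Perry}, as in the proof of \zcref{lem: fully faithful iff fully faithful on pure tensors}, applied twice: once to the localization $\Coh(X)^{S^1}\to\Coh(X)^{tS^1}$ and once to the base change $-\otimes_A B$. The same thick-subcategory argument as in \zcref{lem: fully faithful iff fully faithful on pure tensors} then shows that it suffices to check that $\theta$ induces an isomorphism on $\Hom$ between two such objects.

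\textbf{The source side.} Since the categories are small, $\Hom$ in the base change is obtained by base changing $\Hom$, as in the last step of \zcref{lem:fully-faithful-functors-are-ff-after-tate}. Combining this with \zcref{lem: morphisms in C^G} (in its $k$-linear variant), the source $\Hom$ is
\[\Hom_{\Coh(X)}(c,c')^{tS^1}\otimes_{A[u^{\pm1}]}B[u^{\pm1}],\]
where $M=\Hom_{\Coh(X)}(c,c')\in A\mod$ carries the $S^1$-action of \zcref{sssec: G-action on mapping spaces}.

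\textbf{The target side.} The target $\Hom$ is $\Hom_{\Coh(X\times_SS')}(p^*c,p^*c')^{tS^1}$. By \zcref{lem: base change for homs} we have $\Hom_{\Coh(X\times_SS')}(p^*c,p^*c')\simeq M\otimes_AB$, and this identification should be $S^1$-equivariant, where the action on $M\otimes_AB$ is induced from $M$ with $B$ trivial. This equivariance holds because $p^*$ is $S^1$-equivariant and the comparison map is natural; I will verify it by running the argument of \zcref{lem: base change for homs} inside $\Fun(BS^1,A\mod)$.

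\textbf{Concluding.} \zcref{lem: base change of S^1-invariants} then identifies the two sides, provided $M$ is cohomologically bounded below. This boundedness is the main point that needs checking. Since $c\in\Coh(X)$ is bounded above and almost perfect, and $c'$ is bounded below, $\Hom(c,c')$ is bounded below. I would verify this first on each $X_n$ using the $t$-structure: $\sHom_{\QCoh}(c,c')$ is bounded below, and $f_{n*}$ to the affine $S$ is left $t$-exact. The bound then passes to the colimit over $n$ in \zcref{lem: base change for homs}, since the bound is uniform once representatives on $X_n$ are fixed, as the pushforwards are $t$-exact.

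Finally, one checks that the composite identification is the map induced by $\theta$. This follows by tracing naturality, and I expect it to be routine.
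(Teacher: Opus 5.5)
Your proposal is correct and follows essentially the same route as the paper: you compute the source $\Hom$ as $M^{tS^1}\otimes_{A[u^{\pm1}]}B[u^{\pm1}]$ via \zcref{lem: morphisms in C^G}, you compute the target $\Hom$ as $(M\otimes_AB)^{tS^1}$ via \zcref{lem: base change for homs}, and you match the two using \zcref{lem: base change of S^1-invariants} together with boundedness of objects of $\Coh(X)$. The only differences are that you make explicit two points the paper leaves implicit: the reduction to objects coming from $\Coh(X)^{S^1}$, and the $S^1$-equivariance of the $\Hom$ base-change comparison.
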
 
\begin{proof}
	Let $\mathcal{F}',\mathcal{G}'\in \mathsf{Coh}(X)^{S^1}$ whose underlying object in $\Coh(X)$ are denoted by $\mathcal{F}$ and $\mathcal{G}$, respectively. 
    Set $M \coloneq \Hom^A_{\mathsf{Coh}(X)}(\mathcal{F},\mathcal{G})$, considered with its induced $S^1$-action. Let $\mathcal{F}'_B$ and $\mathcal{G}'_B$ denote 
    the image of $\mathcal{F}',\mathcal{G}'$, respectively, in $\Coh(X)^{S^1} \otimes_A B$ under extension of scalars. Then
	\begin{align}
		\Hom^{B[u^{\pm 1}]}_{\Coh(X)^{S^1} \otimes_A B}(\mathcal{F}'_B,\mathcal{G}'_B) &\simeq (M^{S^1} \otimes_{A[u]} A[u^{\pm 1}])\otimes _{A[u^\pm 1]}B[u^\pm 1] \label{eq:application of S^1-invariants of homs}\\
		&\xrightarrow{\simeq} (M\otimes _AB)^{tS^1} \label{eq:application of base change for S^1 invariants}\\
		&\xrightarrow{\simeq} \left(\Hom^B_{\Coh(X \times_S S')}(p^*\mathcal{F},p^*\mathcal{G})\right)^{S^1} \otimes_{B[u]} B[u^{\pm 1}] \label{eq:application of base change for Homs}\\
		&\simeq \Hom^{B[u^{\pm 1}]}_{\Coh(X \times_S S')^{tS^1}}(\theta (\mathcal{F}_B),\theta (\mathcal{G}_B))\nonumber,
	\end{align} 
	where equivalence \zcref[noname]{eq:application of S^1-invariants of homs} follows from \zcref{lem: morphisms in C^G}; 
    \zcref[noname]{eq:application of base change for S^1 invariants} follows from \zcref{lem: base change of S^1-invariants}
    since objects in $\Coh(X)$ are bounded; 
    and \zcref[noname]{eq:application of base change for Homs} follows from \zcref{lem: base change for homs}.
\end{proof}

We first prove Kashwara's lemma for $\Pot(-)$ in the case of a
global quotient of a derived scheme.
The proof in this case is broken up into two propositions shown
below, proving fully faithfulness of $i_*$ and essential surjectivity,
respectively.

\begin{thm}
		\label{thm: Kashiwara's lemma for G-projective quotients}
		Let $U$ be a Noetherian scheme almost of finite type and with affine diagonal 
		equipped with an action by a reductive group $G$.
		Let $Z \hook U$ be an almost finitely presented inclusion of a $G$-invariant closed derived subscheme of $U$.
		Then the pushforward along $i: Z/G \to U/G$ induces an
		equivalence,
		\[i_*: \Pot(Z/G) \to \Pot_{Z/G}(U/G).\]
	\end{thm}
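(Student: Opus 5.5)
First, by \zcref{lem:support-is-formal-completion} together with \zcref{lem: formal completion is quotient}, I reduce to showing that pushforward along the nil-isomorphism $i: Z/G \to \wih{U_Z}/G$ induces an equivalence
\[\Pot(Z/G) \to \Pot(\wih{U_Z}/G) = \IndCoh(\calL(\wih{U_Z}/G))^{\tT}.\]
Both sides are ind-completions of $\Coh(-)^{tS^1}$. The map $\calL i$ is ind-proper, so $(\calL i)_*$ preserves $\Coh$, exactly as in the last paragraph of the proof of \zcref{prop: Kashiwara's lemma for unipotent loop spaces}. It therefore suffices to show that
\[(i_*)^{tS^1}: \Coh(\calL(Z/G))^{tS^1} \to \Coh(\calL(\wih{U_Z}/G))^{tS^1}\]
is fully faithful and that its image generates the target under finite colimits, shifts and retracts. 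I treat these two claims as two separate propositions.

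\emph{Generation.} By \cite[Lemma 2.7]{Perry}, the target is generated by the image of $\Coh(\calL(\wih{U_Z}/G))^{S^1}$. That category is in turn generated by its heart, and by \zcref{lem: S^1 invariants commutes with heart} the heart is $(\Coh^{\heart})^{S^1}$. The ind-presentation of \zcref{lem: formal completions are ind-geometric} shows that any object of the heart is coherent on some infinitesimal neighbourhood $\calL(Z_n/G)$. I then filter it by the powers $\calI^k\calF$ of the ideal of $\calL(Z^{\cl}/G)$. Each step of the filtration is $S^1$-stable, by \zcref{lem:truncation functors are equivariant} and the fact that the $S^1$-action preserves full subcategories. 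The associated graded pieces are pushed forward from $\calL(Z/G)$, so by d\'evissage the heart lies in the thick closure of the image.

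\emph{Fully faithfulness.} By \zcref{lem: fully faithful iff fully faithful on pure tensors}, it is enough to test on objects $\calF,\calG$ coming from $\Coh(\calL(Z/G))^{S^1}$. Let $F$ be the fiber of the map on $\Hom$ complexes. It is a module over $A = \calO(G/\!\!/G)$. By Neeman's criterion \cite[Lemma 2.12]{neeman-bockstedt}, it suffices to show $F\otimes_A k(\frakp) = 0$ for every prime $\frakp$, and it is harmless to pass further to $K = \overline{k(\frakp)}$. Base changing the stacks to $K$ is controlled by \zcref{prop: Tate base change of Coh is fully faithful}, \zcref{lem: base change for homs} and \zcref{lem: base change of S^1-invariants}: these identify $F\otimes_k K$ with the analogous fiber for the base-changed stacks over the algebraically closed field $K$, and \zcref{cor: loop spaces have coherent ind-geometric base changes} supplies the needed coherence.

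Let $\alpha$ be the geometric point of $A_K$ determined by $\frakp$, and let $P$ be the Koszul complex of its maximal ideal. Since $P$ is perfect, $P\otimes P^\vee$ may be moved inside the $S^1$-invariants. This turns $F_K\otimes P\otimes P^\vee$ into a fiber of $\Hom$ complexes between objects supported on $\mu^{-1}(\alpha)$. By the argument of \zcref{lem:support-is-formal-completion}, such objects are the same as objects on the $\alpha$-potent loop spaces $\calL^\alpha(Z_K/G_K)$ and $\calL^\alpha(\wih{U_Z}_K/G_K)$. By \zcref{prop: Kashiwara's lemma for unipotent loop spaces}, applied over $K$, the fiber vanishes. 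Finally I base change along $A_K \to K$. There the Koszul differentials vanish, so the result is a finite sum of shifts of $F\otimes_A K$, and hence $F\otimes_A K = 0$.

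The main obstacle is the step ``tensoring $\Hom$ complexes with a perfect complex commutes with $S^1$-invariants and lands in $\alpha$-supported objects''. One has to check that the twisted objects $\calF\otimes P$ are honest $S^1$-equivariant coherent sheaves set-theoretically supported on $\mu^{-1}(\alpha)$, and that the $\Hom$ complexes computed on $\calL$ and on $\calL^\alpha$ agree after inverting $u$. I would first try to verify this by hand for $\Hom$ complexes of the form $\Hom(\calF,\calG)^{S^1}$, using that $P$ is a finite complex of free $A$-modules.
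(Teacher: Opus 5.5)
Your proposal is correct and follows essentially the same route as the paper: full faithfulness via \zcref{lem: fully faithful iff fully faithful on pure tensors}, Neeman's residue-field criterion over $A=\calO(G/\!\!/G)$, base change to $\overline{k(\frakp)}$, the Koszul-complex trick and \zcref{prop: Kashiwara's lemma for unipotent loop spaces}, and generation via the bounded $t$-structure, the heart and the finite $\mathcal{I}$-adic filtration, exactly as in \zcref{prop: fully faithfulness in projective case} and \zcref{prop: essential surjectivity in projective case}. The step you flag as the main obstacle is precisely what the paper handles with \zcref{lem: tensoring Homs with perfect complex commutes with S^1-invariants} together with the support/formal-completion argument of \zcref{lem:support-is-formal-completion}; your use of the ind-presentation of the formal completion in the d\'evissage is only a cosmetic variant of the paper's passage to $\Coh_{\calL Z}(\calL X)$, where the global nilpotence bound comes from \zcref{lem: loop space is Noetherian}.
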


	\begin{proof}
		\zcref{prop: fully faithfulness in projective case} and \zcref{prop: essential surjectivity in projective case}
		together imply that $(\calL i_*)^{tS^1}: \Coh(\calL (Z/G))^{tS^1} \xrightarrow{\simeq} \Coh_{\calL (Z/G)}(\calL (U/G))^{tS^1}$ is an equivalence.
		Taking the ind-extension of this equivalence gives the desired result.
	\end{proof}

\subsubsection{Fully faithfulness}
We first need two technical lemmas, to prepare for the proof of fully faithfulness below.
\begin{lem}\label{lem: tensoring Homs with perfect complex commutes with S^1-invariants}
	Let $A \in \CAlg_k$, and let $\mathscr{C} \in (\dgCat_A)^{BS^1}$. Let $a',b'\in \mathscr{C}^{S^1}$, 
    and let $a,b\in \mathscr{C}$ denote their underlying objects. Denote by $\eta^*: \scrC^{S^1} \to \scrC^{tS^1}$
    the canonical functor given by extension of scalars.
    Let $P\in \mathsf{Perf}(A)$, and let $P^\vee=\Hom_A(P,A)\in \mathsf{Perf}(A)$ 
    denote the dual of $P$. Then there is a natural isomorphism
	\begin{align*}
		\Hom^{A}_{\mathscr{C}^{tS^1}}(\eta^*(a'\otimes_{A[u]} P[u]), \eta^*(b'\otimes_{A[u]} P[u])) \simeq \Hom^{A}_{\mathscr{C}^{tS^1}}(a',b') \otimes_A P^\vee \otimes_A P,
	\end{align*} 
	where for any $x,y\in \mathscr{C}^{tS^1}$, $\Hom^{A}_{\mathscr{C}^{tS^1}}(x,y)$ denotes the restriction of the $A[u^{\pm 1}]$-module $\Hom^{A[u^{\pm 1}]}_{\mathscr{C}^{tS^1}}(x,y)$ along $A\to A[u^{\pm 1}]$.
\end{lem}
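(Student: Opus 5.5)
The plan is to reduce everything to the dualizability of $P$ and the $A$-linearity of the categories involved, performed first at the level of $S^1$-invariants and only then passing to Tate invariants. Here $a'\otimes_{A[u]}P[u]$ means the action of the object $P[u]=P\otimes_A A[u]\in\Perf(A[u])$ on $\scrC^{S^1}$ through its $A[u]$-linear structure. Equivalently, it is $a'\otimes_A P$, where $P$ is viewed as an $S^1$-equivariant object of $\Perf(A)$ with trivial action. The reason is that the $A$-linear structure on $\scrC^{S^1}$ factors through $A[u]$, as already used in the proof of \zcref{lem: invariants of tensor product}. Since $\eta^*$ is $A[u]$-linear, we also get $\eta^*(a'\otimes_{A[u]}P[u])\simeq \eta^*(a')\otimes_{A[u^{\pm1}]}P[u^{\pm1}]$.

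The first step is the adjunction identity for a dualizable object in an $A[u^{\pm1}]$-linear category: for $x,y\in\scrC^{tS^1}$ and $Q\in\Perf(A[u^{\pm1}])$,
\[\Hom^{A[u^{\pm1}]}_{\scrC^{tS^1}}(x\otimes Q, y\otimes Q)\simeq \Hom^{A[u^{\pm1}]}_{\scrC^{tS^1}}(x,y)\otimes_{A[u^{\pm1}]} Q^\vee\otimes_{A[u^{\pm1}]} Q.\]
To prove it, we first use that $-\otimes Q$ has $-\otimes Q^\vee$ as its adjoint, so the left side becomes $\Hom(x, y\otimes Q\otimes Q^\vee)$. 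Then we note that $\Hom(x,-)$ commutes with tensoring by a perfect module. This holds for free modules, and the class of $Q$ for which it holds is closed under shifts, cones and retracts, so it holds for all perfect $Q$. Applying this with $Q=P[u^{\pm1}]$, we use $P[u^{\pm1}]^\vee\simeq P^\vee[u^{\pm1}]$, which holds because base change preserves duals of perfect modules.

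The second step simplifies the tensor factors. We have $(-)\otimes_{A[u^{\pm1}]}P^\vee[u^{\pm1}]\otimes_{A[u^{\pm1}]}P[u^{\pm1}]\simeq (-)\otimes_A P^\vee\otimes_A P$, since $N\otimes_{A[u^{\pm1}]}(A[u^{\pm1}]\otimes_A Q)\simeq N\otimes_A Q$. This identification is compatible with restriction of scalars along $A\to A[u^{\pm1}]$. Restricting along this map then gives the stated isomorphism, in which $\Hom^A_{\scrC^{tS^1}}(a',b')$ is read as $\Hom^A_{\scrC^{tS^1}}(\eta^*a',\eta^*b')$.

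The main obstacle is bookkeeping rather than mathematics. One must check that the $A$-module structure on $\Hom$ in the statement is the restricted one, and that $\eta^*$ intertwines the two module actions. Naturality in $a'$ and $b'$ follows because every map used (adjunction units and counits, and the projection formula) is natural.
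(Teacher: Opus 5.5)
Your argument is correct, but it takes a different route from the paper's.

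\textbf{Your route.} You work entirely inside $\scrC^{tS^1}$, viewed as a module over $\Perf(A[u^{\pm 1}])$. You first note that $\eta^*$ is $A[u]$-linear, so that $\eta^*(a'\otimes_{A[u]}P[u])\simeq \eta^*a'\otimes_{A[u^{\pm1}]}P[u^{\pm1}]$. The claim then reduces to the general identity $\Hom(x\otimes Q,y\otimes Q)\simeq\Hom(x,y)\otimes Q^\vee\otimes Q$ for a dualizable $Q$ in a linear category, followed by the projection formula for restriction along $A\to A[u^{\pm1}]$.

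\textbf{The paper's route.} The paper instead proves the identity one level down, before inverting $u$.
\begin{enumerate}
\item It establishes $\Hom^A_{\scrC}(a,b)^{S^1}\otimes_A P\otimes_A P^\vee\simeq\Hom^A_{\scrC}(a\otimes_A P,b\otimes_A P)^{S^1}$, by commuting the limit $(-)^{S^1}$ with tensoring by the dualizable modules $P$ and $P^\vee$.
\item It identifies the underlying object of $a'\otimes_{A[u]}P[u]$ with $a\otimes_A P$.
\item It converts $S^1$-invariants of underlying Homs into Homs in $\scrC^{S^1}$ via \zcref{lem: morphisms in C^G}.
\item Only then does it invert $u$, using \cite[Lemma 2.7]{Perry}.
\end{enumerate}

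\textbf{Comparison.} Your route is shorter and more general: it needs no $S^1$-structure and no base-change lemma for Homs. It also sidesteps a point the paper leaves implicit, namely that $\Hom_{\scrC}(a\otimes P,b\otimes P)\simeq\Hom_{\scrC}(a,b)\otimes P^\vee\otimes P$ holds $S^1$-equivariantly. The paper's route has the advantage of expressing everything through $S^1$-invariants of underlying mapping complexes. This matches the description of Tate Homs in \zcref{lem: rewriting morphisms of Tate}.

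\textbf{One addition.} In the later application, what is actually used is compatibility of this isomorphism with the $A_{\overline{k(\frakp)}}[u^{\pm1}]$-linear functor $(i_{\frakp *})^{tS^1}$. Your argument gives this at once, since a linear functor commutes with $-\otimes Q$ and with the adjunction and projection-formula maps. You should state naturality in that form, not only in $a'$ and $b'$.
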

\begin{proof}
	We first show that there is a natural isomorphism of $A[u]$-modules
	\begin{align*}
		\Hom_{\mathscr{C}}^{A}(a,b)^{S^1}\otimes _AP\otimes _AP^\vee\simeq \Hom_{\mathscr{C}}^{A}(a\otimes_A P,b\otimes _AP)^{S^1}.
	\end{align*} 
	Indeed,
	\begin{align*}
		\Hom_{\mathscr{C}}^A(x,y)^{S^1}\coloneq \lim_{BS^1}\Hom_{\mathscr{C}}^A(x,y)
	\end{align*} 
	for any $x,y\in \mathscr{C}$. Since $P,P^\vee$ are dualizable $A$-modules, the functors $-\otimes _AP$ and $-\otimes _AP^\vee$ are both left and right adjoints; in particular, they preserve limits, which gives the desired isomorphism. We next claim that the underlying object under the forgetful functor $\mathscr{C}^{S^1}\to \mathscr{C}$ of $a'\otimes _{A[u]}P[u]$ is precisely $a\otimes _AP$, and similarly for $b$. Indeed, the forgetful functor is $A$-linear, and $-\otimes _{A[u]}P[u]$ is canonically isomorphic to $-\otimes _AP$. Thus, by \zcref{lem: morphisms in C^G}, we obtain a natural, $A[u ]$-linear isomorphism
	\begin{align*}
		\Hom_{\mathscr{C}}^{A}(a,b)^{S^1}\otimes _AP\otimes _AP^\vee\simeq\Hom_{\mathscr{C}^{S^1}}^{A[u]}(a'\otimes _{A[u]}P[u],b'\otimes _{A[u]}P[u]).
	\end{align*} 
	Given $x',y'\in\mathscr{C}^{S^1}$, by \cite[Lemma 2.7]{Perry}, the functor $\eta^*$ induces a natural isomorphism of $A[u^{\pm 1}]$-modules
	\begin{align*}
		\Hom_{\mathscr{C}^{S^1}}^{A[u]}(x',y')\otimes _{A[u]}A[u^{\pm 1}]\xrightarrow{\simeq}\Hom_{\mathscr{C}^{tS^1}}^{A[u^{\pm 1}]}(\eta^*x',\eta^*y').
	\end{align*} 
	We thus have the following chain of natural isomorphisms, where $\res$ is the restriction functor along $A\to A[u^{\pm 1}]$.
	\begin{align*}
		\Hom_{\mathscr{C}^{tS^1}}^{A}(\eta^*a',\eta^*b')  \otimes _AP\otimes _AP^\vee\simeq&\res\left(\Hom^{A[u]}_{\mathscr{C}^{S^1}}(a',b')\otimes _AP\otimes _AP^\vee\otimes _{A[u]}A[u^{\pm 1}]\right)\\
		\simeq& \res\left( \Hom_{\mathscr{C}}^{A}(a,b)^{S^1} \otimes _AP\otimes _AP^\vee\otimes _{A[u]}A[u^{\pm 1}]\right) \\
		\simeq &\res\left( \Hom_{\mathscr{C}}^A (a\otimes _AP,b\otimes _AP)^{S^1}\otimes _{A[u]}A[u^{\pm 1}]\right) \\
		\simeq &\res\left(\Hom^{A[u]}_{\mathscr{C}^{S^1}}(a'\otimes _{A[u]}P[u],b'\otimes _{A[u]}P[u])\otimes _{A[u]}A[u^{\pm 1}]\right)\\
		\simeq&\Hom_{\mathscr{C}^{tS^1}}^A(\eta^*(a'\otimes _{A[u]}P[u]),\eta^*(b'\otimes _{A[u]}P[u])).
	\end{align*} 
\end{proof}
\begin{lem}
\label{lem: rewriting morphisms of Tate}
    Let $A \in \CAlg_k$, and let $\scrC, \scrD \in (\dgCat_A)^{BS^1}$.
    Suppose that $F: \scrC \to \scrD$ is an $S^1$-equivariant, $A$-linear functor, 
    and let $a', b' \in \scrC^{S^1}$ whose underlying objects of $\scrC$
    are denoted by $a, b \in \scrC$, respectively. Let $\eta^*_{\mathscr{C}}$ (resp. $\eta^*_{\mathscr{D}}$)
    denote the canonical extension of scalars functor,
    $\scrC^{S^1} \to \scrC^{S^1} \otimes_{A[u]} A[u^{\pm 1}]$
    (resp. $\scrD^{S^1} \to \scrD^{S^1} \otimes_{A[u]} A[u^{\pm 1}]$).
    Then there is a commutative diagram of $A[u^{\pm 1}]$-modules,
    \[\begin{tikzcd}
	    {\Hom^{A[u^{\pm 1}]}_{\scrC^{tS^1}}(\eta^*_{\mathscr{C}}a', \eta^*_{\mathscr{C}}b')} & & {\Hom^{A[u^{\pm 1}]}_{\scrD^{tS^1}}(F^{tS^1}\eta^*_{\mathscr{C}}a', F^{tS^1}\eta^*_{\mathscr{C}}b')} \\
	{\left(\Hom^A_{\scrC}(a,b)\right)^{tS^1}} & & {\left(\Hom^A_{\scrD}(Fa,Fb)\right)^{tS^1}}
	\arrow["{F^{tS^1}_{\eta^*a', \eta^*b'}}", from=1-1, to=1-3]
	\arrow["\simeq"', from=1-1, to=2-1]
	\arrow["\simeq"', from=1-3, to=2-3]
	\arrow["{F_{a,b}}", from=2-1, to=2-3],
\end{tikzcd}\]
    in which the vertical arrows are isomorphisms,
    as indicated.
\end{lem}

\begin{proof}
    First, observe that there is a commutative square
    \[
    \begin{tikzcd}
	    \mathscr{C}^{S^1}\arrow[d,"\eta^*_{\mathscr{C}}"]\arrow[r,"F^{S^1}"]&\mathscr{D}^{S^1}\arrow[d,"\eta^*_{\mathscr{D}}"]\\
	    \mathscr{C}^{tS^1}\arrow[r,"F^{tS^1}"]&\mathscr{D}^{tS^1}
    \end{tikzcd}.
    \] 
    Indeed, functoriality of the tensor product gives that 
    \begin{align*}
	    \eta^*_{\mathscr{C}}=\id_{\mathscr{C}}\otimes \eta,\qquad\eta^*_{\mathscr{D}}=\id_{\mathscr{D}}\otimes \eta,\qquad F^{S^1}\otimes_{k[u]}\ku=F^{S^1}\otimes \id_{\mathsf{Perf}(\ku)},
    \end{align*} 
    where $\eta: \mathsf{Perf}(k[u])\to \mathsf{Perf}(\ku)$ is the usual extension of scalars. Thus, we have the equivalence
    \begin{align*}
	    \eta^*_{\mathscr{D}}\circ F^{S^1} &= (\id_{\mathscr{D}}\otimes \eta)\circ F^{S^1}\\
	    &\simeq F^{S^1}\otimes \eta\\
	    &\simeq F^{S^1}\circ(\id_{\mathscr{C}}\otimes \eta),
    \end{align*} 
    as desired. Thus, we obtain the following commutative diagram
    \[
    \begin{tikzcd}
	    \Hom_{\mathscr{C}^{S^1}}^{A[u]}(a',b')\arrow[r,"F^{S^1}"]\arrow[d,"\eta^*_{\mathscr{C}}"]&\Hom_{\mathscr{D}^{S^1}}^{A[u]}(F^{S^1}a',F^{S^1}b')\arrow[d,"\eta^*_{\mathscr{D}}"]\\
	    \Hom_{\mathscr{C}^{tS^1}}^{A[u^{\pm 1}]}(\eta^*_{\mathscr{C}}a',\eta^*_{\mathscr{C}}b')\arrow[r,"F^{tS^1}"]&\Hom_{\mathscr{D}^{tS^1}}^{A[u^{\pm 1}]}(\eta^*_{\mathscr{D}}F^{S^1}a',\eta^*_{\mathscr{D}}F^{S^1}b')
    \end{tikzcd}.
    \] 
    By \cite[Lemma 2.10]{Perry}, this diagram induces a commutative diagram
    \begin{equation}\label{cd: S^1-invariants to Tate invariants}
	    \begin{tikzcd}[column sep=large]
	    \Hom_{\mathscr{C}^{S^1}}^{A[u]}(a',b')\otimes _{A[u]}A[u^{\pm 1}]\arrow[r,"{F^{S^1}}"]\arrow[d,"\simeq"]&\Hom_{\mathscr{D}^{S^1}}^{A[u]}(F^{S^1}a',F^{S^1}b')\otimes _{A[u]}A[u^{\pm 1}]\arrow[d,"\simeq"]\\
	    \Hom_{\mathscr{C}^{tS^1}}^{A[u^{\pm 1}]}(\eta^*_{\mathscr{C}}a',\eta^*_{\mathscr{C}}b')\arrow[r,"F^{tS^1}"]&\Hom_{\mathscr{D}^{tS^1}}^{A[u^{\pm 1}]}(\eta^*_{\mathscr{D}}F^{tS^1}a',\eta^*_{\mathscr{D}}F^{tS^1}b')
    \end{tikzcd}
\end{equation} 
    Now, by \zcref{lem: morphisms in C^G}, we also have a commutative diagram
    \[
    \begin{tikzcd}
	    \Hom_{\mathscr{C}}^{A}(a,b)^{S^1}\arrow[r,"F"]\arrow[d,"\simeq"]&\Hom_{\mathscr{D}}^A(Fa,Fb)^{S^1}\arrow[d,"\simeq"]\\
	    \Hom_{\mathscr{C}^{S^1}}^{A[u]}(a',b')\arrow[r,"F^{S^1}"]&\Hom_{\mathscr{D}^{S^1}}^{A[u]}(F^{S^1}a',F^{S^1}b')
    \end{tikzcd}.
    \] 
    Applying the functor $-\otimes _{A[u]}A[u^{\pm 1}]$ to this diagram and combining the resulting diagram with \zcref[noname]{cd: S^1-invariants to Tate invariants}, we obtain the commutative diagram below, in which the desired diagram is the outer square.
    \[
	    \begin{tikzcd}[column sep = large]
		    \Hom_{\mathscr{C}}^{A}(a,b)^{tS^1}\arrow[r,"F"]\arrow[d,equal]&\Hom_{\mathscr{D}}^{A}(Fa,Fb)^{tS^1}\arrow[d,equal]\\
		    \Hom_{\mathscr{C}}^{A}(a,b)^{S^1}\otimes _{A[u]}A[u^{\pm 1}]\arrow[r,"F"]\arrow[d,"\simeq"]&\Hom_{\mathscr{D}}^A(Fa,Fb)^{S^1}\otimes _{A[u]} A[u^{\pm 1}]\arrow[d,"\simeq"]\\
    \Hom_{\mathscr{C}^{S^1}}^{A[u]}(a',b')\otimes _{A[u]}A[u^{\pm 1}]\arrow[r,"{F^{S^1}}"]\arrow[d,"\simeq"]&\Hom_{\mathscr{D}^{S^1}}^{A[u]}(F^{S^1}a',F^{S^1}b')\otimes _{A[u]}A[u^{\pm 1}]\arrow[d,"\simeq"]\\
	    \Hom_{\mathscr{C}^{tS^1}}^{A[u^{\pm 1}]}(\eta^*_{\mathscr{C}}a',\eta^*_{\mathscr{C}}b')\arrow[r,"F^{tS^1}"]\arrow[d,equal]&\Hom_{\mathscr{D}^{tS^1}}^{A[u^{\pm 1}]}(\eta^*_{\mathscr{D}}F^{S^1}a',\eta^*_{\mathscr{D}}F^{S^1}b')\arrow[d,"\simeq"]\\
	    \Hom_{\mathscr{C}^{tS^1}}^{A[u^{\pm 1}]}(\eta^*_{\mathscr{C}}a',\eta^*_{\mathscr{C}}b')\arrow[r,"F^{tS^1}"]&\Hom_{\mathscr{D}^{tS^1}}^{A[u^{\pm 1}]}(F^{tS^1}\eta_{\mathscr{C}}^*a',F^{tS^1}\eta^*_{\mathscr{C}}b')\\
    \end{tikzcd}
    \] 
\end{proof}

We now prove fully faithfulness.

\begin{prop}
	\label{prop: fully faithfulness in projective case}
	Suppose that $U$ is a derived scheme almost of finite type, with affine diagonal, and equipped with
	an action by a reductive group $G$.
	Let $Z \hook U$ be an almost finitely presented inclusion of a $G$-invariant closed derived subscheme of $U$. Let $i: \calL (Z/G) \to \calL (\wih{U_Z}/G)$ denote
	the map induced by the inclusion. Then
	the functor $(i_*)^{tS^1}: \Coh(\calL (Z/G))^{tS^1} \to \Coh(\calL (\wih{U_Z}/G))^{tS^1}$ is fully faithful.
\end{prop}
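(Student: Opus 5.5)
By \zcref{lem: fully faithful iff fully faithful on pure tensors}, it suffices to check fully faithfulness on objects in the image of $\eta^*:\Coh(\calL(Z/G))^{S^1}\to\Coh(\calL(Z/G))^{tS^1}$. Fix $\calF',\calG'\in\Coh(\calL(Z/G))^{S^1}$ with underlying objects $\calF,\calG$. Set
\[F:=\fib\Bigl(\Hom_{\Coh(\calL(Z/G))^{tS^1}}(\eta^*\calF',\eta^*\calG')\to\Hom_{\Coh(\calL(\wih{U_Z}/G))^{tS^1}}((i_*)^{tS^1}\eta^*\calF',(i_*)^{tS^1}\eta^*\calG')\Bigr).\]
Both categories are linear over $A:=\calO(G/\!\!/G)$ via $\mu$, and the functor is $A$-linear. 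By \zcref{lem: rewriting morphisms of Tate}, $F$ is identified with the fiber of $\Hom^A(\calF,\calG)^{tS^1}\to\Hom^A(i_*\calF,i_*\calG)^{tS^1}$. By Neeman's criterion \cite[Lemma 2.12]{neeman-bockstedt}, it suffices to show $F\otimes_A k(\frakp)\simeq0$ for every prime $\frakp\subset A$.

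Fix $\frakp$ and let $K=\overline{k(\frakp)}$, which is flat over $k$. I would base change everything along $\Spec K\to\Spec k$. \zcref{lem: base change for homs} identifies the Hom complexes over $K$ with Hom complexes on the base-changed stacks $\calL(Z/G)_K$ and $\calL(\wih{U_Z}/G)_K$; these stacks are coherent ind-geometric by \zcref{cor: loop spaces have coherent ind-geometric base changes}. Since the objects are bounded, \zcref{lem: base change of S^1-invariants} lets the base change commute with $(-)^{tS^1}$, so $F_K:=F\otimes_kK$ is the analogous fiber over $K$, for the reductive group $G_K$ acting on $U_K$. The point $A\to K$ gives a geometric closed point $\a\in (G/\!\!/G)_K$, represented by a semisimple element. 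Let $P$ be the Koszul complex of its maximal ideal in $A_K$.

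By \zcref{lem: tensoring Homs with perfect complex commutes with S^1-invariants}, $F_K\otimes_{A_K}P\otimes_{A_K}P^\vee$ is the fiber of the induced map on Homs between $\eta^*(\calF'\otimes P[u])$ and $\eta^*(\calG'\otimes P[u])$. These objects are $S^1$-equivariant and set-theoretically supported on $\mu^{-1}(\a)$. By the argument of \zcref{lem:support-is-formal-completion}, applied with $\mu^{-1}(\a)$ in place of $\calL Z$, they are pushed forward from $\calL^\a$, and $i_*$ is compatible with this. So the map in question is a map of Hom complexes for $(\calL^\a(i)_*)^{tS^1}$, which is an equivalence by \zcref{prop: Kashiwara's lemma for unipotent loop spaces} applied over $K$. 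Hence $F_K\otimes P\otimes P^\vee\simeq0$.

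Finally, apply $-\otimes_{A_K}K$. The Koszul differentials vanish, so the result is a finite direct sum of shifts of $F\otimes_AK$, and each summand is therefore $0$. By faithful flatness of $K/k(\frakp)$, we get $F\otimes_Ak(\frakp)\simeq0$.

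I expect the main obstacle to be the support step: making precise that $\calF'\otimes P[u]$ comes from $\Coh(\calL^\a)^{S^1}$ compatibly with the maps on Hom complexes, and that this matches the Hom complexes in the Tate categories on $\calL^\a$. A secondary subtlety is that $A_K$ need not be Noetherian if $K$ is large. To avoid this, I would first work over a finitely generated subfield over which $\a$ is defined and is geometric, and only then pass to $K$.
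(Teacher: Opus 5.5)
Your proposal is correct and follows essentially the same route as the paper. You reduce to objects in the image of $\eta^*$ and kill the $A$-linear fiber $F$ prime by prime via Neeman's criterion after base change to $\overline{k(\frakp)}$; you twist by the Koszul complex $P\otimes P^\vee$ of the geometric point $\a$, so that you land where \zcref{prop: Kashiwara's lemma for unipotent loop spaces} applies; and you finish by base changing along $A_{\overline{k(\frakp)}}\to\overline{k(\frakp)}$. The paper packages your two base-change lemmas as \zcref{prop: Tate base change of Coh is fully faithful}, and it also treats the support step you flag only briefly. Your ``secondary subtlety'' is not an issue: $A\otimes_k K$ is a finitely generated $K$-algebra and hence Noetherian for any field $K$, so no descent to a finitely generated subfield is needed.
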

\begin{proof}
    Let $\eta: k[u] \to \ku$ denote the canonical localization map,
    and let $\eta^*: \Coh(\calL (Z/G))^{S^1} \to \Coh(\calL (Z/G))^{tS^1}$ denote the
    functor given by extension of scalars. Let $\calF', \calG' \in \Coh(\calL (Z/G))^{S^1}$ be
    arbitrary $S^1$-equivariant objects whose images under $\eta^*$ in $ \Coh(\calL (Z/G))^{tS^1}$ are denoted by $\calF$ and $\calG$, respectively. Since $i$
    commutes with the maps $\mu_Z$ and $\mu_{\wih{U_Z}}$,
    the functor $(i_*)^{tS^1}: \Coh(\calL (Z/G))^{tS^1} \to \Coh(\wih{U_Z}/G))^{tS^1}$
    is canonically $A$-linear, where $A \coloneq \calO(G/\!\!/G)$. With this in
    mind, we define the following $A$-module,
    \[F \coloneq \ker\left(\Hom^A_{\Coh(\calL (Z/G))^{tS^1}}(\mathcal{F}, \mathcal{G})\to 
        \Hom^A_{\Coh(\calL (\wih{U_Z}/G))^{tS^1}}\left((i_*)^{tS^1}\mathcal{F},(i_*)^{tS^1}\mathcal{G}\right)\right),\]
    where, if $\scrC$ is an $A[u^{\pm 1}]$-linear category, $\Hom^A_{\scrC}(-,-)$ denotes
    the $A$-module obtained by the restriction of scalars functor $\res_{A[u^{\pm 1}]/A}: A[u^{\pm 1}]\mod \to A\mod$ 
    given as the right adjoint to $- \otimes_A A[u^{\pm 1}]$.
    Since $\res_{A[u^{\pm 1}]/A}$ is a right adjoint it preserves limits, meaning
    \begin{align*}
        F &\simeq \res_{A[u^{\pm 1}]/A}\left(\ker\left(\Hom^{A[u^{\pm 1}]}_{\Coh(\calL (Z/G))^{tS^1}}(\calF,\calG)\to \Hom^{A[u^{\pm 1}]}_{\Coh(\calL (\wih{U_Z}/G))^{tS^1}}\left((i_*)^{tS^1}\calF, (i_*)^{tS^1}\calG\right)\right)\right).
	\end{align*}
    In order to show that $(i_*)^{tS^1}$ is fully faithful, we begin by showing that $F \simeq 0$.
    Since $A$ is Noetherian, it suffices by \cite[Lemma 2.12]{neeman-bockstedt} 
    to show that $F \otimes_A k(\frakp) \simeq 0$
    for every prime ideal $\frakp \subset A$, where $k(\frakp)$ denotes the residue field of $\frakp$.
    In fact, since the algebraic closure $k(\frakp) \to \overline{k(\frakp)}$ is faithfully flat,
    it suffices to show that $F \otimes_A \overline{k(\frakp)} \simeq 0$ for each prime $\frakp \subset A$.

    For any stack $X/k$, let $X_{\overline{k(\frakp)}} \coloneq X \times_{\Spec k} \Spec \overline{k(\frakp)}$ denote
    the base change of $X$ along $\Spec \overline{k(\frakp)} \to \Spec k$. Our key observation is
    that the point of $(G/\!\!/G)_{\overline{k(\frakp)}}$ given by the
    canonical map $\a: \Spec \overline{k(\frakp)} \to G/\!\!/G_{\overline{k(\frakp)}}$ is closed, 
    even though the original point $\frakp \in G/\!\!/G$ might not be.
    Let $\frakm_{\a} \subset A \otimes_k \overline{k(\frakp)} =: A_{\overline{k(\frakp)}}$
    denote the maximal ideal associated to the closed point $\a \in (G/\!\!/G)_{\overline{k(\frakp)}}$.
    Crucially, the statement of \zcref{prop: Kashiwara's lemma for unipotent loop spaces}
    is formulated over an arbitrary algebraically closed base field $k$ of characteristic zero, which was fixed
    by convention in \zcref{sec:conventions and notation}---in particular it
    applies to the case of the $G_{\overline{k(\frakp)}}$-equivariant
    closed immersion $i_{\overline{k(\frakp)}}: Z_{\overline{k(\frakp)}} \hook U_{\overline{k(\frakp)}}$.

    Since $A_{\overline{k(\frakp)}}$ is Noetherian (as $A$ is a finitely-generated
    $k$-algebra), $\frakm_{\a}$ is finitely generated. Choose a presentation $\frakm_{\a} = (f_1, \ldots, f_n)$
    by a finite set of generators, and let $P \coloneq K_{A_{\overline{k(\frakp)}}}(f)$ be the associated Koszul complex,
    which, as a finite sequence of free $A_{\overline{k(\frakp)}}$-modules, belongs to $\mathsf{Perf}(A_{\overline{k(\frakp)}})$.
    On the other hand, since $H^0(K_{A_{\overline{k(\frakp)}}}(f)) \simeq (A_{\overline{k(\frakp)}})/\frakm_{\a}$,
    it is in fact supported on the closed point $\a$. In particular,
    this means that the $A_{\overline{k(\frakp)}}$-module,
	  \[(F_{\overline{k(\frakp)}} \otimes _{A_{\overline{k(\frakp)}}}P^\vee) \otimes_{A_{\overline{k(\frakp)}}} P\]
    is supported at $\alpha\in (G/\!\!/G)_{\overline{k(\frakp)}}$. Now we establish a sequence
    of equivalences that allows us to conclude that $F_{\overline{k(\frakp)}}\otimes _{A_{\overline{k(\frakp)}}}P\otimes _{A_{\overline{k(\frakp)}}}P^\vee \simeq 0$:

\begin{center}
\resizebox{\linewidth}{!}{%
\begin{minipage}{1.4\textwidth}
\begin{align}
F_{\overline{k(\frakp)}} 
&=
\left(
\res_{A[u^{\pm 1}] / A}
\left(
\ker\left(
\Hom^{A[u^{\pm 1}]}_{\Coh(\calL (Z/G))^{tS^1}}(\calF,\calG)
\to
\Hom^{A[u^{\pm 1}]}_{\Coh(\calL (\wih{U_Z}/G))^{tS^1}}
\left((i_*)^{tS^1}\calF,(i_*)^{tS^1}\calG\right)
\right)
\right)
\right)_{A_{\overline{k(\frakp)}}}
\nonumber
\\
&\simeq
\res_{A_{\overline{k(\frakp)}}[u^{\pm 1}] / A_{\overline{k(\frakp)}}}
\left(
\ker\left(
\Hom^{A[u^{\pm 1}]}_{\Coh(\calL (Z/G))^{tS^1}}(\calF,\calG)
\to
\Hom^{A[u^{\pm 1}]}_{\Coh(\calL (\wih{U_Z}/G))^{tS^1}}
\left((i_*)^{tS^1}\calF,(i_*)^{tS^1}\calG\right)
\right)_{A_{\overline{k(\frakp)}}[u^{\pm 1}]}
\right)
\label{eq:fiber-2}
\\
&\simeq
\res_{A_{\overline{k(\frakp)}}[u^{\pm 1}] / A_{\overline{k(\frakp)}}}
\left(
\ker\left(
\left(\Hom^{A[u^{\pm 1}]}_{\Coh(\calL (Z/G))^{tS^1}}(\calF,\calG)\right)_{A_{\overline{k(\frakp)}}[u^{\pm 1}]}
\to
\left(\Hom^{A[u^{\pm 1}]}_{\Coh(\calL (\wih{U_Z}/G))^{tS^1}}
((i_*)^{tS^1}\calF,(i_*)^{tS^1}\calG)\right)_{A_{\overline{k(\frakp)}}[u^{\pm 1}]}
\right)
\right)
\label{eq:fiber-3}\\
&\simeq 
\res_{A_{\overline{k(\frakp)}}[u^{\pm 1}] / A_{\overline{k(\frakp)}}}
\left(\ker\left( \Hom^{A_{\overline{k(\frakp)}}[u^{\pm 1}]}_{\mathsf{Coh}\left((\mathcal{L}(Z /G))_{A_{\overline{k(\frakp)}}}\right)^{tS^1}}\left(\mathcal{F}_{A_{\overline{k(\frakp)}}},\mathcal{G}_{A_{\overline{k(\frakp)}}}\right)\to \Hom^{A_{\overline{k(\frakp)}}[u^{\pm 1}]}_{\mathsf{Coh}\left((\mathcal{L}(\wih{U_Z} /G))_{A_{\overline{k(\frakp)}}}\right)^{tS^1}}\left(\left((i_*)^{tS^1}\mathcal{F}\right)_{A_{\overline{k(\frakp)}}},
\left((i_*)^{tS^1}\mathcal{G}\right)_{A_{\overline{k(\frakp)}}}\right) \right)  \right) \label{eq:fiber-4} \\
&\simeq 
\res_{A_{\overline{k(\frakp)}}[u^{\pm 1}] / A_{\overline{k(\frakp)}}}
\left(\ker\left( \Hom^{A_{\overline{k(\frakp)}}[u^{\pm 1}]}_{\mathsf{Coh}\left((\mathcal{L}(Z /G))_{A_{\overline{k(\frakp)}}}\right)^{tS^1}}\left(\mathcal{F}_{A_{\overline{k(\frakp)}}},\mathcal{G}_{A_{\overline{k(\frakp)}}}\right)\to \Hom^{A_{\overline{k(\frakp)}}[u^{\pm 1}]}_{\mathsf{Coh}\left((\mathcal{L}(\wih{U_Z} /G))_{A_{\overline{k(\frakp)}}}\right)^{tS^1}}\left(({i_{\frakp}}_*)^{tS^1} (\mathcal{F}_{A_{\overline{k(\frakp)}}}),
({i_{\frakp}}_*)^{tS^1}(\mathcal{G}_{A_{\overline{k(\frakp)}}})\right) \right)  \right) \label{eq:fiber-5}
\end{align}
\end{minipage}
}
\end{center}
where
\begin{itemize}[label=--]
    \item the objects $(\mathcal{L}(Z/G))_{A_{\overline{k(\frakp)}}}$ and $(\mathcal{L}(\wih{U_Z} /G))_{A_{\overline{k(\frakp)}}}$ appearing in \zcref[noname]{eq:fiber-4} 
    denote the base changes of $\calL(Z/G),\calL(\wih{U_Z}/G) \in \mathsf{Stk}_{/A}$, respectively, along the map $\Spec A_{\overline{k(\frakp)}} \to \Spec A$;
    \item the map $i_{\frakp}: \calL(Z/G)_{A_{\overline{k(\frakp)}}}\to \calL(\wih{U_Z}/G)_{A_{\overline{k(\frakp)}}}$ appearing in \zcref[noname]{eq:fiber-5}
    denotes the base change of $i$ along $\Spec A_{\overline{k(\frakp)}} \to \Spec A$;
\end{itemize}
and where
\begin{itemize}[label=--]
    \item equivalence \zcref[noname]{eq:fiber-2} holds because restriction of scalars commutes
    with base change;\footnotemark
    \footnotetext{More precisely, suppose that $R \to R'$ and $R \to S$ are maps in $\CAlg_k$, and let $S' \coloneq R' \otimes_R S$.
    There is a natural equivalence of functors, $(- \otimes_R R') \circ \res_{S / R} \simeq \res_{S' / R'} \circ (- \otimes_R S)$.}
    \item equivalence \zcref[noname]{eq:fiber-3} holds because $A\mod$ is a stable category (meaning
    kernels are also finite colimits), and $- \otimes_{A[u^{\pm 1}]} A_{\overline{k(\frakp)}}[u^{\pm 1}]$ is a left adjoint;
    \item equivalence \zcref[noname]{eq:fiber-4} follows from \zcref{prop: Tate base change of Coh is fully faithful} and the fact that restriction of scalars $\res_{A_{\overline{k(\frakp)}}[u^{\pm 1}] / A_{\overline{k(\frakp)}}}$
	    satisfies the projection formula 
	    \begin{align*}
		    \res_{A_{\overline{k(\frakp)}}[u^{\pm 1}] / A_{\overline{k(\frakp)}}}(M)\otimes _AN\simeq \res_{A_{\overline{k(\frakp)}}[u^{\pm 1}] / A_{\overline{k(\frakp)}}}(M\otimes _{A[u^{\pm 1}]}Q[u^{\pm 1}]);
	    \end{align*} 
    \item and equivalence \zcref[noname]{eq:fiber-5} follows from the naturality of $\theta$ in \zcref{prop: Tate base change of Coh is fully faithful}.
    
\end{itemize}

For any stack $X\in \mathsf{Stk}_{/A}$, every square in the commutative diagram
	\[
	\begin{tikzcd}
		X_{A_{\overline{k(\frakp)}}}\arrow[r]\arrow[d]&\Spec A_{\overline{k(\frakp)}}\arrow[r]\arrow[d]&\Spec\overline{k(\frakp)}\arrow[d]\\
		X\arrow[r]&\Spec A\arrow[r]&\Spec k
	\end{tikzcd}
	\] 
is Cartesian. Thus, we have a canonical isomorphism
	\begin{align*}
		X_{A_{\overline{k(\frakp)}}}\simeq X_{\overline{k(\frakp)}}.
	\end{align*} 
Now, for any $X \in \Stk_k$, we have $(X_{\overline{k(\frakp)}})_{\dR} \simeq (X_{\dR})_{\overline{k(\frakp)}}$,
from which we deduce
\[(\wih{U_Z})_{\overline{k(\frakp)}} \simeq \wih{(U_{\overline{k(\frakp)}})_{Z_{\overline{k(\frakp)}}}}.\]
To compute the loop spaces of these stacks, we need to distinguish between the loop space functor $\mathcal{L}$ with respect to the base $\Spec k$ and that with respect to the base $\Spec \overline{k(\frakp)}$. Denote the former by $\mathcal{L}\coloneq\mathcal{L}_k$ and the latter by $\mathcal{L}_{\overline{k(\frakp)}}$. Using that $ \calL_k$ and $\mathcal{L}_{\overline{k(\frakp)}}$ preserve products, we therefore obtain equivalences
\begin{align*}
\calL_k(Z/G)_{A_{\overline{k(\frakp)}}}\simeq\calL_k(Z/G)_{\overline{k(\frakp)}} \simeq \calL_{\overline{k(\frakp)}}((Z/G)_{\overline{k(\frakp)}}) \simeq \calL_{\overline{k(\frakp)}}(Z_{\overline{k(\frakp)}}/G_{\overline{k(\frakp)}}), \;\text{and} \\
\calL_k(\wih{U_Z}/G)_{A_{\overline{k(\frakp)}}}\simeq\calL_k(\wih{U_Z}/G)_{\overline{k(\frakp)}} \simeq \calL_{\overline{k(\frakp)}}((\wih{U_Z}/G)_{\overline{k(\frakp)}}) \simeq \calL_{\overline{k(\frakp)}}\left(\wih{{U_{\overline{k(\frakp)}}}_{Z_{\overline{k(\frakp)}}}}/G_{\overline{k(\frakp)}}\right).
\end{align*}
With these equivalences in hand, we continue by 
computing the expression $\zcref[noname]{eq:fiber-4} \otimes _{A_{\overline{k(\frakp)}}}P\otimes _{A_{\overline{k(\frakp)}}}P^\vee$. In what follows,
\[\mathscr{C}\coloneq \Coh\left(\calL_{\overline{k(\frakp)}}\left(Z_{\overline{k(\frakp)}}/G_{\overline{k(\frakp)}}\right)\right),\quad\text{and}\quad
\mathscr{D}\coloneq \Coh\left(\calL_{\overline{k(\frakp)}}\left(\wih{(U_{\overline{k(\frakp)}})_{Z_{\overline{k(\frakp)}}}}/G_{\overline{k(\frakp)}}\right)\right):\]

\begin{center}
\resizebox{\linewidth}{!}{%
\begin{minipage}{1.3\textwidth}
\begin{align}
&\simeq 
\res_{A_{\overline{k(\frakp)}}[u^{\pm 1}] / A_{\overline{k(\frakp)}}}
\left(\ker\left( 
\Hom^{A_ {\overline{k(\frakp)}}[u^{\pm 1}]}_{\mathscr{C}^{tS^1}}\left(\mathcal{F}_{ {\overline{k(\frakp)}}},\mathcal{G}_{ {\overline{k(\frakp)}}}\right)\to\Hom^{A_ {\overline{k(\frakp)}}[u^{\pm 1}]}_{\mathscr{D}^{tS^1}}\left(({i_{\frakp}}_*)^{tS^1}\mathcal{F}_{ {\overline{k(\frakp)}}},({i_{\frakp}}_*)^{tS^1}\mathcal{G}_{ {\overline{k(\frakp)}}}\right) \right)  \right) \otimes _{A_{\overline{k(\frakp)}}}P\otimes _{A_{\overline{k(\frakp)}}}P^\vee
\label{eq:fiber-6}
\\
&\simeq 
\ker
\left(\res_{A_{\overline{k(\frakp)}}[u^{\pm 1}] / A_{\overline{k(\frakp)}}}\left( \Hom^{A_ {\overline{k(\frakp)}}[u^{\pm 1}]}_{\mathscr{C}^{tS^1}}\left(\mathcal{F}_{ {\overline{k(\frakp)}}},\mathcal{G}_{ {\overline{k(\frakp)}}}\right)\to \Hom^{A_ {\overline{k(\frakp)}}[u^{\pm 1}]}_{\mathscr{D}^{tS^1}}\left(({i_{\frakp}}_*)^{tS^1}\mathcal{F}_{ {\overline{k(\frakp)}}},({i_{\frakp}}_*)^{tS^1}\mathcal{G}_{ {\overline{k(\frakp)}}}\right) \right)  \right) \otimes _{A_{\overline{k(\frakp)}}}P\otimes _{A_{\overline{k(\frakp)}}}P^\vee
\label{eq:fiber-7}
\\
\begin{split}
&\simeq
\ker
\biggr(
\res_{A_{\overline{k(\frakp)}}[u^{\pm 1}] / A_{\overline{k(\frakp)}}}
\biggr(
\left(
\Hom^{A_{\overline{k(\frakp)}}[u^{\pm 1}]}_{\mathscr{C}^{tS^1}}
(\eta^*(\calF'_{\overline{k(\frakp)}}),\eta^*(\calG'_{\overline{k(\frakp)}}))
\right)
\otimes_{A_{\overline{k(\frakp)}}[u^{\pm 1}]}
\eta^*P[u]
\otimes_{A_{\overline{k(\frakp)}}[u^{\pm 1}]}
\eta^*P^{\vee}[u]\\
&\quad\longrightarrow
\left(
\Hom^{A_{\overline{k(\frakp)}}[u^{\pm 1}]}_{\mathscr{D}^{tS^1}}
\left(({i_{\frakp}}_*)^{tS^1}\eta^*(\calF'_{\overline{k(\frakp)}}),({i_{\frakp}}_*)^{tS^1}\eta^*(\calG'_{{\overline{k(\frakp)}}})
\right)
\right)
\otimes_{A_{\overline{k(\frakp)}}[u^{\pm 1}]}
\eta^*P[u]
\otimes_{A_{\overline{k(\frakp)}}[u^{\pm 1}]}
\eta^*P^{\vee}[u]
\biggr)
\biggr)
\end{split} 
\label{eq:fiber-8}
\\
&\simeq
\ker
\left(
\Hom^{A_{\overline{k(\frakp)}}}_{\mathscr{C}^{tS^1}}
(\eta^*\calF'_{A_{\overline{k(\frakp)}}},\eta^*\calG'_{A_{\overline{k(\frakp)}}})
\otimes_{A_{\overline{k(\frakp)}}}
P
\otimes_{A_{\overline{k(\frakp)}}}
P^{\vee}
\to
\Hom^{A_{\overline{k(\frakp)}}}_{\mathscr{D}^{tS^1}}
\left(\eta^*({i_{\frakp}}_*)^{S^1}\calF'_{A_{\overline{k(\frakp)}}},\eta^*({i_{\frakp}}_*)^{S^1}\calG'_{A_{\overline{k(\frakp)}}}\right)
\otimes_{A_{\overline{k(\frakp)}}}
P
\otimes_{A_{\overline{k(\frakp)}}}
P^{\vee}
\right)
\label{eq:fiber-9}
\\
\begin{split}
&\simeq   
\ker\biggr(
\Hom^{A_{\overline{k(\frakp)}}}_{\mathscr{C}^{tS^1}}
\left(\eta^*(\mathcal{F}'_{\overline{k(\frakp)}}) 
\otimes_{A_{\overline{k(\frakp)}}[u^{\pm 1}]} \eta^*P[u],\eta^*(\mathcal{G}'_{\overline{k(\frakp)}}) \otimes_{A_{\overline{k(\frakp)}}[u^{\pm 1}]} \eta^*P[u]
\right)\\
&\quad\longrightarrow
\Hom^{A_{\overline{k(\frakp)}}}_{\mathscr{D}^{tS^1}}
\left(({i_{\frakp}}_*)^{S^1}(\eta^*(\mathcal{F}'_{\overline{k(\frakp)}}) 
\otimes_{A_{\overline{k(\frakp)}}[u^{\pm 1}]} \eta^*P[u]),({i_{\frakp}}_*)^{S^1}(\eta^*(\mathcal{G}'_{\overline{k(\frakp)}}) \otimes_{A_{\overline{k(\frakp)}}[u^{\pm 1}]} \eta^*P[u])
\right)
\biggr),
\end{split} 
\label{eq:fiber-10}
\end{align}
\end{minipage}
}
\end{center}
where 
\begin{itemize}[label=--]
    \item equivalence \zcref[noname]{eq:fiber-6} follows from the preceding discussion;
    \item equivalence \zcref[noname]{eq:fiber-7} holds because, as a right adjoint, $\res_{A[u^{\pm 1}] / A}$ commutes with
    limits (e.g. kernels)
\item \zcref[noname]{eq:fiber-8} holds because restriction of scalars along $B\to C$ is tensoring with the $(B,C)$-bimodule $C$, hence commutes with tensor products over the base $B$;
    \item equivalence \zcref[noname]{eq:fiber-9} follows from the fact that the $A_{\overline{k(\frakp)}}$-linear $\Hom$ objects
	    are equivalent to image of the $A_{\overline{k(\frakp)}}[u^{\pm 1}]$-linear $\Hom$ objects under restriction of scalars;
    \item and equivalence \zcref[noname]{eq:fiber-10} follows from \zcref{lem: tensoring Homs with perfect complex commutes with S^1-invariants}.
\end{itemize}

Because $P,P^\vee$ are bounded, the sheaves $\mathcal{F}'_{\overline{k(\frakp)}}\otimes _{A_{\overline{k(\frakp)}}}P,\mathcal{G}'_{\overline{k(\frakp)}}\otimes _{A_{\overline{k(\frakp)}}}P$ are coherent,
as well as supported on the fiber over $\a$ by the discussion above,
so they belong to $\mathsf{Coh}((\mathcal{L}_{\overline{k(\frakp)}})^u_\alpha(Z_{\overline{k(\frakp)}}/G_{\overline{k(\frakp)}}))^{tS^1}$.\footnote{This can be easily seen by an argument similar to that in \zcref{lem:support-is-formal-completion}.}
Thus, we can apply \zcref{prop: Kashiwara's lemma for unipotent loop spaces} to obtain that
	 \begin{equation}
     \label{eqn: acyclic}
	 	F_{\overline{k(\frakp)}}\otimes _{A_{\overline{k(\frakp)}}}P\otimes _{A_{\overline{k(\frakp)}}}P^\vee\simeq 0.
	 \end{equation} 
	 Base changing this $A_{\overline{k(\frakp)}}$-module along $A_{\overline{k(\frakp)}}\to {\overline{k(\frakp)}}$, we obtain an
  object that belongs to the thick subcategory generated by $F_{\overline{k(\frakp)}}$, again by perfectness of $P$ over $A_{\overline{k(\frakp)}}$.
  By definition of the Koszul complex $P$, base changing it along $A_{\overline{k(\frakp)}}\to {\overline{k(\frakp)}}$ sends the Koszul differential to zero. Thus, we see that the complex \zcref[noname]{eqn: acyclic} is a direct sum of shifts of $F\otimes_A k_{\overline{k(\frakp)}}$. 
  We thus conclude that
	 \begin{align*}
	 	 F \otimes _A{\overline{k(\frakp)}} = F_{\overline{k(\frakp)}} \simeq 0,
	 \end{align*} 
  as desired.

  Finally, in order to conclude, we apply \zcref{lem: fully faithful iff fully faithful on pure tensors}.
\end{proof}

\subsubsection{Essential surjectivity}

\begin{prop}
	\label{prop: essential surjectivity in projective case}
	Let $i:Z\hookrightarrow X$ be a closed substack of a stack $X$ such that $X$ is an algebraic stack almost of finite type with affine diagonal and $i$ is almost of finite presentation. 
	Denote by $\wih{i}: Z \to \wih{X_Z}$ the inclusion of $Z$ into the formal completion,
	and by $\calL \wih{i}: \calL Z \to \calL \wih{X_Z}$ the corresponding map on loop spaces.
	Then the image of the functor $(\calL \wih{i}_*)^{tS^1}: \Coh(\calL Z)^{tS^1} \to \Coh(\calL \wih{X_Z})^{tS^1}$ thickly generates
    the target.
\end{prop}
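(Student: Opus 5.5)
The plan is to reduce generation of $\Coh(\calL\wih{X_Z})^{tS^1}$ in three steps: first to $S^1$-invariants, then to hearts, and finally to a classical dévissage along powers of the ideal sheaf of $(\calL Z)^{\cl}$. First, by \cite[Lemma 2.7]{Perry} (as in the proof of \zcref{lem: fully faithful iff fully faithful on pure tensors}), the image of $\eta^*:\Coh(\calL\wih{X_Z})^{S^1}\to\Coh(\calL\wih{X_Z})^{tS^1}$ thickly generates the target. By the commutative square relating $(\calL\wih i_*)^{S^1}$ and $(\calL\wih i_*)^{tS^1}$ (first part of the proof of \zcref{lem: rewriting morphisms of Tate}), it therefore suffices to show the following: the image of $(\calL\wih i_*)^{S^1}:\Coh(\calL Z)^{S^1}\to\Coh(\calL\wih{X_Z})^{S^1}$ thickly generates the target.

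Second, I reduce to hearts.
\begin{itemize}
\item By \zcref{lem:geometric S^1-actions induce topological S^1-actions}, the loop-rotation action is compatible with the $t$-structure. So $\Coh(\calL\wih{X_Z})^{S^1}$ carries the induced $t$-structure, for which the forgetful functor is $t$-exact.
\item Every object of $\Coh(\calL\wih{X_Z})^{S^1}$ is bounded. By \zcref{lem:truncation functors are equivariant}, its truncation triangles are $S^1$-equivariant, so it lies in the thick subcategory generated by shifts of objects in $(\Coh(\calL\wih{X_Z})^{S^1})^{\heart}$.
\item By \zcref{lem: S^1 invariants commutes with heart}, this heart is $(\Coh(\calL\wih{X_Z})^{\heart})^{S^1}$.
\end{itemize}
Since the heart is a $1$-category, an $S^1=B\bbZ$-action on it is just a natural automorphism $\rho$ of the identity functor. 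An equivariant object is then an object $c$ together with the condition $\rho_c=\id_c$, as Hom-sets are discrete.

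Third, I run the dévissage.
\begin{itemize}
\item By \zcref{lem:loops-commute-with-formal-completion}, $\calL\wih{X_Z}\simeq\wih{\calL X_{\calL Z}}$. By the ind-presentation $\colim_n Z_n$ of \zcref{lem: formal completions are ind-geometric}, applied to $\calL Z\hook\calL X$ (a closed immersion by \zcref{cor: loops of complement}), every $\calF$ in $\Coh(\calL\wih{X_Z})^{\heart}$ is a coherent sheaf on $(\calL X)^{\cl}$ annihilated by $\calI^n$ for some $n$. Here $\calI$ is the ideal sheaf of $(\calL Z)^{\cl}$.
\item Consider the finite filtration $\calF\supset\calI\calF\supset\cdots\supset\calI^n\calF=0$. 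Each $\calI^k\calF$ is a subobject of $\calF$ in the heart.
\item Naturality of $\rho$ forces $\rho_{\calI^k\calF}$ to be the restriction of $\rho_\calF=\id$, hence the identity. The same holds on the quotients $\calI^k\calF/\calI^{k+1}\calF$. So all graded pieces are canonically $S^1$-equivariant, and the filtration lives in $(\Coh^{\heart})^{S^1}$.
\item Each graded piece is killed by $\calI$. It is therefore $\wih i_*$ of an object of $\Coh(\calL Z)^{\heart}$.
\item Pushforward along the closed immersion is $t$-exact and fully faithful on hearts, and it intertwines the natural automorphisms $\rho$. Hence the triviality of $\rho$ descends, and each graded piece lies in the image of $(\Coh(\calL Z)^{\heart})^{S^1}$.
\end{itemize}
Since $\calF$ is an iterated extension of its graded pieces, this gives thick generation.

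The main obstacle I expect is the first claim of the third step: describing the heart of $\Coh(\calL\wih{X_Z})$ as coherent sheaves on the classical stack that are scheme-theoretically supported on some infinitesimal neighbourhood of $(\calL Z)^{\cl}$. I would deduce this from \cite[Proposition 5.5]{CautisWilliams}, which gives $\Coh(\wih{\calL X_{\calL Z}})\simeq\colim_n\Coh(Z_n)$, together with $t$-exactness of the transition pushforwards. I would then compare each $Z_n^{\cl}$ with the classical infinitesimal neighbourhoods, checking this smooth-locally on an affine atlas. A secondary point to verify is that the identification of equivariant objects in the heart with "$\rho_c=\id$" is compatible with pushforward. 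This should follow from the $S^1$-equivariance of $\calL\wih i$.
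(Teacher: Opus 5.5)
Your proposal is correct and follows the same route as the paper: reduce from $tS^1$- to $S^1$-invariants by base change along $k[u]\to k[u^{\pm 1}]$, pass to hearts using boundedness of the induced $t$-structure, and conclude by the d\'evissage $\calF\supset\mathcal{I}\calF\supset\cdots\supset\mathcal{I}^N\calF=0$ along the ideal sheaf of $(\calL Z)^{\cl}$.

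The differences are only in the details. The paper obtains $\mathcal{I}^N\calF=0$ from $\Coh(\wih{\calL X_{\calL Z}})\simeq\Coh_{\calL Z}(\calL X)$ together with Noetherianity of $(\calL X)^{\cl}$, which sidesteps the comparison of $Z_n^{\cl}$ with classical infinitesimal neighbourhoods that you flag as the main obstacle. The paper also only asserts that the filtration can be taken $S^1$-equivariantly because $\mathcal{I}$ is $S^1$-stable; your observation that equivariance in the $1$-categorical heart is the property $\rho_c=\id_c$, inherited by subobjects and quotients, makes that step explicit.
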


\begin{proof}
	We begin by showing that $\Coh(\calL \wih{X_Z})^{S^1}$ is generated as a thick category
	by the image of the restriction of $(\calL \wih{i}_*)^{\tT}$ to $\Coh(\calL Z)^{S^1}$. For this,
	it suffices by \zcref{lem: thick generation preserved by base change} to show that
	$\Coh(\calL \wih{X_Z})^{S^1}$ is generated as a thick category by the image
	of the functor $\Coh(\calL Z)^{S^1} \to \Coh(\calL \wih{X_Z})^{S^1}$.
	
	From the natural isomorphism $\calL \wih{X_Z} \to \wih{\calL X_{\calL Z}}$ 
	(\zcref{lem:loops-commute-with-formal-completion}), we have
	the equivalence $\Coh(\calL \wih{X_Z}) \simeq \Coh(\wih{\calL X_{\calL Z}})$.
	We recall that pushforward along the map $\wih{\calL X_{\calL Z}} \to \calL X$ induces an
	equivalence $\Coh(\wih{\calL X_{\calL Z}}) \xrightarrow{\simeq} \Coh_{\calL Z}(\calL X) \subset \Coh(\calL X)$,
	so it suffices to show that the target of $\Coh(\calL Z)^{S^1} \to \Coh_{\calL Z}(\calL X)^{S^1}$
	induced by $\calL Z \to \calL X$ is generated as a thick category by the image.
	
	The $t$-structures on both $\Coh(\calL Z)^{S^1}$ and
	$\Coh_{\calL Z}(\calL X)^{S^1} \simeq \Coh(\wih{\calL X_{\calL Z}})^{tS^1}$ are bounded since they're induced
	by those on $\Coh(\calL Z)$ and $\Coh_{\calL Z}(\calL X)$, respectively, so it follows
	from \zcref{lem: formal lemma} that it suffices just to show that the heart $(\Coh_{\calL Z}(\calL X)^{S^1})^{\heart}$ is generated
	by the image of $\Coh(\calL Z)^{S^1}$.
	
	Note that \zcref{lem: S^1 invariants commutes with heart} and \zcref{lem: heart of derived is heart of classical} 
	together imply that
	\[(\Coh_{\calL Z}(\calL X)^{S^1})^{\heart} \simeq (\Coh_{(\calL Z)^{\cl}}((\calL X)^{\cl})^{\heart})^{S^1}.\]
	There is, of course, a functor $\oblv: (\Coh_{(\calL Z)^{\cl}}((\calL X)^{\cl})^{\heart})^{S^1} \to \Coh_{(\calL Z)^{\cl}}((\calL X)^{\cl})^{\heart}$
	which forgets the $S^1$-equivariant structure of an object on the left. Since $X$ is a $1$-stack,
	$\calL X$ is also a $1$-stack, meaning that the restriction of $\calL X: \CAlg_k^{\cn} \to \Spc$
	to classical rings $\CAlg_k^{\cl} \to \Spc$ is an ordinary stack in the sense
	of e.g. \cite{OlssonStacks}. In this case, $(\calL X)^{\cl}$ possesses a structure
	sheaf of commutative rings, $\calO_{(\calL X)^{\cl}}$. 
	The $t$-structure on $\Coh((\calL X)^{\cl})$, hence
	on $\Coh_{(\calL Z)^{\cl}}((\calL X)^{\cl})$, is compatible with the fully faithful embedding 
	$\Coh((\calL X)^{\cl}) \hook \calO_{(\calL X)^{\cl}}\mod$.
	The $t$-structure on the latter is the canonical one on any category of modules over a sheaf of commutative rings,
	so an object of $\F \in \Coh_{(\calL Z)^{\cl}}((\calL X)^{\cl})^{\heart}$ is a discrete module over $\calO_{(\calL X)^{\cl}}$.
	Moreover, because $\F$ is supported on $(\calL Z)^{\cl}$, the defining ideal of $(\calL Z)^{\cl}$,
	\[I := \ker\left(\calO_{(\calL X)^{\cl}} \to (\calL  i)_*\calO_{(\calL Z)^{\cl}}\right),\]
	is contained in $\sqrt{\Ann \F}$.
	By \zcref{lem: loop space is Noetherian}, the classical stack $(\calL X)^{\cl}$
	is Noetherian, so $I$ is locally finitely generated. For a local chart $U_a$, let $I_a = (f_{a1}, \ldots, f_{am})$ be a
	finite local presentation of $I$. Since each $f_{ai}$ belongs to $\sqrt{\Ann \F}$, there exists
	in a neighborhood of each point $p \in U_a$ an integer $n_{ai} \geq 1$ such that
	${f_{ai}}^{n_{ai}} \F = 0$. Set $N_a=\sum_i n_{ai}$, so that on the chart $U_a$, $I^N\F=0$. Since $(\calL X)^{\cl}$ is quasi-compact, we may pick a maximum such integer $N=\max N_a$
	for a finite open cover of $X$,
	so that globally $I^N \F = 0$.
	
	Thus, $\F$ possesses a finite filtration $\F \supset I \F \supset \ldots \supset I^N \F$, each
	of whose associated graded pieces are annihilated by $I$. Thus, each associated graded
	piece $I^i \F/I^{i+1} \F$ belongs to the image of $i_*: \Coh((\calL Z)^{\cl}) \to \Coh((\calL X)^{\cl})$.
	Since each term of the filtration on $\F$ fits into an exact sequence,
	\[0 \to I^{i+1} \F \to I^i \F \to I^i \F/I^{i+1}\F \to 0,\]
	we see by induction that $\F$ itself belongs to the thick subcategory
	of $\Coh((\calL X)^{\cl})$ generated by the image of $i_*$.
	
	Now, $I$ is an $S^1$-equivariant subsheaf of $\calO_{(\calL X)^{\cl}}$.
	Thus, the argument above can be performed $S^1$-equivariantly to
	show that every $\wit{F} \in (\Coh_{(\calL Z)^{\cl}}((\calL X)^{\cl})^{\heart})^{S^1}$ is in the thick subcategory generated
	by the image of $(\calL i^{\cl}_*)^{S^1}: \Coh((\calL Z)^{\cl})^{S^1} \to \Coh((\calL X)^{\cl})^{S^1}$.
	This in turn finishes the argument that $\Coh_{\calL Z}(\calL X)^{S^1}$ is thickly
	generated by the image of $\Coh(\calL Z)^{S^1}$ under $(\calL i_*)^{S^1}$.
	
	To conclude the proof, we apply \zcref{lem: thick generation preserved by base change}
	to base change along the map $k[u] \to \ku$ and the functor
	$(\calL i_*)^{S^1}: \Coh(\calL Z)^{S^1} \to \Coh_{\calL Z}(\calL X)^{S^1}$.
\end{proof}

\begin{lem}
	\label{lem: thick generation preserved by base change}
	Let $R$ be an $\bbE_{\infty}$-algebra, and let $R \to R'$ be an $R$-algebra.
	Suppose that $F: \scrC \to \scrD$ is an $R$-linear functor such that $\scrD$
	is thickly generated by the image of $F$. Then $\scrD \otimes_R R'$ is thickly
	generated by the image of $F \otimes_R R'$.
\end{lem}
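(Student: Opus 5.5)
The plan is to describe the base change $\scrD \otimes_R R'$ concretely and to use only two facts: extension of scalars is exact, and every object of the base change is built from pure tensors. I work in $\dgcat$-type settings, so small idempotent-complete stable $R$-linear categories. There $\scrD \otimes_R R'$ is the idempotent completion of the stable subcategory generated by the objects $d \otimes R'$ with $d \in \scrD$. Equivalently, it is the thick closure of the image of the extension of scalars functor $\beta_{\scrD}: \scrD \to \scrD \otimes_R R'$, and it has the same objects as $\Perf(R') \otimes_R \scrD$. This is the same kind of description as the one used from \cite[Lemma 2.7]{Perry} in \zcref{lem: fully faithful iff fully faithful on pure tensors}, now with $R \to R'$ in place of $k[u] \to \ku$.

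Naturality of extension of scalars gives a commutative square $\beta_{\scrD} \circ F \simeq (F \otimes_R R') \circ \beta_{\scrC}$, exactly as in the proof of \zcref{lem: rewriting morphisms of Tate}. Let $\calT \subset \scrD \otimes_R R'$ be the thick subcategory generated by the image of $F \otimes_R R'$. By the square, $\calT$ contains $\beta_{\scrD}(F(c))$ for every $c \in \scrC$.

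Next, the preimage $\beta_{\scrD}^{-1}(\calT)$ is a thick subcategory of $\scrD$. This holds because $\beta_{\scrD}$ is exact and preserves retracts, so the preimage of a thick subcategory is again closed under shifts, cones and retracts. The preimage contains the image of $F$, and by hypothesis that image thickly generates $\scrD$, so $\beta_{\scrD}^{-1}(\calT) = \scrD$. In other words, $\calT$ contains the whole image of $\beta_{\scrD}$. Since the image of $\beta_{\scrD}$ thickly generates $\scrD \otimes_R R'$, it follows that $\calT = \scrD \otimes_R R'$.

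The only real obstacle is justifying that the pure tensors $\beta_{\scrD}(d)$ thickly generate $\scrD \otimes_R R'$. This depends on the model of the tensor product. For the small (idempotent-complete) tensor product it follows from the construction, as in \cite[Lemma 2.7]{Perry}. In the presentable setting I would instead pass to compact objects and use $\Ind(\scrD) \otimes_R R'\mod \simeq \Ind(\scrD \otimes_R \Perf(R'))$.
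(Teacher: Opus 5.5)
Your argument is correct and is essentially the paper's proof. The paper's one-line justification, that $-\otimes_R R'$ commutes with the finite colimits and retracts used to build $\scrD$ out of $\im(F)$, is exactly your observation that $\beta_{\scrD}^{-1}(\calT)$ is a thick subcategory containing $\im(F)$. Your version also makes explicit a step the paper leaves implicit: that the pure tensors $\beta_{\scrD}(d)$ thickly generate $\scrD\otimes_R R'$, which \cite[Lemma 2.7]{Perry} supplies in the case $k[u]\to\ku$ where the lemma is actually applied.
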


\begin{proof}
	Indeed, since $-\otimes_R R'$ commutes with colimits, we see that the thick subcategory generated by the image of $F\otimes_R R'$ is precisely $\mathscr{D}\otimes_R R'$, since $\mathscr{D}$ is the thick subcategory generated by the image of $F$.
\end{proof}

\begin{lem}
	\label{lem: heart of derived is heart of classical}
	Let $X$ be a coherent derived algebraic $1$-stack,
	and let $Z \hook X$ be a closed derived substack.
	Then, pushforward along the natural map $j:\iota(X^{\cl})\footnotemark 
	\footnotetext{Here $\iota$ denote the functor from classical
		stacks to derived stacks given by left Kan extension along
		the embedding $\CAlg_k^{\cl} \hook \CAlg^{\cn}$.} \to X$ 
	induces an equivalence 
	\begin{equation}
		\label{eqn: equivalence of Cohs}
		\Coh_{Z^{\cl}}(X^{\cl})^{\heart} \xrightarrow{\simeq} (\Coh_Z(X))^{\heart}.
	\end{equation}
	Moreover, if $X$ is equipped with an $S^1$-action such that
	$Z$ is an $S^1$-invariant substack,
	then $X^{\cl}$ possesses a natural induced $S^1$-action from that on $X$; the canonical $t$-structures
	on $\Coh(X)$ and $\Coh(X^{\cl})$ are compatible with the induced $S^1$-actions on $\Coh(X)$
	and $\Coh(X^{\cl})$; and the equivalence \zcref[noname]{eqn: equivalence of Cohs} is canonically $S^1$-equivariant.
\end{lem}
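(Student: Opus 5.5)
The plan is to reduce both sides to statements about discrete quasi-coherent modules on the classical truncation, and then to handle the $S^1$-equivariance by naturality.

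\textbf{Step 1: the non-equivariant equivalence.} The map $j:\iota(X^{\cl})\to X$ is a closed immersion. It is affine, and on structure sheaves it is $\calO_X\to \pi_0\calO_X = H^0(\calO_X)$. Hence $j_*$ is $t$-exact and conservative on $\QCoh$, and it sends coherent objects to coherent objects because $X$ is coherent. Restricted to hearts, $j_*$ identifies $\QCoh(X^{\cl})^{\heart}$ with the modules over $H^0(\calO_X)$. By the definition of the $t$-structure on a derived stack, this is exactly $\QCoh(X)^{\heart}$: an object of the heart is a discrete module, on which $\calO_X$ acts through $\pi_0$. To check this I would work smooth-locally on an affine atlas $\Spec B\to X$. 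There it is the standard fact that $\Mod_B^{\heart}\simeq \Mod_{\pi_0B}^{\heart}$, and it descends because the heart of a limit of $t$-exact diagrams is the limit of the hearts (compare \zcref{lem: S^1 invariants commutes with heart}). Coherence is local and matches under $j_*$, so $\Coh(X^{\cl})^{\heart}\simeq\Coh(X)^{\heart}$.

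\textbf{Step 2: supports.} A discrete coherent sheaf is supported on $Z$ exactly when its restriction to the open complement $U$ vanishes. The open complement of $Z^{\cl}$ in $X^{\cl}$ is $U^{\cl}=U\times_X X^{\cl}$, and restriction to it commutes with $j_*$ by base change. Both support conditions therefore correspond under the equivalence of Step 1, which gives \zcref[noname]{eqn: equivalence of Cohs}.

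\textbf{Step 3: equivariance.} Classical truncation $(-)^{\cl}$ (restriction to classical rings followed by $\iota$) is functorial and has a natural counit $\iota(X^{\cl})\to X$. Composing an action $BS^1\to\PreStk$ with $(-)^{\cl}$ therefore gives an $S^1$-action on $X^{\cl}$, and it makes $j$ canonically $S^1$-equivariant; the same holds for $Z^{\cl}\subset X^{\cl}$. By \zcref{lem:geometric S^1-actions induce topological S^1-actions} the actions on $\IndCoh$, and hence on $\Coh$, are compatible with the $t$-structures, since the generator acts by the identity autoequivalence. Because $j$ is equivariant, $j_*$ is an $S^1$-equivariant $t$-exact functor. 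By \zcref{lem:truncation functors are equivariant} it restricts to an equivariant functor on hearts and on the full subcategories with support, and an equivariant functor whose underlying functor is an equivalence is an equivariant equivalence.

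I expect the main obstacle to be Step 3, specifically producing the action on $X^{\cl}$ coherently when $S^1$ acts only through the Betti prestack. The concern is that $S^1_B\times X$ might not be classical. I would try to rely on the fact that $(S^1_B\times X)^{\cl}\simeq S^1_B\times X^{\cl}$, since $S^1_B$ is a colimit of points and truncation commutes with these colimits. The functorial description of the action as a map $BS^1\to\PreStk$ should then avoid this issue altogether.
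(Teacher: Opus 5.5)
Your proposal is correct and follows the paper's proof essentially step for step: a smooth-local identification of hearts (the paper cites \cite[Proposition 2.5.9.1]{SAG} where you use $\Mod_B^{\heart}\simeq\Mod_{\pi_0 B}^{\heart}$ plus descent of hearts), the Cartesian square $U^{\cl}\simeq U\times_X X^{\cl}$ with base change to match the support conditions, and equivariance of $j$ combined with \zcref{lem:truncation functors are equivariant}. Your construction of the action on $X^{\cl}$, by composing $BS^1\to\PreStk$ with the truncation functor and whiskering the counit, is a cleaner way to make precise the paper's ``pull back the action map'' description, and it disposes of your worry about $S^1_B\times X$.
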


\begin{proof}
		We first check that pushforward induces an equivalence $j_*:\mathsf{Coh}(X^{\cl})^\heartsuit\xrightarrow{\cong}\mathsf{Coh}(X)^\heartsuit.$ 
		The proof of \cite[Proposition 2.5.9.1]{SAG} applies to the map $j:X^{\cl}\to X$ locally
		in the smooth topology on the target. 
		Indeed, if $U=\Spec A \to X$ is a smooth affine chart, 
		then its base change $j_A: \Spec H^0(A)\to \Spec A$ has 1-connective fiber. This implies that 
		$j_{A*}:\QCoh(\Spec A)^\heart \to \QCoh(\Spec H^0(A))^\heart$ is an equivalence. 
		Moreover, it is apparent from
		the proof of \cite[Proposition 2.5.9.1]{SAG} that $j_{A*}$ 
		preserves the subcategory of finitely generated objects, 
		so the equivalence restricts to coherent objects. Since being coherent and
		being in the heart are local properties in the smooth topology, 
		it follows that $j_*$ is an equivalence, as desired.\footnotemark
		\footnotetext{The functors $\Psi_X,\Psi_{X^{\cl}}$ of \cite[Chapter 4 Section 1.1.2]{GR17} 
		(extended to stacks in Chapter 5 of \textit{op. cit.}) are $t$-exact 
		equivalences when restricted to the hearts by Proposition 1.2.2 of \textit{loc. cit.}}
	
	We now check that we obtain the induced equivalence (\ref{eqn: equivalence of Cohs}). We claim to have a Cartesian diagram
	\begin{equation}\label{cd:classical-open-complement}
		\begin{tikzcd}
			U^{\cl}\arrow[r,"i_{U^{\cl}}"]\arrow[d,"j_U"]&X^{\cl}\arrow[d,"j"]\\
			U\arrow[r,"i_U"]&X
		\end{tikzcd}.
	\end{equation}
	Indeed, there is a natural map $U^{\cl}\to U\times _X X^{\cl}$. To see that it is an equivalence,
	it is sufficient to check the following. Let $x:S\to X$ be an $S$-point of $X$. 
	Then $S\times _X Z=\emptyset$ if and only if $S\times _X Z^{\cl}=\emptyset$. 
	One direction is clear, to see the other, it suffices to observe that a stack $Y$ is empty 
	if and only if its classical truncation is. Indeed, any $T $-point $T\to Y$ restricts to a point 
	$T^{\cl}\to Y^{\cl}$, and $T^{\cl}$ is nonempty whenever $T$ is nonempty. Thus,
	\begin{align*}
		U^{\cl}\xrightarrow{\simeq}U\times _X X^{\cl}
	\end{align*} 
	is an equivalence as desired. We can then use base change for the diagram (\ref{cd:classical-open-complement}) 
	to see that if $\calF\in \mathsf{Coh}_{Z^{\cl}}(X^{\cl})^\heart$, then $i_U^!j_*\calF\cong 0$. 
	Thus, we see that $j_*$ defines a functor
	\begin{align*}
		\mathsf{Coh}_{Z^{\cl}}(X^{\cl})^\heart\to \mathsf{Coh}_Z(X)^\heart.
	\end{align*} 
	Clearly, the functor is fully faithful, because in the commutative diagram
	\[
	\begin{tikzcd}
		\mathsf{Coh}_{Z^{\cl}}(X^{\cl})^\heart\arrow[r]\arrow[d]& \mathsf{Coh}_Z(X)^\heart\arrow[d]\\
		\mathsf{Coh}(X^{\cl})^\heart\arrow[r,"\cong"]&\mathsf{Coh}(X)^\heart
	\end{tikzcd},
	\]
	the vertical arrows are fully faithful inclusions and the bottom horizontal arrow is an equivalence. 
	To see it is essentially surjective, suppose $\calF\in \mathsf{Coh}_Z(X)^\heart$. 
	Under the inclusion $\mathsf{Coh}_Z(X)^\heart \hookrightarrow \mathsf{Coh}(X)^\heart$ and the equivalence 
	\begin{align*}
		\mathsf{Coh}(X^{\cl})^\heart\xrightarrow{\cong}\mathsf{Coh}(X)^\heart,
	\end{align*} 
	there is an $\calF_0\in \mathsf{Coh}(X^{\cl})^\heart$ such that $j_*\calF_0= \calF$. 
	It is sufficient to show that $j_U^!j_*\calF_0\cong 0$, which follows from base change along (\ref{cd:classical-open-complement}).
	
	If $X$ is equipped with an $S^1$-action such that $Z $ is an $S^1$-invariant substack, 
	then there is an induced $S^1 $-action on $ X^{\cl}$ by pulling back the action map 
	$S^1\times X\to X$ to $X^{\cl}$. Clearly, the map $X^{\cl}\to X$ is $S^1$-equivariant. 
	The truncation functors on $\mathsf{QCoh}(X)$ and $\mathsf{QCoh}(X^{\cl})$ 
	are $S^1$-equivariant by \zcref{lem:truncation functors are equivariant}. Thus, the $S^1$-equivariant functor $j_*$ preserves the hearts as desired.
\end{proof}

\begin{lem}
	\label{lem: formal lemma}
	Suppose that $\scrC$ and $\scrD$ are stable $\infty$-categories such that
	$\scrD$ is equipped with a bounded $t$-structure. Then $\scrD$ is thickly generated by the image of
	a functor $F: \scrC \to \scrD$ if and only if its heart $\scrD^{\heart}$ is.
\end{lem}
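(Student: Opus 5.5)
The direction ``$\scrD$ generated $\Rightarrow$ heart generated'' is where the real content is, since it requires extracting hearts from the thick closure. I read the statement in its natural sense: the thick subcategory $\langle \im F\rangle$ contains $\scrD$ if and only if it contains $\scrD^{\heart}$. This is the sense used in \zcref{prop: essential surjectivity in projective case}, namely that $\scrD^{\heart}$ is contained in the thick closure of $\im(F)$. Read that way, the forward implication is trivial, because $\scrD^{\heart}\subset\scrD$. The plan is therefore to prove only the reverse implication: if every object of $\scrD^{\heart}$ lies in $\langle \im F\rangle$, then so does every object of $\scrD$.

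The argument is an induction on cohomological amplitude, using boundedness of the $t$-structure. Let $d\in\scrD$. Boundedness means $d\in\scrD^{\geq a}\cap\scrD^{\leq b}$ for some $a\le b$, and I induct on $b-a$. If $b=a$, then $d\simeq H^a(d)[-a]$ is a shift of an object of the heart. Since thick subcategories are closed under shifts, $d\in\langle\im F\rangle$. If $b>a$, I use the truncation fiber sequence
\[\tau^{\leq b-1} d\to d\to \tau^{\geq b} d .\]
Both outer terms have strictly smaller amplitude, so by induction they lie in $\langle \im F\rangle$. Thick subcategories are closed under extensions (cones and fibers), hence $d\in\langle\im F\rangle$.

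The only point needing care is to match the conventions: the paper writes the $t$-structure both homologically ($\scrC_{\le 0}$) and cohomologically, and I will fix one indexing so that truncations are stated correctly. I will also note that thick generation is closed under shifts, cofibers and retracts, and that no idempotent completeness of $\scrD$ is needed. I do not expect any step to be a real obstacle. If the intended meaning were instead that the heart is generated inside the abelian category $\scrD^{\heart}$ by extensions of objects $F(c)$ lying in the heart, the same induction still works, since short exact sequences in the heart are fiber sequences in $\scrD$.
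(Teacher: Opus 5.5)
Your proof is correct and is essentially the paper's argument: the forward direction is trivial since $\scrD^{\heart}\subset\scrD$, and the converse is an induction on the amplitude $b-a$ using the triangle $\tau^{\leq b-1}d\to d\to\tau^{\geq b}d\simeq H^b(d)[-b]$, together with closure of the thick subcategory under shifts and extensions. The one blemish is your opening sentence, which says the forward direction carries the real content. That contradicts your own correct observation that the forward direction is trivial and that the reverse implication is what needs the induction, so that sentence should be deleted.
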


\begin{proof}
	One direction is clear, so suppose $\mathscr{D}^\heart$ is generated by $\im(F)$. 
	Denote by $\mathscr{D}'$ the category thickly generated by $\im(F)$. 
	Let $d\in \mathscr{D}$. Since the $t$-structure on $\mathscr{D}$ is bounded, 
	there are integers $\ell \le k$ such that $d\in \mathscr{D}^{\ge \ell}\cap \mathscr{D}^{\le k}$.
	We proceed by induction on $n=k-\ell$. If $n=0$, then $d \simeq H^{\ell}(d)[-\ell]$.
	Since $H^{\ell}(d)\in \mathscr{D}^\heart \subset \mathscr{D}'$ 
	and $\mathscr{D}'$ is closed under shifts, we see that $d\in \mathscr{D}'$. For $n>0$, use the triangle
	\begin{align*}
		\tau^{\le k-1}d\to d\to H^k(d)[-k].
	\end{align*} 
	The first term lies in $\mathscr{D}'$ by induction. 
	The last term also lies in $\mathscr{D}'$ because 
	$H^k(d)\in \mathscr{D}^\heart$. Since $\mathscr{D}'$ 
	is closed under extensions, we conclude.
\end{proof}

\begin{lem}
	\label{lem: loop space is Noetherian}
	Suppose that $X$ is a Noetherian derived algebraic $1$-stack. Then
	$(\calL X)^{\cl}$ is a Noetherian classical stack. In particular, $(\calL X)^{\cl}$
	is quasi-compact.
\end{lem}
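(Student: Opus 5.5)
The plan is to reduce to an affine (or at least quasi-compact, finite type) chart and use the fact that the classical loop stack is the classical inertia stack, which is of finite type over $X^{\cl}$. Concretely, since $X$ is Noetherian, $X$ is quasi-compact and admits a smooth atlas $p: V = \Spec A \to X$ with $A$ Noetherian. The first step is to compare $(\calL X)^{\cl}$ with $(\calL (X^{\cl}))^{\cl}$. Write $\calL X \simeq X \times_{X\times X} X$. Since classical truncation is a right adjoint (restriction along $\CAlg^{\cl}_k \hook \CAlg^{\cn}_k$), it preserves limits as prestacks; combined with the fact that the classical truncation of a fiber product of derived stacks is the classical fiber product of truncations, we get $(\calL X)^{\cl} \simeq X^{\cl} \times^{\cl}_{X^{\cl}\times X^{\cl}} X^{\cl} = I_{X^{\cl}}$, the classical inertia stack.

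Next, I would show that $I_{X^{\cl}} \to X^{\cl}$ is of finite type. It is the base change of the diagonal $\Delta: X^{\cl} \to X^{\cl}\times X^{\cl}$ along $\Delta$ itself. For a Noetherian (in particular quasi-separated, locally finite type) algebraic $1$-stack the diagonal is representable by algebraic spaces, quasi-compact and locally of finite type (in the situations of the paper it is even affine, which is all we use via \zcref{lem: laft qc algebraic stacks with affine diagonal are QCA}); hence so is $I_{X^{\cl}} \to X^{\cl}$. Pulling back along the atlas, $I_{X^{\cl}}\times_{X^{\cl}} V^{\cl}$ is an algebraic space of finite type over the Noetherian scheme $\Spec H^0(A)$, hence quasi-compact and locally Noetherian; choosing an étale cover by a single affine $W$, $W$ is of finite type over $H^0(A)$, so Noetherian by the Hilbert basis theorem.

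Finally, $W \to I_{X^{\cl}}$ is a smooth surjection (composite of étale and the base change of $V \to X$), so $(\calL X)^{\cl}$ has a smooth atlas by a Noetherian affine scheme, and is quasi-compact with quasi-compact, quasi-separated diagonal (the diagonal of the inertia is a base change of diagonals of $X^{\cl}$). This is the definition of Noetherian, and quasi-compactness follows immediately. The main obstacle I expect is the first step: justifying carefully that classical truncation commutes with the derived fiber product defining $\calL X$ (i.e. that $\pi_0$ of a derived tensor product is the classical tensor product, checked on smooth charts), and pinning down which finiteness hypothesis on the diagonal is packaged into ``Noetherian''; if needed I would simply assume affine diagonal, as everywhere else in the paper.
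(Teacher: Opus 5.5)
Your proposal is correct and follows essentially the same route as the paper: identify $(\calL X)^{\cl}$ with $X^{\cl}\times_{X^{\cl}\times X^{\cl}}X^{\cl}$, observe that its projection to $X^{\cl}$ is a base change of the diagonal (representable, locally of finite type, quasi-compact and quasi-separated), and deduce that it is locally Noetherian, quasi-compact and quasi-separated. The step you flagged as the main obstacle is settled in the paper by a single remark: classical truncation is restriction along $\CAlg_k^{\cl}\hookrightarrow\CAlg_k^{\cn}$, a right adjoint, so it preserves limits — no chart computation and no affine-diagonal hypothesis is needed.
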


\begin{proof}
	Denote $X^{\cl}\eqcolon X_0$. Classical truncation preserves limits, as it is a right adjoint. Hence,
	\begin{align*}
		(\mathcal{L}X)^{\cl}\simeq X_0\times _{X_0\times X_0}X_0,
	\end{align*} 
	and the canonical map
	\begin{align*}
		p:(\mathcal{L}X)^{\cl}\to X_0
	\end{align*} 
	is a base change of the diagonal $\Delta_0$ of $X_0$. Now, $X_0$ is a classical algebraic stack; hence, its diagonal is representable by algebraic spaces and is locally of finite type. Moreover, $X_0$ is Noetherian; hence, its diagonal is qcqs, since $X_0$ is in particular quasi-separated. This immediately gives that $(\mathcal{L}X)^{\cl}$ is locally Noetherian. It is also qcqs, since this holds for $p$ and $X_0$. Thus, it is Noetherian as desired.
\end{proof}

\section{The linearly scalloped case}
\label{sec:the linearly scalloped case}

\subsection{Nisnevich descent}
\label{ssec:nisnevich descent}
In this section, we prove that potent
$\calD$-modules on linearly scalloped stacks satisfy
Kashiwara's Lemma.
Our proof relies heavily on the fact that $\Pot(-)$ satisfies
Nisnevich descent, which we prove below after
recalling the following technical lemma. Let $\mathsf{Rep}_{/X}$ denote the category of derived stacks $Y$ with affine diagonal over $X$ for which the structure map $Y\to X$ is representable by algebraic spaces and of finite presentation. 
\begin{rem}
	The category $\mathsf{Rep}_{/X}$ we define here differs slightly from the one in \cite{KhanRavi}, where they do not require objects to have affine diagonal.
\end{rem}

\begin{prop}\label{prop:potent-d-modules-nis-descend}
	Let $X$ be an algebraic stack almost of finite type and with affine diagonal\footnote{This condition is necessary for potent $\mathcal{D}$-modules to define a presheaf on $\Rep_{/X}$ \cite{ES-6FF}.} Then the presheaf $\Pot: \Rep_{/X} \to \DGCat_{\ku}$ satisfies descent
	with respect to the Nisnevich topology on $\Rep_{/X}$.
\end{prop}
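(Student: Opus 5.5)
The plan is to reduce Nisnevich descent for $\Pot$ to excision for Nisnevich squares, and then deduce excision from the localization sequences already established. Following \cite{KhanRavi}, Nisnevich descent on $\Rep_{/X}$ is equivalent to two conditions. First, $\Pot(\emptyset)\simeq 0$, which is clear. Second, $\Pot$ sends every Nisnevich square
\[\begin{tikzcd} W \arrow[r] \arrow[d] & V \arrow[d,"p"] \\ U \arrow[r,"j"] & Y \end{tikzcd}\]
to a Cartesian square. Here $j$ is an open immersion with reduced closed complement $Z$, $p$ is étale, and $p$ restricts to an isomorphism $p^{-1}(Z)_{\red}\xrightarrow{\simeq} Z_{\red}$.

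For such a square I will use the fiber sequences $\Pot_Z(Y)\to\Pot(Y)\to\Pot(U)$ and $\Pot_{p^{-1}Z}(V)\to\Pot(V)\to\Pot(W)$. These come from the localization sequence produced in the proof of \zcref{lem:support-is-formal-completion}, via \zcref{cor: loops of complement}, the $t$-boundedness lemmas, and \zcref{thm: tTate preserves localization sequences}. Objects of $\Rep_{/X}$ satisfy the hypotheses of that lemma by \zcref{lem: stacks in Rep are QCA}. Since both rows are localization sequences, Cartesianness of the square reduces to a single claim: the induced functor $p^!:\Pot_Z(Y)\to\Pot_{p^{-1}Z}(V)$ is an equivalence.

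By \zcref{lem:support-is-formal-completion} applied to $Y$ and to $V$, this functor is identified with $!$-pullback $\Pot(\wih{Y_Z})\to\Pot(\wih{V_{p^{-1}Z}})$. The Nisnevich condition gives an isomorphism of formal completions $\wih{V_{p^{-1}Z}}\xrightarrow{\simeq}\wih{Y_Z}$. To see this, note that \zcref{lem: formal completion commutes with fiber products} identifies $\wih{V_{p^{-1}Z}}\simeq V\times_Y\wih{Y_Z}$. An étale map that is an isomorphism on the reduced fibre over $Z$ then becomes an isomorphism after completing along $Z$, because étale maps lift uniquely along nilthickenings; I will check this on the infinitesimal neighbourhoods $Z_n$ of \zcref{lem: formal completions are ind-geometric}. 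Applying $\calL$ and using \zcref{lem:loops-commute-with-formal-completion}, the isomorphism is $S^1$-equivariant on loop spaces, so it induces an equivalence on $\IndCoh(-)^{\tT}$.

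The main obstacle I expect is compatibility: one has to verify that the functor $\Pot(Y)\to\Pot(V)$ used in the presheaf, which is built from $(\calL p)^!$ (\'etale, hence $t$-exact by \zcref{lem: etale morphisms have t-exact !-pullbacks} and preserving $\Coh$ by \zcref{cor: smooth pullbacks preserve Coh}), restricts on supports to the functor induced by the completion isomorphism. This requires a base change $(\calL p)^!\calL\wih{i}_{*}\simeq \calL\wih{i}'_{*}(\calL p')^!$ along the Cartesian square of formal completions, together with compatibility of $\tT$ with these functors, which holds by \cite[Lemma 4.6.1]{Preygel}. A secondary check is that $\calL$ preserves Nisnevich squares, so that the loop-level squares are Cartesian with the expected open/closed decomposition. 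This should follow from \zcref{cor: loops of complement}, since $\calL$ commutes with fiber products and étale maps.
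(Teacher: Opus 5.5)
Your reduction step in the second paragraph does not hold as stated. A map between two localization sequences that restricts to an equivalence on kernels need not be a Cartesian square. So excision does not reduce to the claim that $p^!\colon\Pot_Z(Y)\to\Pot_{p^{-1}Z}(V)$ is an equivalence.

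Here is a counterexample. Let $\scrD$ be presentable stable, and take the square with rows $\mathrm{pr}_2\colon\scrD\times\scrD\to\scrD$ and $\mathrm{ev}_1\colon\Fun(\Delta^1,\scrD)\to\scrD$. The left vertical map is $(d_0,d_1)\mapsto(d_0\xrightarrow{0}d_1)$ and the right vertical map is $\id$.
\begin{itemize}
\item Both rows are localizations, with fully faithful right adjoints $d\mapsto(0,d)$ and $d\mapsto(d\xrightarrow{\id}d)$.
\item The left map identifies the kernels $\scrD\times 0$ and $\{d\to 0\}$.
\item The square is not Cartesian: the pullback is $\Fun(\Delta^1,\scrD)$ itself, and $(d\xrightarrow{\id}d)$ with $d\neq 0$ is not in the image.
\end{itemize}

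What is missing is an adjointability input. Write $\Phi\colon\Pot(Y)\to\Pot(V)\times_{\Pot(W)}\Pot(U)$. You need two things.
\begin{itemize}
\item[(a)] Base change $p^!j_*\simeq j'_*p'^!$ on $\Pot$. Then $\Phi$ carries $j_*$ to the fully faithful right adjoint $N\mapsto(j'_*p'^!N,N)$ of the projection to $\Pot(U)$. This is exactly what fails in the example above.
\item[(b)] The map $\Hom_{\Pot(Y)}(j_*N,a)\to\Hom_{\Pot(V)}(p^!j_*N,p^!a)$ is an equivalence for $a\in\Pot_Z(Y)$.
\end{itemize}
Condition (b) does not follow from (a) and the kernel equivalence. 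To see this, use $\mathrm{ev}_0$ and $(d_0,d_1)\mapsto(d_1\xrightarrow{0}d_0)$ in the example. It is usually deduced from a support-preserving right adjoint $p_*$ of $p^!$, so that $a\simeq p_*p^!a$. Given (a) and (b), the kernel equivalence does give full faithfulness and essential surjectivity of $\Phi$. You see this by decomposing objects along $\Gamma_Z b\to b\to j_*j^!b$ with $\Gamma_Z b\in\Pot_Z(Y)$.

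Both (a) and (b) hold for $\IndCoh$ on the loop spaces, but you must transport them through $(-)^{\tT}$. Concretely, you need to check two things:
\begin{itemize}
\item that $(\calL p)_*$ and $(\calL j)_*$ induce right adjoints on $\tT$, i.e.\ that they satisfy the continuity and $t$-boundedness hypotheses of \zcref{lem: regularization of adjunction is adjunction};
\item that the base-change maps respect the $S^1$-equivariant structures.
\end{itemize}
The compatibility you flagged as the main obstacle is not this issue. Identifying $p^!$ on supports with $!$-pullback along the completion isomorphism $\hat p$ is just functoriality of $!$-pullback for $\wih{i}\circ\hat p\simeq p\circ\wih{i}'$. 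The rest of your outline is fine, namely the isomorphism $\wih{V_{p^{-1}Z}}\simeq\wih{Y_Z}$ via formal \'etaleness and its $S^1$-equivariant looping.

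The paper sidesteps all of this. It applies fppf descent for $\IndCoh$ to the looped Nisnevich square. It then proves in \zcref{lem: tTate of fiber product} that $(-)^{\tT}$ commutes with that fiber product, using Tamme's lax pullback, the exact sequence $\scrA\times_\scrC\scrB\to\scrA\vec\times_\scrC\scrB\to\scrC$, and \zcref{thm: tTate preserves localization sequences}. There, the required adjointability is packaged into the hypothesis that $(\calL j')^!$ has a fully faithful $t$-bounded right adjoint.
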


\begin{proof}
	It suffices to show that $\Pot(-)$ is Nisnevich excisive, i.e. 
	takes Nisnevich squares\footnotemark
	\footnotetext{For the definition of Nisnevich square, see, \cite[Definition 2.4]{KhanRavi}.} 
	to pullback diagrams, by \cite[Theorem 2.2.7]{KhanLocalization}. 

	Now, given a Nisnevich square
	\[
	\begin{tikzcd}
		W\arrow[r,"j'"]\arrow[d,"p'"]&V\arrow[d,"p"]\\
		U\arrow[r,"j"]&S
	\end{tikzcd},
	\] 
	where we recall that $j$ is an immersion and $p$ is an \'etale
    map that restricts to an isomorphism along the complement to $j$. From this, we obtain another Nisnevich square
	\begin{equation}
    \label{eqn: Nisnevich descent}
	\begin{tikzcd}
			\mathcal{L}W\arrow[r,"\mathcal{L}j'"]\arrow[d,"\mathcal{L}p'"]&\mathcal{L}V\arrow[d,"\mathcal{L}p"]\\
		\mathcal{L}U\arrow[r,"\mathcal{L}j"]&\mathcal{L}S
	\end{tikzcd},
	\end{equation}
	since $\mathcal{L}$ is a Nisnevich cosheaf by \cite{BouazizKhan}. We note that each morphism
    in \zcref[noname]{eqn: Nisnevich descent} is canonically $S^1$-equivariant. Since $\mathsf{IndCoh}$ satisfies fppf, 
    hence Nisnevich, descent (\cite[Corollary 10.4.5]{GaitsgoryIndCoh}), we obtain
    an $S^1$-equivariant equivalence,
	\begin{align*}
		\mathsf{IndCoh}(\mathcal{L}S)\xrightarrow{\simeq} \mathsf{IndCoh}(\mathcal{L}V)\times _{\mathsf{IndCoh}(\mathcal{L}W)}\mathsf{IndCoh}(\mathcal{L}U).
	\end{align*}

    We conclude by checking that the set-up above satisfies the conditions of \zcref{lem: tTate of fiber product}.
    By \cite[Proposition 5.30]{CautisWilliams}, the $t$-structure on $\IndCoh(\calS)$ for any coherent ind-geometric stack
    $\calS$ is regular, meaning in particular that $\IndCoh(\calS)$ is compactly generated.
    We claim that the functors $(\calL j')^!:\IndCoh(\calL V) \to \IndCoh(\calL W)$ and $(\calL p')^!: \IndCoh(\calL U) \to \IndCoh(\calL W)$
    are each $t$-exact and preserve compact objects. The morphism $j'$ is an open immersion as the base change of an open immersion, so $\calL j'$ is an open immersion
    by \zcref{lem: loops of closed immersion is closed immersion}. In particular, $\calL j'$ is smooth. On the other hand
    $\calL p$ is \'etale by virtue of \zcref[noname]{eqn: Nisnevich descent} being a Nisnevich square, so $\calL p'$ is
    \'etale as the base change of an \'etale morphism. In particular, $\calL p'$ is smooth. Thus, by \zcref{cor: smooth pullbacks preserve Coh},
    $(\calL p')^!$ and $(\calL j')^!$ each preserve compact objects, and they are $t$-exact by \zcref{lem: etale morphisms have t-exact !-pullbacks}. 
    Finally, $(\calL j')^!$ has a fully faithful $t$-bounded right adjoint by \zcref{lem: open immersion pushforward is fully faithful} and \zcref{lem: open immersion pushfoward is t-bounded}.
	\end{proof}

\begin{lem}
\label{lem: t-structure on fiber product}
	Let $\mathscr{A}\xrightarrow{F} \mathscr{C}\xleftarrow{G}  \mathscr{B}$ be a span in $\bReg_t$, 
    such that $G:\mathscr{B}\to \mathscr{C}$ has a fully faithful right adjoint in $\Pr^L$, and such that
    $F$ and $G$ are $t$-exact and preserve compact objects. 
    Assume that $\scrA \times_{\scrC} \scrB$ is
    compactly generated. 
    Then the lax product $\scrA \vec{\times}_{\scrC} \scrB$\footnotemark
    \footnotetext{For the definition of this, see \cite[Definition 5]{Tamme-excision}.} 
    and the product $\scrA \times_{\scrC} \scrB$ have canonical regular $t$-structures.
\end{lem}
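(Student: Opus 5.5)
The plan is to build both $t$-structures by hand, using the recollement coming from the fully faithful right adjoint $G^R$ of $G$. In this setting $G$ behaves like an open restriction: it is a Verdier localization with kernel $\scrB_0 := \ker G$, and $\scrB_0$ is closed under truncations because $G$ is $t$-exact.

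\textbf{The lax product.} An object of $\scrA \vec{\times}_{\scrC} \scrB$ is a triple $(a,b,\varphi\colon F a \to G b)$. I would declare $(a,b,\varphi)$ to be connective (resp. coconnective) exactly when $a$ and $b$ are. To produce truncations, apply $\tau_{\le 0}$ and $\tau_{\ge 0}$ to $a$ and $b$ separately, and use the $t$-exactness of $F$ and $G$ to truncate $\varphi$ compatibly. Since the truncations are functorial, the triangle $\tau_{\le 0}x \to x \to \tau_{\ge 1}x$ is computed componentwise, and orthogonality holds because mapping spaces in the lax limit are fibers of mapping spaces in $\scrA$, $\scrB$ and $\scrC$.

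\textbf{The product.} Here the condition is that $\varphi$ be an equivalence. I will regard $\scrA\times_{\scrC}\scrB$ as the full subcategory of the lax product spanned by triples with $\varphi$ an equivalence. Componentwise truncation preserves this subcategory, again because $F$ and $G$ are $t$-exact. Hence $\scrA\times_\scrC\scrB$ inherits a $t$-structure, and the inclusion into the lax product is $t$-exact.

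\textbf{Regularity.} In the sense of Preygel, regularity asks for the $t$-structure to be compatible with filtered colimits, right complete, and coherent, with $\Ind$ of the coherent objects recovering the category in a suitable bounded-below sense. I take these in turn.
\begin{enumerate}
\item Compatibility with filtered colimits. The projections to $\scrA$ and $\scrB$ are jointly conservative, preserve colimits, and are $t$-exact, so this property is inherited from $\scrA$ and $\scrB$. Right completeness follows the same way.
\item Compact generation of the lax product. I would use the generators $(a,0,0)$ and $(0,b,0)$ together with $(a, G^R F a, \text{unit})$, built from compact $a\in\scrA$ and $b\in\scrB$. These are compact because $F$ and $G$ preserve compact objects, so the functors $\sfMap((a,\ldots),-)$ are fibers of compact-detecting maps.
\item Coherence. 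For the lax product, $\Coh$ should consist of triples with $a$ and $b$ coherent. For the product, I would use the compact generation hypothesis together with the recollement to show that coherent objects are exactly those whose components are coherent.
\end{enumerate}

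The main obstacle is this coherence step for the fiber product, where the requirement $\varphi$ invertible interacts with $G^R$. Almost-compactness of $(a,b)$ has to be checked against uniformly bounded-below filtered colimits, and $\sfMap((a,b,\varphi),-)$ involves $\sfMap_{\scrC}(Fa, G(-))$. To control this term I will use that $G^R$ is fully faithful and that the objects involved are uniformly bounded below (in the intended application $G^R$ is $t$-bounded), and I would try to transfer the question to $\scrB$ via adjunction.
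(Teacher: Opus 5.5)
\textbf{The gap: compact objects of the fiber product.} Your componentwise construction of both $t$-structures is correct. It is the same mechanism as \cite[Lemma 2.1.9]{HLPreygel}, applied directly to the large categories, whereas the paper applies it to $\Coh(\scrA)\vec{\times}_{\Coh(\scrC)}\Coh(\scrB)$ and ind-extends. For the lax product, your regularity check can also be completed using the generators $(a,0,0)$ and $(0,b,0)$. But the proposal stops exactly at the step that carries the content of the lemma.

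Regularity of $\scrD=\scrA\times_{\scrC}\scrB$ means $\Ind(\Coh(\scrD))\xrightarrow{\simeq}\scrD$ on the nose. Agreement on left $t$-bounded parts is automatic for any coherent $t$-structure by \cite[Proposition 4.3.2]{Preygel}, so it is not the requirement. You therefore have to show that every compact object of the product has compact components. Only then does the compact-generation hypothesis give $\scrD\simeq\Ind(\Coh(\scrA)\times_{\Coh(\scrC)}\Coh(\scrB))$.

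You leave this open. The tool you reach for, $t$-boundedness of $G^R$, is not a hypothesis of this lemma; it only appears in \zcref{lem: tTate of fiber product}. It is also not what is needed.

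The paper's argument runs as follows.
\begin{itemize}
\item By \cite[Corollary 12]{Tamme-excision} there is a localization sequence $\scrA\times_{\scrC}\scrB\xrightarrow{\iota}\scrA\vec{\times}_{\scrC}\scrB\xrightarrow{e}\scrC$. This is where full faithfulness of $G^R$ enters.
\item Since $F$ and $G$ preserve compacts, so does $e$. Hence $e^R$ preserves filtered colimits, and therefore so does $\iota^R\simeq\fib(\id\to e^Re)$.
\item So $\iota$ preserves compact objects, and $(\scrA\times_{\scrC}\scrB)^{\omega}\simeq\ker(e^{\omega})=\Coh(\scrA)\times_{\Coh(\scrC)}\Coh(\scrB)$.
\end{itemize}

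Your recollement picture can also be made to work directly. Let $x=(a,b,\varphi)$ be compact in the product.
\begin{itemize}
\item The functor $a'\mapsto(a',G^RFa',\epsilon^{-1})$ is right adjoint to the projection to $\scrA$ and preserves filtered colimits. Hence $a$ is compact.
\item For $b'\in\scrB$, take the fiber sequence $b'_0\to b'\to G^RGb'$, where $b'_0\in\ker G$. Then $\sfMap(x,(0,b'_0,0))\simeq\sfMap(b,b'_0)$ and $\sfMap(b,G^RGb')\simeq\sfMap(Fa,Gb')$. Both commute with filtered colimits, so $b$ is compact.
\end{itemize}
Running the same argument with uniformly left $t$-bounded filtered colimits identifies $\Coh$ of the product. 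No boundedness assumption on $G^R$ is needed, because $G^R$ is automatically left $t$-exact as the right adjoint of the $t$-exact functor $G$.

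\textbf{Two smaller points.} First, the objects $(a,G^RFa,\text{unit})$ are not compact in general, because $G^R$ need not preserve compacts. For example, take $j\colon\bbG_m\hook\mathbb{A}^1$ and $G=j^!$ on $\IndCoh$; then $j_*\calO_{\bbG_m}$ is not coherent. Your justification that they are compact ``because $F$ and $G$ preserve compact objects'' does not apply to them, though they are also unnecessary.

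Second, you never verify that the hearts are locally coherent, which is part of the definition of a coherent $t$-structure. The paper gets this from \cite{PrestAbelianCat} applied to $\Ind_1$ of the small hearts. In your route it follows only after you have shown $\scrD\simeq\Ind(\Coh(\scrD))$ with $\Coh(\scrD)$ carrying a bounded $t$-structure.
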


\begin{proof}

    We first prove the claim that the canonical $t$-structure on
    the lax product $\scrA \vec{\times}_{\scrC} \scrB$ is coherent.

    We begin by considering the pointwise $t$-structure on $\Coh(\scrC)^{[1]}$, where $[1]$ is the 
    standard $1$-simplex. By \cite[Lemma 2.1.9(iv)]{HLPreygel}, the products $\Coh(\scrA) \times \Coh(\scrB)$
    and $\Coh(\scrC) \times \Coh(\scrC)$ have canonical $t$-structures such that the projection
    morphisms are $t$-exact. By the same, the lax product $\Coh(\scrA) \vec{\times}_{\Coh(\scrC)} \Coh(\scrB)
    \coloneq (\Coh(\scrA) \times \Coh(\scrB)) \times_{\Coh(\scrC) \times \Coh(\scrC)} \Coh(\scrA) \times \Coh(\scrB)$ 
    possesses a unique $t$-structure such that the projection functors to $\Coh(\scrC)^{[1]}$, $\Coh(\scrA) \times \Coh(\scrB)$,
    and $\Coh(\scrC) \times \Coh(\scrC)$. By \cite[Proposition 2.13]{AntieauGepnerHeller},
    this $t$-structure induces one on the ind-completion $\Ind(\Coh(\scrA) \vec{\times}_{\Coh(\scrC)} \Coh(\scrB))$,
    which is compatible with filtered colimits, and right complete if the $t$-structure on
    $\Coh(\scrA) \vec{\times}_{\Coh(\scrC)} \Coh(\scrB)$ is bounded below.

    Since $\scrA$, $\scrB$, and $\scrC$ belong to $\bCoh_t$,
    their coherent subcategories $\Coh(\scrA)$, $\Coh(\scrB)$, and $\Coh(\scrC)$
    have bounded $t$-structures. Hence, the categories $\Coh(\scrC)^I$,
    $\Coh(\scrA) \times \Coh(\scrB)$, and $\Coh(\scrC) \times \Coh(\scrC)$
    inherit bounded $t$-structures.
    From this, it is clear that the $t$-structure
    on the fiber product $\Coh(\scrA) \vec{\times}_{\Coh(\scrC)} \Coh(\scrB)$ is
    also bounded---in particular, it is bounded below.

    In order to show that the $t$-structure on $\Ind(\Coh(\scrA) \vec{\times}_{\Coh(\scrC)} \Coh(\scrB))$
    is coherent, it therefore remains to show that its heart is a locally coherent abelian category,
    meaning that it is compactly generated (as an abelian category) and that the subcategory of compact
    objects in the heart is abelian.
    From the construction of the $t$-structure, we have 
    \begin{equation}
    \label{eqn: regular t-structure lemma 2}
    \Ind(\Coh(\scrA) \vec{\times}_{\Coh(\scrC)} \Coh(\scrB))^{\heart} \simeq \Ind_1\left(\left(\Coh(\scrA) \vec{\times}_{\Coh(\scrC)} \Coh(\scrB)\right)^{\heart}\right)
    \end{equation}
    where $\Ind_1$ denotes the 1-categorical ind-construction.
    Because $\Coh(\scrA) \vec{\times}_{\Coh(\scrC))} \Coh(\scrB)$ is
    a limit in $\Cat_{\infty}^{\sm}$, it is an $\omega$-small
    stable category. In particular, any of its subcategories is
    also essentially small. Thus, $\left(\Coh(\scrA) \vec{\times}_{\Coh(\scrC)} \Coh(\scrB)\right)^{\heart}$
    is a skeletally small abelian category. Thus, by
    \cite[Theorem 4.1]{PrestAbelianCat}, its $\Ind_1$-category
    is a Grothendieck locally coherent abelian category whose
    subcategory of compact objects is precisely
    $\left(\Coh(\scrA) \vec{\times}_{\Coh(\scrC)} \Coh(\scrB)\right)^{\heart}$,
    which shows that the $t$-structure on
    $\Ind(\Coh(\scrA) \vec{\times}_{\Coh(\scrC)} \Coh(\scrB))$ is coherent.
    To conclude that the $t$-structure on $\scrA \vec{\times}_{\scrC} \scrB$
    is coherent, we note that
    \begin{align*}
    \Ind(\Coh(\scrA) \vec{\times}_{\Coh(\scrC)} \Coh(\scrB)) &\simeq \Ind(\Coh(\scrA)) \vec{\times}_{\Ind(\Coh(\scrC))} \Ind(\Coh(\scrB)) \\
                                                             &\simeq \scrA \vec{\times}_{\scrC} \scrB,
    \end{align*}
    by \cite[Proposition 13]{Tamme-excision}.

    The exact same argument as above shows that the canonical $t$-structure on 
    $\Ind(\Coh(\scrA) \times_{\Coh(\scrC)} \Coh(\scrB))$ is coherent. Since the analogue
    of \cite[Proposition 13]{Tamme-excision} for products is not true in general, it remains to show in our case
    that $\Ind(\Coh(\scrA) \times_{\Coh(\scrC)} \Coh(\scrB)) \xrightarrow{\simeq} \scrA \times_{\scrC} \scrB$ in order
    to prove that the $t$-structure on the latter is coherent.

    To begin, we note that, because $G$ has a fully faithful adjoint, there exists an exact
    sequence $\scrA \times_{\scrC} \scrB \xrightarrow{\i} \scrA \vec{\times}_{\scrC} \scrB \xrightarrow{e} \scrC$
    by \cite[Corollary 12]{Tamme-excision}. Since $F$ and $G$ preserve compact objects
    by assumption, the latter functor restricts to a functor $(\scrA \vec{\times}_{\scrC} \scrB)^{\omega}
    \to \scrC^{\omega}$. At the same time, by \cite[Proposition 13]{Tamme-excision},
    there is an equivalence $(\scrA \vec{\times}_{\scrC} \scrB)^{\omega} \simeq \scrA^{\omega} \vec{\times}_{\scrC^{\omega}} \scrB^{\omega}$.
    Since the $t$-structures on $\scrA$, $\scrB$, and $\scrC$ are regular,
    their subcategories of compact objects coincide with their subcategories
    of coherent objects, so we see that the functor $e^{\omega}: \Coh(\scrA) \vec{\times}_{\Coh(\scrC)} \Coh(\scrB)
    \to \Coh(\scrC)$ is the restriction of the functor $e: \scrA \vec{\times}_{\scrC} \scrB \to \scrC$
    to compact objects. Now, observe that the kernel of $e^{\omega}$ is
    precisely the ordinary product, $\Coh(\scrA) \times_{\Coh(\scrC)} \Coh(\scrB)$.
    We now claim that $\ker(e)^{\omega} \simeq \ker(e^{\omega})$. Indeed,
    since $e$ is a localization, it has a right adjoint $e^R$. Since $e$ preserves
    compact objects (because each of $F$ and $G$ does), $e^R$ preserves filtered colimits.
    Since $\i: \scrA \times_{\scrC} \scrB \to \scrA \vec{\times}_{\scrA} \scrB$ is the
    kernel of the map $e$, its right adjoint $\i^R$ given object-wise by
    $\i^R(X) \simeq \ker(X \to e^R e(X))$. Since $e^R$ preserves filtered colimits,
    and $e$ preserves colimits as a left-adjoint functor, we find that, for $X = \colim_n X_n$,
    \begin{align*}
        \i^R(X) = \i^R(\colim_n X_n) &\simeq \ker(\colim_n X_n \to e^R e(\colim_n X_n)) \\
                                     &\simeq \ker(\colim_n(X_n \to e^R e(X_n))) \\
                                     &\simeq \colim_n \ker(X_n \to e^R e(X_n)),      
    \end{align*}
    where the last equivalence follows from the fact that in a stable category, kernels are both
    limits and colimits. Since $\i^R$ thus preserves filtered colimits, $\i$ preserves compact objects,
    meaning $\ker(e)^{\omega} \simeq \ker(e) \cap (\scrA \vec{\times}_{\scrC} \scrB)^{\omega}$, while
    the latter object is precisely $\ker(e|_{(\scrA \vec{\times}_{\scrC} \scrB)^{\omega}}) = \ker(e^{\omega})$. 
    Since $\ker(e) \simeq \scrA \times_{\scrC} \scrB$, is compactly
    generated by assumption, we finally obtain that $(\scrA \times_{\scrC} \scrB)^{\omega} \simeq \Coh(\scrA) \times_{\Coh(\scrC)} \Coh(\scrB)$,
    which implies that the equivalence $\Ind(\Coh(\scrA) \times_{\Coh(\scrC)} \Coh(\scrB)) \xrightarrow{\simeq} \scrA \times_{\scrC} \scrB$ as desired.

    By \cite[Lemma 5.3.2.9]{HA}, ind-completion
    furnishes an equivalence $\Ind(-): \Cat_{\infty}^{\idm} \to \Pr^L_{\omega}$,
    where $\Cat_{\infty}^{\sm}$ denotes the category of $\omega$-small, idempotent-complete
    $\infty$-categories, and $\Pr^L_{\omega}$ denotes the category of
    $\omega$-presentable, stable $\infty$-categories. In particular,
    this implies that the universal functor,
    \begin{align*}
    \Ind(\Coh(\scrA) \times_{\Coh(\scrC)} \Coh(\scrB)) &\xrightarrow{\simeq} \Ind(\Coh(\scrA)) \times_{\Ind(\Coh(\scrC))} \Ind(\Coh(\scrB)) \\
                                                       &\simeq \scrA \times_{\scrC} \scrB
    \end{align*}
    is an equivalence.

    In order to show that the $t$-structure on $\scrA \vec{\times}_{\scrC} \scrB$
    is regular, we need to show that 
    \[\Coh(\scrA) \vec{\times}_{\Coh(\scrC)} \Coh(\scrB)
    \simeq \Coh(\scrA \vec{\times}_{\scrC} \scrB).\]
    By \cite[Lemma 6.2.5]{BZNP}, since the $t$-structure on
    $\scrA \vec{\times}_{\scrC} \scrB$ is coherent, an object
    $X \in \scrA \vec{\times}_{\scrC} \scrB$ belongs to $\Coh(\scrA \vec{\times}_{\scrC} \scrB)$
    if and only if $H_n X \in \Coh(\scrA \vec{\times}_{\scrC} \scrB)^{\heart}$ for all $n$, 
    and $H_n X = 0$ for all but finitely many $n$.
    By the same, we also know that $\Coh(\scrA \vec{\times}_{\scrC} \scrB)^{\heart}$ consists
    precisely of the compact objects in $(\scrA \vec{\times}_{\scrC} \scrB)^{\heart}$.
    By the characterization \zcref[noname]{eqn: regular t-structure lemma 2} above, 
    we have 
    \[\left((\scrA \vec{\times}_{\scrC} \scrB)^{\heart}\right)^{\omega} \simeq \left(\Coh(\scrA) \vec{\times}_{\Coh(\scrC)} \Coh(\scrB)\right)^{\heart}.\]
    That is, $X \in \scrA \vec{\times}_{\scrC} \scrB$ belongs to $\Coh(\scrA \vec{\times}_{\scrC} \scrB)$
    if and only if 
    \begin{enumerate}
        \item $H_n X \in \left(\Coh(\scrA) \vec{\times}_{\Coh(\scrC)} \Coh(\scrB)\right)^{\heart}$,
        for all $n$,
        \item and $H_n X = 0$ for all but finitely many $n$.
    \end{enumerate} 
    Let $X \in \Coh(\scrA \vec{\times}_{\scrC} \scrB)$.
    Because $\Coh(\scrA \vec{\times}_{\scrC} \scrB)$ has
    a bounded $t$-structure, condition (2) implies that $X$ is equivalent to 
    one of its truncations $\tau_{\leq n} X$, for some $n$. On the other hand,
    if $m$ is the least integer such that $H_m X \neq 0$, then $H_m(X)[m] \simeq \tau_{\leq m} X$.
    Since each truncation is obtained from the previous one by the fiber sequence,
    \[H_{n+1}(X)[n+1] \longrightarrow \tau_{\leq n+1}X \longrightarrow \tau_{\leq n}X,\]
    we see that $X$ is obtained by a series of extensions by shifts of its homology objects.
    Since $H_n X \in \left(\Coh(\scrA) \vec{\times}_{\Coh(\scrC)} \Coh(\scrB)\right)^{\heart}$,
    for all $n$ by condition (1), and since $\Coh(\scrA) \vec{\times}_{\Coh(\scrC)} \Coh(\scrB)$ is a stable subcategory, this
    implies that $X \in \Coh(\scrA \vec{\times}_{\scrC} \scrB)$.
    On the other hand, it is obvious that every object of $\Coh(\scrA) \vec{\times}_{\scrC} \scrB$
    satisfies conditions (1) and (2). Since $\Coh(\scrA \vec{\times}_{\scrC} \scrB)$
    and $\Coh(\scrA) \vec{\times}_{\Coh(\scrC)} \Coh(\scrB)$ are full subcategories
    of $\scrA \vec{\times}_{\scrC} \scrB$ containing the same objects, they are equivalent.

    The argument that the $t$-structure on $\scrA \times_{\scrC} \scrB$ is regular proceeds in exactly the same way.
\end{proof}

\begin{lem}
\label{lem: tTate of fiber product}
	Let $\mathscr{A}\xrightarrow{F} \mathscr{C}\xleftarrow{G}  \mathscr{B}$ be a span in $\bReg_t^{BS^1}$, 
    such that $G:\mathscr{B}\to \mathscr{C}$ has a fully faithful, $t$-bounded right adjoint in $\Pr^L$, 
    and $F$ and $G$ are $t$-exact and preserve compact objects. 
    If $\mathscr{A}\times_{\mathscr{C}}\mathscr{B}$ is compactly generated, then
	\begin{align*}
		\mathscr{A}^{\tT}\times _{\mathscr{C}^{\tT}}\mathscr{B}^{\tT}\simeq(\mathscr{A}\times _{\mathscr{C}}\mathscr{B})^{\tT},
	\end{align*} 
    where the renormalized Tate invariants on the right-hand side 
    are taken with respect to the coherent $t$-structure defined 
    in \zcref{lem: t-structure on fiber product}, and all fiber products
    are taken in $\Pr^L$.
\end{lem}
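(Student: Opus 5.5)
The plan is to reduce the statement to the corresponding fact for small categories, and then handle the two pieces of Tate invariants separately: first $S^1$-invariants, then base change along $k[u]\to \ku$.

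\emph{Step 1: reduction to compact objects.} By \zcref{lem: t-structure on fiber product}, $\scrA\times_{\scrC}\scrB$ carries a regular $t$-structure. Its proof also identifies the coherent objects as $\Coh(\scrA)\times_{\Coh(\scrC)}\Coh(\scrB)$, with $\Ind$ of this product equivalent to $\scrA\times_{\scrC}\scrB$. This identification is $S^1$-equivariant, because $F$ and $G$ are. Hence
\[(\scrA\times_{\scrC}\scrB)^{\tT}\simeq \Ind\left(\left(\Coh(\scrA)\times_{\Coh(\scrC)}\Coh(\scrB)\right)^{tS^1}\right).\]
On the other side, $\Ind$ is an equivalence $\Cat^{\idm}_\infty\to\Pr^L_\omega$ (\cite[Lemma 5.3.2.9]{HA}), and it carries fiber products of small categories along compact-preserving functors to fiber products in $\Pr^L$. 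It therefore suffices to show that the canonical functor
\[\left(\Coh(\scrA)\times_{\Coh(\scrC)}\Coh(\scrB)\right)^{tS^1}\to \Coh(\scrA)^{tS^1}\times_{\Coh(\scrC)^{tS^1}}\Coh(\scrB)^{tS^1}\]
is an equivalence after idempotent completion. We also need that $F^{tS^1}$ and $G^{tS^1}$ preserve compacts, which holds because they are obtained by ind-extension.

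\emph{Step 2: $S^1$-invariants.} The functor $(-)^{S^1}=\lim_{BS^1}$ commutes with fiber products, so the small fiber product commutes with $S^1$-invariants on the nose.

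\emph{Step 3: base change along $k[u]\to\ku$ (main obstacle).} This step does not follow formally, since $-\otimes_{k[u]}\ku$ is a localization (a Verdier quotient by $u$-torsion objects after idempotent completion) and does not in general commute with pullbacks. I will use the hypothesis that $G$ has a fully faithful, $t$-bounded right adjoint. With it, \zcref{thm: tTate preserves localization sequences} applies to the localization sequence $\ker G\to\scrB\to\scrC$, and shows that $\ker(G)^{\tT}\to\scrB^{\tT}\to\scrC^{\tT}$ is again a localization sequence.

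The pullback $\scrA\times_{\scrC}\scrB\to\scrA$ is then also a localization with the same kernel $\ker G$, by Tamme's description \cite[Corollary 12]{Tamme-excision} of the product as the kernel in the lax product. The $t$-boundedness of the corresponding right adjoint $(\id,G^R F)$ follows from that of $G^R$ together with the $t$-exactness of $F$. Applying \zcref{thm: tTate preserves localization sequences} once more gives a localization sequence $\ker(G)^{\tT}\to(\scrA\times_{\scrC}\scrB)^{\tT}\to\scrA^{\tT}$.

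\emph{Step 4: comparison.} The natural map from $(\scrA\times_{\scrC}\scrB)^{\tT}$ to $\scrA^{\tT}\times_{\scrC^{\tT}}\scrB^{\tT}$ is a map of localization sequences over $\scrA^{\tT}$ that is the identity on the common kernel $\ker(G)^{\tT}$. Both are split Verdier sequences with fully faithful right adjoints, and they have the same kernel and the same quotient. A five-lemma-type argument for recollements (compare mapping spectra using the fiber sequences $\iota\iota^R\to\id\to e^Re$) then gives the equivalence.
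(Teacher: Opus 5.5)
\textbf{There is a genuine gap, and it is in Step 4.} You use the principle that a functor between two split Verdier sequences, compatible with the inclusions of a common kernel $\mathcal{K}$ and the projections to a common quotient $\mathcal{Q}$, is an equivalence. That principle is false. The middle term is not determined by $\mathcal{K}$ and $\mathcal{Q}$ alone: it also depends on the gluing bimodule $(q,k)\mapsto \Hom(p^R q,\iota k)$.

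Here is a counterexample. Take $\mathcal{K}=\mathcal{Q}=\Vect_k$ and $\Phi\colon \Vect_k\times\Vect_k\to\Fun(\Delta^1,\Vect_k)$, $(x,y)\mapsto (x\xrightarrow{0}y)$. It is compatible with the following pairs of structure functors:
\begin{itemize}
\item the inclusions $y\mapsto(0,y)$ and $y\mapsto(0\to y)$;
\item the projections to $x$;
\item the fully faithful right adjoints $x\mapsto(x,0)$ and $x\mapsto(x\to 0)$.
\end{itemize}
Yet $\Hom((x\to 0),(0\to y))\simeq\Hom(x,y)[-1]$, while $\Hom((x,0),(0,y))=0$, so $\Phi$ is not full.

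Your comparison of mapping spectra via $\iota\iota^R\to\id\to p^Rp$ reduces exactly to this bimodule term, and nothing in your argument controls it. In the present situation you would have to show two things:
\begin{itemize}
\item the comparison $(\scrA\times_\scrC\scrB)^{\tT}\to\scrA^{\tT}\times_{\scrC^{\tT}}\scrB^{\tT}$ intertwines $(G^R)^{\tT}$ with $(G^{\tT})^R$;
\item it induces equivalences on $\Hom(p^R a,\iota k)$, i.e.\ $\tT$ is compatible with mapping spectra out of the generally non-coherent objects $G^RFa$.
\end{itemize}
That is the actual content of the lemma, and Steps 1--3 do not supply it.

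\textbf{Two further steps are unsupported.}
\begin{itemize}
\item \emph{Step 1.} $\Ind$ preserves limits in $\Pr^L_\omega$, not in $\Pr^L$. The kernel in $\Pr^L$ of a compact-preserving localization of compactly generated categories need not be compactly generated. This is exactly what the paper's closing remark warns about. For $\scrA\times_\scrC\scrB$ the compact-generation hypothesis rescues this. You have no such information about $\scrA^{\tT}\times_{\scrC^{\tT}}\scrB^{\tT}$ before the lemma is proved.
\item \emph{Step 3.} Applying \zcref{thm: tTate preserves localization sequences} to $\ker G\to\scrB\to\scrC$ requires $\ker G\in\bCoh_t$, which is not among the hypotheses.
\end{itemize}

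\textbf{How the paper avoids this.} It works with the lax product $\scrA\vec{\times}_\scrC\scrB$.
\begin{itemize}
\item By \cite[Proposition 13]{Tamme-excision}, $(\scrA\vec{\times}_\scrC\scrB)^\omega\simeq\scrA^\omega\vec{\times}_{\scrC^\omega}\scrB^\omega$; this is precisely the property plain pullbacks lack.
\item Hence $S^1$-invariants, $\Ind$, and base change to $\ku$ (which preserves limits by dualizability) all commute with the lax product, giving $(\scrA\vec{\times}_\scrC\scrB)^{\tT}\simeq\scrA^{\tT}\vec{\times}_{\scrC^{\tT}}\scrB^{\tT}$.
\item The pullback is the kernel of the single localization $\pi\colon\scrA\vec{\times}_\scrC\scrB\to\scrC$ of \cite[Corollary 12]{Tamme-excision}. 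The terms of this sequence lie in $\bReg_t$ by \zcref{lem: t-structure on fiber product}.
\item Applying \zcref{thm: tTate preserves localization sequences} and identifying $\ker(\pi^{\tT})$ with $\scrA^{\tT}\times_{\scrC^{\tT}}\scrB^{\tT}$ finishes the proof.
\end{itemize}
Because the gluing data is built into the lax product, no comparison of gluings is ever needed.
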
 
\begin{proof}    
    
	We first prove that
	\begin{align*}
		(\mathscr{A}\vec{\times }_{\mathscr{C}}\mathscr{B})^{\tT}\simeq \mathscr{A}^{\tT}\vec{\times }_{\mathscr{C}^{\tT}}\mathscr{B}^{\tT},
	\end{align*}
    where $\tT$ on the left-hand side is taken with respect to the regular $t$-structure 
    defined in \zcref{lem: t-structure on fiber product}.
	Since $F$ and $G$ preserve compact objects and $\mathscr{A}$ and $\mathscr{B}$ are compactly generated, \cite[Proposition 13]{Tamme-excision} gives an equivalence
    \[\Coh(\scrA \vec{\times}_{\scrC} \scrB) \simeq (\scrA \vec{\times}_{\scrC} \scrB)^{\omega} \simeq \scrA^{\omega} \vec{\times}_{\scrC^{\omega}} \scrB^{\omega} \simeq \Coh(\scrA) \vec{\times}_{\Coh(\scrC)} \Coh(\scrB).\]
    Since $S^1$-invariants are defined by a limit operation, and limits commute with one another,
    we obtain an equivalence 
    \begin{equation}
    \label{eqn: established above}
    \Coh(\scrA \vec{\times}_{\scrC} \scrB)^{S^1} \simeq \Coh(\scrA)^{S^1} \vec{\times}_{\Coh(\scrC)^{S^1}} \Coh(\scrB)^{S^1}.\footnotemark
    \end{equation}
    \footnotetext{Recall that an $S^1$-action on a category $\scrE$ restricts to an $S^1$-action on any full
    subcategory of $\scrE$.}
    Now observe that the induced functors $\Ind({F|_{\Coh(\scrA)}}^{S^1}): \Ind(\Coh(\scrA)^{S^1}) \to \Ind(\Coh(\scrC)^{S^1})$
    and $\Ind({G|_{\Coh(\scrA)}}^{S^1}): \Ind(\Coh(\scrB)^{S^1}) \to \Ind(\Coh(\scrC)^{S^1})$ obviously preserve compact objects.
    Thus, we have the sequence of equivalences,
    \begin{align}
         \Ind\left(\Coh(\scrA)^{S^1}\right) \vec{\times}_{\Ind\left(\Coh(\scrC)^{S^1}\right)} \Ind\left(\Coh(\scrB)^{S^1}\right) &\simeq \Ind\left(\Coh(\scrA)^{S^1} \vec{\times}_{\Coh(\scrC)^{S^1}} \Coh(\scrB)^{S^1}\right) \label{eqn: invariant-1} \\
         &\simeq \Ind\left(\left(\Coh(\scrA) \vec{\times}_{\Coh(\scrC)} \Coh(\scrB)\right)^{S^1}\right) \label{eqn: invariant-2} \\
         &\simeq \Ind\left(\Coh\left(\scrA \vec{\times}_{\scrC} \scrB\right)^{S^1}\right) \label{eqn: invariant-3},
    \end{align}
    where \zcref[noname]{eqn: invariant-1} follows from a second application of \cite[Proposition 13]{Tamme-excision}; 
    \zcref[noname]{eqn: invariant-2} is the equivalence \zcref[noname]{eqn: established above} established above; and \zcref[noname]{eqn: invariant-3} follows
    from the regularity of the $t$-structure on $\scrA \vec{\times}_{\scrC} \scrB$.
    Since $\ku\mod$ is dualizable as a $k[u]$-linear category, base change along $k[u]\mod \to \ku\mod$ preserves limits.
    Using this, we deduce from the sequence of equivalences 
    \zcref[noname]{eqn: invariant-1}--\zcref[noname]{eqn: invariant-3} above the equivalence,
    \[(\scrA \vec{\times}_{\scrC} \scrB)^{\tT} \simeq \scrA^{\tT} \vec{\times}_{\scrC^{\tT}} \scrB^{\tT},\]
    as desired.
	
    Now, since $G$ has a fully faithful right adjoint, denoted $G^R$, we 
    apply \cite[Corollary 12]{Tamme-excision} to obtain a localization sequence of
    $S^1$-equivariant functors,
	\begin{align*}
		\mathscr{A}\times _{\mathscr{C}}\mathscr{B}\xlongrightarrow{i}\mathscr{A}\vec \times _{\mathscr{C}}\mathscr{B}\xlongrightarrow{\pi}\mathscr{C}.
	\end{align*} 
	In order to apply \zcref{thm: tTate preserves localization sequences}
    and obtain a localization after applying $(-)^{\tT}$,
    we show that $\pi^R$ and $i^R$ are $t$-bounded.
    Using the decomposition $\pi^R = r \circ G^R$ found therein,\footnotemark
    \footnotetext{For the definition of $r: \scrB \to \scrA \vec{\times}_{\scrC} \scrB$, see the bottom of page
    6 of \cite{Tamme-excision}.}
    we see that $\pi^R$ is $t$-bounded if and only if $G$ and $G^R$ are (as it is clear from the definition
    that $r$ is $t$-bounded if and only if $G$ is). Since both $G$ and $G^R$ are $t$-bounded by
    assumption, it follows that $\pi^R$ is $t$-bounded.
    On the other hand, the functor $i$ is a $t$-exact inclusion, so its right adjoint $i^R$ is left $t$-exact,
    hence left $t$-bounded. To obtain a right $t$-bound on $i^R$, consider the fiber sequence
	\begin{align*}
		ii^R \xlongrightarrow{} \id \xlongrightarrow{} \pi^R\pi.
	\end{align*} 
    Since $\pi^R$ and $\pi$ are $t$-bounded, we see that the fiber $ii^R$ is $t$-bounded.
    Since, $i$ furthermore is $t$-exact, we see that $i^R$ is $t$-bounded. 
Thus, \zcref{thm: tTate preserves localization sequences} applies, and we obtain a localization sequence
	\begin{equation}
    \label{eqn: localization sequence lax Tate}
		\left(\mathscr{A}\times _{\mathscr{C}}\mathscr{B}\right)^{\tT} \xlongrightarrow{} (\mathscr{A}\vec \times _{\mathscr{C}}\mathscr{B})^{\tT} \xlongrightarrow{\pi^{\tT}} \mathscr{C}^{\tT}
	\end{equation}
    Under the identification $(\mathscr{A}\vec \times _{\mathscr{C}}\mathscr{B})^{\tT} \simeq \mathscr{A}^{\tT}\vec\times _{\mathscr{C}^{\tT}}\mathscr{B}^{\tT}$,
    the right-hand functor $\pi^{\tT}$ corresponds to the canonical map 
    \[\pi: \scrA^{\tT} \vec{\times}_{\scrC^{\tT}} \scrB^{\tT} \to \scrC^{\tT},\]
    from the lax product of the span $\scrA^{\tT} \xrightarrow{} \scrC^{\tT} \xleftarrow{} \scrB^{\tT}$
    to the base $\scrC^{\tT}$.
    The fiber of this map is precisely $\scrA^{\tT} \times_{\scrC^{\tT}} \scrB^{\tT}$, so
    we conclude from the localization sequence \zcref[noname]{eqn: localization sequence lax Tate} 
    that $\scrA^{\tT} \times_{\scrC^{\tT}} \scrB^{\tT} \simeq (\scrA \times_{\scrC} \scrB)^{\tT}$, as desired.
\end{proof}

\begin{rem}
    The necessity of using the lax product above boils down to the following.
    If we tried to
    prove \zcref{lem: tTate of fiber product} without the use of the lax product,
    we would need to know that 
    \[\Ind(\Coh(\scrA)^{S^1} \times_{\Coh(\scrC)^{S^1}} \Coh(\scrB)^{S^1})
    \simeq \Ind(\Coh(\scrA)^{S^1}) \times_{\Ind(\Coh(\scrC)^{S^1})} \Ind(\Coh(\scrB)^{S^1})\]---that is,
    we would need to commute $\Ind(-)$ with a limit in $\Pr^L_{\st}$ (not $\Pr^L_{\omega, st}$!),
    which does not hold in general. The existence of \cite[Proposition 13]{Tamme-excision}
    circumvents this need.
\end{rem}

Given a six-functor formalism, we may define the following presheaves
of categories.

\begin{defn}
	\label{defn: six-functor formalism with support}
Let $\mathscr{D}$ be a six-functor formalism, and let $X$ be a stack. Let $Z\hookrightarrow X$ be
a closed immersion, and let $U \hook X$ its open complement.
For each $Y \in \Rep_{/X}$, the assignment of the category
\[\mathscr{D}_Z(Y)\coloneq \fib(j_Y^!: \mathscr{D}(Y)\to \mathscr{D}(U \times_X Y)),\]
where $j_Y: U \times_X Y \to Y$ denotes the canonical open immersion,
defines a presheaf on $\Rep_{/X}$
\end{defn}

\begin{rem}
	\label{rem: value of the presheaf D_Z}
	Note that $Y \times_X U \hook Y$ is the complement
	to the closed immersion $Y \times_X Z \to Y$, so
	the value of the presheaf $\scrD_Z(-)$ on $Y \in \Rep_{/X}$
	is seen to be the category $\scrD_{Y \times_X Z}(Y)$,
	which by definition is the fiber of the functor $j_Y^!: \scrD(Y) \to \scrD(Y \times_X U)$.
\end{rem}

\begin{defn}
	\label{defn: restricted six-functor formalism}
	Let $\scrD$ be a six-functor formalism, and let $X$ be a stack.
	For a fixed $Z\to X$, the assignment to each $Y \in \Rep_{/X}$
	of the category $\scrD(Z \times_X Y)$ defines a presheaf
	which we denote by $\scrD[Z](-): \Rep_{/X} \to \DGCat_k$.
\end{defn}

For a given closed immersion $Z \hook X$, there is an obvious
natural transformation of presheaves, $\scrD[Z](-) \to \scrD_Z(-)$, whose
component at $Y \in \Rep_{/X}$ is given by
pushforward along the inclusion $Z \times_X Y \to Y$.

\begin{lem}\label{lem:cats-with-support-nis-descend}
	Let $X$ be a stack with affine diagonal, and let $\mathscr{D}: \Rep_{/X} \to \DGCat_R$ 
	be a six-functor formalism satisfying Nisnevich descent, where $R$ is any
	$\bbE_{\infty}$-algebra over $k$. 
	For any closed immersion $Z \hook X$,
	the presheaf $\mathscr{D}_Z$ satisfies Nisnevich descent. 
\end{lem}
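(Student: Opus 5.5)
The plan is to reduce Nisnevich descent for $\scrD_Z$ to Nisnevich descent for $\scrD$ itself, using that $\scrD_Z$ is, objectwise, a fiber of a natural transformation between two sheaves, and that limits commute with fibers. By \cite[Theorem 2.2.7]{KhanLocalization}, as in the proof of \zcref{prop:potent-d-modules-nis-descend}, it suffices to show that $\scrD_Z$ is Nisnevich excisive, i.e. sends every Nisnevich square in $\Rep_{/X}$ to a pullback square of categories.

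First I will check that $Y \mapsto \scrD(Y\times_X U)$ is again a presheaf on $\Rep_{/X}$ satisfying Nisnevich descent. This uses the base-change functor $\Rep_{/X}\to\Rep_{/U}$, $Y\mapsto Y\times_X U$, together with the observation that $\Rep_{/U}\subset\Rep_{/X}$ via the open immersion $U\hook X$. Indeed, $Y\times_X U\to Y$ is representable, of finite presentation, and preserves affine diagonal. Next, base change along $U\to X$ carries Nisnevich squares to Nisnevich squares: open immersions and étale maps are stable under pullback, and the isomorphism-on-complement condition pulls back. Hence $\scrD(-\times_X U)$ is Nisnevich excisive. Then, for each $Y$, the functors $j_Y^!$ assemble into a natural transformation $\scrD(-)\Rightarrow\scrD(-\times_X U)$. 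The required compatibility $j_{Y'}^! f^!\simeq (f_U)^! j_Y^!$ for $f:Y'\to Y$ follows from functoriality of $!$-pullback in the six-functor formalism, since the two composites $Y'\times_X U\to Y$ agree.

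With these in hand, $\scrD_Z(Y)=\fib(j_Y^!)$ is the pointwise fiber of a map of presheaves valued in $\DGCat_R$. Here the fiber is taken over the zero object, i.e. it is the pullback $\scrD(Y)\times_{\scrD(Y\times_XU)}0$. Given a Nisnevich square $W,V,U',S$, I form the $3\times 3$ diagram with rows $\scrD(-)\to\scrD(-\times_XU)\leftarrow 0$. Since limits commute with limits, the pullback of the fibers is the fiber of the pullbacks. By excisiveness of the two sheaves, the latter is $\fib(j_S^!)=\scrD_Z(S)$. Limits in $\DGCat_R$ are computed in $\Cat_\infty$, so this step is formal.

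The main obstacle is bookkeeping rather than mathematics. The natural transformation must be coherent as a morphism of presheaves, not merely objectwise, and this is the one point where the six-functor structure (as opposed to just the presheaf $\scrD$) is used. One must also verify that base change to $U$ preserves the class $\Rep_{/X}$ and Nisnevich squares; the affine-diagonal hypothesis on $X$ enters only here, to keep $Y\times_X U$ in $\Rep_{/X}$.
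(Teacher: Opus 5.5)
Your proposal is correct and follows the paper's proof. The paper likewise reduces to Nisnevich excision and checks that the square of open complements $W^c, V^c, U^c, S^c$ (the base change of the Nisnevich square along $U\hook X$) is again Nisnevich, so that descent for $\scrD$ applies to it; it then commutes the fibers of $\scrD(-)\to\scrD((-)^c)\leftarrow 0$ past the pullback using a $3\times 3$ diagram of spans. Your remarks on the coherence of $j^!$ as a natural transformation cover a point the paper leaves implicit; one small correction is that $Y\times_X U$ has affine diagonal because $Y$ does (its diagonal is a base change of $\Delta_Y$, since $Y\times_X U\to Y$ is a monomorphism), so this step does not actually use the affine-diagonal hypothesis on $X$.
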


\begin{proof}
 	As before, it suffices to show that $\scrD_Z(-)$ satisfies Nisnevich excision. 
	Let
	\[
	\begin{tikzcd}
		W\arrow[r]\arrow[d]&V\arrow[d]\\
		U\arrow[r]&S
	\end{tikzcd}
	\] 
	be a Nisnevich square in $\Rep_{/X}$. For $Y \in \Rep_{/X}$, denote by $Y^c$ the
	complement of the closed immersion $Z \times _X Y \hook Y$. Observe that the square
	\[
	\begin{tikzcd}
		W^c\arrow[r]\arrow[d]&V^c\arrow[d]\\
		U^c\arrow[r]&S^c
	\end{tikzcd}
	\] 
	is also Nisnevich. Indeed, it is Cartesian because we may write $W^c \simeq W\times _{S}U^c$. 
	Moreover, the base changes $V^c\to S^c$ and $U^c\to S^c$ are respectively \'etale and open, as these properties are preserved by base change. Similarly, if $V\to S$ was an isomorphism away from $U$, then its base change will also be an isomorphism away from the base change of $U$. Thus, by descent for $\mathscr{D}$, we have pullbacks
	\begin{align*}
		\mathscr{D}(S)=\mathscr{D}(V)\times _{\mathscr{D}(W)}\mathscr{D}(U)
	\end{align*} 
	and
	\begin{align*}
		\mathscr{D}(S^c)=\mathscr{D}(V^c)\times _{\mathscr{D}(W^c)}\mathscr{D}(U^c).
	\end{align*} 
	In order to identify
	\begin{align*}
		\mathscr{D}_Z(S)=\mathscr{D}_Z(V)\times _{\mathscr{D}_Z(W)}\mathscr{D}_Z(U),
	\end{align*} 
	we need only check that the kernel of the map $\mathscr{D}(S)\to \mathscr{D}(S^c)$ 
	induced by $j^!|_V \times j^!|_U$ has an analogous presentation as a fiber product. 
	We now use the following general observation. Let $\Lambda$ denote the category 
	$*\to *\leftarrow  *$. Then $\lim_{\Lambda}$ sends diagrams of shape $\Lambda$ 
	to their pullbacks. Moreover, if we have a diagram $A\to C\leftarrow B$ in $\mathsf{Map}(\Lambda,\Cat_\infty)$, 
	where $A=A_1\to A_3\leftarrow  A_2$ and similarly for $B,C$, then we have an object of 
	$\mathsf{Map}(\Lambda,\mathsf{Map}(\Lambda,\Cat_\infty))\simeq \mathsf{Map}(\Lambda \times \Lambda,\Cat_\infty)$. 
	Writing this diagram out, we have
	\[
	\begin{tikzcd}
		A_1\arrow[r]\arrow[d]&\arrow[d]B_1&\arrow[l]C_1\arrow[d]\\
		A_3\arrow[r]&B_3&\arrow[l]C_3\\
		A_2\arrow[u]\arrow[r]&B_2\arrow[u]&\arrow[l]C_2\arrow[u]
	\end{tikzcd}.
	\] 
	Applying $\lim_{\Lambda \times \Lambda}$ to this diagram and using that limits commute with limits, we find
	\begin{align*}
		\lim_{\Lambda \times \Lambda}(A\to C\leftarrow  B)=(\lim_\Lambda A)\times _{\lim_\Lambda C}(\lim_\Lambda B) = (A_1\times_{C_1}B_1)\underset{A_3\times _{B_3}C_3}{\times }A_2\times _{C_2}B_2.
	\end{align*} 
	Now, let $A$ be the span $\mathscr{D}(V)\to \mathscr{D}(V^c)\leftarrow  0$, 
	$C$ the span $\mathscr{D}(W)\to \mathscr{D}(W^c)\leftarrow  0$, 
	and $B$ the span $\mathscr{D}(U)\to \mathscr{D}(U^c)\leftarrow  0$.
\end{proof}

\subsection{The main theorem}

\begin{thm}
	\label{thm: Kashiwara's lemma for scalloped stacks}
	Let $X$ be a linearly scalloped stack almost of finite type and with affine diagonal, and let $i: Z\hookrightarrow X$ be an almost finitely presented inclusion of
	a closed substack. Then pushforward along $i$ induces
	an equivalence,
	\[i_*: \Pot(Z) \xrightarrow{\simeq} \Pot_Z(X).\]
\end{thm}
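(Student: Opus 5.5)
The plan is to argue by induction along a scallop decomposition of $X$, comparing the two presheaves $\Pot[Z](-)$ and $\Pot_Z(-)$ on $\Rep_{/X}$ via the natural transformation $\Pot[Z](-)\to\Pot_Z(-)$ given by pushforward along $Z\times_X Y\hook Y$ (\zcref{defn: restricted six-functor formalism}, \zcref{defn: six-functor formalism with support}). The goal is to show that this transformation is an equivalence at $Y=X$.

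\textbf{Descent.} First I show that both presheaves satisfy Nisnevich descent on $\Rep_{/X}$. For $\Pot_Z(-)$ this is \zcref{lem:cats-with-support-nis-descend} combined with \zcref{prop:potent-d-modules-nis-descend}. For $\Pot[Z](-)$, a Nisnevich square in $\Rep_{/X}$ base changes along $Z\to X$ to a Nisnevich square in $\Rep_{/Z}$. Since $Z$ is again an algebraic stack almost of finite type with affine diagonal, \zcref{prop:potent-d-modules-nis-descend} applied over $Z$ gives the required pullback square. Compatibility of $i_*$ with $!$-pullback along \'etale maps and open immersions comes from base change in the six functor formalism of \cite{ES-6FF}, so the natural transformation is a map of Nisnevich sheaves.

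\textbf{Induction.} By \cite[Definition A.1]{KhanRavi}, $X$ admits a finite filtration by quasi-compact opens $\emptyset=U_0\subset U_1\subset\cdots\subset U_n=X$. Each step sits in a Nisnevich square
\[
\begin{tikzcd}
W_k\arrow[r]\arrow[d] & V_k\arrow[d]\\
U_{k-1}\arrow[r] & U_k
\end{tikzcd}
\]
where $V_k\to U_k$ is \'etale, representable and isomorphic over $U_k\setminus U_{k-1}$, and $V_k\simeq Y_k/G_k$ with $G_k$ linearly reductive and $Y_k$ a $G_k$-quasi-projective, hence quasi-affine-over-projective, derived scheme. I induct on $k$, proving that $\Pot[Z]\to\Pot_Z$ is an equivalence on $U_k$. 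The transformation is an equivalence on $U_{k-1}$ by induction, and on $V_k$ and on $W_k$ by the base case. Since $W_k$ is open in $V_k$, it is also a global quotient of a quasi-projective $G_k$-scheme. Descent then gives the equivalence on $U_k$, because both sides are pullbacks of equivalent diagrams.

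\textbf{Base case.} For $Y/G$ with $Y$ a finite type derived scheme with affine diagonal, the closed substack $Z\times_X(Y/G)$ has the form $Z'/G$ by \zcref{lem:closed-substack-is-quotient}. Then \zcref{thm: Kashiwara's lemma for G-projective quotients} gives $\Pot(Z'/G)\simeq\Pot_{Z'/G}(Y/G)$.

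\textbf{Main obstacle.} The hardest step is the bookkeeping for the base case. Linearly reductive groups in characteristic $0$ are reductive, but the earlier results (\zcref{thm: Kashiwara's lemma for G-projective quotients} and \zcref{prop: Kashiwara's lemma for unipotent loop spaces}) assume $G$ reductive, possibly connected, and $Y$ Noetherian almost of finite type. I expect to have to check that these hypotheses hold for the scallops, including the almost finite presentation of $Z'\hook Y$, which follows by base change. A second point to check is that the hypotheses of \zcref{prop:potent-d-modules-nis-descend} are inherited by $Z$ and by every stack in $\Rep_{/X}$ that appears; the affine diagonal of the scallops is where the definition of $\Rep_{/X}$ is used.
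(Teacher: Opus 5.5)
Your proposal is correct and follows essentially the same route as the paper. You compare $\Pot[Z](-)\to\Pot_Z(-)$ on the Nisnevich squares of the scallop decomposition, use Nisnevich excision for both presheaves (the descent result for $\Pot$ and its consequence for categories with support), and feed in the global quotient case for $V_k$ and $W_k$ together with the inductive hypothesis on $U_{k-1}$. Your remarks that the base-changed squares over $Z$ are again Nisnevich, and that $Z\times_X(Y_k/G_k)$ has the form $Z'/G_k$ with $Z'\hookrightarrow Y_k$ almost finitely presented, only spell out steps the paper leaves implicit.
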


\begin{proof}
	Note that $X$ is in particular a quasi-compact algebraic stack with affine diagonal; hence, it is geometric. We may thus use the results of \zcref{ssec:nisnevich descent}. We will proceed by induction on scallop diagrams. Recall that a linearly scalloped stack
	$X$ admits a finite filtration by quasi-compact open substacks
	\[
		\emptyset=X_0\subset X_1\subset \cdots \subset X_n=X
	\]
	along with Nisnevich squares
	\begin{equation}
		\label{diag:nisnevich}
		\begin{tikzcd}
			W_i\arrow[r]\arrow[d]&V_i\arrow[d,"u_i"]\\
			X_{i-1}\arrow[r]&X_i
		\end{tikzcd}
	\end{equation} 
	where the $u_i$ are representable (by algebraic spaces) \'etale morphisms of 
	finite presentation and the $V_i$ are linearly quasi-fundamental, which means that each $V_i$ admits a
	quasi-projective morphism to $BG_i$ for a linearly reductive embeddable group scheme $G_i$.\footnotemark
	\footnotetext{See \cite[\S2.1]{KhanRavi} for the definition of embeddable group.}

	We recall the presheaf $\Pot[Z]$ defined in \zcref{defn: restricted six-functor formalism}
	above, and the natural transformation $\Pot[Z](-) \to \Pot_Z(-)$, whose
	component at $Y \in \Rep_{/X}$ is given by the pushforward,
	$(i_{Z \times_X Y})_*: \Pot(Z \times_X Y) \to \Pot_{Z \times_X Y}(Y)$.
	For a given $Y \in \Rep_{/X}$, let $i_Y: Z \times_X Y \to Y$ denote the canonical
	closed immersion.
	Since $\Pot[Z](-) \to \Pot_Z(-)$ is a map of presheaves, from the Nisnevich square
	\zcref[noname]{diag:nisnevich}, we obtain the following commutative diagram,
	\[\begin{tikzcd}
		{\Pot(Z \times_X X_i)} & {\Pot(Z \times_X V_i)\underset{\Pot(Z \times_X W_i)}{\times }\Pot(Z \times_X X_{i-1})} \\
		{\Pot_Z(X_i)} & {\Pot_Z(V_i)\times _{\Pot_Z(W_i)}\Pot_Z(X_{i-1})}
		\arrow["{(1)}", from=1-1, to=1-2]
		\arrow["{(i_{X_i})_*}", from=1-1, to=2-1]
		\arrow["{(i_{V_i})_* \times_{\left(i_{W_i}\right)_*} (i_{X_{i-1}})_*}", from=1-2, to=2-2]
		\arrow["{(2)}", from=2-1, to=2-2],
	\end{tikzcd}\]
	where the horizontal arrow $(1)$ and $(2)$ are the functors
	induced by the universal property of the fiber products on the right-hand side
	of the diagram.
	By \zcref{lem:cats-with-support-nis-descend}, the presheaf 
	$\Pot_Z(-)$ satisfies Nisnevich excision, so arrow $(2)$ is actually
	an equivalence.
	Similarly, $\Pot(-)$ 
	satisfies Nisnevich excision by \zcref{prop:potent-d-modules-nis-descend},
	so arrow $(1)$ is also an equivalence.
	Thus, in order to prove that $i_*$ is an equivalence,
	it suffices to show that
	$(i_{V_i})_* \times_{(i_{W_i})_*} (i_{X_{i-1}})_*$ is an equivalence.

	We proceed by induction on $i$. For the case $i=1$, the Nisnevich square \zcref[noname]{diag:nisnevich} shows that $X_1$ is quasi-fundamental, 
	since the $u_i$ are isomorphisms away from $X_{i-1}$ by the definition of Nisnevich square. Hence, $X_1 \to BG_1$ is a quasi-projective morphism for $G_1$ a linearly reductive embeddable group. In particular, $X_1 \simeq U_1/G_1$ for some $G_1$-quasi-projective
	scheme $U_1$, so \zcref{thm: Kashiwara's lemma for G-projective quotients} implies that
	\[(i_{X_1})_*: \Pot(Z_1) \xrightarrow{\simeq} \Pot_{Z_1}(X_1) 
	=: \Pot_Z(X_1).\footnotemark
	\footnotetext{See \zcref{rem: value of the presheaf D_Z}.}\]
	
	Now let $n \geq i >1$, and assume as our induction hypothesis that 
	\[(i_{X_{i-1}})_*: \Pot(Z \times_X X_{i-1}) \xrightarrow{} \Pot_Z(X_{i-1})\]
	is an equivalence.
	We now observe that, by the assumption that $X$ is linearly scalloped, $V_i$ admits a quasi-projective morphism $V_i \to BG_i$ for
	some linearly reductive embeddable group scheme $G_i$, hence has the form $U_i/G_i$ for some $G_i$-quasi-projective
	scheme $U_i$, as above. In the same vein, the map $W_i\to V_i$ is an open immersion,\footnotemark
	\footnotetext{It is the base change of the open immersion $X_{i-1} \to X_i$.} 
	so $W_i$ also admits a quasi-projective morphism to $BG_i$, meaning it is also the
	quotient of a $G_i$-quasi-projective scheme. Thus, \zcref{thm: Kashiwara's lemma for G-projective quotients}
	implies that both $(i_{V_i})_*: \Pot(Z \times_X V_i) \xrightarrow{} \Pot_Z(V_i)$ and $(i_{W_i})_*: \Pot(Z \times_X W_i) \xrightarrow{} \Pot_Z(W_i)$
	are equivalences. These two equivalences together with the induction hypothesis imply that
	$(i_{V_i})_* \times_{(i_{W_i})_*} (i_{X_{i-1}})_*$ is an equivalence.

	Taking $i = n$, we therefore obtain the desired
	equivalence, $i_*: \Pot(Z) \to \Pot_Z(X)$.
\end{proof}


\printbibliography

\end{document}